\numberwithin{equation}{section}
\definecolor{myBlue}{rgb}{0, 0, 0.7}
\newtheorem{theorem}{Theorem}[section]
\newtheorem{lemma}[theorem]{Lemma}
\newtheorem{corollary}[theorem]{Corollary}
\theoremstyle{definition}
\newtheorem{remark}[theorem]{Remark}
\newtheorem{definition}[theorem]{Definition}
\newcommand{\ind}[1]{\text{$\mathbbm{1}$}\left(#1\right)}
\newcommand{\PR}{\mathbb{P}}
\newcommand{\E}{\mathbb{E}}
\newcommand{\Var}{\mathrm{Var}}
\newcommand{\lr}[1]{\left(#1\right)}
\newcommand{\lrb}[1]{\left[#1\right]}
\newcommand{\lrc}[1]{\left\{#1\right\}}
\newcommand{\Z}{\mathbb{Z}}
\newcommand{\T}{\mathbb{T}}
\newcommand{\R}{\mathbb{R}}
\newcommand{\cC}{\mathcal{C}}
\newcommand{\fF}{\mathfrak{F}}
\newcommand{\cJ}{\mathcal{J}}
\newcommand{\cB}{\mathcal{B}}
\newcommand{\cE}{\mathcal{E}}
\newcommand{\cF}{\mathcal{F}}
\newcommand{\cP}{\mathcal{P}}
\newcommand{\cG}{\mathcal{G}}
\newcommand{\cS}{\mathcal{S}}
\newcommand{\cT}{\mathcal{T}}
\newcommand{\bT}{\mathbb{T}}
\newcommand{\rs}{\mathrm{s}}
\newcommand{\rt}{\mathrm{t}}
\newcommand{\diam}{\mathrm{diam}}
\newcommand{\dbar}[1]{\overline{\overline{#1}}}
\renewcommand{\bar}[1]{\overline{#1}}
\newcommand{\mix}{\mathrm{mix}}
\newcommand{\rel}{\mathrm{rel}}
\newcommand{\core}{\mathrm{core}}
\mathchardef\mhyphen="2D
\newcommand{\score}{\mathrm{s\mhyphen core}}
\newcommand{\inn}{\mathrm{inn}}
\newcommand{\defn}{=}
\newcommand{\rmin}{\mathrm{min}}
\newcommand{\rmax}{\mathrm{max}}
\newcommand{\cyl}{\mathrm{cyl}}
\newcommand{\qand}{\quad\text{and}\quad}
\newcommand{\TV}{\mathrm{TV}}
\newcommand{\crit}{\mathrm{c}}
\newcommand{\compl}{\mathrm{c}}
\newcommand{\enl}{\mathrm{enl1}}
\newcommand{\enlb}{\mathrm{enl2}}
\newcommand{\benl}{{\partial\enl}}
\newcommand{\SRWM}[2]{\mathbb{I}_{(#1,#2)}^{{\scalebox{0.7}{SRWM}}}}
\newcommand{\coup}{\mathrm{coup}}
\newcommand{\Mid}{\,\middle\vert\,}
\renewcommand{\phi}{\varphi}
\renewcommand{\hat}{\widehat} 
\begin{document}
\newcounter{cte}
\setcounter{cte}{0}
\newcommand{\newcte}[1]{{\refstepcounter{cte}\arabic{cte}\label{#1}}}
\newcommand{\cref}[1]{{\ref*{#1}}}
\newcommand{\newcteH}[1]{{\refstepcounter{cte}\label{#1}}}

\title{Mixing time of random walk on dynamical random cluster}%: non-Markovian coupling via a multi-scale analysis}

\author{Andrea Lelli\footnote{Most of this work was done while the author was affiliated with the University of Bath, Department of Mathematical Sciences, supported by a scholarship from the EPSRC Centre for Doctoral Training in Statistical Applied Mathematics at Bath (SAMBa), under the project EP/L015684/1.} \and Alexandre Stauffer\footnote{a.stauffer@bath.ac.uk, University of Bath, Department of Mathematical Sciences, supported by EPSRC Fellowship EP/N004566/1.}}

\maketitle
\begin{abstract}
   We study the mixing time of a random walker who moves inside a dynamical random cluster model on the $d$-dimensional torus of side-length $n$. 
   In this model, edges switch at rate $\mu$ between \emph{open} and \emph{closed}, following a Glauber dynamics for the random cluster 
   model with parameters $p,q$. At the same time, the walker jumps at rate $1$ as a simple random walk on the torus, but is only allowed to traverse open edges. 
   We show that for small enough $p$ the mixing time of the random walker is of order $n^2/\mu$.
   In our proof we construct of a non-Markovian coupling through a multi-scale analysis of the environment, which we believe could be more widely applicable. 
\end{abstract}

\section{Introduction}
We study the mixing time of a random walk on a dynamical random cluster model in $\bT_n^d$, the 
$d$-dimensional torus of side-length $n$. 
In this model, each edge of $\bT_n^d$ can be in either of two states: \emph{open} or \emph{closed}. 
At time $0$, we take the state of the edges to be distributed according to the 
random cluster measure with parameters $p\in(0,1)$ and $q>0$. That is, for any subset of edges $\omega\subset E(\bT_n^d)$, 
with $E(\bT_n^d)$ denoting the set of edges of the torus, the probability that the set of open edges at time $0$ is equal to $\omega$ is
\begin{equation}
   \upsilon(\omega)\defn\frac{1}{Z}p^{|\omega|}(1-p)^{|E(\bT_n^d)\setminus \omega|}q^{\kappa(\omega)},
   \label{eq:rc}
\end{equation}
where $\kappa(\omega)$ is the number of connected components obtained in the graph with vertex set $\bT_n^d$ and edge set $\omega$, and $Z=Z(d,p,q)>0$ is just a normalizing constant so that the above is 
a probability measure. Instead of representing the state of the edges by the set $\omega$ of open edges, 
we will often represent it by an element $\eta\in \lrc{0,1}^{E(\bT_n^d)}$, with $\eta(e)=0$ meaning that the edge $e\in E(\bT_n^d)$ is closed 
and $\eta(e)=1$ meaning that $e$ is open. Thus, given $\eta$, we have $\omega=\lrc{e\in E(\bT_n^d) \colon \eta(e)=1}$.

From time $0$, edges change their state following a continuous-time Glauber dynamics. 
Thus, given a parameter $\mu>0$, each edge $e\in E(\bT_n^d)$ has a Poisson clock of rate $\mu$, and when the clock of $e$ rings, 
the state of $e$ is resampled (open or closed) according 
to the conditional probability obtained from $\upsilon$ in~\eqref{eq:rc} conditioned on the states of all the other edges.
This resampling can be easily described: if the clock of $e$ rings at time $t$, then the probability that $e$ becomes open at time $t$ is equal to 
\begin{equation}
   \begin{array}{rl}
   p, &  \text{ if $e$ is not a \emph{cut-edge} at time $t-$},\\
   \frac{p}{p+(1-p)q}, & \text{ if $e$ is a \emph{cut-edge} at time $t-$},
   \end{array}\label{eq:rcupdate}
\end{equation}
where an edge $e$ is called a \emph{cut-edge} if modifying the state of $e$ (while keeping the state of the other edges unaltered) 
causes a change in the number of connected components in the configuration. Note that whether an edge $e$ is a cut-edge for a configuration $\eta$ is, in fact, independent of 
$\eta(e)$.
We let $\eta_t\in \lrc{0,1}^{E(\bT_n^d)}$ denote the configuration that gives the state of the edges at time $t$.

On top of this dynamic environment we place a random walker which starts from the origin of $\bT_n^d$ and has a Poisson clock of rate $1$. When the clock of the walker rings, the walker chooses  
an edge uniformly at random from the set of edges that are adjacent to its current location, regardless of their states. 
If the chosen edge is open at that time, then the walker traverses the edge, otherwise the walker stays put.
We denote by $X_t\in \bT_n^d$ the position of the walker at time $t$, and let 
\begin{equation*}
    \{M_t\}_{t\ge0} \defn \{X_t, \eta_t\}_{t\ge0},
\end{equation*}
denote the \emph{full system} composed of the walker $\lrc{X_t}_{t\geq 0}$ and the environment $\lrc{\eta_t}_{t\geq 0}$.
We note that $\lrc{M_t}_{t\geq 0}$ and $\lrc{\eta_t}_{t\geq 0}$ are Markov chains, while 
$\lrc{X_t}_{t\geq 0}$ is not.

One can check (for example, by reversibility) 
that if $\pi$ denotes the uniform probability measure on $\bT_n^d$ 
then $\pi \times \upsilon$ is the unique stationary distribution of $\lrc{M_t}_t$. 

Let $T_\mix$ denote the mixing time of the full system, starting from the worst-case initial state. 
In other words, given $x\in \T_n^d$ and $\xi\in\lrc{0,1}^{E(\T_n^d)}$, let 
$T_\mix^{x,\xi}$ be the smallest $t$ such that, starting from $M_0=(x,\xi)$, the total variation distance between the 
distribution of $M_t$ and $\pi \times \upsilon$ is smaller than a given constant, which for concreteness we take to be $1/4$.
Then $T_\mix = \min_{x,\xi}T_\mix^{x,\xi}$.

Our main result establishes that the mixing time is of order $n^2/\mu$ for all small enough $p$. 
We remark that $p$ and $q$ are considered to be constants independent of $n$, 
while $\mu$ may depend on $n$; in particular, a natural case in the context of dynamic networks is that $\mu\to 0$ as $n\to\infty$.
\begin{theorem}
   \label{thm:main}
   Given any $q>0$ and any dimension $d\geq 1$, there exists a positive $p_0>0$ so that for all $p\in (0,p_0)$ there exists $C_\newcte{cte:thmmain}=C_{\ref*{cte:thmmain}}(d,p,q)>0$ for which
   $$
       T_\mix\le C_{\ref*{cte:thmmain}}\frac{n^2}{\mu},\quad \text{for all $\mu=\mu(n)>0$ and all $n\geq 1$}.
   $$
\end{theorem}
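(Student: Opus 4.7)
The plan is to prove Theorem~\ref{thm:main} by coupling two copies $\{M_t^1\}, \{M_t^2\}$ of the full system, started from appropriate initial conditions, and showing that they coalesce by time $T = C n^2/\mu$ with probability at least $3/4$. Given the stated definition $T_\mix = \min_{x,\xi} T_\mix^{x,\xi}$, it suffices to exhibit one good initial condition; a natural choice is $\eta_0 \sim \upsilon$ with $X_0$ at the origin, so that the environment starts at stationarity and only the walker's marginal needs to equilibrate. The coupling will be \emph{non-Markovian}, in that the joint update at time $t$ depends on the full joint history of the two copies.

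First, I would handle the environment alone. Because $p$ is small, the $q$-random cluster model is in a regime of exponential decay of connectivities, and the Glauber dynamics $\{\eta_t\}$ mixes in time $O(\log n / \mu)$ via a grand Poisson-clock coupling combined with a disagreement-percolation or path-coupling estimate. After this short initial phase, the two environments can be taken to agree and evolve identically, so the remaining task is to couple the two walkers on a common environment.

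The bulk of the argument is the walker coupling. Heuristically, in each time window of length $\Theta(1/\mu)$, edges in a bounded neighborhood of a walker are refreshed $\Theta(1)$ times, and with positive probability the walker performs one effective step of displacement $\Theta(1)$. Over a horizon $T$ this gives $\Theta(\mu T)$ effective steps, and since two random walks on $\bT_n^d$ coalesce within $\Theta(n^2)$ steps we expect coalescence by $T = \Theta(n^2/\mu)$. To make this rigorous I would partition $[0,T]$ into intervals of length $\Theta(1/\mu)$ and $\bT_n^d$ into dyadic blocks, and then run a multi-scale renormalization: at each scale $\ell$, a block is \emph{good} if the local cluster structure is bounded and its boundary edges are refreshed sufficiently often during the interval; subcriticality gives that bad blocks at scale $\ell$ have probability decaying super-polynomially in $\ell$, and a Peierls-type argument ensures that the walker almost surely traverses only good blocks.

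The main obstacle is constructing the walker coupling inside a single block while accounting for the bias that a walker's past trajectory imposes on its surrounding environment: each failed move attempt reveals that a specific edge was closed at a specific time, and these revelations accumulate along the trajectory, breaking Markovianity in the naive coupling. The non-Markovian coupling circumvents this by sampling the full environment trajectory inside the block first and then constructing both walkers jointly against this fixed environment, using the joint history (not just current positions) to decide each next increment. With this device one can show that on good blocks the separation between the two walkers contracts with positive probability in each window of length $\Theta(1/\mu)$, while the rare bad blocks are absorbed as negligible corrections via the multi-scale estimates, yielding the desired bound on the coupling time.
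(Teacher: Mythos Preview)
Your plan has a structural gap at the point where you write ``After this short initial phase, the two environments can be taken to agree and evolve identically, so the remaining task is to couple the two walkers on a common environment.'' This sentence hides the central difficulty of the problem, and the rest of the proposal does not address it.

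If by ``agree'' you mean the two environments are literally the same configuration $\eta_t=\bar\eta_t$, then the two walkers sit at different locations in one subcritical dynamic graph, and you give no mechanism for coalescence: each walker is trapped in its own small cluster, the clusters are typically disjoint, and there is no obvious coupling that contracts $\|X_t-\bar X_t\|$. Your claim that ``on good blocks the separation between the two walkers contracts with positive probability'' is asserted rather than argued, and I do not see how it follows from what you have set up. If instead you mean the environments agree \emph{from the point of view of the walker} (i.e.\ $\eta_t(X_t+e)=\bar\eta_t(\bar X_t+e)$ for all $e$), then the only way to keep them evolving identically is to keep the translation $X_t\mapsto\bar X_t$ fixed, which forces the distance between the walkers never to change and defeats the purpose.

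The paper's resolution is precisely to abandon the idea of fully coupled environments. It couples the environments via the translation map $\Phi_t$ only \emph{locally} around the walkers, and accepts that every time a ``simple random walk moment'' successfully changes the distance between the walkers, the translation changes and the environments immediately decouple outside a small ball. Waiting for full recoupling after each such step would cost $\Theta(\log n/\mu)$ per step and give only $n^2\log n/\mu$; instead the paper works with partially coupled environments throughout. The multi-scale tessellation is used not (as you suggest) to control where the walker goes, but to decide \emph{in advance}, by looking at future environment information, which space-time regions are safe for attempting a simple-random-walk step and which require identity coupling with a suitably large locally-coupled buffer prepared beforehand. This look-ahead is the source of the non-Markovianity, and it is quite different from your description of ``sampling the full environment trajectory inside the block first.''

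A secondary point: exploiting the literal $\min$ in the definition of $T_\mix$ to choose a stationary initial environment is not in the spirit of the theorem (the text says ``worst-case initial state'' and the $\min$ is almost certainly a typo for $\max$); the paper's coupling handles arbitrary initial configurations.
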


The proof of Theorem~\ref{thm:main} goes via the construction of a \emph{non-Markovian} coupling using a multi-scale analysis of the environment. We believe this idea can be more widely applicable to analyze the 
mixing time of random walks on particle systems, and we regard it as one of our main contributions. 
Another main contribution of our work is to initiate the analysis of the mixing time of a random walk in a dynamic environment where edge updates are not 
independent of one another; see the related works in Section~\ref{sec:relwork}. 
We will employ a multi-scale analysis exactly to control the evolution of the environment. 
We will give a thorough description of the main ideas of the proof in Section~\ref{sec:pf}, since first, in Section~\ref{sec:spaceext}, 
we will need to introduce an auxiliary process. 

% ###################################################################################################################
% ###################################################################################################################
% ###################################################################################################################
\subsection{Lower bounds on the mixing time}\label{sec:sublb}
We also derive matching lower bounds on the mixing time. 
We start by stating a straightforward generalization of the lower bound from~\cite{peres2015random}. 

We consider a larger class of models, which we refer to 
as \emph{continuous-time random walks on general dynamical percolation}, where the word general is to mean that the percolation process may not be independent. 
Let $\lrc{X_t,\eta_t}_{t\geq 0}$ be a continuous-time Markov chain where the walker $X_t$ jumps at rate $1$ and can only traverse open edges of $\T_n^d$, and the environment $\lrc{\eta_t}_t$ is a Markov chain on 
$\lrc{0,1}^{E(\T_n^d)}$ where edges refresh their state at rate $\mu$ independently of the walker. As usual $\mu$ may depend on $n$. 
Let $\pi$ be the uniform distribution on $\T_n^d$ and let $\nu$ be the stationary distribution
of the Markov chain $\lrc{\eta_t}_t$. 

We recall some fundamental definitions. The \emph{spectral gap} $\gamma$ of a reversible Markov chain is defined as 
$$
   \gamma\defn \inf_{f} \frac{\cE(f,f)}{\Var(f)},
$$
where the infimum is over all functions $f$ from the state space to $\R$ with $\Var(f)\neq 0$, the variance $\Var(f)$ being with respect to
the stationary distribution of the chain, and $\cE(f,f)$ is the so-called Dirichlet form. The \emph{relaxation time} of the said Markov chain is defined as 
$$
   T_\rel = \gamma^{-1}.
$$ 

Given a time interval $I\subset \R_+$, we say that an edge is $I$-open, if it is open at some time during $I$. 
Then, for any vertex $x\in\T_n^d$ and any time interval $I\subset \R_+$, we let $\cC_x(I)$ denote the connected component of $I$-open edges from $x$. Finally, given a subset $S\subset\T_n^d$, let 
$\diam(S)=\max_{x,y\in S}\|x-y\|_1$ be the diameter of $S$, where $\|x-y\|_1$ is the $L_1$ distance (or, equivalently, the length of the shortest path) between $x$ and $y$ in $\T_n^d$.
We require the following two assumptions from the process $\lrc{X_t,\eta_t}_{t\geq 0}$:
\begin{equation}
   \pi \times \nu \text{ is the stationary distribution of $\lrc{X_t,\eta_t}_{t\geq 0}$}, 
   \label{eq:assump1}
\end{equation}
and \newcteH{cte:assump2}
\begin{equation}
   \text{$\exists \delta>0$ and $C_{\ref*{cte:assump2}}>0$ such that for any $x\in\T_n^d$ we have $\E_\nu\lr{D_{x,\delta}^2}\leq C_{\ref*{cte:assump2}}$},
   \label{eq:assump2}
\end{equation}
where $D_{x,\delta}=\diam\lr{\cC_x\lr{[0,\delta]}}$ and $\E_\nu$ denotes the expectation with respect to the stationary measure of the environment.
The assumption in~\eqref{eq:assump1} just says that the stationary measure of the walker is uniform, while~\eqref{eq:assump2} gives that the environment is strictly subcritical.
\begin{theorem}
   \label{thm:lb}
   Let $\lrc{X_t,\eta_t}_{t\geq 0}$ be a random walk in a general dynamical percolation satisfying~\eqref{eq:assump1} and~\eqref{eq:assump2} above. 
   Then, there exist a constant $C_\newcte{cte:lb}>0$ depending only on $d$ such that 
   $$
      T_\rel \geq \frac{C_{\ref*{cte:lb}}\delta n^2}{C_\cref{cte:assump2}}.
   $$
\end{theorem}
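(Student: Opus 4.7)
The plan is to apply a Poincaré inequality to a well-chosen test function on the joint chain, exploiting the geometric fact that during $[0,\delta]$ the walker is trapped in the open cluster $\cC_{X_0}([0,\delta])$.

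First I would invoke the autocorrelation form of the Poincaré inequality: since $\{M_t\} = \{X_t,\eta_t\}$ is reversible with stationary measure $\pi\times\nu$ and relaxation time $T_\rel$, any mean-zero $f \in L^2(\pi\times\nu)$ satisfies $\langle f, P_\delta f\rangle_{\pi\times\nu} \leq e^{-\delta/T_\rel}\Var(f)$, and consequently, using $1-e^{-x}\geq x/(1+x)$,
\begin{equation*}
\E_{\pi\times\nu}\!\left[(f(M_0) - f(M_\delta))^2\right] \;\geq\; 2\Var(f)\bigl(1 - e^{-\delta/T_\rel}\bigr) \;\geq\; \frac{2\Var(f)\,\delta}{T_\rel + \delta}.
\end{equation*}

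Next I would take the test function $f(x,\eta) = \cos(2\pi x_1/n)$ depending only on the first coordinate of the walker. It has $\Var_{\pi\times\nu}(f) = 1/2$ and is $(2\pi/n)$-Lipschitz in the $\ell_1$ distance on $\T_n^d$. The key geometric input is that since the walker only traverses open edges, every edge it uses during $[0,\delta]$ is $[0,\delta]$-open; hence $X_\delta\in \cC_{X_0}([0,\delta])$ and $\|X_\delta-X_0\|_1 \leq D_{X_0,\delta}$. Combined with the Lipschitz bound and assumption~\eqref{eq:assump2}, this gives
\begin{equation*}
\E_{\pi\times\nu}\!\left[(f(M_0)-f(M_\delta))^2\right] \;\leq\; \frac{(2\pi)^2}{n^2}\,\E_{\pi\times\nu}\!\left[D_{X_0,\delta}^2\right] \;\leq\; \frac{(2\pi)^2\, C_\cref{cte:assump2}}{n^2}.
\end{equation*}

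Combining the two displays and rearranging yields $T_\rel + \delta \geq n^2\delta/\bigl(4\pi^2 C_\cref{cte:assump2}\bigr)$; in the only interesting regime $n^2 \gtrsim C_\cref{cte:assump2}$ the $-\delta$ correction is negligible and produces the claimed bound $T_\rel\ge C_\cref{cte:lb}\,\delta n^2/C_\cref{cte:assump2}$. There is no serious obstacle in this argument: the confinement $X_\delta\in \cC_{X_0}([0,\delta])$ is immediate from the walker's transition rule, and the remainder is a direct Poincaré calculation. The value of the theorem lies rather in its generality — the argument uses only the subcritical moment bound~\eqref{eq:assump2} and not independence of edge updates or any specific feature of the random cluster dynamics — which makes it a matching lower-bound companion to Theorem~\ref{thm:main}.
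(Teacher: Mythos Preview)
Your argument is correct and is essentially the same as the paper's: both exploit the variational characterization of the spectral gap with a test function depending only on the walker's position, and both control the one-step increment via the confinement $\|X_\delta-X_0\|_1\le D_{X_0,\delta}$ together with~\eqref{eq:assump2}. The only cosmetic differences are that the paper first discretizes time at mesh $\delta$ and then uses the Dirichlet form with $f(x,\xi)=d(x,0)$, whereas you work directly with the continuous-time semigroup and the eigenfunction-type test $f(x,\eta)=\cos(2\pi x_1/n)$; your route is slightly more direct but neither choice buys anything substantive.
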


A natural setting is when the environment starts from its stationary distribution. 
For this, let $\upsilon_t$ stand for the distribution of $(X_t,\eta_t)$ where the walker starts from the origin and the environment starts from stationarity (that is, $\eta_0$ is distributed as $\nu$).
Then, $\|\upsilon_t - \pi\times\nu\|_\TV$ is the total variation distance between $\upsilon_t$ and the stationary measure of $\lrc{X_t,\eta_t}_t$.
\begin{theorem}
   \label{thm:lb2}
   Let $\lrc{X_t,\eta_t}_{t\geq 0}$ be a random walk in a general dynamical percolation satisfying~\eqref{eq:assump1} and~\eqref{eq:assump2} above. 
   Then, there exists a constant $C_\newcte{cte:lb2}>0$ depending only on $d$ such that, for any $\epsilon>0$, 
   $$
      \text{if $t\leq \tfrac{C_\cref{cte:lb2}}{C_\cref{cte:assump2}}\epsilon^{\frac{d+2}{d}}\delta n^2$ then $\|\upsilon_t - \pi\times\nu\|_\TV\geq 1-\epsilon$.}
   $$
    Moreover, there exists a constant $C_\newcte{cte:sqd}>0$ depending only on $d$ such that, for any $t\geq \delta$, 
   $$
      \E_{\upsilon_t}\lr{\|X_t-X_0\|_1^2}\leq C_\cref{cte:sqd}C_\cref{cte:assump2}\frac{t}{\delta},
   $$
   where $\E_{\upsilon_t}$ stands for the expectation with respect to $\upsilon_t$.
\end{theorem}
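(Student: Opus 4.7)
The plan is to prove the mean-squared displacement bound first and then to deduce the total variation lower bound from it by a ball-capacity argument.

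For the displacement bound, I would partition $[0,t]$ into $K = \lfloor t/\delta\rfloor$ consecutive intervals $I_i = [(i-1)\delta, i\delta]$ of length $\delta$ and set $\Delta_i = X_{i\delta} - X_{(i-1)\delta}$. During $I_i$ the walker can only cross edges that are open at some time in $I_i$, so $X_s \in \cC_{X_{(i-1)\delta}}(I_i)$ for every $s \in I_i$, yielding $\|\Delta_i\|_1 \le D_i := \diam(\cC_{X_{(i-1)\delta}}(I_i))$. Because the environment evolves autonomously of the walker and $\nu$ is both the starting and stationary law of $\{\eta_t\}_{t}$, translation-invariance combined with the tower property gives $\E[D_i^2] \le C_{\ref*{cte:assump2}}$. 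A naive bound $\|X_t-X_0\|_1 \le \sum_i D_i$ together with Cauchy--Schwarz only yields $\E[\|X_t-X_0\|_1^2] \le K^2 C_{\ref*{cte:assump2}}$, off by a factor $K$ from the diffusive target; the key is that cancellation between the mean-zero increments must be exploited.

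To do this I would pass to a Dynkin martingale decomposition. Fix a coordinate $k$, lift $\phi(x,\eta) = x^{(k)}$ to $\R$, and apply Dynkin's formula to the generator $L$ of the joint process to write
\begin{equation*}
   X_t^{(k)} - X_0^{(k)} = M_t^{(k)} + \int_0^t f_k(X_s, \eta_s)\,ds,
   \qquad
   f_k(x,\eta) = \tfrac{1}{2d}\bigl(\eta(x, x+e_k) - \eta(x, x-e_k)\bigr),
\end{equation*}
where $M_t^{(k)}$ is a square-integrable martingale with predictable quadratic variation at most $t/d$, and $f_k$ is a bounded local function of the environment which is mean-zero under $\nu$ by reflection symmetry. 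A Kipnis--Varadhan-type variance estimate for centred additive functionals of the reversible joint chain then gives
\begin{equation*}
   \E_{\pi\times\nu}\Bigl[\Bigl(\int_0^t f_k(X_s,\eta_s)\,ds\Bigr)^2\Bigr] \le 2t\,\|f_k\|_{H^{-1}}^2,
\end{equation*}
with $\|f_k\|_{H^{-1}}^2 = \langle f_k, (-L)^{-1} f_k\rangle$. The crucial analytic step is to prove $\|f_k\|_{H^{-1}}^2 \le C\,C_{\ref*{cte:assump2}}/\delta$. Since the global spectral bound of Theorem~\ref{thm:lb} goes in the wrong direction to plug in, I would attack this directly through the variational characterization of the $H^{-1}$-norm, constructing a trial function $g$ supported on the short-time cluster from~\eqref{eq:assump2} and satisfying $-Lg \approx f_k$ with Dirichlet form controlled by $\delta^{-1}\,\E_\nu[D_{x,\delta}^2]$. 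Translation invariance moves the estimate from $\pi\times\nu$ to $\upsilon_t$, and combining with the martingale bound yields $\E_{\upsilon_t}[\|X_t-X_0\|_2^2] \le C_{\ref*{cte:sqd}}\,C_{\ref*{cte:assump2}}\,t/\delta$; the stated $L^1$ bound follows from $\|\cdot\|_1 \le \sqrt{d}\,\|\cdot\|_2$. This $H^{-1}$ drift estimate is by far the hardest step of the proof.

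Given the displacement bound, the total-variation statement follows by Markov's inequality: for any $R>0$ one has $\PR_{\upsilon_t}(\|X_t - X_0\|_1 \ge R) \le C_{\ref*{cte:sqd}}\,C_{\ref*{cte:assump2}}\,t/(\delta R^2)$. Choosing $R = c\,\epsilon^{1/d} n$ so that the ball $B_R(0) \subset \T_n^d$ has $\pi$-mass at most $\epsilon$, and requiring $t \le c'\,\delta n^2\,\epsilon^{(d+2)/d}/C_{\ref*{cte:assump2}}$, the right-hand side is at most $\epsilon$. Under $\upsilon_t$ the walker then lies in $B_R(0)$ with probability at least $1-\epsilon$, while under $\pi\times\nu$ the marginal $\pi$ assigns mass at most $\epsilon$ to the same ball. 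Comparing the two probabilities on the test set $B_R(0)\times\{0,1\}^{E(\T_n^d)}$ gives $\|\upsilon_t - \pi\times\nu\|_\TV \ge 1 - 2\epsilon$, and rescaling $\epsilon$ by a constant yields the precise statement.
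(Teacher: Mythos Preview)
Your derivation of the total-variation lower bound from the mean-squared displacement bound (Markov's inequality on a ball of radius $c\,\epsilon^{1/d}n$) is essentially what the paper does, so the only question is the displacement estimate. Here the paper takes a much shorter path than your Kipnis--Varadhan scheme. It discretises to $\tilde M_k=(X_{k\delta},\eta_{k\delta})$, embeds the torus bi-Lipschitzly into $\R^{2d}$ via $g_n(x)=\bigl(n\cos(2\pi x_j/n),\,n\sin(2\pi x_j/n)\bigr)_{j=1}^d$, sets $h(x,\xi)=g_n(x)$, and invokes the Markov-type inequality of Ball and of Naor--Peres--Schramm--Sheffield: for any stationary reversible discrete-time chain $\{Y_k\}$ and any Hilbert-space valued $h$,
\[
   \E\|h(Y_k)-h(Y_0)\|^2 \le k\,\E\|h(Y_1)-h(Y_0)\|^2.
\]
The one-step increment $\|h(\tilde M_1)-h(\tilde M_0)\|$ is controlled by $D_{\tilde X_0,\delta}$, so assumption~\eqref{eq:assump2} bounds the right-hand side by $c\,C_{\ref*{cte:assump2}}$, and the diffusive estimate drops out in two lines. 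This black-box inequality packages precisely the cancellation you were after, with no martingale decomposition, no $H^{-1}$ norm, and no Poisson equation.

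Your proposed route has a genuine gap at the step you yourself flag as hardest. To upper-bound $\|f_k\|_{H^{-1}}^2=\langle f_k,(-L)^{-1}f_k\rangle$ by exhibiting a trial function $g$ you must have $-Lg=f_k$ exactly; plugging a trial $g$ into the variational formula $\sup_g\bigl(2\langle f_k,g\rangle-\cE(g,g)\bigr)$ only gives a \emph{lower} bound on the $H^{-1}$ norm. You give no construction of such a $g$, and it is far from clear how the cluster-diameter hypothesis~\eqref{eq:assump2} could control $(-L)^{-1}$ when $L$ contains the full environment generator, about which the theorem assumes nothing beyond rate-$\mu$ refreshing. Two smaller problems compound this: the coordinate $\phi(x,\eta)=x^{(k)}$ is not single-valued on the torus, so Dynkin's formula already requires an embedding like $g_n$; and you appeal to reflection symmetry of $\nu$ to centre $f_k$, but the hypotheses~\eqref{eq:assump1} and~\eqref{eq:assump2} do not impose any such symmetry.
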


The proofs of Theorems~\ref{thm:lb} and~\ref{thm:lb2} are identical to the ones for random walk on dynamical percolation from~\cite{peres2015random}. For the sake of completeness, we add the proofs
in Section~\ref{prooflb}.

We want to apply the above theorems to derive lower bounds on the mixing time of a random walk in dynamical random cluster. 
It is clear that~\eqref{eq:assump1} holds in this case. 
We will show in Section~\ref{sec:cor} that~\eqref{eq:assump2} also holds, obtaining the corollary below.
For any $q$, let $p_\crit^q$ be the critical probability for the appearance of an infinite cluster in the random cluster model on $\Z^d$. 
\begin{corollary}
   \label{cor:lb}
   If $\lrc{X_t,\eta_t}_{t\geq 0}$ is a random walker in the dynamic random cluster model, then for any $q\geq 1$ and any $p<p_\crit^q$, there exists a constant $c=c(d,q,p)>0$ such that 
   the relaxation time of the full system and the mixing time starting from a stationary environment is at least $c n^2/\mu$. If $q<1$, then for all small enough $p$ the same conclusion holds.
\end{corollary}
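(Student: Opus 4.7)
The plan is to deduce the corollary from Theorems~\ref{thm:lb} and~\ref{thm:lb2} by verifying their two hypotheses for the dynamical random cluster model and by choosing $\delta$ so that the factor $\delta$ in their conclusions yields the claimed $n^2/\mu$. Assumption~\eqref{eq:assump1} is immediate: for any fixed $\eta$ the walker's generator is symmetric on $\bT_n^d$, so $\pi$ is stationary for the walker; $\upsilon$ is stationary for the random cluster Glauber dynamics; and since the walker does not affect the environment, $\pi\times\upsilon$ is stationary for the full system. The content lies in~\eqref{eq:assump2}.

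Fix a small constant $c_0>0$ to be chosen later and set $\delta=c_0/\mu$. The key observation is the envelope
\[
E_{[0,\delta]} \subseteq E_0\cup M,
\]
where $E_0=\{e:\eta_0(e)=1\}\sim\upsilon$ and $M$ is the set of edges whose Poisson-$\mu$ clock rings at least once in $[0,\delta]$. By definition of the dynamics, the update clocks are independent of $\eta_0$, so $M$ is a Bernoulli$(1-e^{-c_0})$ percolation, independent of $E_0$. Consequently $D_{x,\delta}\leq\diam(\cC_x(E_0\cup M))$, and it suffices to bound the second moment of the right-hand side by a constant independent of $\mu$.

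For $q<1$ and $p$ small I would use the pointwise conditional bound
\[
\upsilon(\eta(e)=1\mid\eta_{-e}) \leq p^*:=\tfrac{p}{p+(1-p)q},
\]
valid for every $\eta_{-e}$ because the conditional probability is either $p$ or $p^*$ and $p<p^*$ when $q<1$. A tower-property induction along the edges of a fixed path $\gamma$, combined with the independence of $M$ from $E_0$, gives
\[
\Pr(\gamma\subseteq E_0\cup M) \leq \bigl(p^*+(1-p^*)(1-e^{-c_0})\bigr)^{|\gamma|}.
\]
Since the number of self-avoiding paths of length $k$ from $x$ is at most $2d(2d-1)^{k-1}$, taking $p$ and $c_0$ small enough that this geometric ratio is below $1$ yields exponential decay of $\Pr(\diam(\cC_x(E_0\cup M))\geq k)$ in $k$, hence $\E_\nu[D_{x,\delta}^2]\leq C$.

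For $q\geq 1$ and $p<p_\crit^q$, I would invoke the sharpness of the subcritical phase of the random cluster model, which furnishes exponential decay $\upsilon(0\leftrightarrow\partial B(n))\leq Ce^{-cn}$. The main obstacle is that, unlike when $q=1$, one cannot simply dominate $E_0\cup M$ by a subcritical Bernoulli percolation, since $p_\crit^q$ may exceed $p_\crit^1$. I would deal with this by a block renormalization based on a finite-size criterion: pick $p'\in(p,p_\crit^q)$; sharpness at $p'$ provides a scale $L$ on which the crossing probability of a box is arbitrarily small, while the expected number of sprinkled edges from $M$ in such a box is only $O(c_0 L^d)$, hence negligible once $c_0$ is chosen small enough. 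Iterating this criterion at larger scales yields exponential decay of $\diam(\cC_x(E_0\cup M))$, and thus again $\E_\nu[D_{x,\delta}^2]\leq C$ uniformly in $\mu$. Plugging $\delta=c_0/\mu$ into Theorems~\ref{thm:lb} and~\ref{thm:lb2} then delivers the claimed $cn^2/\mu$ lower bounds on both the relaxation time and the mixing time starting from a stationary environment.
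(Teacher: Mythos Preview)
Your setup, the envelope $E_{[0,\delta]}\subseteq E_0\cup M$ with $M$ an independent Bernoulli$(1-e^{-c_0})$ percolation, and the choice $\delta=c_0/\mu$ are exactly what the paper does. Your treatment of $q<1$ via the uniform conditional bound $\upsilon(\eta(e)=1\mid\eta_{-e})\leq p^*$ and a path-counting argument is essentially the paper's argument, spelled out in more detail; this part is fine.

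For $q\geq 1$ your route diverges from the paper's, and the sketch has a gap. You correctly identify the obstacle---one cannot dominate $E_0\cup M$ by a subcritical Bernoulli process when $p_\crit^q>p_\crit^1$---but the proposed block renormalization is not justified as written. The sentence ``iterating this criterion at larger scales yields exponential decay'' tacitly assumes that crossing events in disjoint $L$-blocks are (approximately) independent, so that a small single-box crossing probability can be fed into a standard coarse-graining scheme. For the random cluster model with $q>1$ this fails: the measure has positive correlations of arbitrary range, so block events are neither independent nor $k$-dependent, and the usual Liggett--Schonmann--Stacey or Peierls-type iterations do not apply without a further spatial-mixing input that you do not supply. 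Making your outline rigorous would require substantial additional work.

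The paper sidesteps this difficulty entirely via a \emph{sprinkling lemma} (Lemma~\ref{lem:sprinkle}): for $q\geq 1$, a monotone coupling of the Glauber dynamics shows that if $\eta\sim\nu_{p,q}$ and $Z$ is i.i.d.\ Bernoulli$(\epsilon)$ with $\epsilon$ small enough relative to $p'-p$, then $\eta+Z$ is stochastically dominated by a sample $\eta'\sim\nu_{p',q}$. Choosing $p'=(p+p_\crit^q)/2$ thus reduces~\eqref{eq:assump2} to exponential decay for a single subcritical random cluster measure, which is precisely what sharpness of the phase transition provides. This is both shorter and cleanly avoids the block-dependence issue.
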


The proof of the lower bound is much simpler than that of the upper bound, allowing us to derive it in the whole subcritical regime when $q\geq 1$. 
In fact, when $q\geq 1$, the proof follows by using a sprinkling lemma to compare two random clusters configurations with densities $p<p'$ (Lemma~\ref{lem:sprinkle}), and the exponential decay 
of cluster sizes in the subcritical regime. 
When $q<1$, exponential decay of cluster sizes is only known for small enough $p$, preventing us to establish~\eqref{eq:assump2} in the whole subcritical regime.

We expect the upper bound of order $n^2/\mu$ to hold in the whole subcritical regime as well, however our proof technique requires the percolation process to be a small perturbation of subcritical independent percolation, 
in a sense that we better explain in Remark~\ref{rem:whysub}, after introducing the $\star$-process.

% ###################################################################################################################
% ###################################################################################################################
% ###################################################################################################################
\subsection{Related works}\label{sec:relwork}
We will restrict our discussion to works dealing with the mixing time of random walks on \emph{dynamic} environments, 
as otherwise there is simply a plethora of works. 
We also remark that, if the environment is allowed to evolve in an arbitrary fashion (for example, by taking any sequence of graphs on a fixed 
vertex set), then several problems may arise. For example, there may not be a stationary distribution for the walker. Moreover, even if there is a stationary distribution, 
the distribution of the walker may not converge to stationarity, 
or the total variation distance to stationarity may not be monotone in time.

\emph{Random walk on dynamical percolation on $\bT_n^d$.} This model is equivalent to the model we described restricted to $q=1$. 
This special case is already quite challenging but some results have been obtained recently. 
First note that, when $q=1$, the two probabilities in~\eqref{eq:rcupdate} become equal, and when an edge updates, it does so \emph{independently} of the other edges, becoming open 
with probability $p$ or closed with probability $1-p$. 
Though in this case edges evolve independently of one another, there are strong dependences between the location of the walker and the state of the edges (especially if $\mu\to 0$ as $n\to\infty$, 
since edges update very slowly in comparison to the rate of jump of the walker).

The random walk on dynamical percolation model was introduced by Peres, Stauffer and Steif~\cite{peres2015random}, where it is shown that, 
in the whole subcritical regime\footnote{%
   That is, for any $p<p_\crit$ with $p_\crit=p_\crit(d)$ being the critical 
   probability for the existence of an infinite cluster in percolation in $\Z^d$}, 
the mixing time is of order $n^2/\mu$. 
We remark that in~\cite{peres2015random} both upper and lower bounds of order $n^2/\mu$ were derived for $T_\mix$. 
% The proof techniques used to establish this, a regeneration time construction for the upper bound and 
% the use of a Markov type inequality for the lower bound, strongly rely on the fact that edges update independently of one another, hampering its application to the case $q\neq 1$; 
% we explain this further when we describe the ideas behind our proof in Section~\ref{sec:pf}. 
Recall that $n^2$ is the order of the mixing time of a simple random walk on the \emph{static} torus (that is, where all edges are open at all times). 
So, in a subcritical dynamical percolation, the walker is delayed by a factor of $1/\mu$, which is the 
expected time that a single edge takes to refresh. 

Later, Peres, Sousi and Steif~\cite{peresSousiSteif} analyzed the supercritical regime and showed that, for $p$ large enough, the mixing time 
is at most $(\log n)^a \lr{n^2+\frac{1}{\mu}}$ for some constant $a>0$. Their upper bound is not believed to be tight: 
one expects that, in the whole supercritical regime, the mixing time is of order $n^2+\frac{1}{\mu}$. This remains an interesting open problem.
Their proof makes strong use of isoperimetric properties of the infinite cluster of supercritical percolation, which are only known for $q=1$.  
With regard to the \emph{critical} regime, the only known result is that the mixing time is of order at most $\frac{n^2}{\mu}$, which is the mixing time in the subcritical regime~\cite{hermonSousi}. 
It is not inconceivable that the mixing time in the critical case is in fact of smaller order than $\frac{n^2}{\mu}$.

\emph{Random walk on dynamical percolation on other graphs.}
Sousi and Thomas~\cite{sousiThomas} studied the case where the torus is replaced by the \emph{complete graph}.
This is a simpler case due to the lack of an underlying geometry, but for which a more detailed analysis can be carried out.  
They established the order of the mixing time in that case, and also the occurrence of a cut-off phenomenon.
We remark that if the walker is at some vertex $v$ and we know that an edge incident to $v$ is updating to open, but we refrain from observing which of the edges incident to $v$ is updating, 
then the other endpoint of this edge is uniformly at random
among all vertices (but $v$). So, by traversing this edge (call it $e$), after one additional step, the walker can find itself in a location that is essentially uniformly at random, so very close to stationarity. 
Though suggestive, this is not enough to establish the mixing time, as one still needs to control that the walker ``forgets'' that $e$ is now open (that is, the walker may be close to stationarity, but 
the full system is not). Still, this illustrates the kind of simplification that the 
lack of an underlying geometry brings.

% , which is a dynamic 
% version of the Erd\H{o}s-R\'enyi random graph. 
The last work we mention for the random walk on dynamical percolation model is a recent result by Hermon and Sousi~\cite{hermonSousi}. They developed a comparison principle and showed that, 
for \emph{any} graph $G$, 
the so-called \emph{spectral profile mixing time} for the random walk on dynamical percolation on $G$ 
is at most $\frac{1}{\mu}$ times the spectral profile mixing time of simple random walk on (the static graph) $G$. 

In all the above results, it was crucial that when $q=1$ edges update independently of one another. 
The main objective of our work is to develop a technique that can go beyond the dynamical percolation case and which 
can deal with environments whose edge updates may depend on one another, including the case of unbounded dependences such as in the dynamical random cluster.

\emph{Other models.} We end this section by mentioning two lines of work.
In the first one, Avena et al.\ \cite{avenaGuldasHofstadHollander,avenaGuldasHofstadHollander2} studied a different dynamic on the environment, where instead of dynamical percolation 
one has a \emph{dynamic configuration model}. 
This model has some intuitive similarities with the dynamical percolation on the \emph{complete graph}, in the sense that it also lacks an underlying geometry. They studied the mixing time and the occurrence of 
a cut-off phenomenon in this setting, but restricted to a random walker that is non-backtracking. This helps the walker to move away from its current location, strongly reducing dependences between the walker and the environment. 

Finally, the second line of work we mention is that of~\cite{sauerwaldZanetti,Shimizu2021Feb}. They considered the case of a discrete-time random walk on a graph with a fixed set of vertices, but which evolves
over time by means of an arbitrary sequence of graphs on that vertex set. The goal of their work is much different than ours; for example, they want to understand which conditions on the sequence of graphs one 
can impose to guarantee that the mixing time is polynomial. They also derive results for the hitting time and cover time. We refer to~\cite{sauerwaldZanetti,Shimizu2021Feb} and 
references therein for a list of known results on dynamic graphs that go beyond the mixing time. We also refer the reader to~\cite{Cai2020Jul} for results on a model similar to random walks on dynamical percolation on the 
complete graph.

% ###################################################################################################################
% ###################################################################################################################
% ###################################################################################################################
\subsection{The $\star$-process: retaining some randomness}
% \section{Space extension, general theorem and proof idea}
\label{sec:spaceext}
Before giving the ideas of our proof, we need to describe a different representation of the full system, which is inspired by~\cite{peres2015random}. 
Recall that each edge has a Poisson clock of rate $\mu$ associated to it, which gives the times at which the edge is updated. 
To each update of an edge, we can decide whether the edge becomes open or closed by sampling an independent random variable $U$ with a uniform distribution in $(0,1)$, and then 
making the edge open if and only if the edge is not a cut-edge and $U<p$ or the edge is a cut-edge and $U<\frac{p}{p+(1-p)q}$. Now, let 
$$
   p_{\rmin} \defn \min\lrc{p,\frac{p}{p+(1-p)q}}
   \qand
   p_{\rmax} \defn \max\lrc{p,\frac{p}{p+(1-p)q}};
$$
thus, $p_\rmin= \frac{p}{p+(1-p)q}$ if $q>1$ and $p_\rmin=p$ if $q<1$.
Note that if $U$ turns out to be in the interval $(0,p_\rmin)\cup(p_\rmax,1)$ the outcome of the update (i.e., whether the edge becomes open or closed) 
is determined regardless of whether or not the edge is a cut-edge. In other words, the update is \emph{oblivious} to the current configuration, and we will refer to those updates as \emph{$\star$-updates}.
We then let 
\begin{equation*}
   p_\star\defn p_{\rmin}+1-p_\rmax \in (0,1)
\end{equation*}
be the probability that a given update is a $\star$-update. 

We now define the update of an edge $e$ in two stages. First, we sample an independent random variable $U_\star$, which is uniformly at random in $(0,1)$, so that 
if $U_\star< p_\star$, then the update is a $\star$-update, otherwise it is not a $\star$-update. 
Next, we use the random variable $U$ to determine whether $e$ updates to open or closed. 
More precisely, in the case of a $\star$-update, we make $e$ open if $U<\frac{p_{\min}}{p_\star}$, otherwise $e$ becomes closed. 
In the case of a non-$\star$-update, we need to inspect the current configuration to see whether $e$ is a cut-edge or not. In particular, we need to perform what we call an 
\emph{exploration of $e$}, which means that we perform a local search from the endpoints of $e$ that traverses only open edges in order to determine what are the open clusters of the endpoints of $e$. 
Hence, each update of $e$ will be represented by a tuple $(s,U_\star,U)$, 
where $s>0$ is the time at which the update occurs, $U_\star\in(0,1)$ is the variable used to decide whether 
the update is a $\star$-update, and $U\in(0,1)$ is the random variable governing whether the edge is to be updated open or closed. %given $U_\star$. 

We use this to introduce another Markov process which we denote by $\{{M}^\star_t\}_{t\ge0} = \lrc{X_t, {\eta}_t^\star}_{t\ge0}$, and which we refer to as the 
\emph{$\star$-process}. 
This process will retain more randomness than $\{M_t\}_{t\ge0}$ %the extra information will be the set of edges which the random walker has attempted to cross since their last $*$-update and the connected components of the endpoints of the edges that have received a non-$*$ update.
and its state space will be 
\begin{equation*}
    \Omega^{\star} \defn \lrc{(v, {\eta^\star}) \in \bT_n^d \times \{0, 1, \star\}^{E(\bT_n^d)} \colon {\eta^\star}(e) \in \{0, 1\}\text{ for each edge }e\text{ adjacent to }v}.
\end{equation*}    
So an edge will be allowed to be in an additional state, called $\star$, which means that in its last update the edge underwent a $\star$-update. 
However, we do not allow that edges adjacent to the walker are in state $\star$. 

The $\star$-process evolves as follows.
If the Poisson clock of an edge $e$ rings, we look at the variable $U_\star$ associated with this update and determine whether the update is a $\star$-update. If the update is a $\star$-update and if $e$ is not 
currently adjacent to the walker, then we make the state of $e$ equal to $\star$. If $e$ is adjacent to the walker, then we look at the variable $U$ associated with this update and determine whether $e$ is open or closed.
Finally, if the update is not a $\star$-update, then we perform an exploration of $e$ as mentioned above. The difference is that, in such an exploration, we may run into edges that are in state $\star$. 
For each such edge, we immediately sample whether that edge is open or closed by using the random variable $U$ associated with 
its last update. We proceed in this way until the exploration ends and we have fully determined the cluster of each endpoint of $e$. 
At this moment, we know whether or not $e$ is a cut-edge, and we can use the random variable $U$ associated to the update of $e$ to determine whether
$e$ is to be made open or closed. 
There is still one final case to be described: when it is the clock of the walker that rings. Suppose this happens and the walker jumps from a vertex $v$ to a vertex $w$. 
Then, if there are edges adjacent to $w$ at state $\star$ we sample the state of such edges (using the random variables 
$U$ associated to their last update) and switch them to open or closed, appropriately. 

Note that, conditioned on the position of the walker and on the state $0$, $1$ or $\star$ of each edge, we gain no information concerning whether the edges in state $\star$ are open or closed. 
In particular, each such edge is open with probability $\frac{p_\rmin}{p_\star}$ (which is the probability that the random variable $U$ associated to their last update is at most 
$\frac{p_\rmin}{p_\star}$). Therefore, we do not need to keep track of the variables $U$ related to the last $\star$-update of each edge, since we can sample $U$ whenever needed independently of 
the whole trajectory of the process. The $\star$-process is thus a Markov process.

\begin{remark}\label{rem:whysub}
   When $q=1$, we have $p_\rmin=p_\rmax$, and so $p_\star=1$: all updates are $\star$-updates, as in this case the random cluster model reduces to dynamical percolation.
   If $q\neq 1$, then as $p\to 0$ we have that $p_\rmax-p_\rmin\to 0$ and so $p_\star\to1$. 
   Therefore, for any fixed $q$ and all small enough $p$, the dynamic random cluster model can be viewed as a small perturbation of dynamical percolation. 
   We also obtain that edges of state $\star$ are open with probability $\frac{p_\rmin}{p_\star}<p_\crit$, so they form a subcritical percolation process as well.
   Those are the properties that play an essential role in the constructions of the multi-scale analysis and the coupling used to establish the upper bound on the mixing time (Theorem~\ref{thm:main}).
\end{remark}

% ##############################################################################################################
% ##############################################################################################################
% ##############################################################################################################
\subsection{Proof overview}\label{sec:pf}
We will only give an overview of the upper bound, which is our main result and by far the most involved proof. 
We start recalling the proof in~\cite{peres2015random} for the subcritical regime when $q=1$. 
There they also define the $\star$-process (which they denote by 
$\tilde M_t$). Recall that, when $q=1$, we have $p_\star=1$, so all updates are $\star$-updates. With this, they define a stopping time $\tau_0$ as the first time at which 
\begin{equation}
   \text{all edges adjacent to the walker are closed, and all remaining edges are in state $\star$.}
   \label{eq:srwmintro}
\end{equation}
Then, one can define a sequence of times $\tau_1,\tau_2,\ldots$ so that $\tau_i$ is the first time 
after $\tau_{i-1}+C/\mu$, for some fixed constant $C>0$, at which the event in~\eqref{eq:srwmintro} happens. These are regeneration times in the sense that the evolution of the full system from $\tau_i$ does not 
depend on what happened before $\tau_i$. Once the full system is at a regeneration time $\tau_i$, with positive probability the following sequence of events happen within time $\tau_i+C/\mu$: 
\begin{enumerate}[(i)]
   \item an edge $e$ adjacent to the walker opens
   \item when the walker jumps to the other endpoint of $e$, all the adjacent edges (which are in state $\star$) are sampled closed
   \item $e$ remains open for some time of order $1/\mu$
   \item $e$ closes before any of the other edges adjacent to $e$ open, thereby locking the walker in one of $e$'s endpoints, and 
   \item the edges adjacent to the other endpoint of $e$ (i.e., opposite to the location of the walker) refresh before the edges adjacent to the walker refresh.
\end{enumerate}
% (i) an edge $e$ adjacent to the walker opens, (ii) when the walker jumps to the other endpoint of $e$, all the adjacent edges (which are in state $\star$) are sampled closed, 
% (iii) $e$ remains open for some time of order $1/\mu$, 
% (iv) $e$ closes before any of the other edges adjacent to $e$ open, thereby locking the walker in one of $e$'s endpoints, and 
% (v) the edges adjacent to the other endpoint of $e$ (i.e., opposite to the location of the walker) refresh before the edges adjacent to the walker refresh.
When these events occur, the walker does nothing more than a jump to a uniformly random neighbor, and immediately gets back to a regeneration time (so $\tau_{i+1}=\tau_i+C/\mu$); 
such a regeneration time is then called a \emph{simple random walk regeneration} since, at the end, what the walker did was just one step of a simple random walk in $\bT_n^d$. 

The proof in~\cite{peres2015random} then goes by showing that the
$\tau_{i+1}-\tau_i$ are of order $\frac{1}{\mu}$. Therefore, after time $\frac{n^2}{\mu}$, the walker underwent an order of $n^2$ regeneration times, a positive fraction of which being simple random walk regeneration.
So it is possible to couple the full system with another copy of the full system so that, whenever the walker does a simple random walk regeneration, we employ one of the standard couplings of simple random walks on the torus.
On the other hand, if the regeneration time is not a simple random walk regeneration, we couple the motion of the two walkers from one regeneration time to the next identically, so that the distance between the walkers does not 
change. Since an order of $n^2$ steps is necessary to 
couple two simple random walks on $\bT_n^d$, we get that performing an order of $n^2$ simple random walk regenerations is enough to couple the two processes, which translates to a mixing time of order $n^2/\mu$.

If we try to mimic the steps above for the case $q\neq 1$, 
we immediately run into the issue that the event~\eqref{eq:srwmintro} now occurs very rarely. In fact, since non-$\star$-updates occur with positive probability,
we will typically have a positive density of non-$\star$-edges. 
Therefore, it will take an exponential amount of time to reach a regeneration time as in~\eqref{eq:srwmintro}, rendering this strategy useless. 

We will devise a different strategy. 
We will, as before, construct a coupling between two copies of the full-system, where we see the edges ``from the point of view of the walker'' in the sense that whenever the edge 
$X_t+e$ updates at time $t$, 
where $X_t$ is the position of the walker in the first copy, 
then in the second copy we will do the same update to the edge $\bar{X}_t+e$, where $\bar{X}_t$ is the location of the walker in the second 
copy. Note that to establish the mixing time of the full system  we need to couple the environments and the walkers. 
For simplicity, we concentrate our discussion here in the coupling of the walkers (which is the most delicate bit), and assume for now that somehow we managed to couple the two environments: that is, 
the two copies are coupled modulo a translation of the walkers. 
Note that, from this moment, if we were to employ the \emph{identity coupling} (that is, the second copy mimics all the edge updates and jumps of the walker from the first copy) 
we would get that the environments will remain coupled (from the point of view of the walkers) but the distance 
between the walkers will not change, thereby not allowing the walkers to couple. 

Our idea is to observe ``a bit'' the environment and, whenever the environment looks ``favorable enough'', 
we attempt to do a coupling that could bring the walkers closer together, which will be a standard coupling of simple random walks. 
We will refer to such moments as \emph{simple random walk moments}, as an allusion to the simple random walk regenerations described above, but with the fundamental difference that they will not be regeneration times.
On the other hand, when the environment is not favorable enough, then doing a simple random walk moment is a bit too risky, 
so instead we resort to the identity coupling as a means to keeping the distance between the walkers unchanged and 
not spoiling the work done during the favorable regions of the environments.

But what does it mean for the environment to look favorable enough? In short terms, it will mean that the event~\eqref{eq:srwmintro} occurs \emph{locally}. That is, 
at such times, all edges adjacent to the walkers will be closed and all edges in a small region around the walkers will be $\star$ (for example, all edges inside a ball of radius 3 around the walkers, excluding the 
edges adjacent to the walkers). At such a time, with positive probability, the sequence of events described above for the simple random walk regeneration occurs, and therefore 
we could attemp to perform one of the standard couplings of simple 
random walks. However, there are two important caveats. 

The first caveat is that if we succeed in doing a simple random walk moment with a coupling of simple random walks, then the distance between the walkers will change. 
This means that the translation that maps the location of one walker to the location of the second walker will change, and this map is what we use to match the edges of the first copy to the edges of the second copy, when we view 
the edges from the point of view of the walkers.
As a consequence, the environments will immediately \emph{decouple}. Of course, if we only had $\star$-edges (besides the ones adjacent to the walkers, as in the case $q=1$), 
then the environments would not decouple since despite the change in the translation map, we would still match $\star$-edges in the first copy to $\star$-edges in the second copy, 
so we can easily maintain the environments coupled. 
But, since $q\neq 1$ implies a density of non-$\star$ edges, the environments will necessarily decouple. Moreover, if we decide to just wait the environments to recouple completely, this would take a time of order 
$\frac{\log n}{\mu}$, which is just too long: it will lead to an upper bound on the mixing time of $\frac{n^2\log n}{\mu}$. So we will not recouple the environments completely, but will work with partially coupled 
environments.

The second caveat is that a simple random walk moment occurs with \emph{positive} probability, so it is also possible that it turns out that a simple random walk moment does not take place. 
Then, what could happen in this case? 
If the environments were completely coupled, then we are guaranteed that we can perform identity coupling and keep the distance between the walkers unchanged. 
But we have just seen that the environments will typically not be fully coupled. Yet, if we knew that the environments are coupled in a neighborhood around the walkers and that the walkers will not exit this neighborhood,
then identity coupling is still doable. That will be our strategy, but to implement it we will require a more delicate definition of what a favorable enough environment means. 

We will use a multi-scale analysis to control the environment. 
This will reveal \emph{future information} regarding the environment; that is, we will observe some information about the environment from time $0$ to some time $t$, and then decide how to couple the walkers from 
time $0$. Therefore, this construction will lead to a non-Markovian coupling. 

A good picture to have in mind is that the environment is a process in space-time, where some regions are classified as favorable and others as unfavorable. We observe these regions from time $0$ to time $t$, and 
then start observing the walkers which are paths in space-time that start growing from time $0$.
Whenever we see that the walkers are passing through a favorable part of the environment, where favorable will also imply 
that the walkers will not move outside some neighborhood around their current locations, we will try to do a simple random walk moment. If successful, 
the distance between the walkers may change and the environments may decouple, but still using (the yet-to-be-defined properties of) favorability we 
will be able to recouple the environments within a neighborhood around the walkers. 
If, instead, the simple random walk moment is not successful, then the walkers may move more than just one step of a simple random walk, 
but favorability will also imply that the walkers will not move too far away, in 
particular they will remain within a region where we know the environments were coupled. This will 
translate to a successful application of the identity coupling. 

On the other hand, if we see that the walkers are approaching an unfavorable region of the environment, then we will want to do identity coupling but we will need to start preparing ourselves beforehand. 
The problem is that such an unfavorable region could be of an arbitrarily large scale, 
and the larger its size is, the earlier we need to start preparing for it. So when we see that in space-time the path of the walker is getting dangerously near an unfavorable region, 
we stop doing simple random walk moments even if in a smaller scale around the walkers the environment looks favorable. By switching off the simple random walk moments, 
we only apply identity coupling until the walkers reach the unfavorable region or are again far enough from any unfavourable region.  
We can show that such identity couplings will succeed and, since the translation map 
from one walker to the next will not change during this period, it will give enough time for the environments to couple in a region around the walkers that is as large as needed to contain the scale of the 
unfavorable region that the walkers are approaching. Then, with the environments properly coupled, 
if the walkers do enter the unfavorable region, they can move as wildly as the environment there allows because we can perform identity coupling throughout the unfavorable region. 
So the walkers survive the traversal of the unfavorable region without changing their distance. 

Then one can imagine that the proof ends by showing that $n^2$ instances of a simple random walk moment are enough to guarantee that we can couple the walkers. This is partially true. The fact 
is that, as mentioned above, we need to observe future information to carry out this coupling strategy. 
But in order to establish that the mixing time is at most $t$, we need to show that with 
a large enough probability the two copies of the full system are coupled at time $t$ without revealing any information that goes beyond time $t$. 
So our strategy to finalize the proof is to choose an appropriate time $t'\in(0,t)$, 
reveal the information up to time $t$ and do the coupling described above up to time $t'$, showing that within $t'$ we have carried out an order of $n^2$ simple random walk moments, and that we coupled the walkers 
at time $t'$ (the environments may, and typically will, be uncoupled except for a small region around the walkers). 
The whole analysis will be split into three phases, and the above will be carried out in the first two phases. We will be able to show that these first two phase succeed with positive probability.

Next, the goal is to try to do identity coupling from time $t'$ to $t$ in a similar manner as 
we were doing when approaching an unfavorable region. In this second phase, identity coupling can only fail due to information that we have not observed because we are limited to observe the environment up to time $t$.
We will show that, with positive probability, identity coupling will indeed succeed from $t'$ to $t$, leading to a coupling of the full system at time $t$. This is the content of the third phase.
If any of the three phases fail, then we just restart from scratch. 
We only need to repeat the phases a constant number of times to guarantee that the whole coupling succeeds with probability at least $3/4$. 

% ##############################################################################################################
% ##############################################################################################################
% ##############################################################################################################
\subsection{Organization of the paper}
In Section~\ref{tessellation} we will introduce the multi-scale analysis that will allow to control the favorable regions of the environment.
Then in Section~\ref{sec:overview} we will give an more thorough overview of the three phases of the proof of the upper bound, which will better explain the constructions from the tessellation
of Section~\ref{tessellation}. Then in Sections~\ref{firstphase},~\ref{secondphase} and~\ref{thirdphase} we will give the three phases of the coupling, with the second phase in Section~\ref{secondphase} being the 
most delicate part where the non-Markovian coupling is developed. Then in Section~\ref{proofteo} we put all phases together to complete the proof of the upper bound (Theorem~\ref{thm:main}).
In Section~\ref{prooflb} we establish the general lower bounds from Theorems~\ref{thm:lb} and~\ref{thm:lb2}, but which are essentially the same proofs as in~\cite{peres2015random}; this section is added for the 
sake of completeness. 
Finally, in Section~\ref{sec:cor} we apply these theorems to derive the lower bounds on the mixing time and relaxation time of random walks on the dynamical random cluster model (Corollary~\ref{cor:lb}).

\section{Multi-scale setup}
\label{tessellation}

We start defining a multi-scale tessellation of $\bT_n^d$, which will consist of partitioning $\bT_n^d$ into boxes and defining 
the event that boxes are \emph{good} or \emph{bad}. Those events will be then used to define the favorable parts of the environment.

% ##############################################################################################################
\subsection{Tessellation}\label{tessellation}
Let 
\begin{equation}
   \label{ell}
   \ell\defn p^{-\frac{1}{3d}},
\end{equation}
and $m$ be a sufficiently large integer.
For each $k\ge 1$ we tessellate $\mathbb{T}_n^d$ into cubes of length $\ell_k$ where
\begin{equation}
   \label{ellkdeff}
   \ell_1=\ell\quad\text{ and }\quad \ell_{k+1}=mk^2\ell_{k}.
\end{equation}
The cubes will be indexed by integer vectors $i\in\mathbb{Z}^d$, and denoted $S_k^{\core}(i)\subset\mathbb{T}_n^d$ with
\begin{equation*}
   S_k^\core(i)\defn i \ell_k + [0,\ell_k)^d. %=\prod_{j=1}^d\left[i_j\ell_k,\,(i_j+1)\ell_k\right].
\end{equation*} 
We will consider a tiling of $\mathbb{T}_n^d$ with a hierarchy as each cube of scale $k$ is contained inside a unique cube of scale $k+1$. 
For simplicity we will assume $\ell_k$ divides $n$ for all $k$ we will consider\footnote{If that were not the case, one could consider for each $k$ some cubes to have length between $\ell_k$ and $2\ell_k$ to fully 
   tessellate the torus.}. 
Moreover for any subset $V$ of the vertices of $\mathbb{T}_n^d$, we denote by
\begin{equation*}
    E(V)=\{(v_1,v_2)\in E(\mathbb{T}_n^d):v_1,v_2\in V\}
\end{equation*}
the set of all edges incident only to vertices in $V$.

Now we define a multi-scale tessellation of time. At scale 1, we tessellate $\mathbb{R}$ into intervals of length $t_1=\frac{\sqrt{\ell}}{\mu}$ and then, for higher scales, we define
\begin{equation*}
   t_{k+1}\defn mk^2t_{k},\quad  k\geq  1 .
\end{equation*}
We index the time intervals by $\tau\in \Z$ and denote them by $T_k^\core(\tau)$, where
\begin{equation*}
    T_k^\core(\tau)=\left[\tau t_k,\, (\tau+1)t_k\right).
\end{equation*}

Now for any $i\in\mathbb{Z}^d$, $k\geq  1$, and $\tau\in\mathbb{Z}$, we define the core of the space-time $k$-box by
$$
%     R_k(i,\,\tau)\defn S_k(i)\times T_k(\tau)
%     \quad\text{and}\quad
    R_k^\core(i,\,\tau)\defn S_k^\core(i)\times T_k^\core(\tau).
$$
% Notice that a box of scale $k+1$ contains $(mk^2)^{d+1}$ boxes of scale $k$ (see figure \ref{fig:test}).
For any subset of $A \subset \Z^d$, we let $\partial A$ denote its inner boundary. Then, in space-time, we define the spatial boundary of $R_k^\core(i,\tau)$ by
\begin{equation}
    \partial_\rs R_k^\core(i,\tau)=\partial S_k^\core(i) \times T_k^\core(\tau).
    \label{eq:sbd}
\end{equation}
% and analogously define $\partial_\rs R_k(i,\tau)$. 
For the time dimension, we define two time boundaries, the boundary $\partial_\rt^+$ corresponding to the largest unit of time in the box and the boundary 
$\partial_\rt^-$ corresponding to the smallest unit of time in the box:
\begin{align}
    \partial_\rt^+ R_k^\core(i,\tau)=S_k^\core(i)\times \sup\lrc{T_k^\core(\tau)} = S_k^\core(i)\times \lrc{(\tau+1)t_k}, \text{ and}\nonumber\\
    \partial_\rt^-R_k^\core(i,\tau)=S_k^\core(i)\times\inf\lrc{T_k^\core(\tau)}=S_k^\core(i)\times\left\{\tau t_k\right\}.
    \label{eq:tbd}
\end{align}
For $k\geq 2$, each box $R_k^\core(i,\tau)$  will be the central part of a larger box
\begin{equation*}
    R_k(i,\,\tau)\defn \bigcup_{(j,\tau') \in\{-1,0,+1\}^{d+1}} R_k^\core(i+j,\tau+\tau')=S_k(i)\times T_k(\tau),
\end{equation*}
where we let
\begin{equation}
    S_k(i)\defn \bigcup_{j\in\{-1,0,+1\}^{d}} S_k^\core(i+j),
    \quad\text{and}\quad 
    T_k(\tau)\defn\bigcup_{\tau'\in\{-1,0,+1\}} T_k^\core(\tau+\tau').
    \label{eq:sktk}
\end{equation}
In words $R_k(i,\,\tau)$ is composed of a cube in space of side length $3\ell_k$ and a time interval of length $3t_k$, and it has $R_k^\core(i,\,\tau)$ as its central part (see Figure~\ref{fig:test}).
\begin{figure}[t]
    \centering
   \begin{subfigure}{.45\textwidth}
     \centering
     \includegraphics[scale=.7]{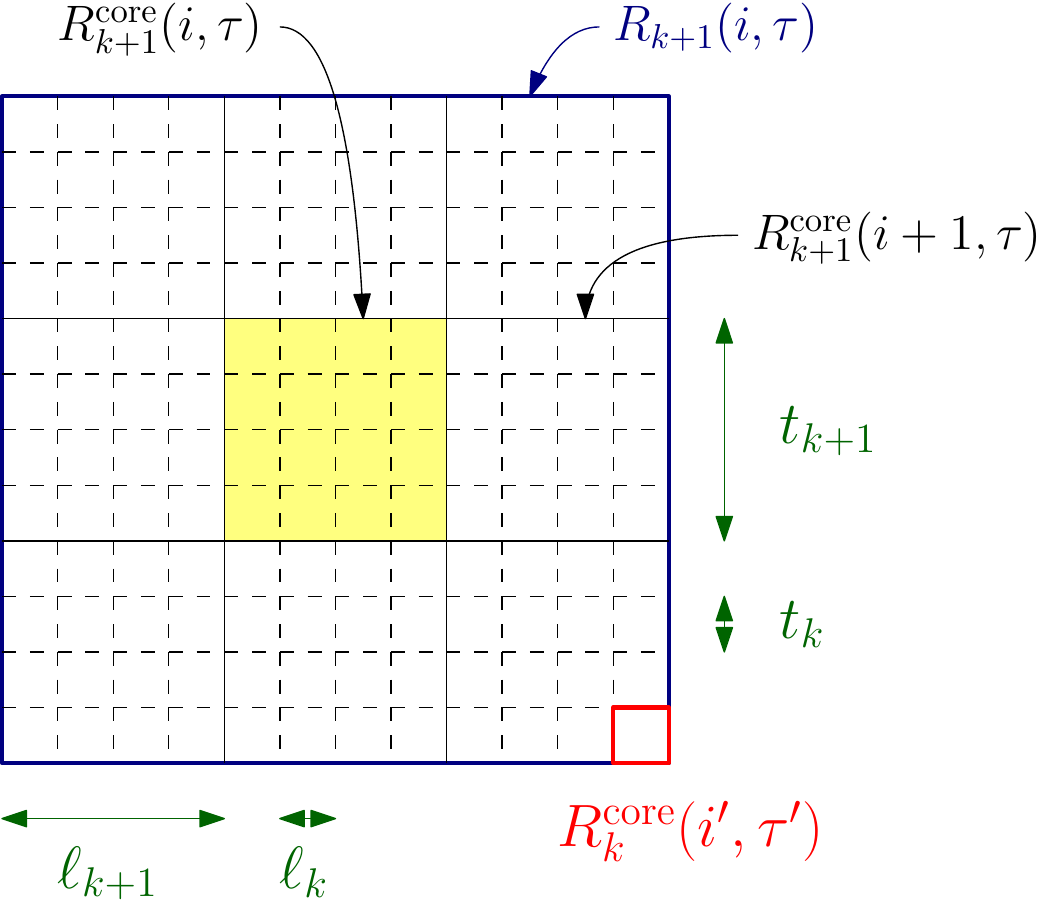}
   \end{subfigure}%
   \hspace{\stretch{1}}\begin{subfigure}{.45\textwidth}
     \centering
     \includegraphics[scale=.7]{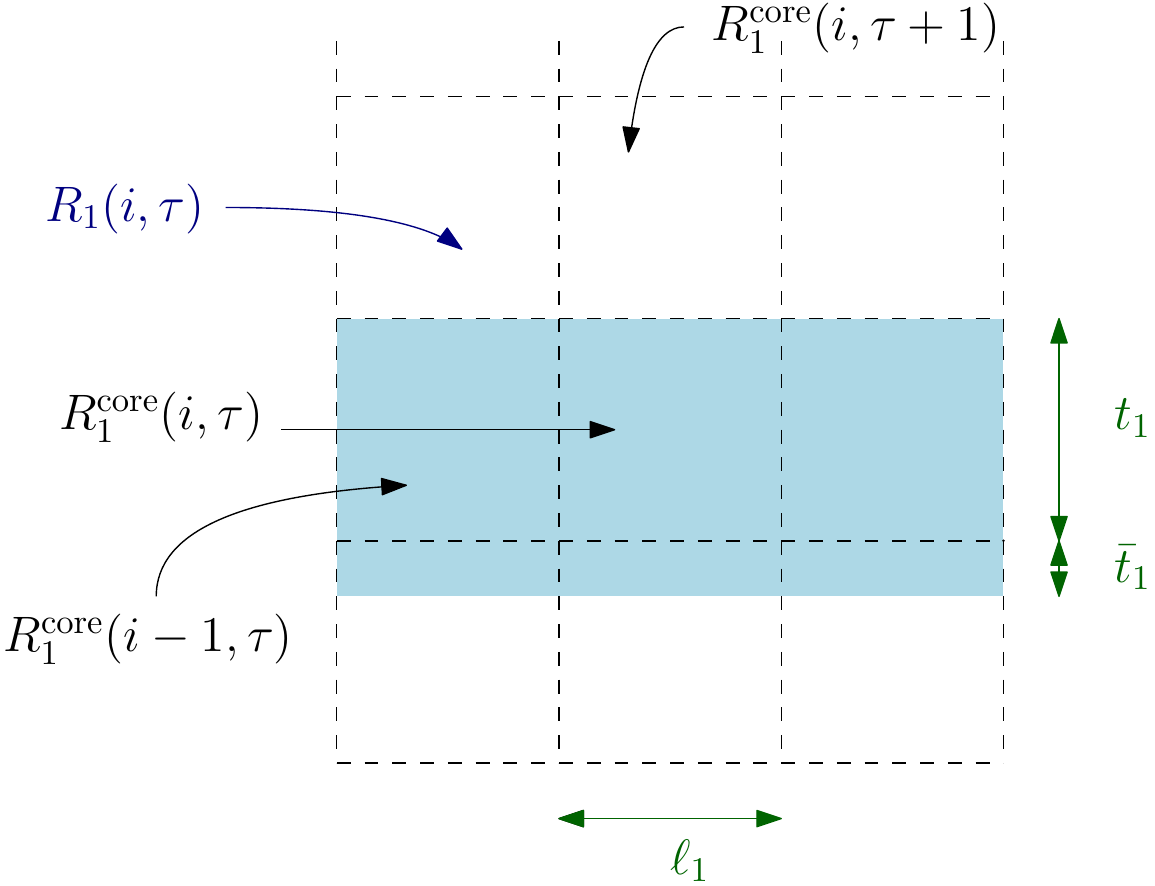}
   \end{subfigure}
   \caption{On the left a space-time box $R_{k+1}(i,\tau)$ represented by a blue square, its core highlighted in yellow, its partition into cores of scale $k+1$ represented by solid black lines, 
      and its partition into cores of scale $k$ in dashed lines.
      The horizontal axis represents space and the vertical axis represents time. 
      On the right a space-time box of scale 1 (highlighted in blue), and space-time cores of level $1$ in dashed lines.}
   \label{fig:test}
\end{figure}
For scale $k=1$, we will 
need a small intersection between the time dimension. For this, let 
\begin{equation}
   \bar{t}_1 \defn \frac{\log^2\ell}{\mu},\label{eq:bt1}
\end{equation}
and set for each $\tau\in\Z$
$$
   T_1(\tau) \defn \left[\tau t_1-\bar{t}_1,\, (\tau+1)t_1\right].
$$
With $S_1$ defined as in~\eqref{eq:sktk} we can define $R_1(i,\tau)=S_1(i)\times T_1(\tau)$.
We then define the space and time boundaries of $R_k(i,\tau)$ for each $k,i,\tau$ analogously to~\eqref{eq:sbd} and~\eqref{eq:tbd}.
% In words $R_1(i,\,\tau)$ is composed of a cube in space of side length $3\ell_1$ and a time interval of length $2t_1$.
% This will allow us to have boxes of scale 1 that intersect in the space-dimension but not in the time dimension, more precisely for any pair $\tau,\tau'\in\{0\}\cup2\mathbb{N}$, $T_1(\tau)\cap T_1(\tau')=\emptyset$.
% Similarly to the definition of space and time boundary of $R_k^\core(i,\tau)$, for $k\geq  1$ we denote the space boundaries of $R_k(i,\tau)$ by
% \begin{equation*}
%     \partial_\rs R_k(i,\tau)=\bigcup_{j'=1}^d{\left(\left\{(i_{j'}-1)\ell_k, (i_{j'}+2)\ell_k\right\}\times\prod_{j\ne j'}{[(i_j-1)\ell_k, (i_j+2)\ell_k]}\right)}\times T_k(\tau),
% \end{equation*}
% whereas the time boundaries for $k\geq  2$ by
% \begin{equation*}
%     \partial_\rt^+ R_k(i,\tau)=S_k(i)\times\left\{(\tau+2)t_k\right\}\quad\text{and}\quad\partial_\rt^- R_k(i,\tau)=S_k(i)\times\left\{(\tau-1)t_k\right\},
% \end{equation*}
% and the time boundaries for $k=1$ by
% \begin{equation*}
%     \partial_\rt^+ R_1(i,\tau)=S_1(i)\times\left\{(\tau+2)t_1\right\}\quad\text{and}\quad\partial_\rt^- R_1(i,\tau)=S_1(i)\times\left\{\tau t_1\right\}.
% \end{equation*}

Finally we denote by $S_1^\inn(i)$ the \emph{inner part} of $S_1(i)$ which is obtained by removing all the vertices within distance $\frac{\gamma}{6}\log^2\ell$ from the boundary of $S_1(i)$ 
($\gamma$ is a constant that will be clarified later in the definition); in symbols,
\begin{equation}
    S_1^\inn(i)\defn\lrc{v\in S_1(i)\colon \|v-w\|_1>\frac{\gamma}{6}\log^2\ell \text{ for all }w\in \partial S_1(i)}.
    \label{eq:sinn}
\end{equation}

%==========================================================================================================================
%==========================================================================================================================
%==========================================================================================================================
\subsection{Good boxes at scale 1}
\begin{definition}
   We say that an event $A$ is restricted to a region $R\subset V(\mathbb{T}_n^d)$ and a time interval $[s_0,s_1]$ 
   if $A$ is measurable with respect to the $\sigma$-algebra generated by the updates of the edges from $E(R)$ from time $s_0$ to $s_1$, together with the 
   random variables $U,U'$ associated to such updates. 
\end{definition}

Denote with $\mathcal{C}_x(t)$ the connected component of open edges containing vertex $x\in V$ at time $t$. 
Given a time interval $[s,s']$, we say that an edge is $[s,s']$-open if there is at least one time during $[s,s']$ at which this edge is open.
Then, we denote with $\mathcal{C}_x(s,s')$ the connected component of $[s,s']$-open edges that contains $x$. If we denote $I=[s,s']$, then we employ the shorter notation
$\mathcal{C}_x(I)$.
Below we split the time interval of a box into two sets of sub-intervals, and then introduce the definition of \emph{good boxes}.

\begin{definition}\label{def:dbar}
   Recall that $\bar{t}_1=\frac{\log^2\ell}{\mu}$. We define two other tessellations of disjoint time intervals. 
   The first one has length $\frac{\log^2\ell}{\mu}$:
   $$
      \bar{T}_1(j) = [j \bar{t}_1,(j+1)\bar{t}_1), \quad \text{ for $j\in \Z_+$}.
   $$
   Moreover, we fix a constant $\gamma=\gamma(p,q,d)>0$ such that $\frac{p_\rmin}{p_\star}+\gamma < p_\crit$, where $p_\crit$ is the critical probability for independent bond percolation on $\Z^d$,
   and introduce a tessellation of time of length $\frac{\gamma}{\mu}$:
   $$
      \dbar{T}_1(j) = \left[j \tfrac{\gamma}{\mu},(j+1)\tfrac{\gamma}{\mu}\right), \quad \text{ for $j\in\Z_+$}.
   $$
\end{definition}

We assume throughout this paper that $\bar t_1$ divides $t_1$ and that $\gamma/\mu$ divides $\bar t_1$, so that $\dbar{T}_1$ is a finer tessellation than $\bar{T}_1$, which in turn is a finer tessellation than $T_1^\core$.
\begin{remark}\label{rem:gamma}
   Note that the larger $p$ is (that is, the closer $p$ is to $p_\crit$) the smaller we need to take $\gamma$. However, as we will need to take $p$ small enough in several places in the proof, we will set
   $\gamma$ first (for example, it is enough to take $\gamma=\frac{1}{100}$). Then we make $p$ small enough so that the condition on $\gamma$ is satisfied.
\end{remark}

The definition of good boxes will be done in steps. First we define some fundamental events that we will require from good boxes.
\begin{enumerate}
   \item[($\cG_1'$)] Given a box $R_1(i,\tau)$, let $\cG_1'(i,\tau)$ be the event that, for any $e\in E(S_1(i))$, there are no non-$\star$ update on $e$ during $T_1(\tau)\cap[0,\infty)$.
   \item[($\cG_{2}'$)] For each $j \in \Z_+$ and each spatial box $S_1(i)$, define $\cG_{2}'(i,j)$ the event that, for any $e\in E(S_1(i))$, during $\bar{T}_1(j)$ edge $e$ never gets a $\star$-update to become open.
   \item[($\cG_{3}'$)] For each $j \in \Z_+$ and each spatial box $S_1(i)$, let $\cG_{3}'(i,j)$ be the event that, for each $e\in E(S^\core_1(i))$, 
      the number of $\star$-updates on edge $e$ during $\bar{T}_1(j)$ is at least $\frac{1}{2}p_\star \log^2\ell$ (for the values of $\ell$ and $p$ we will consider this will always be at least 1).
   \item[($\cG_{4}'$)] For any $j\in\Z_+$ take the unique $\tau$ such that $\dbar{T}_1(j)\subset T_1^\core(\tau)$.
      For any site $x$ on the torus, 
      if we regard all edges closed at time $\tau t_1$ and we only consider $\star$-updates disregarding all non-$\star$-updates,
      then let $\cG_{4}'(x,j)$ be the event that 
      $
         \left|\mathcal{C}_x\left(\dbar{T}_1(j)\right)\right|<\log^2(\ell).
      $
\end{enumerate}
Now for a box $R_1(i,\tau)$, define
\begin{equation}
   j(\tau) \text{ the value $j$ such that } \min \bar{T}_1(j) = \tau t_1-\bar t_1;
   \label{eq:jtau}
\end{equation}
in other words, it is the value such that $\bar{T}_1(j(\tau))$ starts at the initial time of $R_1(i,\tau)$. Note that both $\bar{T}_1(j(\tau))$ and $\bar{T}_1(j(\tau+1))$ are contained in $T_1(\tau)$. 

The event that a box $R_1(i,\tau)$ is good will be composed of four events, which we denote by $\cG_1(i,\tau)$, $\cG_2(i,\tau)$, $\cG_3(i,\tau)$ and $\cG_4(i,\tau)$. 
The first event regards only non-$\star$ updates and is simply 
$$
   \cG_1(i,\tau) \defn \cG_1'(i,\tau).
$$
The second event regards the time intervals $\bar{T}_1(j(\tau))$ and $\bar{T}_1(j(\tau+1))$, and is defined as 
$$
      \cG_2(i,\tau) \defn \cG_{2}'(i,j(\tau))\cap\cG_{2}'(i,j(\tau+1))\bigcap_{{\substack{i'\colon S_1^\core(i')\subset S_1(i)}}} \cG_{3}'(i',j(\tau)) \cap \cG_{3}'(i',j(\tau+1)).
$$
The next two events are confined to the time interval $\T_1^\core(\tau)\setminus T_1(\tau+1)$:
$$
   \cG_3(i,\tau) \defn \bigcap_{\substack{i'\colon S_1^\core(i')\subset S_1(i)\\ j \colon \bar{T}_1(j)\subset \lr{T_1^\core(\tau)\setminus T_1(\tau+1)}}} \cG_3'(i',j)
%    \quad\quad\text{and}\quad\quad
$$
and
$$
   \cG_4(i,\tau) \defn \bigcap_{\substack{x\in S_1^\inn(i)\\j \colon \dbar{T}_1^j(\tau)\subset \lr{T_1^\core(\tau)\setminus T_1(\tau+1)}}}\cG_4'(x,j).
$$
For convenience, we write 
$$
   \cG_{12}(i,\tau)  = \cG_1(i,\tau) \cap \cG_2(i,\tau)
   \quad\text{and}\quad
   \cG_{34}(i,\tau)  = \cG_3(i,\tau) \cap \cG_4(i,\tau)
$$

\begin{lemma}
   \label{lem:badboxI}
   The family of events $\lrc{\cG_{1}(i,\tau)}_{(i,\tau)}$, $\lrc{\cG_{2}(i,\tau)}_{(i,\tau)}$ and $\lrc{\cG_{34}(i,\tau)}_{(i,\tau)}$ are independent of one another. 
\end{lemma}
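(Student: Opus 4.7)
The plan is to show that the three families are measurable with respect to pairwise independent pieces of the underlying randomness. I would separate the randomness into two layers: for each edge $e$ and each ring of its Poisson clock of rate $\mu$, we first consult the variable $U_\star$ to decide whether the ring is a $\star$-update (which happens with probability $p_\star$) or not. By the standard thinning/splitting theorem for Poisson processes, the $\star$-updates of edge $e$ form a Poisson process of rate $p_\star \mu$ and the non-$\star$-updates form an independent Poisson process of rate $(1-p_\star)\mu$. Moreover the variables $U$ attached to each class of updates, together with the clocks themselves, are mutually independent across $(\star, \text{non-}\star)$ and across disjoint time intervals.

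Next I would argue that $\cG_1(i,\tau)=\cG_1'(i,\tau)$ is measurable with respect to the non-$\star$-updates only (it records the \emph{absence} of non-$\star$-updates on edges of $E(S_1(i))$ during $T_1(\tau)\cap[0,\infty)$), whereas by inspection every event in the definitions of $\cG_2$ and $\cG_{34}$ involves exclusively $\star$-updates: $\cG_2'$ records the absence of $\star$-updates producing an open edge, $\cG_3'$ counts the number of $\star$-updates, and in the definition of $\cG_4'$ one explicitly restarts the process from the all-closed configuration at time $\tau t_1$ and propagates only the $\star$-updates. Combined with the splitting observation, this immediately gives the independence of the family $\{\cG_1(i,\tau)\}_{(i,\tau)}$ from the pair $(\{\cG_2(i,\tau)\},\{\cG_{34}(i,\tau)\})$.

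It then remains to show independence between $\{\cG_2(i,\tau)\}_{(i,\tau)}$ and $\{\cG_{34}(i,\tau)\}_{(i,\tau)}$, and here I would use time-disjointness together with the fact that $\star$-updates on disjoint time intervals are independent. For each $(i,\tau)$ the event $\cG_2(i,\tau)$ is measurable with respect to $\star$-updates during $\bar{T}_1(j(\tau))\cup \bar{T}_1(j(\tau+1))=[\tau t_1-\bar t_1,\tau t_1)\cup[(\tau+1)t_1-\bar t_1,(\tau+1)t_1)$ (acting on edges contained in $E(S_1(i))$), while $\cG_{34}(i,\tau)$ is measurable with respect to $\star$-updates during $T_1^\core(\tau)\setminus T_1(\tau+1)=[\tau t_1,(\tau+1)t_1-\bar t_1)$: indeed $\cG_3'(i',j)$ only looks at $\star$-updates during $\bar T_1(j)$, and $\cG_4'(x,j)$ starts from an all-closed configuration at $\tau t_1$ and records $\star$-update information up to $\max\dbar T_1(j)<(\tau+1)t_1-\bar t_1$. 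Taking the union over all $\tau$ on each side, the $\cG_{34}$-side uses only $\star$-updates in $\bigcup_\tau[\tau t_1,(\tau+1)t_1-\bar t_1)$ while the $\cG_2$-side uses only $\star$-updates in the complementary set $\bigcup_k[k t_1-\bar t_1,k t_1)$, so the two $\sigma$-algebras sit in independent pieces of the $\star$-update Poisson processes.

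There is no serious obstacle here; the only point that requires a little care is the bookkeeping of time intervals for $\cG_4'$, since at first glance the event could involve information outside the $\dbar T_1(j)$ window. The definition, however, artificially restarts at $\tau t_1$ so that no information from earlier times enters, and $\dbar T_1(j)\subset T_1^\core(\tau)\setminus T_1(\tau+1)$ guarantees the window stays within the interval claimed. Once these disjointness windows are verified, the lemma is a direct consequence of the independence of Poisson splittings and of Poisson processes on disjoint time intervals.
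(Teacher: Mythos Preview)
Your proposal is correct and follows essentially the same approach as the paper's proof: first separate $\cG_1$ from the others using the Poisson-thinning independence between $\star$- and non-$\star$-updates, then separate $\cG_2$ from $\cG_{34}$ by observing that the former uses only $\star$-updates in $\bigcup_k[k t_1-\bar t_1,k t_1)$ while the latter uses only $\star$-updates in the complementary intervals $\bigcup_\tau[\tau t_1,(\tau+1)t_1-\bar t_1)$. Your treatment is in fact more explicit than the paper's, particularly in noting that the artificial restart at $\tau t_1$ in $\cG_4'$ is what confines its measurability to $[\tau t_1,(\tau+1)t_1-\bar t_1)$.
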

\begin{proof}
   The events $\cG_1(\cdot,\cdot)$ depend only on non-$\star$ updates, so it is independent of $\cG_2(\cdot,\cdot)$, $\cG_3(\cdot,\cdot)$ and $\cG_4(\cdot,\cdot)$, 
   which only regard $\star$-updates. Then, note that $\cG_2(\cdot,\cdot)$ are events about $\star$-updates during $\bigcup_\tau \bar{T}_1(j(\tau))$, while $\cG_3(\cdot,\cdot)$ and 
   $\cG_4(\cdot,\cdot)$ regard $\star$-updates during $\bigcup_\tau T_1^\core(\tau)\setminus T_1(\tau+1)$. Since these two time intervals are disjoint, independence is obtained from standard properties of 
   Poisson processes.
\end{proof}

\begin{lemma}
   \label{lem:badboxpre}
   Let $R_1(i,\tau)$ be any box of scale $1$.
   There exist constants $c,c'>0$ so that for all small enough $p$ we obtain
   $$
       \PR\lr{\cG_{12}^\compl(i,\,\tau)}\leq  c\ell^{d+\frac{1}{2}}p_\rmax.
   $$
   and 
   $$
       \PR\lr{\cG_{34}^\compl(i,\tau)}\leq \exp\lr{-c'\log^2\ell}.
   $$
\end{lemma}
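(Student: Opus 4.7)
The plan is to handle the two inequalities separately. For $\PR(\cG_{12}^\compl(i,\tau))$ I would use simple Markov/union-bound estimates on Poisson updates, while for $\PR(\cG_{34}^\compl(i,\tau))$ the key step is comparing the pure $\star$-process to a subcritical independent Bernoulli percolation.

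First I would treat $\cG_1^\compl$ and $\cG_2^\compl$ separately. For $\cG_1^\compl$: $S_1(i)$ contains $O(\ell^d)$ edges, the window $T_1(\tau)\cap[0,\infty)$ has length $O(t_1)=O(\sqrt{\ell}/\mu)$, and each edge undergoes non-$\star$-updates as a Poisson process of rate $\mu(p_\rmax-p_\rmin)\leq \mu p_\rmax$. A first-moment bound on the total number of such updates yields $\PR(\cG_1^\compl)\leq c\,\ell^{d+1/2}p_\rmax$. For $\cG_2^\compl$: for each of the two relevant intervals $\bar T_1(j(\tau))$ and $\bar T_1(j(\tau+1))$, a $\cG_2'$-failure requires some edge in $E(S_1(i))$ to receive a $\star$-open update, which has probability at most $\mu p_\rmin\cdot \log^2\ell/\mu=p_\rmin\log^2\ell$ per edge, hence a total of $O(\ell^d p_\rmin\log^2\ell)$; and a $\cG_3'$-failure requires some edge to have fewer than $\tfrac12 p_\star\log^2\ell$ $\star$-updates against a Poisson mean of $p_\star\log^2\ell$, which by a Chernoff bound is $\exp(-c p_\star\log^2\ell)$ per edge, hence $O(\ell^d \exp(-c p_\star\log^2\ell))$ in total. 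Since $\log^2\ell=O((\log(1/p))^2)$ while $\ell^{1/2}=p^{-1/(6d)}$, both contributions are $o(\ell^{d+1/2}p_\rmax)$ for $p$ sufficiently small, yielding the first bound.

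For $\PR(\cG_{34}^\compl)$, the bound on $\cG_3^\compl$ is again a union bound over the $O(3^d)$ sub-cores and $O(t_1/\bar t_1)=O(\sqrt{\ell}/\log^2\ell)$ relevant intervals $\bar T_1(j)$, each with $\cG_3'$-failure probability $O(\ell^d\exp(-c p_\star \log^2\ell))$, giving $\exp(-c'\log^2\ell)$ once polynomial factors are absorbed (using that $p_\star\to 1$ as $p\to0$). The heart of the argument is $\cG_4^\compl$. In the pure $\star$-process obtained by starting from all edges closed at time $\tau t_1$ and ignoring non-$\star$-updates, the update processes on distinct edges are independent, so the events ``edge $e$ is $\dbar T_1(j)$-open'' are independent across $e$. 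For a single edge, that probability is bounded by
$$
\PR\lr{e\text{ open at }\min \dbar T_1(j)}+\PR\lr{\exists\,\star\text{-open update on $e$ during }\dbar T_1(j)}\leq \frac{p_\rmin}{p_\star}+\mu p_\rmin\cdot\frac{\gamma}{\mu}=\frac{p_\rmin}{p_\star}+\gamma p_\rmin,
$$
where the first term uses a coupling with the stationary pure $\star$-process (starting closed gives at most the stationary marginal $p_\rmin/p_\star$). By the choice of $\gamma$ in Definition~\ref{def:dbar}, this is at most $p_\rmin/p_\star+\gamma<p_\crit$. Hence $\cC_x(\dbar T_1(j))$ is stochastically dominated by the cluster of the origin in a strictly subcritical independent Bernoulli bond percolation on $\Z^d$, and classical exponential decay of cluster volumes gives $\PR(|\cC_x(\dbar T_1(j))|\geq \log^2\ell)\leq \exp(-c\log^2\ell)$. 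A union bound over $O(\ell^d)$ choices of $x\in S_1^\inn(i)$ and $O(\sqrt{\ell}/\gamma)$ choices of $j$ produces $\PR(\cG_4^\compl)\leq \exp(-c'\log^2\ell)$, completing the second bound.

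The main subtlety lies in the $\cG_4$ analysis: one must justify that edges are independent in the pure $\star$-process and that the ``all closed'' start does not inflate the marginal $\PR(e\text{ open at }\min\dbar T_1(j))$ above $p_\rmin/p_\star$, which is handled by the coupling with stationarity described above. Beyond that, everything reduces to standard concentration estimates for Poisson random variables and union bounds, combined with the quantitative choice of $\gamma$ that makes the pure-$\star$ traces strictly subcritical.
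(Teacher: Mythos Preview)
Your proof is correct and follows essentially the same approach as the paper: union/Chernoff bounds on Poisson update counts for $\cG_1,\cG_2,\cG_3$, and a comparison of the pure $\star$-process to subcritical independent bond percolation for $\cG_4$. Your $\cG_4$ estimate is in fact marginally sharper (you bound the probability of a $\star$-\emph{opening} update by $\gamma p_\rmin$ rather than the probability of any $\star$-update by $p_\star\gamma$ as the paper does), but both lead to the same conclusion via the condition $p_\rmin/p_\star+\gamma<p_\crit$.
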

\begin{proof}
   We start with event $\cG_{34}(i,\tau)$. 
   For a given edge, an update that is not $\star$ occurs at rate $(1-p_\star)\mu$, a $\star$-update occurs at rate $p_\star\mu$, and
   a $\star$-update that opens an edge occurs at rate $p_\star \frac{p_\rmin}{p_\star}\mu=p_\rmin\mu$. 
   Moreover, there are at most $2d\,3^d\ell^d$ edges in $E(S_1(i))$.
   
   For $\cG_3$, note that for a given edge $e\in S_1(i)$ and a given $j$, the number of $\star$-updates on $e$ during $\bar{T}_1(j)$ is a Poisson random variable of 
   mean $p_\star \log^2\ell$. Therefore, using a standard Chernoff bound for Poisson random variables and the union bound over the edges in $S_1(i)$ and over the values of $j$, we 
   obtain a constant $c_1>0$ such that 
   \begin{align*}
       \PR\lr{\cG_3^\compl(i,\tau)}\leq& 2d3^{d}\ell^d\, \frac{(t_1-\bar t_1)\mu}{\log^2\ell}\exp\lr{-c_1p_\star \log^2\ell}.
   \end{align*}
   Note that as $p$ decreases to $0$ we have that $p_\star$ increases to $1$ and $\ell$ increases to $\infty$. So for all small enough $p$ we obtain 
   \begin{align*}
       \mathbb{P}(\cG_3^\compl(i,\tau))\leq& \exp\lr{-c_2\log^2\ell},
   \end{align*}
   for some constant $c_2$.
   Regarding $\cG_4(i,\tau)$, for any $j\geq 0$ with $\dbar{T}_1(j)\subset T_1^\core(\tau)\setminus T_1(\tau+1)$ and any edge $e$, 
   note that the probability that $e$ is open at the beginning of the interval $\dbar{T}_1(j)$, given that we only consider $\star$-updates and consider that all edges are closed at $\tau t_1$,
   is at most $\frac{p_\rmin}{p_\star}$, since this is the probability that the last $\star$-update of $e$ (if there was any) made $e$ open. Thus, using that $1-\exp\lr{-p_\star \gamma}$ is the probability 
   that $e$ has a $\star$-update during $\dbar{T}_1^j$, we obtain that the probability that 
   $e$ is $\dbar{T}_1^j$-open under the above assumptions is at most 
   $$
       \frac{p_\rmin}{p_\star}+1-\exp\lr{-p_\star \gamma}
       \leq \frac{p_\rmin}{p_\star}+p_\star \gamma < p_\crit.
   $$
   In other words, the set of $\dbar{T}_1^j$-open edges (under the above assumptions) forms a subcritical percolation cluster.  
   Therefore, using the exponential decay of cluster size for subcritical percolation, together with the union bound over all sites and values of $j$, we obtain a constant $c''$ such that 
   \begin{align*}
       \mathbb{P}(\cG_4^\compl(i,\tau))\leq \ell^d \frac{t_1\mu}{\gamma} \exp\lr{-c_3 \log^2\ell}.
   \end{align*}
   We note that $c_3$ increases as $p$ decreases. So the bound on $\PR\lr{\cG_{34}^\compl(i,\tau)}$ follows by taking $p$ small enough.

%    From the definition of good box
%    \begin{align*}
%       \mathbb{P}(R_1(i,\,\tau)\text{ is good})
%       \geq 1 - \mathbb{P}(\cG_1^\compl(i,\tau)\cup\cG_2^\compl(i,\tau))-\mathbb{P}(\cG_3^\compl(i,\tau))-\mathbb{P}(\cG_4^\compl(i,\tau)).
%    \end{align*}
%    We will handle each term separately.
   Now we turn to $\cG_{12}(i,\tau)$. From the above considerations we obtain
   \begin{align*}
       \mathbb{P}(\cG_{12}^\compl(i,\tau))
       &\leq 1-\exp\lr{-2d3^d\ell^{d}\lr{(1-p_\star)\mu\lr{t_1+\bar t_1}+p_\rmin \log^2\ell}} +2d3^{d}\ell^d\, 2\exp\lr{-c_1p_\star \log^2\ell},
   \end{align*}
   where the term $1-\exp(\cdot)$ comes from $\cG_1'$ and $\cG_2'$, while the last term comes from $\cG_3'$ as in the case of $\cG_3$ above.
   Using that $t_1+\bar t_1\leq 2t_1$ and that $e^{-x}\geq 1-x$ for all $x\in\R$, we obtain
   \begin{align*}
       \mathbb{P}(\cG_{12}^\compl(i,\tau))
       &\leq 2d3^d\ell^{d}\lr{(1-p_\star)2\sqrt{\ell}+p_\rmin \log^2\ell+2\exp\lr{-c_1p_\star \log^2\ell}}\\
       &\leq 4d3^d\ell^{d+\frac{1}{2}}(1-p_\star+p_\rmin)\\
       &= 4d3^d\ell^{d+\frac{1}{2}}(1-p_\star+p_\rmin)
       =4d3^d\ell^{d+\frac{1}{2}}p_\rmax,
   \end{align*}
   where we use that $\sqrt{\ell}$ is the term that dominates inside the parenthesis as $p$ is made small enough (hence, $\ell$ is made large enough).
   So we can take $p$ small enough so that $p_\rmin \log^2\ell+2\exp\lr{-c_1p_\star \log^2\ell}\leq 2p_\rmin \log^2\ell\leq 2p_\rmin \sqrt{\ell}$.
\end{proof}

We need one more step to define good boxes of scale $1$. Using a standard result for percolation with bounded dependences~\cite{LSS}, we will replace $\cG_{34}(i,\tau)$ by 
a collection of i.i.d.\ Bernoulli random variables.
\begin{lemma}\label{lem:badboxlss}
   There exists a constant $C_\newcte{cte:bblss}=C_{\ref*{cte:bblss}}(d)>0$ such that letting $\lrc{\hat\cG_{34}(i,\tau)}_{(i,\tau)}$ be a collection of i.i.d.\ Bernoulli random variables of parameter $1-\exp\lr{-C_{\ref*{cte:bblss}}\log^2\ell}$,  
   Then for any $p$ small enough we obtain that $\lrc{\cG_{34}(i,\tau)}_{(i,\tau)}$ stochastically dominates $\lrc{\hat\cG_{34}(i,\tau)}_{(i,\tau)}$.
\end{lemma}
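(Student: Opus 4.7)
The plan is to apply the Liggett--Schonmann--Stacey (LSS) domination theorem to the Bernoulli field $\{\mathbbm{1}_{\cG_{34}(i,\tau)}\}_{(i,\tau)}$, viewed as a random field indexed by $\Z^{d+1}$. Two ingredients are needed: bounded-range dependence of the field, and a quantitative upper bound on the failure probability of a single marginal. The second is already supplied by Lemma~\ref{lem:badboxpre}: for all small enough $p$, $\PR(\cG_{34}^\compl(i,\tau))\leq \exp(-c'\log^2\ell)$, uniformly in $(i,\tau)$.

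First I would verify bounded-range dependence. Each event $\cG_{34}(i,\tau)$ is, by construction, measurable with respect to the Poisson $\star$-updates on edges in $E(S_1(i))$ during the sub-interval of $T_1^\core(\tau)\setminus T_1(\tau+1)$. Since $S_1(i)$ is the union of the $3^d$ cores $S_1^\core(i+j)$ with $j\in\{-1,0,+1\}^d$, the sets $E(S_1(i))$ and $E(S_1(i'))$ are disjoint as soon as $\|i-i'\|_\infty\geq 2$; and $T_1^\core(\tau)\setminus T_1(\tau+1)$ and the analogous interval for $\tau'$ are disjoint as soon as $|\tau-\tau'|\geq 2$. By the independence of the Poisson update processes on disjoint edge sets and disjoint time intervals, the field $\{\mathbbm{1}_{\cG_{34}(i,\tau)}\}_{(i,\tau)}$ is therefore $r$-dependent on $\Z^{d+1}$ for some finite $r=r(d)$ depending only on the dimension.

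Second, I would invoke the LSS theorem in its quantitative form (this is exactly what the paper's reference \cite{LSS} provides): for any $r$-dependent Bernoulli field on $\Z^{d+1}$ with marginal failure probability at most $\epsilon$, provided $\epsilon$ is smaller than some threshold $\epsilon_0=\epsilon_0(d,r)$, the field stochastically dominates an i.i.d.\ Bernoulli field of parameter $1-g(\epsilon)$, where $g(\epsilon)\to 0$ as $\epsilon\to 0$ at a rate depending only on $d$ and $r$. Applying this with $\epsilon=\exp(-c'\log^2\ell)$, and noting that $\ell=p^{-1/(3d)}\to\infty$ as $p\to 0$ (so that $\epsilon$ can be made smaller than $\epsilon_0$ by choosing $p$ small enough), the dominated i.i.d.\ parameter can be written as $1-\exp(-C_{\ref*{cte:bblss}}\log^2\ell)$ for a suitable $C_{\ref*{cte:bblss}}=C_{\ref*{cte:bblss}}(d)>0$, yielding the stated stochastic domination.

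The proof contains no real obstacle: the range of dependence is read off directly from the definitions of $S_1(i)$ and $T_1^\core(\tau)$, and the rest is a black-box application of LSS. The only point requiring a moment's care is the claim that $C_{\ref*{cte:bblss}}$ depends only on $d$; this holds because $r$ depends only on $d$, the LSS threshold and rate depend only on $r$ and the lattice dimension, and one can absorb the constant $c'$ from Lemma~\ref{lem:badboxpre} into $C_{\ref*{cte:bblss}}$ since $c'$ can itself be chosen as a function of $d$ once $p$ is small.
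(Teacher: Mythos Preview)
Your proposal is correct and follows essentially the same approach as the paper: establish finite-range dependence of $\{\mathbbm{1}_{\cG_{34}(i,\tau)}\}$, then feed the marginal bound from Lemma~\ref{lem:badboxpre} into the LSS domination theorem. Two cosmetic differences: the paper observes that for fixed $i$ the events are actually \emph{independent} across $\tau$ (since $T_1^\core(\tau)\setminus T_1(\tau+1)$ are disjoint), so it applies LSS on $\Z^d$ rather than $\Z^{d+1}$, with dependence degree $\Delta=5^d$; and your claim that $E(S_1(i))$ and $E(S_1(i'))$ are disjoint once $\|i-i'\|_\infty\geq 2$ is off by one (the correct threshold is $\|i-i'\|_\infty\geq 3$, since $S_1(i)$ has side length $3\ell$), but this does not affect the argument.
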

\begin{proof}
   First note that if we fix $i$, then $\cG_{34}(i,\tau)$ forms a collection of independent random variables as $\tau$ varies. 
   So $\cG_{34}(i,\tau)$ depends only on events $\cG_{34}(i',\tau)$ such that $S_1(i)\cap S_1(i')\neq\emptyset$. Note that this is true even for the events $\cG_4'$ since 
   for $x\in S_1^\inn(i)$, the event that the component of $x$ is of length $\log ^2\ell$ is measurable with respect to the edges in $S_1(i)$. 
   Given $i$, the number of $i'$ such that $S_1(i)\cap S_1(i')\neq\emptyset$ is strictly smaller than $\Delta=5^d$. 
   Then, from Lemma~\ref{lem:badboxpre} we obtain 
   that by taking $p$ small enough the marginal probability $w=\PR\lr{\cG_{34}^\compl(i,\tau)}$ can be made smaller than $\frac{(\Delta-1)^{\Delta-1}}{\Delta^\Delta}$, and so we can apply~\cite[Theorem~1.3]{LSS} to 
   deduce that the family $\cG_{34}(\cdot,\cdot)$ stochastically dominates a set of i.i.d.\ Bernoulli random variables with parameter 
   \begin{align*}
      \rho
      &=\lr{1-\frac{w^{1/\Delta}}{(\Delta-1)^{1-1/\Delta}}}\lr{1-\lr{w(\Delta-1)}^{1/\Delta}}\\
      &\geq 1- \frac{w^{1/\Delta}}{(\Delta-1)^{1-1/\Delta}}-\lr{w(\Delta-1)}^{1/\Delta}\\
      &= 1 - w^{1/\Delta}\lr{\frac{\Delta}{(\Delta-1)^{1-1/\Delta}}}.
   \end{align*}
   Applying Lemma~\ref{lem:badboxpre} for the value of $w$ completes the proof since $w^{1/\Delta}\lr{\frac{\Delta}{(\Delta-1)^{1-1/\Delta}}}\leq \exp(-C_{\ref*{cte:bblss}}\log^2\ell)$ for some constant $C_{\ref*{cte:bblss}}>0$. 
\end{proof}

Now we are ready to define good boxes at scale $1$.
\begin{definition}[Good boxes of scale 1]\label{def:goodbox}
   Let $i\in\Z^d$ and $\tau\geq 0$. 
   A box $R_1(i,\,\tau)$ is said to be \emph{good} if the following event happens
   $$
      \cG(i,\tau)
         \defn \cG_{12}(i,\tau) \cap \hat\cG_{34}(i,\tau).
   $$
   For convenience, we assume that for $\tau<0$ then $\cG(i,\tau)$ holds for all $i\in\Z^d$. 
   We also couple $\lrc{\hat\cG_{34}(i,\tau)}_{(i,\tau)}$ with $\lrc{\cG_{34}(i,\tau)}_{(i,\tau)}$ so that, 
   \begin{equation}
      \text{for each $(i,\tau)$, whenever $\hat\cG_{34}(i,\tau)$ holds, so does $\cG_{34}(i,\tau)$.}
      \label{eq:hatcoup}
   \end{equation}
\end{definition}
 
The following lemma bounds the probability that a box is bad, and follows directly from the previous lemmas.
\begin{lemma}
   \label{lem:badbox}
   Let $R_1(i,\tau)$ be any box of scale $1$.
   There exists a constant $C_\newcte{cte:badbox}>0$ so that for all small enough $p$ we obtain
   $$
       \PR\lr{\cG^\compl(i,\,\tau)}\leq  C_{\ref*{cte:badbox}}\ell^{d+\frac{1}{2}}p_\rmax.
   $$
\end{lemma}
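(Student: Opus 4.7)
The proof is essentially a direct combination of Lemmas~\ref{lem:badboxpre} and~\ref{lem:badboxlss} via the union bound. The plan is to first apply the union bound to split
\[
   \PR\lr{\cG^\compl(i,\tau)} \leq \PR\lr{\cG_{12}^\compl(i,\tau)} + \PR\lr{\hat\cG_{34}^\compl(i,\tau)},
\]
using the decomposition $\cG(i,\tau) = \cG_{12}(i,\tau)\cap\hat\cG_{34}(i,\tau)$ from Definition~\ref{def:goodbox}.

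For the first summand I would invoke Lemma~\ref{lem:badboxpre} directly, which gives $\PR(\cG_{12}^\compl(i,\tau))\leq c\ell^{d+1/2}p_\rmax$ for all small enough $p$. For the second summand I would use Lemma~\ref{lem:badboxlss}: since $\hat\cG_{34}(i,\tau)$ is a Bernoulli variable with parameter $1-\exp(-C_{\ref*{cte:bblss}}\log^2\ell)$, we get $\PR(\hat\cG_{34}^\compl(i,\tau))\leq \exp(-C_{\ref*{cte:bblss}}\log^2\ell)$.

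The only thing left is to argue that this second, exponentially small, contribution can be absorbed into the first term at the cost of enlarging the constant. Recalling $\ell=p^{-1/(3d)}$ from~\eqref{ell}, the definitions of $p_\rmin, p_\rmax$ give $p_\rmax$ of order $p$ (indeed $p_\rmax = p$ when $q\geq 1$ and $p_\rmax=p/(p+(1-p)q)$ otherwise, both comparable to $p$ for small $p$). Hence $\ell^{d+1/2}p_\rmax$ decays only polynomially in $p$, namely as $p^{1-(d+1/2)/(3d)}$ with a strictly positive exponent for every $d\geq 1$, whereas $\exp(-C_{\ref*{cte:bblss}}\log^2\ell)=\exp(-\frac{C_{\ref*{cte:bblss}}}{(3d)^2}(\log p)^2)$ decays faster than any polynomial in $p$. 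Therefore, for all sufficiently small $p$,
\[
   \exp\lr{-C_{\ref*{cte:bblss}}\log^2\ell}\leq \ell^{d+1/2}p_\rmax,
\]
and we conclude $\PR(\cG^\compl(i,\tau))\leq C_{\ref*{cte:badbox}}\ell^{d+1/2}p_\rmax$ by taking $C_{\ref*{cte:badbox}}=c+1$. There is no real obstacle here; the lemma is a bookkeeping consequence of the two previous lemmas, and the only mild point to verify is that the scale-1 Bernoulli failure from the domination step is of smaller order than the $p_\rmax$-driven bound.
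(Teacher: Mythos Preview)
Your proof is correct and follows exactly the approach the paper takes: the paper's own proof is the single line ``This follows from Lemmas~\ref{lem:badboxpre} and~\ref{lem:badboxlss}'', and you have simply spelled out the union bound and the absorption of the $\exp(-C_{\ref*{cte:bblss}}\log^2\ell)$ term that the paper leaves implicit.
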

\begin{proof}
   This follows from Lemmas~\ref{lem:badboxpre} and~\ref{lem:badboxlss}
\end{proof}

\begin{remark}
   Note that the event $\{R_1(i,\tau)\text{ is good}\}$ is restricted to the cube $S_1(i)$ and the interval $T_1(\tau)$. In fact, that is why in~$\cG_4'$ we assume that all edges are closed at time 
   $\tau t_1$, the initial time of the core $T_1^\core(\tau)$; note that the fact that all edges are closed at time $\tau t_1$ is implied by $\cG_2$, but by explicitly adding 
   the assumption in $\cG_4'$ we make $\cG_2(i,\tau)$ and $\cG_4(i,\tau)$ 
   independent of each other.
   Note also that the decision of whether a box is good is completely independent of the walker, it only depends on the updates of the dynamical random cluster process. 
%    Finally, note that~$\cG_4$ does not imply that the clusters are small during $\dbar{T}_1^j(\tau)$ for small $j$, 
%    because the edges may not be closed at the beginning of the interval $T_1(\tau)$. But, under~$\cG_3$ and~$\cG_4$ we have that 
%    the clusters are at most $\log^2\ell$ during $\dbar{T}_1^j(\tau)$ for $j\geq \frac{\log^2\ell}{\gamma}$. For $j<\frac{\log^2\ell}{\gamma}$, note that $\dbar{T}_1^j(\tau)\subset T_1(\tau-1)$, so such clusters will be controlled
%    by the box immediately preceding $R_1(i,\tau)$ in time. Finally, it seems a bit exaggerated to require in~$\cG_2$ that each edge does not receive a update to open during $\bar{T}_1^1(\tau)$, which 
%    requires that $p$ is small 
%    enough. However, it is not here that we make essential use that $p$ is small enough. %; this strong definition of~$\cG_2$ only simplifies the argument. 
%    The main use we make of $p$ being small is to have the edges of state $\star$ supercritical. Our introduction of the event~$\cG_2$ is mostly 
%    to simplify the proof later on.
\end{remark}

Recall that $X_t$ denotes the position of the random walker at time $t$. In the lemma below, we will show that if the walker happens to be inside a good box, then it cannot move very quickly. 
This will allow us to have a better control on where the walker can be.

\begin{lemma}
   \label{eventa3}
   Let $t\geq  0$ be any given time and suppose $(X_t,t)\in R_1^\core(i,\tau)$, where $R_1(i,\tau)$ is a good box. Then,
   \begin{equation*}
       \sup_{\substack{s \geq t\\ s\in T_1(\tau)}}\|X_t-X_{s}\|_1
       \leq  |\cC_{X_t}(t)| + \left\lceil\frac{(\tau+1) t_1 -t}{\gamma/\mu}\right\rceil \log^2\ell,
   \end{equation*}
   In particular, if $\tau>0$ then 
   \begin{equation*}
       \sup_{\substack{s \geq t\\ s\in T_1(\tau)}}\|X_t-X_{s}\|_1
       \leq  \log^2\ell + \left\lceil\frac{(\tau+1) t_1 -t}{\gamma/\mu}\right\rceil \log^2\ell
       \leq \log^2\ell + \frac{\sqrt{\ell}}{\gamma} \log^2\ell  \leq \frac{\ell}{10},
   \end{equation*}
   where the last inequality holds for all $p$ small enough (thus, $\ell$ large enough) and where $\|x - y\|_1$ denotes the $L^1$ distance in the torus between the positions $x,y \in \mathbb{T}_n^d$; 
   in particular, $\|x - y\|_1$ does not depend on whether edges are open or closed.
\end{lemma}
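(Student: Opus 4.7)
My plan is to slice the interval $[t,(\tau+1)t_1]$ into the $\dbar T_1$-sub-intervals of length $\gamma/\mu$ that cover it, and bound the walker's $L^1$-displacement within each piece by the size (hence diameter) of the cluster of open edges accessible to it. Let $j_0$ be the unique index with $t\in\dbar T_1(j_0)$; the interval is then covered by at most $\lceil((\tau+1)t_1-t)/(\gamma/\mu)\rceil$ consecutive $\dbar T_1$-sub-intervals. During $[t,(j_0+1)\gamma/\mu]$ the walker stays in the $[t,(j_0+1)\gamma/\mu]$-cluster of $X_t$, and during each subsequent $\dbar T_1(j)$ it stays in the $\dbar T_1(j)$-cluster of its position at time $j\gamma/\mu$; the triangle inequality then converts controlled cluster sizes into the stated displacement bound.

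The cluster sizes are controlled by unwinding the events that make the box good. Event $\cG_1(i,\tau)$ forbids non-$\star$-updates on $E(S_1(i))$ throughout $T_1(\tau)\cap[0,\infty)$, while $\cG_2(i,\tau)$---via $\cG_2'(i,j(\tau))$ together with the $\cG_3'(i',j(\tau))$ components---guarantees that during the backward overlap $\bar T_1(j(\tau))$ no $\star$-update opens any edge of $E(S_1(i))$, yet every edge of $\bigcup_{i'}E(S_1^\core(i'))$ receives at least one (necessarily closing) $\star$-update (for $p$ small enough, $\tfrac12 p_\star\log^2\ell\geq 1$). Hence every such edge is closed at time $\tau t_1$, and for $t'\geq\tau t_1$ the dynamics of those edges coincide with the auxiliary process underlying $\cG_4'$ (all-closed initial condition at $\tau t_1$, only $\star$-updates). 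Consequently, for every $x\in S_1^\inn(i)$ and every $\dbar T_1(j)\subset T_1^\core(\tau)\setminus T_1(\tau+1)$, the event $\cG_4'(x,j)$ gives $|\mathcal{C}_x(\dbar T_1(j))|<\log^2\ell$ in the real process. On the trailing overlap $\bar T_1(j(\tau+1))$ the same $\cG_2$-components again prohibit openings, so clusters can only shrink there and $\log^2\ell$ remains a valid upper bound.

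Putting the ingredients together requires an induction that also certifies the walker stays inside $S_1^\inn(i)$ so that $\cG_4'$ may be invoked at each step. The partial first sub-interval contributes at most $|\mathcal{C}_{X_t}(t)|+\log^2\ell$ (current cluster plus auxiliary growth), and each of the remaining at most $\lceil((\tau+1)t_1-t)/(\gamma/\mu)\rceil-1$ sub-intervals contributes at most $\log^2\ell$; summing gives the general bound. For the particular case $\tau>0$, the overlap $\bar T_1(j(\tau))$ lies entirely in $[0,\infty)$, so the edge-closing argument runs without caveats and $\cG_4'(X_t,j_0)$ also controls $|\mathcal{C}_{X_t}(t)|\leq|\mathcal{C}_{X_t}(\dbar T_1(j_0))|<\log^2\ell$, collapsing the first summand to $\log^2\ell$; the final inequality $\log^2\ell+(\sqrt\ell/\gamma)\log^2\ell\leq\ell/10$ then holds because $\ell=p^{-1/(3d)}\to\infty$ as $p\to 0$. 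The main delicate point I anticipate is the matching between the real and auxiliary edge processes---particularly the handful of edges straddling adjacent cores within $S_1(i)$ which $\cG_3'$ does not directly force closed---together with the geometric bootstrap that keeps the walker inside $S_1^\inn(i)$ throughout $T_1(\tau)$; both can be handled but need a careful case analysis.
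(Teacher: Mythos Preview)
Your proposal is correct and follows essentially the same approach as the paper: slice $[t,(\tau+1)t_1]$ into the $\dbar T_1$-intervals, invoke $\cG_4'$ on each (after matching the real process to the auxiliary all-closed-at-$\tau t_1$ process via $\cG_1$ and $\cG_2$), and handle the trailing overlap $\bar T_1(j(\tau+1))$ by the no-opening part of $\cG_2$. Your write-up is in fact more careful than the paper's terse argument---you correctly flag the bootstrap keeping the walker in $S_1^\inn(i)$ and the edges straddling adjacent cores not directly covered by $\cG_3'$, both of which the paper's proof leaves implicit.
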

\begin{proof}
   This is a direct consequence of the event~$\cG_4'$ from the definition of good boxes, and the fact that good boxes do not have non-$\star$-updates. 
   There is just one caveat. The event $\cG_4'$ is not enforced in time intervals $\bar{T}_1(j)$ where $j=j(\tau)$ for some $\tau$; recall the definition of $j(\tau)$ from~\eqref{eq:jtau}.
   So for example, $\cG_4(i,\tau)$ does not include the event $\cG'_4(j)$ for $j$ such that $\bar{T}_1(j)=T_1(\tau)\cap T_1(\tau+1)$, which is the only time interval of the type $\bar{T}_1(\cdot)$ inside 
   $T_1(\tau)$. However, for such a $j$, we know that the connected components inside $\bar{T}_1(j-1)$ are of size at most $\log^2\ell$, and $\cG_2(i,\tau)$ implies that during $\bar{T}_1(j)$ no edge of $S_1(i)$ gets an
   update to open. Therefore, the size of the connected components can only decrease during $\bar{T}_1(j)$ and the result follows.
%    We first split the time interval of the box $R_1(i,\tau)$ which is $\left(\tau t_1,\left(\tau+2\right)t_1\right)$, into smaller sub-intervals of length $\frac{\gamma}{\mu}$ where $\gamma$ is the constant in the event $(A_4)$ of Definition \ref{a14}. Note that in the box $R_1(i,\tau)$ there are $2t_1\frac{\mu}{\gamma}$ of sub-intervals. Since $R_1(i,\tau-2)$ is good, by event $A_4$ the clusters at time $\tau t_1$ in $S_1^\inn(i)$ are small, and because of event $A_2$ they will not increase during $T_1^1(\tau)$, namely $\mathcal{C}_x(\tau t_1)<\frac{\gamma}{6}\log^2\ell$ for all $x\in S_1^\inn(i)$, therefore
%    \begin{equation*}
%        \max_{s \in [t, (\tau+2)t_1]}\|X_t,X_{s}\|_1\leq  \sum_{j=0}^{2\frac{\sqrt{\ell}}{\gamma}-1}\|X_{s_j(\tau)},X_{s_{j+1}(\tau)}\|_1\leq  2\frac{\sqrt{\ell}}{\gamma}\cdot\frac{\gamma}{6}\log^2(\ell)<\frac{\ell}{3},
%    \end{equation*}
%    where $s_j(\tau)$ are as defined in \eqref{sjtau}.
%    In words, under $A_4$ in each interval of length $\frac{\gamma}{\mu}$ the walker is able to move at most $\frac{\gamma}{6}\log^2(\ell)$; summing over all the intervals yields the lemma.
\end{proof}

%====================================================================================================================
%====================================================================================================================
%====================================================================================================================
\subsection{Good boxes at larger scales}

In this section we define the concept of good and bad boxes of scale larger than 1, but first we define a slightly relaxed version of \emph{intersection} of boxes.
\begin{definition}
   \label{intersection} 
   Since boxes are defined by semi-open intervals, we will consider boxes that are barely non-intersecting as intersecting. That is, we consider two boxes 
   $R_k(i,\tau)$ and $R_k(i',\tau')$ as \emph{non-intersecting} if and only if 
   $$
      \inf_{\substack{(j,s)\in R_k(i,\tau)\\(j',s')\in R_k(i',\tau')}}\|(j,s)-(j',s')\|_1\geq 2.
   $$
\end{definition}
\begin{definition}
   \label{badbox}
   A $k$-box $R_k(i,\,\tau)$ with $k\ge 2$ is said to be \emph{bad} if it contains at least two non-intersecting bad boxes of scale $k-1$. 
   Otherwise, $R_k(i,\,\tau)$ is called \emph{good}.
%    Since boxes are defined by semi-open intervals, we will consider boxes that are barely non-intersecting as intersecting. That is, we consider two boxes 
%    $R_k(i,\tau)$ and $R_k(i',\tau')$ as non-intersecting if 
%    $$
%       \inf_{\substack{(j,s)\in R_k(i,\tau)\\(j',s')\in R_k(i',\tau')}}\|(j,s)-(j',s')\|_1\geq 2.
%    $$
\end{definition}

\begin{remark}
   \label{rem1}
   The event $\{R_{k}(i,\tau) \text{ is bad}\}$ is strictly restricted to the cube $S_k(i)$ and the time interval $T_k(\tau)$. 
   Moreover, by translation invariance, for any pair $(i,\tau),\,(i',\tau')$ and any scale $k$ we have $\mathbb{P}(R_k(i,\,\tau)\text{ is bad})=\mathbb{P}(R_k(i',\,\tau')\text{ is bad})$. 
   Therefore if $R_k(i,\tau)$ and $R_k(i',\tau')$ are two non-intersecting boxes then
   \begin{equation*}
       \mathbb{P}(R_k(i,\,\tau)\text{ and }R_k(i',\,\tau')\text{ are bad})=\mathbb{P}(R_k(i,\,\tau)\text{ is bad})^2.
   \end{equation*}
\end{remark}

\begin{definition}
   Define $\rho_k$ as the probability that a $k$-box $R_k(i,\,\tau)$ is bad, that is, 
   $$
      \rho_k\defn\mathbb{P}\left(R_k(i,\,\tau)\text{ is bad}\right).
   $$
   As noted in Remark \ref{rem1}, this probability does not depend on $(i,\tau)$.
\end{definition}
Recall that $m$ is the variable that appears in the definition of $\ell_k$ from \eqref{ellkdeff}.
\begin{lemma}
   \label{lemrho}
   For any $m>0$, by setting $p$ small enough we obtain 
   \begin{equation*}
       \rho_k\le\rho_1^{2^{k-2}}
   \end{equation*}
\end{lemma}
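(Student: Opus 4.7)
The plan is to proceed by induction on $k$, using a union bound at each step together with the independence of non-intersecting bad events from Remark~\ref{rem1}, and then run the induction in log-scale so that the polynomial factors collected at each level can be absorbed.

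First I would set up the counting. A box $R_k(i,\tau)$ has spatial side $3\ell_k$ and time length $3t_k$, while the cores of $(k-1)$-boxes have dimensions $\ell_{k-1}$ and $t_{k-1}$; using the recursions $\ell_k=m(k-1)^2\ell_{k-1}$ and $t_k=m(k-1)^2 t_{k-1}$, the number of $(k-1)$-boxes whose core sits inside $R_k(i,\tau)$ is at most $N_k\leq C_d\bigl(m(k-1)^2\bigr)^{d+1}$ for some $C_d$. By Definition~\ref{badbox}, if $R_k(i,\tau)$ is bad, there exist two non-intersecting bad $(k-1)$-boxes inside; and by Remark~\ref{rem1} the events that two non-intersecting $(k-1)$-boxes are bad are independent, each having probability $\rho_{k-1}$. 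A union bound over at most $\binom{N_k}{2}\leq N_k^2$ pairs then yields
$$
\rho_k \leq N_k^{\,2}\,\rho_{k-1}^{\,2}, \qquad k\geq 2.
$$

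Next I would pass to log-scale. Set $a_k=-\log\rho_k$ and $b_k=a_k/2^{k-2}$ for $k\geq 2$; the recursive bound reads $a_k\geq 2a_{k-1}-2\log N_k$, hence after dividing by $2^{k-2}$ and telescoping from $k$ down to $2$,
$$
b_k \;\geq\; b_2 \;-\; \sum_{j=3}^{k}\frac{2\log N_j}{2^{j-2}}.
$$
Since $\log N_j = O(\log j)$, the series $K:=\sum_{j\geq 3}\frac{2\log N_j}{2^{j-2}}$ converges to a finite constant $K=K(d,m)$. The target inequality $\rho_k\leq \rho_1^{2^{k-2}}$ is equivalent to $b_k\geq a_1$, so it suffices to ensure $b_2-K\geq a_1$, i.e.\ $\rho_2\leq \rho_1\,e^{-K}$. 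From $\rho_2\leq N_2^{\,2}\rho_1^{\,2}$ this follows provided $N_2^{\,2}\rho_1\leq e^{-K}$.

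Finally I would close by invoking Lemma~\ref{lem:badbox}: $\rho_1\leq C_{\ref*{cte:badbox}}\ell^{d+1/2}p_{\rmax}$ tends to $0$ as $p\to 0$ (because $p_{\rmax}$ is a power of $p$ and $\ell=p^{-1/(3d)}$ grows only polynomially), so for $p$ small enough (depending on $m$ and $d$) we have $\rho_1\leq e^{-K}/N_2^{\,2}$, which seeds the induction. The principal obstacle I anticipate is purely bookkeeping: laying out the counting $N_k$ carefully enough that the polynomial growth in $k$ is controlled, and checking that the log-scale telescoping indeed produces a summable series so that the constants accumulated across all scales can be absorbed by a single smallness assumption on $p$.
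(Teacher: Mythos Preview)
Your proof is correct and follows essentially the same strategy as the paper: both derive the recursion $\rho_k\le N_k^2\rho_{k-1}^2$ from the union bound plus Remark~\ref{rem1}, and both absorb the polynomial prefactors $N_k$ by exploiting that they are dwarfed by the doubling of the exponent. The paper does this by strengthening the induction hypothesis to $\rho_k\le\rho_1^{c_k 2^{k-1}}$ with $c_{k+1}=c_k-\tfrac{1}{10k^2}$ and checking $c_k\ge\tfrac12$, whereas you pass to log-scale and telescope; these are two presentations of the same computation.
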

\begin{proof}
   We prove the lemma in a slightly stronger version: we prove that we can set values $c_k$, satisfying $c_k\geq \frac{1}{2}$ for all $k$, so that
   \begin{equation*}
       \rho_k\leq\rho_1^{c_k2^{k-1}}.
   \end{equation*}
   We prove this by induction. For $k=1$ the statement is trivially satisfied by setting $c_1=1$. Assume the statement is true up to $k$. Now, by the definition of bad box we have
   \begin{align*}
       \rho_{k+1}&\leq \lr{\lr{\frac{3\ell_{k+1}}{\ell_k}}^d\frac{3t_{k+1}}{t_k}}^2\rho_k^2\\
       &= 3^{2d+2}(mk^2)^{2d+2}\rho_k^{2}\\
       &\leq 3^{2d+2}(mk^2)^{2d+2}\lr{\rho_1^{c_k2^{k-1}}}^2\\
       &=3^{2d+2}(mk^2)^{2d+2}\rho_1^{(c_k-c_{k+1})2^k}\rho_1^{c_{k+1}2^k}.
   \end{align*}
   Setting $c_{k+1}=c_k-\frac{1}{10k^2}$ gives that 
   \begin{equation*}
      3^{2d+2}(mk^2)^{2d+2}\rho_1^{(c_k-c_{k+1})2^k}=3^{2d+2}(mk^2)^{2d+2}\rho_1^\frac{2^k}{10k^2}\leq  1,
   \end{equation*}
   for all $k\geq  1$, provided $\rho_1$ is small enough with respect to $m$. Given $m$, $\rho_1$ can be made small enough by setting $p$ small enough, as in Lemma~\ref{lem:badbox}.
   Notice that $c_k>c_1-\sum_{i=1}^\infty\frac{1}{10i^2}\geq  \frac{1}{2}$, which proves the lemma.
\end{proof}

%====================================================================================================================
%====================================================================================================================
%====================================================================================================================
\subsection{Enlargement of boxes}
As we discussed in the proof overview, whenever the walkers are in a favorable region of the environment, we will try to use a simple random walk coupling to bring the walkers together. However, when the walkers are in an unfavorable 
region of the environment, which essentially means that the walkers are approaching a bad box (at some scale), then we will have to refrain from doing this simple random walk coupling, and will just do a na\"ive 
identity coupling in order to let the environments couple around the walkers before
they can reach the bad box. 
Here we will define two types of enlargements of boxes so that it is when the walker enters the enlargement of a bad box that we will need to stop doing the simple random walk coupling.

\begin{definition}[1-enlargement]
   The \emph{1-enlargement} of a box $R_k(i,\,\tau)$ of scale $k$, is the set of boxes 
   \begin{equation*}
       R_k^\enl(i,\,\tau)\defn\bigcup_{(j,\beta)\in\{-3,-2,\dots,3\}^{d+1}} R_k(i+j,\tau+\beta).
   \end{equation*}
   We also denote
   \begin{equation*}
       S_k^\enl(i)\defn\bigcup_{j\in\{-3,-2,\dots,3\}^{d}} S_k(i+j),\quad\text{and}\quad T_k^\enl(\tau)\defn\bigcup_{\beta\in\{-3,-2,\dots,3\}} T_k(\tau+\beta).
   \end{equation*}
\end{definition}

Note that $R_k^\enl(i,\tau)$ is a parallelogram of spatial length $9\ell_k$ and time length $9t_k$ for $d\geq 2$ and $7t_1+\bar t_1$ for $d=1$.

\begin{remark}
   \label{enl1shield}
   The 1-enlargement is a $(d+1)$-dimensional parallelogram centered in $R_k(i,\tau)$ defined to obtain the following property. Let 
   $R_k(i,\tau)$ be a bad box, whose whole 1-enlargement $R_k^\enl(i,\tau)$ is contained inside a good box of scale $k+1$. Then, we know that the only boxes of scale $k$ inside the $(k+1)$-box that 
   can be bad are those intersecting $R_k(i,\tau)$. Let $I$ be the set of tuples $(i',\tau')$ such that 
   $R_k(i,\tau)\cap R_k(i',\tau')\neq \emptyset$. Note that $R_k(i',\tau')\subset R_k^\enl(i,\tau)$ for all $(i',\tau')\in I$. 
   Moreover, the property that we get is that $\bigcup_{(i',\tau')\in I} R_k^\core(i',\tau')$ does not exhaust $R_k^\enl(i,\tau)$ in the sense
   that $\bigcup_{(i',\tau')\in I} R_k^\core(i',\tau')$ is separated from the outside of $R_k^\enl(i,\tau)$ by at least one layer of cores. We define this layer of cores as 
   $$
      R_k^\benl(i,\tau)=R_k^\enl(i,\tau)\setminus \lr{\bigcup_{(i',\tau')\in I} R_k^\core(i',\tau')}.
   $$
%    
%     is such that for any box $R_k^\core(i',\tau')\not\subset R_k^\enl(i,\tau)$ we have $R_k(i,\tau)\cap R_k(i',\tau')=\emptyset$.
%    To see this, let $k=1$, denote by $u_j$ the $d$ dimensional vector where the $j$th entry equals to 1 and all other entries equal to zero, and suppose an edge $e\in E(S_1(i+5u_1))$ receives an update during the time interval 
%    $T_1(\tau)$. If the outcome of this update depended from the updates in the region $R_1^\mathrm{aff}(i,\tau)$ it would mean that there exists a connected component of length at least $5\ell-3\ell=2\ell$, 
%    but then it would follow that by definition of almost good box the event $(A_4)$ is not satisfied in the box $R_1(i+5u_1,\tau)$ and the said box is bad. 
%    To see this at higher scales we can simple use the definition of good box at higher scale. Let $k>1$ and suppose an edge $e\in E(S_k(i+5u_1))$ receives an update during the time interval $T_k(\tau)$. 
%    If the outcome of this update depended from the updates in the region $R_k^\mathrm{aff}(i,\tau)$ it would mean that there exists a connected component of length at least $5\ell_k-3\ell_k=2\ell_k$, 
%    but then it would follow that there exists a sequence of intersecting boxes of scale 1 such that the first box of the sequence intersects $R_k^\mathrm{aff}(i,\tau)$ 
%    and the last one of the sequence intersects $R_k^\enl(i,\tau)$ for which the event $(A_4)$ is not satisfied. These boxes are all bad and the box $R_k(i+5u_1,\tau)$ is bad as well.
\end{remark}

\begin{definition}[2-enlargement]
The \emph{2-enlargement} of a box $R_k(i,\,\tau)$ of scale $k$ is the set of boxes 
\begin{equation*}
    R_k^\enlb(i,\,\tau)\defn\bigcup_{\substack{j\in\{-20,-19,\dots,20\}^d,\\\beta\in\{-18,-17,\dots,3\}}} R_k(i+j,\tau+\beta),
\end{equation*}
and we also denote
\begin{equation*}
    S_k^\enlb(i)\defn\bigcup_{j\in\{-20,-19,\dots,20\}^d} S_k(i+j),\quad\quad T_k^\enlb(\tau)\defn\bigcup_{\beta\in\{-18,-17,\dots,3\}} T_k(\tau+\beta).
\end{equation*}
\end{definition}

Note that the 2-enlargement is a larger $(d+1)$-dimensional parallelogram centered in $R_k(i,\tau)$ so that $\sup\lrc{ T_k^\enl(\tau)}=\sup\lrc{T_k^\enlb(\tau)}$; See Figure~\ref{fig:affenl}.

\begin{figure}
    \centering
    \includegraphics{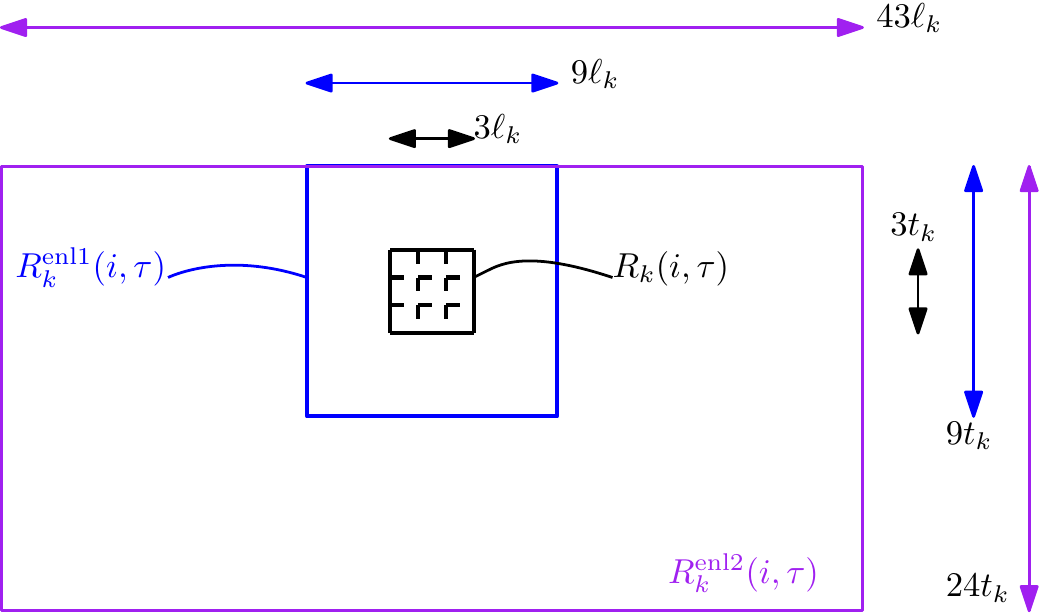}
    \caption{In black a box $R_k(i,\tau)$, with its 1-enlargement in blue and its 2-enlargement in purple. The figure is not to scale and illustrates the case $k\geq 2$; recall that the time intervals are defined differently at scale $1$.}
    \label{fig:affenl}
\end{figure}

We will require a different type of boundary for the second enlargement. 
\begin{definition}[2-enlargement boundary]\label{def:bdenl}
   We define $\partial_\rs^\enlb R_k^\enlb(i,\tau)$, the 2-enlargement boundary of $R_k^\enlb(i,\tau)$, 
   as the set of space-time points $(x,s)\in R_k^\enlb(i,\tau)$ such that $(x,s)\in R_k^\core(i',\tau')$ for some $(i',\tau')$ with $R_k(i',\tau')\subset R_k^\enlb(i,\tau)$ 
   and $R_k(i',\tau')\cap \partial_\rs R_k^\enlb(i,\tau)\neq \emptyset$.
\end{definition}

%\begin{remark}
%The enlargement is an area used to shield the coupling from potential disagreements arising in the bad box. 
%\end{remark}

%====================================================================================================================
%====================================================================================================================
%====================================================================================================================
\subsection{Feasible Paths}
In this subsection we introduce the concept of feasible paths. For any graph $G=(V,E)$, we denote the neighbors of a vertex $v\in V$ by $\mathcal{N}_G(v)=\{w\in V:(v,w)\in E\}$.
A path $\mathcal{P}:\mathbb{R}^+\to V$ on a graph $G=(V,E)$ is a \textit{c\`adl\`ag} function of time such that for any $s\in\mathbb{R}^+$, 
if we take $s'$ to be the smallest value that is larger than $s$ and such that $\mathcal{P}(s')\ne\mathcal{P}(s)$ then $\mathcal{P}(s')\in\mathcal{N}_{G}(\mathcal{P}(s))$.
Note that a path, as defined above, does not consider whether edges are open or closed and is thus allowed to jump across closed edges. The same is true in the definition below.
Recall the definition of the time intervals $\dbar{T}_1^j$ from Definition~\ref{def:dbar} and the inner part $S_1^\inn(i)$ of box $i$ from~\eqref{eq:sinn}.

%A path jumps from $v_j$ to $v_{j+1}$ and remains on vertex $v_j$ from time $s_j$ to $s_{j+1}$. The $\{s_j\}_j$ are the jumping times but the path can be seen as a continuous trajectory.

\begin{definition}[Feasible path]
   \label{feaspathdef}
   A path $\mathcal{P}$ is said to be \emph{feasible} if for any times $s,s'$ with $s,s'\in \dbar{T}_1^j(\tau)\subset T_1^\core(\tau)$ for some $j$ and $\tau>0$, and 
   such that $\mathcal{P}(s)\in S_1^\inn(i)$ for some $i\in\Z^d$ for which $R_1(i,\tau)$ is a good box, then 
   \begin{equation*}
       \|\mathcal{P}(s')-\mathcal{P}(s)\|_{1}<\log^2{\ell}.
   \end{equation*}
%    where $\gamma$ is the constant from the event~$\cG_4$ in Definition~\ref{a14}.
\end{definition}
Intuitively, a feasible path can move at most $\log^2 \ell$ during any interval $\dbar{T}_1^j$ in which it is inside good boxes. Even though the definition of feasible paths does not consider whether edges are open or closed,
this is aligned with the fact that in good boxes the clusters are of size at most $\log^2\ell$.

We will refer to a path that \textit{leaves the box $R_k(i,\tau)$ from the time boundary} 
as a path $\mathcal{P}$ such that $(\mathcal{P}(s),s)\in R_k^\core(i,\tau)$ for some $s\geq  0$ and if $s'>s$ is the smallest value such that 
$(\mathcal{P}(s'),s')\notin R_k(i,\tau)$ then $\mathcal{P}(s')\in S_k(i)$. 
In other words, it is a path that exits $R_{k}(i,\tau)$ through $\partial_\rt^+R_k(i,\tau)$. 
In the following two lemmas we will prove that a feasible path always leaves good boxes from the time boundary. 
Recall the definition of $\cC_x(I)$, the connected component of $x$ during a time interval $I$, from the paragraph preceding Definition~\ref{def:dbar}.
% \begin{figure}
%     \centering
%     \includegraphics{feaspath.pdf}
%     \caption{A feasible path starting anywhere in the core of a good box is contained in the area delimited by the red lines and it therefore exits the box from the time boundary.}
%     \label{feaspath}
% \end{figure}

We define the \emph{spatial core} of a box as follows:
\begin{equation}
   R_k^\score(i,\tau) = S_k^\core(i) \times T_k(\tau).
   \label{eq:score}
\end{equation}
\begin{lemma}
   \label{timeboundlem1}
   For all $p$ small enough (hence, $\ell$ large enough) the following holds. 
   Let $(i,\tau)$ be such that $R_1(i,\tau)$ is a good box. 
   Let $\mathcal{P}$ be a feasible path such that $(\mathcal{P}(s),s)\in R_1^\score(i,\tau)$ for some $s$. 
   Assume that either $s\geq \bar t_1$ or $|\cC_{\cP(s)}(s)|\leq \sqrt{\ell}\log^2 \ell$.
%    , and let 
%    $j$ be such that $s\in \bar{T}_1^j(\tau)$.
   Then, $\mathcal{P}$ leaves $R_1(i,\tau)$ from the time boundary and 
   $$
      \sup_{s' \in \lrb{s,\max T_1(\tau)}} \|\cP(s)-\cP(s')\|_1 \leq \sqrt{\ell}\log^{3}\ell.
   $$    
%    Consequently, if $\Big|\cC_{\cP(s)}\Big(\bar{T}_1^j(\tau)\Big)\Big|< \ell-\sqrt{\ell}\log^{3}\ell$, 
%    then $\mathcal{P}$ leaves $R_1(i,\tau)$ from the time boundary.
%    Moreover, if $\tau>0$ or $j\geq 2$, then 
%    $$
%       \sup_{s' \in [s,(\tau+1)t_1]} \|\cP(s)-\cP(s')\|_1 \leq \sqrt{\ell}\log^{3}\ell.
%    $$    
\end{lemma}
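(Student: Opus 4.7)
The plan is to mirror the strategy of Lemma~\ref{eventa3} for the random walker, replacing the direct use of $\cG_4$ by the feasibility hypothesis, and splitting the window $[s,(\tau+1)t_1]$ into a (possibly empty) \emph{pre-core} subinterval $[s,\tau t_1)\subset \bar T_1(j(\tau))$ and a \emph{core} subinterval $[\max(s,\tau t_1),(\tau+1)t_1]\subset T_1^\core(\tau)$, bounding the displacement on each piece separately. A preliminary observation used throughout: since $\ell=p^{-1/(3d)}$ grows much faster than $\log^2\ell$ as $p\to 0$, we have $S_1^\core(i)\subset S_1^\inn(i)$, and the $L^1$ distance from $S_1^\core(i)$ to $S_1(i)\setminus S_1^\inn(i)$ is at least $\ell-\tfrac{\gamma}{6}\log^2\ell$, which for all $p$ small enough comfortably dominates the target bound $\sqrt\ell\log^3\ell$.

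For the \emph{core} part I would argue inductively over the successive intervals $\dbar T_1(j)\subset T_1^\core(\tau)$. Feasibility applies to each such $\dbar T_1(j)$ as long as $\cP$ is in $S_1^\inn(i)$ at the starting moment of the interval, yielding an increment of less than $\log^2\ell$ per interval. There are at most $\lceil t_1/(\gamma/\mu)\rceil=\lceil\sqrt\ell/\gamma\rceil$ such intervals, so the accumulated core displacement is at most $\sqrt\ell\log^2\ell/\gamma$. By the preliminary observation, this running total keeps $\cP$ strictly inside $S_1^\inn(i)$, so the induction closes.

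For the \emph{pre-core} part (only needed when $s<\tau t_1$), the crucial input is that $\cG_1(i,\tau)\cap\cG_2(i,\tau)$ forbids any edge of $E(S_1(i))$ from updating to open during $\bar T_1(j(\tau))$. Consequently the open cluster $\cC_{\cP(s)}(\cdot)$ can only shrink during $[s,\tau t_1]$, so the pre-core displacement is bounded by $|\cC_{\cP(s)}(s)|$. Under the case $|\cC_{\cP(s)}(s)|\leq\sqrt\ell\log^2\ell$ the bound is immediate. Under the case $s\geq\bar t_1$, I would appeal to Remark~\ref{rem:whysub}: the $\star$-edges form a subcritical percolation of density $p_\rmin/p_\star<p_\crit$, and combining this with $\cG_1$ (no non-$\star$-updates in $S_1(i)$ throughout $T_1(\tau)$) together with the subcritical exponential decay of cluster sizes used in Lemma~\ref{lem:badboxpre} gives $|\cC_{\cP(s)}(s)|\leq \sqrt\ell\log^2\ell$ for all $p$ small enough.

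Adding the two contributions yields a total displacement of at most $\sqrt\ell\log^2\ell+\sqrt\ell\log^2\ell/\gamma\leq \sqrt\ell\log^3\ell$ for all small $p$, which is $o(\ell)$ and hence much smaller than the distance from $S_1^\core(i)$ to $\partial S_1(i)$. Thus $\cP$ cannot escape $R_1(i,\tau)$ through its spatial boundary and must exit through $\partial_\rt^+R_1(i,\tau)$, as required. The main obstacle I foresee is the pre-core bound under the case $s\geq \bar t_1$: since $s$ lies in the overlap window $\bar T_1(j(\tau))\subset T_1^\core(\tau-1)$, neither the good-box event $\cG_4$ nor the feasibility hypothesis is directly available for the box $R_1(i,\tau)$ (the relevant core would be $T_1^\core(\tau-1)$, about which we have no goodness information), so the cluster bound at time $s$ has to be pulled out of the subcriticality of the $\star$-process together with the no-opening property $\cG_2$, rather than from the good-box machinery itself.
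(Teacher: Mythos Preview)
Your core-interval argument is correct and matches the paper's: partition into $\dbar T_1$-intervals, invoke feasibility on each, and check inductively that the running displacement keeps $\cP$ inside $S_1^\inn(i)$.

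The genuine gap is in the pre-core under the hypothesis $s\geq\bar t_1$. First, the step ``pre-core displacement $\leq|\cC_{\cP(s)}(s)|$ because no edge opens'' is only valid if $\cP$ is confined to its open cluster. The random walk is, but a general feasible path is not: by Definition~\ref{feaspathdef} it may traverse closed edges, and feasibility constrains it only on $\dbar T_1$-intervals inside good cores with positive time index. Bounding the percolation cluster at $\cP(s)$ says nothing about the displacement of an abstract feasible path. (The paper makes the same move in its case $s<\bar t_1$, so this slip is inherited; in practice the only feasible path ever used is the walker's trajectory.) Second, and specific to your route, the subcriticality appeal does not produce a deterministic bound. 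Remark~\ref{rem:whysub} and the exponential decay used in Lemma~\ref{lem:badboxpre} give $\PR(|\cC_x|>r)\leq e^{-cr}$; they do not force $|\cC_{\cP(s)}(s)|\leq\sqrt\ell\log^2\ell$ on the event that $R_1(i,\tau)$ is good. The only deterministic cluster control built into goodness is $\cG_4(i,\tau)$, which is enforced on $\dbar T_1$-intervals in $T_1^\core(\tau)\setminus T_1(\tau+1)$, not on the overlap $[\tau t_1-\bar t_1,\tau t_1)\subset T_1^\core(\tau-1)$; and $\cG_1(i,\tau)$ forbids non-$\star$ updates only during $T_1(\tau)$, so edges opened by non-$\star$ updates before time $\tau t_1-\bar t_1$ may still be open at time $s$. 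The paper avoids this detour entirely: in its case $s\geq\bar t_1$ it does not split off a pre-core and does not argue via cluster size; it simply applies the feasibility count across all of $[s,\max T_1(\tau)]$ to obtain $\frac{3t_1\mu}{\gamma}\log^2\ell\leq\sqrt\ell\log^3\ell$ directly.
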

\begin{proof}
   For any $(v,s)\in R_1^\score(i,\tau)$ and any $(v',s')\in\partial_\rs R_1(i,\tau)$, 
   $\|v-v'\|_1\geq  \ell$ as well as $|s'-s|\leq 3t_1$. 
%    Let $\mathcal{P}$ be a feasible path such that $(\mathcal{P}(s),s)\in R_1^\core(i,\tau)$ for some $s$, and $R_1(i,\tau)$ is a good box. 
   Recall that $T_1(\tau)$ is split into intervals $\dbar{T}_1^j(\tau)$ of length $\frac{\gamma}{\mu}$. 
   Assume first that $s\geq \bar t_1$. 
   Then the subinterval $\bar{T}_1^j(\tau)$ containing $s$ consists only of positive times; hence, $\cC_{\cP(s)}(s)$ has cardinality at most $\log^2\ell$. 
   From the definition of feasible paths we obtain
   \begin{equation*}
       \sup_{s'\in\lrb{s,\max T_1(\tau)}}\|\mathcal{P}(s)-\mathcal{P}(s')\|_1\leq  \frac{3t_1\mu}{\gamma} \log^2\ell.
   \end{equation*}
   For all small enough $p$ we have $\frac{3t_1\mu}{\gamma} \log^2\ell = \frac{3\sqrt{\ell}}{\gamma} \log^2\ell \leq \ell\log^3\ell$, recalling that $\gamma$ is set before we take $p$ small enough as 
   in Remark~\ref{rem:gamma}.
   
   If $s\in[0,\bar t_1)$ then $s\in \bar{T}_1^1(0)\subset T_1(0)$ and we use that during $T_1^1(0)$ no edge in $S_1(i)$ opens. Therefore,
   \begin{equation*}
       \sup_{s'\in\lrb{s,\max T_1(\tau)}}\|\mathcal{P}(s)-\mathcal{P}(s')\|_1\leq  |\cC_{\cP(s)}(s)|+\frac{3t_1\mu}{\gamma} \log^2\ell\leq \sqrt{\ell}\log^3\ell.
   \end{equation*}
\end{proof}

The next lemma is the analogous of Lemma~\ref{timeboundlem1} for higher scales.
\begin{lemma}
   \label{timeboundlem}
%    For all $k\geq  2$, there exists $m$ large enough such that for any feasible path such that there exists $s\geq  0$ for which $(\mathcal{P}(s),s)\in R_k^\core(i,\tau)$ of a good box $R_k(i,\tau)$. It must be the case that the path leaves $R_k(i,\tau)$ from the time boundary.
% 
   For all $m$ large enough and all $p$ small enough with respect to $m$, the following holds. 
   Let $k\geq 2$ and 
   let $(i,\tau)$ be such that $R_k(i,\tau)$ is a good box. 
   Let $\mathcal{P}$ be a feasible path such that $(\mathcal{P}(s),s)\in R_k^\score(i,\tau)$ for some $s$.
   Assume that either $s\geq\bar t_1$ or $|\cC_{\cP(s)}(s)|\leq \sqrt{\ell}\log^2 \ell$.   
   Then $\mathcal{P}$ leaves $R_k(i,\tau)$ from the time boundary.
   Moreover, while the path is inside the box, from time $s$ up to time $(\tau+2)t_k$, the path must be within distance $\ell_k/9$ from $\mathcal{P}(s)$.
\end{lemma}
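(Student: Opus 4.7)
I would prove this by induction on $k \geq 2$, using Lemma~\ref{timeboundlem1} as the base case. For $k=2$, note that by Definition~\ref{badbox} all the bad scale-$1$ sub-boxes inside a good $R_2(i,\tau)$ must pairwise intersect, so they form at most a single cluster of bounded diameter (at most a constant multiple of $\ell$ in space and $t_1$ in time). Track $\cP$ through the successive scale-$1$ cores it enters: whenever $\cP$ enters the spatial core of a good scale-$1$ sub-box, Lemma~\ref{timeboundlem1} bounds its subsequent displacement by $\sqrt\ell\log^3\ell$ before it exits through the sub-box's time boundary. The hypothesis propagates across entries: once $s\geq\bar t_1$ it remains so, and during the initial interval $[0,\bar t_1)$ the good-box event $\cG_2$ prevents any edge in $S_1(i)$ from opening, so the cluster-size bound $|\cC_{\cP(s')}(s')|\leq \sqrt\ell\log^2\ell$ carries over. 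Summing over the at most $3t_2/t_1=3m$ successive scale-$1$ time intervals contained in $T_2(\tau)$ gives good-box displacement $O(m\sqrt\ell\log^3\ell)$, and the traversal of the bad cluster (if any) adds a further $O(\ell)$ from its diameter. For $p$ small enough (so $\ell=p^{-1/(3d)}$ is large) this total fits inside $\ell_2/9=m\ell/9$.

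For the inductive step $k\geq 3$, assume the statement at scale $k-1$. By Definition~\ref{badbox}, the good box $R_k(i,\tau)$ contains at most one cluster of pairwise intersecting bad $(k-1)$-sub-boxes, of bounded spatial extent $O(\ell_{k-1})$. Track $\cP$ through successive $(k-1)$-cores: each time $\cP$ enters a good $(k-1)$-core, the inductive hypothesis bounds its subsequent displacement by $\ell_{k-1}/9$ before it exits through that sub-box's time boundary, and the hypothesis on the starting configuration is preserved since feasibility is a scale-$1$ property and the cluster-size bound is either already irrelevant ($s\geq\bar t_1$) or transferred via the underlying scale-$1$ structure. The number of scale-$(k-1)$ time intervals inside $T_k(\tau)$ is $3t_k/t_{k-1}=3m(k-1)^2$, so the cumulative displacement from good sub-boxes is a controlled multiple of $\ell_k$. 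The bad $(k-1)$-cluster, when present, contributes at most an additional $O(\ell_{k-1})$ via the shielding argument of Remark~\ref{enl1shield}: once $\cP$ enters the cluster's $1$-enlargement, the layer of good $(k-1)$-sub-boxes surrounding it (controlled by the inductive hypothesis) prevents any spatial escape before the cluster's time-extent elapses. Choosing $m$ large first and then $p$ small enough closes the arithmetic into the announced bound $\ell_k/9$.

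Since the path stays within $\ell_k/9$ of $\cP(s)\in S_k^\core(i)$, and the $L^1$ distance from any point of $S_k^\core(i)$ to $\partial S_k(i)$ is at least $\ell_k$, the path cannot reach the spatial boundary and must therefore leave $R_k(i,\tau)$ through its time boundary. The main obstacle is the bookkeeping around the bad $(k-1)$-cluster: one must simultaneously verify that the path cannot cross the shield of good surrounding sub-boxes faster than the cluster's own time-extent, and that the cumulative displacements from good sub-boxes, which a priori grow linearly with the number of $(k-1)$-time intervals, still fit into the stated $\ell_k/9$ window. Both rely on the choice of $m$ (taken first) and of $p$ sufficiently small (taken afterward so that $\ell$ is large), with the feasibility definition at scale $1$ and the $1$-enlargement geometry providing the geometric control.
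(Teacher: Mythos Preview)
Your overall strategy matches the paper's: induction on $k$, with the bad sub-boxes at scale $k-1$ confined (via Remark~\ref{enl1shield}) to a single $1$-enlargement of spatial size $O(\ell_{k-1})$, and the remaining good $(k-1)$-sub-boxes handled one at a time by the inductive hypothesis. The base case at $k=2$ is also fine, since each scale-$1$ good sub-box contributes only $\sqrt{\ell}\log^3\ell \ll \ell$.

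However, there is a genuine gap in the inductive step for $k\geq 3$: the induction does not close with the bare constant $1/9$. Your count of scale-$(k-1)$ sub-boxes traversed is of order $m(k-1)^2$ (whether you count $3t_k/t_{k-1}$ cores or, more carefully, $\frac{3t_k}{2t_{k-1}}$ boxes, it is at least $m(k-1)^2$). Multiplying by the inductive bound $\ell_{k-1}/9$ already gives at least $m(k-1)^2\cdot\tfrac{\ell_{k-1}}{9}=\tfrac{\ell_k}{9}$ from the good sub-boxes alone, before adding the $O(\ell_{k-1})$ from the bad cluster. So the total strictly exceeds $\ell_k/9$, and ``choosing $m$ large first and then $p$ small'' does not help: both the accumulated good contribution and the target scale identically with $m$. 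The paper avoids this by strengthening the inductive hypothesis to $\sup\|\mathcal{P}(s')-\mathcal{P}(s)\|_1\leq c_k\ell_k$ with an auxiliary increasing sequence $c_k$ (starting at $c_1=\tfrac{1}{100}$, $c_2=\tfrac{1}{10}$, then $c_{j+1}=c_j+\tfrac{11}{mj^2}$). The point is that the recurrence gives $c_{k+1}\ell_{k+1}\geq m k^2 c_k\ell_k + O(\ell_k)$ with $c_{k+1}-c_k=O\!\big(\tfrac{1}{mk^2}\big)$, and since $\sum_j 1/j^2<\infty$, taking $m$ large forces $c_k\leq 1/9$ for all $k$. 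You need this slack mechanism (or an equivalent) to make the bookkeeping close.
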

\begin{proof}
   For any $(v,s)\in R_k^\score(i,\tau)$ and any $(v',s')\in\partial_\rs R_k(i,\tau)$, $\|v-v'\|_1\geq  \ell_k$ as well as $|s'-s|\leq 3t_k$. We do a proof by induction on $k$. 
   The case $k=1$ is be a direct consequence of Lemma~\ref{timeboundlem1}. % with the difference that when $k=1$ the boxes are smaller in the time dimension, which forces us to require also that $R_k(i,\tau-1)$ is good. 
   We will actually assume a slightly stronger induction hypothesis. 
   Take $c_1=\frac{1}{100}$, $c_2=\frac{1}{10}$ and, for $j\geq 2$, set $c_{j+1}=c_{j}+\frac{11}{m j^2}$. 
   We take $m$ large enough so that $c_j\leq \frac{1}{9}$ for all $j\geq 1$.
   Now, for a scale $k$, assume that if $\mathcal{P}(s)\in R_k^\score(i,\tau)$ for some $(i,\tau)$ and some $s$ as in the statement of the lemma, 
   then $\sup_{s'\in\lrb{s,(\tau+2)t_k}}\|\mathcal{P}(s)-\mathcal{P}(s')\|_1\leq c_k\ell_k$.
   We want to prove the above for scale $k+1$. 
   
   We split into two cases, starting with $k\geq 2$ (thus, $k+1\geq 3$). 
   Let now $\mathcal{P}$ be a feasible path such that $(\mathcal{P}(s),s)\in R_{k+1}^{\score}(i,\tau)$, and $R_{k+1}(i,\tau)$ is a good box. 
   Thus there are no pairs of non intersecting bad boxes of scale $k$ inside $R_{k+1}(i,\tau)$. 
   By Remark~\ref{enl1shield}, if $R_{k+1}(i,\tau)$ contains at least one bad box $R_k(i',\tau')$, then all bad boxes contained in $R_{k+1}(i,\tau)$ are contained in 
   $R_k^\enl(i',\tau')$. Inside $R_k^{\enl}(i',\tau')$ a feasible path has no restriction on how quickly it can move and it could potentially traverse $S_k^{\enl}(i')$ instantaneously. 
   The remaining boxes of scale $k$ that are in $R_{k+1}(i,\tau)$ are good and by the inductive hypothesis we can use that in these ones the maximum displacement of the path is bounded above by $c_k\ell_k$, so for $k\geq 2$ it follows that
   \begin{align*}
       \sup_{s'\in\lrb{s,(\tau+2)t_{k+1}}}\|\mathcal{P}(s)-\mathcal{P}(s')\|_1\leq &9\ell_k+\lr{12+\frac{2t_{k+1}}{2t_k}}c_k\ell_k\\
       \leq & 9\ell_k+\lr{12+k^2m}c_k\ell_k\\
       =&\left(\frac{9}{mk^2}+\frac{12 c_k}{mk^2}+c_k\right)\ell_{k+1}\\
       &\leq c_{k+1}\ell_{k+1}.
   \end{align*}
   The term $\lr{12+\frac{2t_{k+1}}{2t_k}}$ accounts for the following boxes. 
   Each time the path $\mathcal{P}$ finds itself at the starting time of a $k$-core, 
   it spends at least time $2t_k$ inside the corresponding $k$-box, and after that amount of time finds itself at the starting time of 
   another $k$-core. 
   This gives at most $\frac{2 t_{k+1}}{2 t_k}$ $k$-cores for which we can apply the induction hypothesis. 
   There are situations, however, that we cannot guarantee that the path $\mathcal{P}$ is at the 
   \emph{starting} time of a $k$-core. One first situation is the very first box. We can still apply the induction hypothesis in such cases, 
   since the hypothesis requires only that the path is inside the spatial core, regardless of it being the starting time of a core or not, but 
   can give rise to at most $12$ additional boxes to the counting: the first box, the boxes right before and after $R_k^\enl(i',\tau')$, and the $9$ time intervals contained in $R_k^\enl(i',\tau')$.
   
%    To understand where the number $11$ comes from, 
%    let $(i',\tau')$ be a $k$-box in $R_{k+1}(i,\tau)$ such that $R_{k}^\enl(i',\tau')$ contains all bad $k$-boxes in $R_{k+1}(i,\tau)$, 
%    and let $(i'',\tau'')$ be such that $\mathcal{P}(s)\in R^\core_{k}(i'',\tau'')$; 
%    that is, $R^\core_{k}(i'',\tau'')$ is the $k$-core containing the path
%    $\mathcal{P}$ at its starting time $s$. 
%    Since we do not know whether or not $s$ corresponds to 
%    the beginning of the time interval $T^\core_k(\tau'')$, 
%    we cannot guarantee that the path $\mathcal{P}$ spends time $2t_k$ inside $R_k(i'',\tau'')$ after time $s$. Nonetheless, we can guarantee that, after time $s$, $\mathcal{P}$ spends at least 
%    $t_k$ time in $R_k(i'',\tau'')$. This fact can happen for at most $11$ boxes: the $k$-box $R_k(i'',\tau'')$, the last box $\mathcal{P}$ finds itself while traversing $R_{k+1}(i,\tau)$, and 
%    the $k$-cores that $\mathcal{P}$ enters each time interval it exists $R^\enl_{k}(i',\tau')$. 
%    The last case amounts to at most $9$ boxes 
%    because each time the path finds itself inside a $k$-core of a good $k$-box it spends at least $t_k$ time inside that box, 
%    and $T_k^\enl(\tau')$ has length $9t_k$.
   
   Now we turn to the last case, which is $k=1$ (that is, $k+1=2$). 
   We proceed in the same way as before, but taking care of the fact that boxes at scale $1$ have a different length in the time dimension. 
   We have
   \begin{align*}
       \sup_{s'\in\lrb{s,(\tau+2)t_{2}}}\|\mathcal{P}(s)-\mathcal{P}(s')\|_1\leq 
       &9\ell_1+\lr{12+\frac{2t_{2}}{t_1}}c_1\ell_1\\
       \leq & 9\ell_1+\lr{12+2m}c_1\ell_1\\
       =&\left(\frac{9}{m}+\frac{12c_1}{m}+2c_1\right)\ell_{2}\\
       &\leq c_{2}\ell_{2}.
   \end{align*}
   The proof is concluded by taking $m$ large enough to guarantee that 
   $c_{k+1}=\frac{1}{100}+\frac{1}{10}+\frac{20}{m}\sum_{i=2}^k\frac{1}{i^2}<\frac{1}{9}$ for all $k$.
\end{proof}

% \begin{corollary}
% \label{corfeaspath}
% If $\mathcal{P}$ is a feasible path, for any $s,s'>0$ such that $s'>s$ and $s'-s\leq  2t_k$, and $(\mathcal{P}(t),t)\in R_k(i,\tau)$ for all $t\in[s,s']$ with $R_k(i,\tau)$ being good, then $\|\mathcal{P}(s')-\mathcal{P}(s)\|_1\leq  \ell_k$.
% \end{corollary}

Next we will prove that if a feasible path enters the 2-enlargement of a box $R_k(i,\tau)$ from its spatial boundary, 
and all the $k$-boxes inside $R_k^\enlb(i,\tau)\setminus R_k^\enl(i,\tau)$ are good, then the path remains far from the box $R_k(i,\tau)$.
For this lemma, recall the definition of $\partial_\rs^\enlb R_k^\enlb(i,\tau)$ the 2-enlargement boundary of $R_k(i,\tau)$ from Definition~\ref{def:bdenl}.

\begin{lemma}
   \label{spaceenl}
   Let $\mathcal{P}$ be a feasible path such that $(\mathcal{P}(s),s)\in \partial_\rs^\enlb R_k^{\enlb}(i,\tau)$ for some $(i,\tau)$ and $s\geq0$. 
   Assume that either $s\geq\bar t_1$ or $|\cC_{\cP(s)}(s)|\leq \sqrt{\ell}\log^2 \ell$.
   Assume also that $R_k(i',\tau')$ is good for all $R_k(i',\tau')\subset R_k^\enlb(i,\tau)\setminus R_k^\enl(i,\tau)$. 
   Then,
   $$
      \inf_{\substack{v\in S_k^{\enl}(i)\\s'\geq s,\;s'\in T_k^{\enlb}(\tau)}}\|\mathcal{P}(s')-v\|_1\geq  12\ell_k.
   $$
%     for all $v\in S_k^{\enl}(i)$, for all $s'\in T_k^{\enlb}(\tau)$ with $s'\geq  s$ one has $\|\mathcal{P}(s')-v\|_1\geq  15\ell_k$.
\end{lemma}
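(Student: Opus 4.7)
The plan is to iterate Lemma~\ref{timeboundlem} along the path $\mathcal{P}$. The starting point $(\mathcal{P}(s),s)$ lies in the spatial core $R_k^\core(i_0,\tau_0)$ of some $k$-box $R_k(i_0,\tau_0)\subset R_k^\enlb(i,\tau)$ that meets $\partial_\rs R_k^\enlb(i,\tau)$; such a box is automatically contained in $R_k^\enlb(i,\tau)\setminus R_k^\enl(i,\tau)$ and is therefore good by hypothesis. A direct unpacking of the definitions of $S_k^\enl$ and $S_k^\enlb$ shows that, spatially, every such core lies at $L^1$-distance at least $14\ell_k$ from $S_k^\enl(i)$.

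I then define inductively $s_0=s$ and $s_{j+1}=(\tau_j+2)t_k$, choosing $(i_{j+1},\tau_{j+1})$ with $\tau_{j+1}=\tau_j+3$ so that $(\mathcal{P}(s_{j+1}),s_{j+1})\in R_k^\score(i_{j+1},\tau_{j+1})$. So long as the visited box $R_k(i_j,\tau_j)$ is good, Lemma~\ref{timeboundlem} yields that $\mathcal{P}$ exits it through its time boundary at time $s_{j+1}$ and that $\|\mathcal{P}(s')-\mathcal{P}(s_j)\|_1\le\ell_k/9$ for every $s'\in[s_j,s_{j+1}]$; the side hypothesis $s_j\ge\bar t_1$ required for $j\ge 1$ is automatic since $t_k\ge t_1>\bar t_1$ for $p$ small. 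Since $s_{j+1}-s_j=3t_k$ for $j\ge 1$ and $|T_k^\enlb(\tau)|=24t_k$, at most $N\le 9$ iterations fit inside $T_k^\enlb(\tau)$, bounding the cumulative displacement by $N\cdot\ell_k/9\le\ell_k$.

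Combining the two steps, $\mathcal{P}$ remains at $L^1$-distance at least $14\ell_k-\ell_k=13\ell_k>12\ell_k$ from $S_k^\enl(i)$ throughout $T_k^\enlb(\tau)$; in particular it never enters $R_k^\enl(i,\tau)$, so every visited $k$-box lies in $R_k^\enlb(i,\tau)\setminus R_k^\enl(i,\tau)$ and is good, closing the induction self-consistently. If the path escapes $R_k^\enlb(i,\tau)$ through its spatial boundary, it is already at distance at least $21\ell_k-4\ell_k=17\ell_k$ from $S_k^\enl(i)$, and any re-entry happens through $\partial_\rs^\enlb R_k^\enlb(i,\tau)$, which restarts the induction from distance $14\ell_k$ while strictly consuming part of the remaining time budget, so the same bound propagates. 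I expect the main obstacle to be precisely this self-referential step, since the inductive hypothesis of good boxes depends on the very displacement bound being proved; the slack $14\ell_k-12\ell_k=2\ell_k$ is however comfortably larger than the worst-case drift $\ell_k$, so the argument closes.
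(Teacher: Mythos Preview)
Your proof follows essentially the same approach as the paper's: iterate Lemma~\ref{timeboundlem} along the path, bound the number of iterations by the length of $T_k^\enlb(\tau)$, and compare the resulting cumulative displacement with the starting distance from $S_k^\enl(i)$. Your bookkeeping differs slightly---you choose $\tau_{j+1}=\tau_j+3$ (time steps of $3t_k$, at most $9$ iterations, total drift $\le\ell_k$), while the paper effectively uses steps of $2t_k$ and takes the coarser bound of $24$ iterations (total drift $\le 24\ell_k/9$); the paper also records the starting distance as $15\ell_k$ rather than your $14\ell_k$. Either set of constants closes the inequality.

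There is one genuine omission: your iteration invokes Lemma~\ref{timeboundlem}, which is stated for $k\ge 2$, and your formula $s_{j+1}=(\tau_j+2)t_k$ relies on $|T_k(\tau)|=3t_k$. For $k=1$ the time boxes have length $t_1+\bar t_1$, so the exit time is $(\tau_j+1)t_1$ (from Lemma~\ref{timeboundlem1}) and each step advances only by $t_1$; over $|T_1^\enlb(\tau)|\approx 22t_1$ this gives roughly $22$--$24$ iterations, not $9$. The paper's loose bound of $24$ iterations is chosen precisely to absorb the $k=1$ case uniformly. Your argument is easily repaired---the per-step displacement $\sqrt{\ell}\log^3\ell$ from Lemma~\ref{timeboundlem1} is far smaller than $\ell/9$, so the $k=1$ inequality is actually slacker---but you should state the $k=1$ case separately rather than rely on a formula that does not apply there.
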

\begin{proof}
   By hypothesis every box $R_k(\cdot,\cdot)\subset R_k^{\enlb}(i,\tau)\setminus R_k^{\enl}(i,\tau)$ is good.
   For any $s'\in T_k^{\enlb}(\tau)$, $|s-s'|\leq  24t_k$. 
   Assume without loss of generality that during $[s,s']$ the path never visits a box $R_k(\cdot,\cdot)$ which is not contained in $R_k^\enlb(i,\tau)$; otherwise we can carry out the proof separately to each portion of the 
   path that only traverses boxes $R_k(\cdot,\cdot)$ contained in $R_k^\enlb(i,\tau)$. 
   Let $(i',\tau')$ be such that $(\mathcal{P}(s),s)\in R_k^\core(i',\tau')$.
   Since $R_k(i',\tau')$ is a good box, letting $s''=\sup\lrc{T_k(\tau')}$ we have that 
   $\|\mathcal{P}(s)-\mathcal{P}(s'')\|_1\leq  \frac{\ell_k}{9}$ by Lemma~\ref{timeboundlem}. 
   If $s''< s'$, 
   we can iterate the above argument obtaining that 
   $\|\mathcal{P}(s)-\mathcal{P}(s')\|_1\leq  24 \frac{\ell_k}{9}$, where $24$ amounts for the largest number of iterations. 
   Since boxes have length $2t_k$ in the time dimension, it would be enough to replace $24$ by $12$ for $k\geq 2$, but we just use the larger bound $24$ to accommodate also the $k=1$ case, for which the 
   length of a box in the time dimension is smaller.
   
   Now since $\frac{24 \ell_k}{9}$ is smaller than $13\ell_k$, which is the distance between $\mathcal{P}(s)$ and the spatial boundary of $S_k^\enl(i)$ enlarged by all boxes $R_k(\cdot,\cdot)$ that intersects it, 
   the path can only traverse good $k$-boxes while inside $R_k^\enlb(i,\tau)$. 
   In addition, for any $ v\in S_k^{\enl}(i)$ one has 
   \begin{equation*}
      \|v-\mathcal{P}(s')\|_1
      \geq \|v-\mathcal{P}(s)\|_1-\frac{24\ell_k}{9} 
      \geq 15\ell_k-\frac{24\ell_k}{9} 
      \geq  12\ell_k.
   \end{equation*}
\end{proof}

%====================================================================================================================
%====================================================================================================================
%====================================================================================================================

\subsection{Great Boxes}

We will need a stroger notion for boxes of scale $1$, which we will call \emph{great} boxes.
\begin{definition}
A box $R_1(i,\tau)$ is said to be $k$-great if for all $k'\leq  k$, for all $R_{k'}(i',\tau')$ such that $R_{k'}^{\enlb}(i',\tau')$ intersects $R_1(i,\tau)$ then $R_{k'}(i',\tau')$ is good. Moreover, we define \begin{equation*}
    G_k\defn\{(i,\tau):R_1(i,\tau)\text{ is }k\text{-great}\}
\end{equation*}
to be the set of $k$-great boxes.
\end{definition}

Later we will see that the walker has to traverse a feasible path. 
The next lemma will be used to say that if the walker traverses a $k$-box that is good with a large neighborhood of good $k$-boxes, then it is necessarily the case that the walker has to traverse enough $k$-great boxes. 
Such great boxes will be the places where we will attemp a simple random walk coupling later. 
\begin{lemma}
   \label{gbcross}
   Let $\mathcal{P}$ be a feasible path such that $(\mathcal{P}(\tau t_k),\tau t_k)\in \partial_\rt^- R_k^\core(i,\tau)$ for some $(i,\tau)$.  
   Assume that either $\tau>0$ or $|\cC_{\cP(\tau t_k)}(\tau t_k)|\leq \sqrt{\ell}\log^2\ell$.
   Then there exists $C_\newcte{cte:gbc}>0$ such that letting $r=C_{\ref*{cte:gbc}}\frac{t_k}{t_1}$ we can find times $s_1<\dots<s_r$ and distinct space-time indices $(i_1,\tau_1),\dots,(i_r,\tau_r)$ 
   such that the following all hold:
   \begin{itemize}
      \item For all $j$, $(\mathcal{P}(s_j),s_j)\in \partial_\rt^-R_1^\core(i_j,\tau_j)$ and $\mathcal{P}$ exits $R_1(i_j,\tau_j)$ from the time boundary.
      \item $R_1(i_j,\tau_j)\subset R_k(i,\tau)$ are $k$-great for all $j$.
      \item $\tau_1 t_1\geq \tau t_k$ and $\tau_j \geq \tau_{j-1}+2$ for all $j\in\lrc{2,3,\ldots,r}$.
   \end{itemize}
%    for which $(\mathcal{P}(s_j),s_j)\in \partial_\rt^-R_1^\core(i_j,\tau_j)$ for all $j$. Moreover, 
%    $R_1(i_j,\tau_j)\subset N_k(i,\tau)$ are $k$-great for all $j$, 
%    and we have
%    $\tau_1 t_1\geq \tau t_k$ 
%    and $\tau_j \geq \tau_{j-1}+2$ for all $j\in\lrc{2,3,\ldots,r}$.%R_1(i_j,\tau_j)$ does not interesect $R_1(i_{j'},\tau_{j'})$ for all $j\ne j'$.
\end{lemma}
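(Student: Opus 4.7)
The plan is to proceed by induction on the scale $k$, using the confinement lemmas of the previous subsection to restrict where the path can be, and then using the hierarchical good-box property to count great scale-1 boxes along the trajectory. The base case $k=1$ is immediate: take $s_1=\tau t_1$ and $(i_1,\tau_1)=(i,\tau)$, and invoke Lemma~\ref{timeboundlem1} to conclude that $\mathcal{P}$ exits $R_1(i,\tau)$ from the time boundary; the hypothesis on $|\mathcal{C}_{\mathcal{P}(\tau t_1)}(\tau t_1)|$ or on $\tau>0$ is exactly what that lemma requires.

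For the inductive step at scale $k\geq 2$, I would first apply Lemma~\ref{timeboundlem} to confine $\mathcal{P}$ to $R_k(i,\tau)$ throughout $[\tau t_k,(\tau+2)t_k]$, with spatial displacement at most $\ell_k/9$. During this window the path crosses $2t_k/t_{k-1}=2m(k-1)^2$ consecutive scale-$(k-1)$ time intervals, and at each interval start the path lies in the spatial core of some scale-$(k-1)$ box with the small-component condition automatically propagated (since $\tau t_k\geq\bar t_1$ once $\tau>0$, and the first box inherits the hypothesis from the statement). Because $R_k(i,\tau)$ is good, all bad $(k-1)$-boxes inside it are mutually intersecting and, via Remark~\ref{enl1shield}, are contained in a single $(k-1)$-enlargement occupying at most a constant number of scale-$(k-1)$ time intervals. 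Hence the path enters a good $(k-1)$-box from its bottom time boundary for all but at most a constant number of those $2m(k-1)^2$ entries.

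For each such good $(k-1)$-entry, the induction hypothesis produces $\geq C_{\cref{cte:gbc}}t_{k-1}/t_1$ scale-1 boxes along the path that are $(k-1)$-great, satisfy the entry/exit conditions, and are appropriately spaced. Summing across the $\gtrsim m(k-1)^2$ good scale-$(k-1)$ entries yields $\gtrsim C_{\cref{cte:gbc}}\,t_k/t_1$ candidate scale-1 boxes for $m$ sufficiently large, and keeping every other one preserves the separation $\tau_j\geq\tau_{j-1}+2$ at the cost of a factor $1/2$ in the constant. Finally, the exit-from-time-boundary property for each selected $R_1(i_j,\tau_j)$ follows from Lemma~\ref{timeboundlem1}: $k$-greatness implies scale-1 goodness, and the time stamp $\tau_j t_1\geq\tau t_k\geq\bar t_1$ supplies the required hypothesis (with the initial box covered by the hypothesis on $|\mathcal{C}_{\mathcal{P}(\tau t_k)}(\tau t_k)|$ in the degenerate case $\tau=0$).

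The step I expect to be the main obstacle is promoting $(k-1)$-greatness to $k$-greatness, since $k$-great additionally demands that every scale-$k$ box whose 2-enlargement intersects $R_1(i_j,\tau_j)$ is good. Using that $\mathcal{P}$ stays within $\ell_k/9$ of its entry point, the candidate box $R_1(i_j,\tau_j)$ lies deep inside $R_k(i,\tau)$, so the relevant scale-$k$ neighbors are a bounded set containing $R_k(i,\tau)$ (good by hypothesis); controlling the remaining neighbors presumably requires either an implicit contextual assumption that they are also good, or a strengthening of the inductive hypothesis to work with a good scale-$k$ neighborhood rather than a single good $R_k(i,\tau)$. Once this is in place, the counting argument above delivers the desired $r=C_{\cref{cte:gbc}}\,t_k/t_1$.
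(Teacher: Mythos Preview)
Your outline matches the paper's proof: induction on $k$, confinement via Lemma~\ref{timeboundlem}, applying the inductive hypothesis in each good scale-$(k-1)$ sub-window and subtracting the bounded time spanned by the 2-enlargements around the single cluster of bad $(k-1)$-boxes (the paper makes the constant explicit, obtaining $c_k=\prod_{j<k}\bigl(1-\tfrac{57}{2mj^2}\bigr)$). The obstacle you flag is real---the lemma as stated is missing a hypothesis---and the paper resolves it exactly as your first suggestion: it silently adds in the proof the assumption that every scale-$k$ box whose 2-enlargement meets $R_k(i,\tau)$ is good, after which the promotion is the one-line observation that any $(k-1)$-great $R_1(\tilde\imath,\tilde\tau)\subset R_k(i,\tau)$ is automatically $k$-great, since $R_k^{\enlb}(i',\tau')\cap R_1(\tilde\imath,\tilde\tau)\neq\emptyset$ forces $R_k^{\enlb}(i',\tau')\cap R_k(i,\tau)\neq\emptyset$.
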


\begin{proof}
First, note that if a box $R_1(\tilde i, \tilde \tau)$ is $(k-1)$-great and is contained in $R_k(i,\tau)$ then it is also $k$-great.
We will prove the statement of the lemma replacing $C_{\ref*{cte:gbc}}$ with $c_k$, some function of $k$. 
Then the lemma follows by showing that there is a universal value $C_{\ref*{cte:gbc}}$ such that $0<C_{\ref*{cte:gbc}}\le c_k$ for all $k$. 
We will do a proof by induction on $k$. Case $k=1$ is trivially verified by choosing $c_1=1$ because in this case $r=c_1=1$
and we take $(i_1,\tau_1)=(i,\tau)$.

Now, for $k\geq2$, assume the lemma is true up to scale $k-1$ 
and consider a feasible path that at time $\tau t_k$ is inside $\partial_\rt^- R_{k}^\core(i,\tau)$ such that every box of scale $k$ whose $2$-enlargement 
intersects $R_{k}(i,\tau)$ is good. 
By Remark~\ref{enl1shield}, the bad boxes of scale $k-1$ inside $R_{k}(i,\tau)$ (if there are any) 
are all contained in $R_{k-1}^{\enl}(i',\tau')$ for some $i',\tau'$. 
We then regard all boxes of scale $1$ which are in at least one of the $2$-enlargement of the boxes contained in $R_{k-1}^\enl(i',\tau')$ 
as potentially not $(k-1)$-great.
By Lemma \ref{timeboundlem} 
we know that $\mathcal{P}$ crosses $\partial_\rt^+R_{k}(i,\tau)$ before $\partial_\rs R_{k}(i,\tau)$. 
In words the path stays for time at least $2t_{k}$ in the box $S_{k}(i)$. 

Since the path starts from $\partial_\rt^- R_{k}^\core(i,\tau)$, 
it starts on $\partial_\rt^- R_{k-1}^\core(i'',\tau'')$ for some $i''$ and $\tau''$.
In this $(k-1)$-box we can apply the inductive hypothesis, so after time $2t_{k-1}$ the path has gone through at least 
$c_{k-1}\frac{t_{k-1}}{t_1}$ distinct $(k-1)$-great boxes. 
Since this path remains inside $R_{k}(i,\tau)$, 
we immediately obtain that such boxes are all $k$-great boxes.
When the path reaches $\partial_\rt^+R_{k-1}(i'',\tau'')$ we have that the path is now on 
$\partial_\rt^-R_{k-1}^\core(i''+j,\tau''+2)\subset R_k(i,\tau)$ for some $j\in\Z^d$ and from here we can reapply the inductive hypothesis. 
So it remains to count how many times we can iterate this procedure before $2t_{k}$ amount of time has passed. 

To do this, we first count how much time the path can spend inside the $2$-enlargement of a bad $(k-1)$-box. 
It suffices to count how much time is spanned by the boxes whose $2$-enlargements intersect $R_{k-1}^\enl(i',\tau')$, which is 
$|T_{k-1}^\enl(\cdot)|+2|T_{k-1}^\enlb(\cdot)|=57t_{k-1}$. Hence, the number of times the above procedure can be iterated is at least 
$$
   \frac{2t_{k}-57t_{k-1}}{2t_{k-1}} = \frac{t_{k}}{t_{k-1}}\lr{1-\frac{57}{2m(k-1)^2}}.
$$
From the inductive hypothesis, the path will traverse at least 
$$
   \frac{t_{k}}{t_{k-1}}\lr{1-\frac{57}{2m(k-1)^2}}c_{k-1}\frac{t_{k-1}}{t_1}
   =c_{k}\frac{t_{k}}{t_1}
$$
$(k-1)$-great boxes, by setting 
$c_{k}=\lr{1-\frac{57}{2m(k-1)^2}}c_{k-1}$.
These boxes are $k$-great by the properties of $R_k(i,\tau)$.
The lemma is then concluded by setting 
$$
   C_{\ref*{cte:gbc}} = \lim_{k\to\infty}c_k=\prod_{i=1}^\infty \lr{1-\frac{57}{2mi^2}}>0.
$$
\end{proof}

%====================================================================================================================
%====================================================================================================================
%====================================================================================================================

\section{Overview of the Proof}\label{sec:overview}

In this section we give a high-level description of the proof. 
Consider two processes $\{M_t^\star\}_{t\ge0}=\{X_t,\eta_t^\star\}_{t\ge0}$ and 
$\{\bar{M}_t^\star\}_{t\ge0}=\{\bar X_t,\bar{\eta}_t^\star\}_{t\ge 0}$ 
with starting states $M_0^\star,\bar{M}_0^\star\in\Omega^\star$. 
We will construct a coupling of the two processes so that for some time $T=\Delta_1+\Delta_2+\Delta_3$ of order $\frac{n^2}{\mu}$ the two configurations agree with positive probability. 
Since $\{M_t\}_{t\geq 0}$ can be recovered from $\{M_t^\star\}_{t\geq 0}$,
by sampling independently the edges with status $\star$, we will obtain our result.

The coupling will consist of three different phases which we will describe in a high level way below.
The coupling of each phase will have a small, albeit positive, probability of failing. If the coupling of a phase fails, 
we declare the whole three-phase procedure to have failed, let the two processes evolve arbitrarily until time $T$ and restart everything again from phase 1. 
The detailed analysis of each phase will be given in sections \ref{firstphase}, \ref{secondphase} and \ref{thirdphase}. 
Then in section \ref{proofteo}, we will put all phases together and complete the proof of Theorem \ref{thm:main}.

%====================================================================================================================
%====================================================================================================================
%====================================================================================================================
\subsection{First phase: the local coupling}
During the first phase we let the two processes evolve \emph{independently}, and wait for the first time the graphs of the two processes agree on a ball of radius $2\ell$ around the walkers, that is, we wait for a time $t$ such that 
\begin{equation*}
    \eta_t^\star(X_t+e)=\bar{\eta}_t^\star(\bar X_t+e),
\end{equation*}
for all edges $e\in E(B_{2\ell}^\infty(0))$, where $B_r^\infty(x)$ is the vertices inside the $L_\infty$ ball of radius $r$ around $x$. 
We will show in Lemma \ref{lemmaf1} that this will happen within time $\Delta_1$ with large enough probability, where $\Delta_1$ has order $\frac{\log^2 n}{\mu}$. 
This is the shortest of the three phases. 

If the first phase does not end within time $\Delta_1$, we declare the whole three-phase procedure to have failed. 
This phase will be handled in section \ref{firstphase}.

%====================================================================================================================
%====================================================================================================================
%====================================================================================================================
\subsection{Second phase: the non-Markovian coupling of the walkers}
This is the most involved phase. 
After the first phase has been completed successfully, the graphs of the two processes are the same on a ball of radius $2\ell$ around the walkers. Then, in the second phase we wish to couple the motion of the walkers. 
We use the tessellation to decide when to couple the walkers identically (so that they jump in the same way) 
and when to perform a better coupling aiming to decrease the distance between the walkers.

Intuitively, whenever the walkers are passing through a ``bad'' region of the environment (which in our case will be the 2-enlargement of a bad box) 
we will just do the identity coupling to make sure the distance between the walkers does not increase. 
In fact, we will only be able to do the identity coupling because 
we will use the annulus between the 2-enlargement of the bad box and the bad box itself (which is composed of good boxes) 
to give time for the graphs around the walkers to get coupled in both configurations, allowing the identity coupling to be carried out. 
If instead the two walkers are in a great box, then we try to do a better coupling, which we shall refer to as a \emph{simple random walk moment}.

More precisely, translate time so that the second phase starts from time $0$. 
Then, we create the multi-scale tessellation describe in Section~\ref{tessellation} up to time $\Delta_2+\Delta_3$ where $\Delta_2$ and $\Delta_3$ are of order $\frac{n^2}{\mu}$.
We will fix a largest scale $k_\rmax$ and will look at how many times the walkers enter $k_\rmax$-great boxes. 

When the walkers are in great boxes, Lemma \ref{srwmprob} will give that the environment is favourable enough so that with positive probability 
the displacement of the walkers will have the same distribution as that of a simple random walk on $\mathbb{T}_n^d$ (i.e., where all edges are open). 
Phase 2 ends at time $\Delta_2$ where we check whether the walkers are coupled and the graphs are coupled on a ball of radius $2\ell$ around the walkers. 

Lemma~\ref{gbcross} says that the walkers will cross an order of $n^2$ great boxes during $[0,\Delta_2]$ and, therefore, by time $\Delta_2$ the walkers are expected to have done an order of $n^2$ simple random walk steps. 
Since two simple random walkers on $\mathbb{T}_n^d$ can be coupled in a way that they coalesce after a time of order $n^2$, we can ensure that with high probability phase 2 ends successfully. 
The details are carried out in section \ref{secondphase}.

%====================================================================================================================
%====================================================================================================================
%====================================================================================================================
\subsection{Third phase: the coupling of the graphs}
The third phase starts are time $\Delta_2$; as before we translate time so that the second phase starts at time 0. 
At the beginning of the third phase the walkers are coupled and the graphs are coupled as well on a ball of radius $2\ell$ around them. 
The idea of this phase is to keep performing identity coupling until the graphs couple together everywhere. We will show that this simple idea works. 

There is one tricky issue. During the second phase, we needed to construct the tessellation all the way to time $\Delta_2+\Delta_3$, 
while the second phase ends already at time $\Delta_2$. The reason for this is that, in order to know whether we can perform a simple random walk moment, 
we need to observe a little bit of future information about the environment. 
Therefore, as we performed the second phase, we observed some information from the updates after the end of phase two. 

So the goal of the third phase is simply to let time pass until we get to a point where no information regarding future times has been observed, meanwhile doing identity coupling.
With this, during the third phase we aim to keep the walkers coupled at all times, 
while we finish to couple the graphs before time $\Delta_2+\Delta_3$. 

We do not use any further information from the tessellation than what we already observed for phase 2. 
The delicate point is that in order to apply identity coupling of the walkers, as we explained in the second phase, we have to ensure that the graphs around the walkers are coupled. 
How large a region we require to be coupled depend on the environment of good and bad boxes that is ahead of the walker, 
but now we cannot observe anything beyond what we have already observed in phase two; 
otherwise we would keep observing future information.

As hinted above, we just proceed with the identity coupling ``blindly''. That is, we perform identity coupling up to time $\Delta_2+\Delta_3$ assuming that any information that we have not yet observed is ``good'', 
and simply ``hope for the best''. It will turn out that this procedure succeeds with large probability leaving the two processes completely coupled (both the graphs and the walkers) by time $\Delta_2+\Delta_3$.
The details of this phase are given in Section~\ref{thirdphase}.
%====================================================================================================================
%====================================================================================================================
%====================================================================================================================
\subsection{What if a phase fails?}
If any of the three phases does not successfully end, we let the two processes run independently (modulo what has already been observed) 
until the end of the third phase. 
This is needed as we might have observed some information about the environment up to that time. 
After that, we repeat the procedure from phase $1$. 
Since the three phases succeed with positive probability, we only need to repeat the whole procedure a constant number of times. 
The end of the proof of the upper bound is given in section \ref{proofteo}.

%====================================================================================================================
%====================================================================================================================
%====================================================================================================================
\section{First Phase}\label{firstphase}

During the first phase we let the processes $ M_t^\star=(X_t, \eta_t^\star)$ and $\bar M_t^\star=(\bar X_t, \bar{\eta}_t^\star)$ evolve independently.
Let $\Psi_t:V\to V$ be the translation that maps $X_t$ into $\bar X_t$; 
we will abuse notation and use the same $\Psi_t$ to denote the corresponding translation map $E\to E$ of the edges. 
For any $i\in \Z_+\cup\lrc{\infty}$, we define
\begin{equation*}
%     E(B_r(v))=\{e=(v_1,v_2)\in E:\|v-v_1\|_1\leq  r,\text{ and }\|v-v_2\|_1 \leq  r\}
    E(B_r^i(v))=\{(v_1,v_2)\in E:v_1,v_2\in B^i_r(v)\}
    \quad\text{and}\quad
    B^i_r(v)=\{v_1\in V:\|v-v_1\|_i\leq  r\};
\end{equation*}
thus $E(B_r^i(v))$ is the set of edges in the ball of radius $r$ around $v$ according to the norm $L_i$. We omit $i$ from the superscript whenever $i=1$. Define the event
\begin{align}
    \cB_t\defn
    &\lrc{\forall e\in E\lr{B_{1}(X_{t})},\,{\eta}_{t}^\star(e)=\bar{\eta}_{t}^\star\lr{\Psi_{t}(e)}=0}\nonumber\\
    &\cap\lrc{\forall e\in E(B_{2\ell}^\infty(X_{t}))\setminus E(B_{1}(X_{t})),\,{\eta}_{t}^\star(e)=\bar{\eta}_{t}^\star(\Psi_{t}(e))=\star} \label{eq:eventb}
\end{align}
that the edges in a $L_\infty$ ball of radius $2\ell$ around the walkers are all $\star$ at time $t$, except for the ones adjacent to the walkers which are closed; recall that $\ell$ is the size of the 
core of boxes of scale $1$, whose value is given in~\eqref{ell}. Let 
\begin{equation}
\label{taubdef}
    \tau_B\defn\inf\left\{t\geq 0:\mathcal{B}_t\text{ holds}\right\}.
\end{equation}
Note that $\tau_B$ is a stopping time.
Define $\Delta_1\defn\frac{C_{\newcte{cte:p1}}\log^2 n}{\mu}$, for some constant $C_{\ref*{cte:p1}}(p)>0$, and define the event
\begin{equation}
    F_1\defn\{\tau_B<\Delta_1\},
    \label{eq:f1}
\end{equation}
which we shall take as the event that phase 1 succeeds. This event is a bit more restricted than the one announced in the previous section, but this will be convenient for us in the next phase. 

We then run phase 1 until $\tau_B$ or $\Delta_1$, whichever occurs first.
If it turns out that $F_1$ does not occur, phase 1 is then stopped at time $\Delta_1$ and we declare the whole procedure to have failed at time $\Delta_1$.
In this case, we do not proceed to the second phase, and define $\Delta_1$ as the \emph{failing time} of the procedure and, as we will explain more thoroughly in Section~\ref{proofteo}, we will
restart from phase 1 from $\Delta_1$.
%We first prove an inequality we will use later
%\begin{lemma}
%\label{lemusefulinequality}
%For all $x\geq 10$
%\begin{equation*}
 %   1-e^{-x}\geq  e^{-\frac{1}{x^2}}.
%\end{equation*}
%\end{lemma}
%\begin{proof}
%It is easy to see that by doing a simple Taylor expansion we notice that for all $x>10$, $\log{(1-e^{-x})}\geq  -2e^{-x}$ so that the lemma is proved because for all $x\geq  10$
%\begin{equation*}
%    2x^2e^{-x}\leq  1.
%\end{equation*}
%\end{proof}

The following lemma establishes the probability that the first phase is successful.
\begin{lemma}[Phase 1 success probability]
\label{lemmaf1}
   For any $\delta>0$, there exists $p_0=p_0(d,\delta)>0$ such that for any $p<p_0$, there exists $C_{\ref*{cte:p1}}(p,d,\delta)>0$ in the definition of $\Delta_1$ so that 
   for any initial configurations ${\eta}_0^\star,\bar{\eta}_0^\star\in \Omega^\star$ we obtain 
   \begin{equation*}
       \mathbb{P}\left(F_1^c\right)\leq  \delta
   \end{equation*}
   for all large enough $n$.
\end{lemma}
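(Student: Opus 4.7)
The plan has two steps: a \emph{single-window estimate} and an amplification via the Markov property. For the single-window estimate I aim to show that there exist positive constants $C_0=C_0(p,d)$ and $c_0=c_0(p,d)$, both independent of $\mu$ and $n$, such that for every initial state $(M_0^\star,\bar M_0^\star)\in\Omega^\star\times\Omega^\star$,
\[
   \mathbb{P}\lr{\mathcal{B}_{C_0/\mu}}\ge c_0.
\]
Given this, the amplification is routine: during phase 1 the two copies $M^\star$ and $\bar M^\star$ are independent, so $(M^\star,\bar M^\star)$ is jointly Markov, and partitioning $[0,\Delta_1]$ into $K:=\lfloor\Delta_1\mu/C_0\rfloor$ disjoint windows of length $C_0/\mu$ and applying the Markov property inductively gives $\mathbb{P}(F_1^c)\le(1-c_0)^K$. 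With $\Delta_1=C_{\ref*{cte:p1}}\log^2 n/\mu$ we have $K\to\infty$ as $n\to\infty$, so the right-hand side is at most $\delta$ for all $n$ large enough and any positive value of $C_{\ref*{cte:p1}}$.

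For the single-window estimate I exploit that $\ell=p^{-1/(3d)}$ is a constant in $p,d$, so $E(B_{2\ell}^\infty(X_0))$ contains only a constant number of edges, and that it suffices (by independence) to prove the analogous bound for one process and square. I construct an explicit favorable scenario by splitting $[0,C_0/\mu]$ into three sub-intervals of length $C_0/(3\mu)$. In the first, the walker ends up at some vertex $v$ whose $2d$ adjacent edges have all just been closed by $\star$-updates, so the walker is trapped at $v$ from time $C_0/(3\mu)$ onwards. In the second, each of the constantly many edges in $E(B_{2\ell}^\infty(v))\setminus E(B_1(v))$ receives at least one $\star$-update; since the walker is trapped at $v$ and not adjacent to any of these edges, the updates leave them in state $\star$. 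In the third, no edge of $E(B_{2\ell}^\infty(v))$ updates at all, preserving the favorable configuration up to time $C_0/\mu$, which is exactly the single-process analog of the event $\mathcal{B}_{C_0/\mu}$. By Poisson independence and the constant number of edges involved, each of the latter two sub-events has probability bounded below by a positive constant depending only on $p$ and $d$.

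The main technical obstacle is the first sub-interval. For adversarial initial configurations and small $\mu$ the walker may perform many jumps (its rate-$1$ Poisson clock rings of order $C_0/\mu$ times in a window of length $C_0/(3\mu)$) before its adjacent edges close, so it could drift far from $X_0$ before being trapped. The naive approach of requiring no walker-clock ring fails since its probability $e^{-C_0/(3\mu)}$ vanishes as $\mu\to 0$. I would instead argue that, regardless of how the walker moves, with constant probability the walker at time $C_0/(3\mu)$ sits at \emph{some} vertex $v$ whose $2d$ adjacent edges have all just been closed by $\star$-updates, where the identity of $v$ may depend on the whole trajectory. A union bound over a constant-sized candidate set for $v$ (obtained by controlling the walker's open cluster after the edges it has traversed have had time to partially equilibrate via updates, using the subcritical nature of the resulting $\star$-configuration when $p$ is small), combined with the Poisson independence of the edge clocks and the uniform lower bound $1-p_\rmin/p_\star$ on the probability that a $\star$-update produces a closed state, yields the required constant lower bound. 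Together with the analogous construction for the second process, this completes the single-window estimate and hence the lemma.
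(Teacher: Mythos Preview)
Your high-level strategy --- establish a one-window estimate $\mathbb{P}(\mathcal{B}_{C_0/\mu})\ge c_0$ uniformly over initial states, then amplify via the Markov property over $\lfloor\Delta_1\mu/C_0\rfloor$ windows --- is genuinely different from the paper's. The paper instead tessellates $\mathbb{T}_n^d\times[0,\Delta_1]$ into scale-$1$ boxes, proves an ``open upwards path'' lemma for dependent site percolation showing that each walker's space-time trajectory can visit at most a small fraction of bad boxes, deduces that at some common time level both walkers sit in good $1$-boxes, and then uses the good-box events to isolate the walkers and trigger~$\mathcal{B}$.

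Your argument has a real gap at exactly the point you flag: the first sub-interval. You appeal to a ``constant-sized candidate set for $v$'' obtained by ``controlling the walker's open cluster after the edges it has traversed have had time to partially equilibrate''. But from an adversarial initial configuration (say all edges open) with small $\mu$, in time $C_0/(3\mu)$ the walker makes order $1/\mu$ jump attempts and typically displaces by order $\sqrt{1/\mu}$; there is no constant-sized spatial region containing it. If instead the candidate set is meant to be the walker's open cluster at some intermediate time, you must show that cluster has bounded size with probability bounded below \emph{uniformly} in $\mu$, $n$, and the initial data --- yet the walker's position is correlated with the edge states (it reached its current vertex along open edges), so subcriticality of the marginal edge law does not suffice, and even a small cluster does not immediately give trapping (cluster of size one). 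This correlation/localisation problem is precisely what the paper's tessellation-plus-open-upwards-path machinery handles: inside a good box the trapping is forced deterministically by the good-box event, and the combinatorial lemma controls how long the walker can avoid good boxes. Your sketch offers no substitute mechanism, and filling it in would require an argument of comparable weight.

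A secondary, concrete issue: with equal sub-intervals of length $C_0/(3\mu)$, your second and third events are incompatible. For all $O(\ell^d)$ edges in the ball to receive a $\star$-update you need $C_0\gtrsim d\log\ell$, but then the probability that \emph{none} of them updates in the third sub-interval is $\exp(-O(\ell^d C_0))$, which vanishes as $p\to 0$ since $\ell=p^{-1/(3d)}$. This is repairable --- the paper requires only ``no non-$\star$-update and the adjacent edges do not open'' rather than ``no update at all'' --- but as written the single-window construction does not produce a positive constant $c_0$ even granting the trapping step.
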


Before showing that phase 1 succeeds with good probability, we need to establish a simple result on percolation. 
We then prove Lemma~\ref{lemmaf1} in Section~\ref{sec:lemmaf1}.

%====================================================================================================================
\subsection{Percolation on cylinders and open upwards paths}
Let $G=(V,E)$ be a finite graph whose maximum degree is $d_\rmax$; in our case, it would be enough to take $G$ to be the $d$-dimensional torus $\T_n^d$ of side length $n$, where nearest-neighbors are defined according to the 
$\ell_\infty$ norm. We consider the discrete cylinder $V_\cyl=V\times \Z_+$ and define a site percolation process on $V_\cyl$ with parameter 
$\varrho>0$. In other words, we declare each site of $V_\cyl$ to be \emph{open} with probability $\varrho$, independently of one another; a vertex that is not open is said to be \emph{closed}.
For two vertices of $x,y\in V$, we write $x\sim y$ to denote that the graph distance between $x$ and $y$ is at most $1$ in $G$; thus, for example, $x\sim x$ for all $x\in V$.
\begin{definition}[open upwards path]
   An \emph{open upwards path} in $V_\cyl$ is a sequence of sites $(i_0,\tau_0),(i_1,\tau_1),(i_2,\tau_2),\ldots,(i_r,\tau_r)$ such that $i_j\in V$, $\tau_j\in\Z_+$, 
   $i_j \sim i_{j+1}$ and the following holds for all $j$. If $(i_j,\tau_j)$ is open, then $\tau_{j+1}=\tau_j+1$; otherwise, $\tau_{j+1}-\tau_j\in\lrc{0,1}$. 
   In other words, the path is compelled to move ``upwards'' in the cylinder when it visits open sites.
\end{definition}

Note that an open upwards path is allowed to visit a vertex more than once.
We say that an open upwards path $(i_0,\tau_0),(i_1,\tau_1),(i_2,\tau_2),\ldots,(i_r,\tau_r)$ \emph{traverses $m$ levels} if $\tau_r-\tau_0=m$.
When $\varrho$ is close to $1$, an open upwards path cannot visit too many closed sites. This is quantified in the next lemma.
\begin{lemma}[Open upwards path]\label{lem:oup}
   Let $(i_0,0)\in V_\cyl$ be fixed. 
   For any $\alpha>0$, there exists $\varrho'=\varrho'(\alpha,d_\rmax)\in(0,1)$ such that if $\varrho> \varrho'$ then the probability that there exists an open upwards path from $(i_0,0)$ that traverses 
   $m$ levels and visits at least $\alpha m$ distinct closed sites is at most $e^{-cm}$ for some constant $c=c(\alpha,d_\rmax)>0$. 
\end{lemma}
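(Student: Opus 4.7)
The plan is to use a Chernoff-type argument combined with a union bound over the possible entry sequences of open upwards paths, exploiting that the site percolation configurations at distinct levels are independent and that closed site percolation is subcritical when $\varrho$ is close to $1$.

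The key observation is that any open upwards path traversing $m$ levels is determined, up to within-level trajectories, by its sequence of entry vertices $(W_0,\ldots,W_{m-1})\in V^{m}$, where $W_0=i_0$ and $W_\tau$ is the vertex at which the path first reaches level $\tau$. Indeed, once at $W_\tau$ the path can only visit closed vertices belonging to the closed site-percolation cluster $C^{(\tau)}_{W_\tau}$ of $W_\tau$ at level $\tau$ (plus possibly one adjacent open vertex used as the exit), so with the convention $|C^{(\tau)}_v|=0$ when $(v,\tau)$ is open, the total number of distinct closed sites visited by any such path is at most $\sum_{\tau=0}^{m-1}|C^{(\tau)}_{W_\tau}|$. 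Moreover, since the exit of level $\tau$ lies in $\cN(\{W_\tau\}\cup C^{(\tau)}_{W_\tau})$, the number of feasible choices of $W_{\tau+1}$ given $W_\tau$ and the level-$\tau$ configuration is at most $(d_\rmax+1)^{2}(|C^{(\tau)}_{W_\tau}|+1)$.

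Letting $\mathcal{S}$ denote the random set of feasible entry sequences, I would apply Markov's inequality: for any $\lambda>0$,
\[
   \PR\lr{\exists\text{ path visiting}\geq\alpha m\text{ distinct closed sites}}\leq e^{-\lambda\alpha m}\,\E\!\lrb{\sum_{(W_\tau)\in\mathcal{S}}\exp\!\lr{\lambda\sum_{\tau=0}^{m-1}|C^{(\tau)}_{W_\tau}|}}.
\]
Peeling off one level at a time from $\tau=m-1$ down to $\tau=0$, using independence of the percolation configurations across levels together with the above branching bound, I would obtain by induction
\[
   \E\!\lrb{\sum_{(W_\tau)\in\mathcal{S}}e^{\lambda\sum_\tau|C^{(\tau)}_{W_\tau}|}}\leq \gamma(\lambda,\varrho)^{m},\qquad \gamma(\lambda,\varrho)\defn (d_\rmax+1)^{2}\sup_v\E\!\lrb{(|C_v|+1)e^{\lambda|C_v|}}.
\]

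The conclusion then follows by controlling $\gamma$ in the regime $\varrho\to 1$. By the standard Galton--Watson comparison, closed site percolation on $G$ is subcritical as soon as $(1-\varrho)(d_\rmax-1)<1$, and in that regime $|C_v|$ has uniformly exponential tails whose decay rate tends to $\infty$ as $\varrho\to 1$. Therefore, for any fixed $\lambda>0$, $\sup_v\E[(|C_v|+1)e^{\lambda|C_v|}]\to 1$ as $\varrho\to 1$, and so $\gamma(\lambda,\varrho)\to(d_\rmax+1)^{2}$. I would fix $\lambda=\lambda(\alpha,d_\rmax)$ large enough that $e^{-\lambda\alpha}(d_\rmax+1)^{2}\leq e^{-2c}$ for the desired $c=c(\alpha,d_\rmax)>0$, and then take $\varrho'$ close enough to $1$ that $\gamma(\lambda,\varrho)\leq e^{c}(d_\rmax+1)^{2}$ for all $\varrho>\varrho'$; this gives the claimed bound. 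The main delicacy lies in the order of quantifiers (one must fix $\lambda$ first, depending only on $\alpha$ and $d_\rmax$, and only afterwards take $\varrho$ close enough to $1$ so that the relevant exponential moment is finite and close to its limiting value), and in the bookkeeping in the peeling step that absorbs the branching factor $|C^{(\tau)}_{W_\tau}|+1$ together with the weight $e^{\lambda|C^{(\tau)}_{W_\tau}|}$ into the single moment $\E[(|C_v|+1)e^{\lambda|C_v|}]$ at each level; apart from these, the argument is a standard peeling/conditioning reduction of the existential claim to the classical exponential-tail property of subcritical percolation clusters.
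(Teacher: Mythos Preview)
Your proof is correct and follows the same level-by-level strategy as the paper: both arguments record the entry vertex at each level, bound the branching into the next level by a factor proportional to the closed-cluster size at the current entry, and then control the product via subcriticality. The only difference is in packaging: the paper enumerates the possible closed-site sets at each level directly via a lattice-animal count (their Lemma~\ref{lem:animal}, giving at most $e^{c's_j}$ connected sets of size $s_j$) and then multiplies by $(1-\varrho)^{s_j}$ and sums over $\sum s_j\geq\alpha m$, whereas you phrase the same estimate as an exponential-moment bound $\E[(|C_v|+1)e^{\lambda|C_v|}]$ on the closed cluster and peel levels via conditioning. Both routes produce the same $\gamma^m$-type product and the same freedom to take $\varrho$ close to $1$ after fixing the exponent; your Chernoff formulation is arguably a bit cleaner since it avoids the explicit combinatorial sum over compositions $(s_1,\ldots,s_m)$, while the paper's version makes the dependence on $(1-\varrho)$ slightly more explicit.
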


Before proving the above result, we need the following estimate in the number of subgraphs of $G$ that contain a given vertex.
\begin{lemma}\label{lem:animal}
   Given a vertex $v\in V$, let $A_r$ be the number of induced connected subgraphs of $V$ containing $v$ and having $r$ vertices.
   There exists a constant $c=c(d_\rmax)>0$ such that $A_r \leq e^{c r}$ for all $r$. 
\end{lemma}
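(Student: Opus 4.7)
The plan is to prove this by the standard encoding of connected subgraphs via depth-first walks on their spanning trees. Concretely, every induced connected subgraph $H \subset V$ containing $v$ with $|H|=r$ admits at least one spanning tree $T_H$, which is a tree on $r$ vertices rooted at $v$. The map $H \mapsto T_H$ is injective once a canonical choice is fixed (say, lexicographically smallest), so it suffices to count rooted subtrees of $G$ on $r$ vertices containing $v$.

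Next, I would encode each such rooted subtree $T$ by its depth-first traversal starting and ending at $v$. Such a traversal crosses each of the $r-1$ edges of $T$ exactly twice, producing a sequence of exactly $2(r-1)$ moves, each move stepping from the current vertex to one of its neighbors in $G$. Since the maximum degree of $G$ is $d_{\rmax}$, each step has at most $d_{\rmax}$ choices, so the total number of such sequences (starting at the fixed vertex $v$) is at most $d_{\rmax}^{2(r-1)}$. Distinct subtrees produce distinct traversals (the traversal determines the vertex set and the tree structure), so the number of rooted subtrees on $r$ vertices containing $v$ is bounded by $d_{\rmax}^{2(r-1)}$.

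Combining the two inequalities yields
\[
   A_r \;\leq\; d_{\rmax}^{2(r-1)} \;\leq\; e^{(2\log d_{\rmax})\, r},
\]
so the conclusion holds with $c = 2\log d_{\rmax}$. There is no real obstacle here since the statement is the classical lattice-animal bound; the only thing to be mildly careful about is ensuring the encoding is injective, which is handled by fixing in advance a canonical spanning tree for each vertex subset and a canonical ordering of neighbors at each vertex used to break ties in the depth-first walk.
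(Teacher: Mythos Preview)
Your proof is correct and uses a genuinely different argument from the paper. The paper employs the classical Peierls/percolation argument (as in Grimmett's book): writing $A_{r,s}$ for the number of such subgraphs with $s$ boundary vertices, one uses the identity $\sum_{r,s} A_{r,s}\,\varrho^r(1-\varrho)^s \leq 1$ from site percolation together with the degree bound $s \leq (d_{\rmax}-2)r+2$, and then sets $\varrho = \tfrac{1}{2}$ to extract $A_r \leq 2^{(d_{\rmax}-1)r+2}$. Your spanning-tree DFS encoding is more direct and purely combinatorial: no auxiliary probability is needed, and the argument works verbatim on infinite graphs. As a bonus, your constant $c = 2\log d_{\rmax}$ grows only logarithmically in $d_{\rmax}$, whereas the paper's $c = (d_{\rmax}-1)\log 2$ grows linearly; this is irrelevant for the application here (where $d_{\rmax}$ is a fixed dimensional constant), but it is a genuine quantitative improvement of the method.
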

\begin{proof}
   This proof is quite standard and a version for the lattice can be found in~\cite[Proof of Theorem 4.20]{Grimmett}; we include a proof here for the sake of completeness.
   Let $A_{r,s}$ be the number of induced connected subgraphs of $V$ containing $v$ and having $r$ vertices and $s$ boundary vertices, where a boundary vertex is a vertex that does not belong
   to the subgraph but has a neighbor who does. Hence, $A_r = \sum_s A_{r,s}$. 
   Note that for any $\varrho\in(0,1)$, if we perform percolation on $V$, we obtain 
   \begin{equation}
      \sum_{r,s} A_{r,s} \varrho^r (1-\varrho)^s=1. 
      \label{eq:animal}
   \end{equation}
   For any vertex $u\in V$ denote $d(u)$ its degree in $G$. 
   Let $S\in V$ be one subgraph counted in $A_{r,s}$, denote $m(S)$ the number of edges between vertices of $S$ and $\partial S$ the number of edges between vertices in $S$ and vertices in $V\setminus S$.
   Then, 
   $$
      d_\rmax r \geq \sum_{u \in S} d(u) = 2 m(S) + \partial S
      \geq 2 \lr{r-1} + s.
   $$
   Thus, $s\leq \lr{d_\rmax -2}r+2$. Plugging this result into~\eqref{eq:animal} and taking $\varrho=\frac{1}{2}$ we obtain
   $$
      1 \geq \sum_{r,s} A_{r,s} \varrho^r (1-\varrho)^{\lr{d_\rmax -2}r+2}
      =\sum_{r,s} A_{r,s} 2^{-\lr{d_\rmax -1}r-2}
      =\sum_{r} A_{r} 2^{-\lr{d_\rmax -1}r-2}.
   $$
   Thus, $A_{r}\leq 2^{\lr{d_\rmax -1}r+2}$ for each $r$.
\end{proof}

\begin{proof}[Proof of Lemma~\ref{lem:oup}]
   Let $m$ be an integer and, for convenience, set $\tau_0=0$. 
   Consider an open upwards path $(i_0,\tau_0),(i_1,\tau_1),(i_2,\tau_2),\ldots,(i_{r},\tau_{r})$ such that $\tau_{r}-\tau_0=m$; that is, the 
   path traverses $m$ levels. Let $s_1$ be the number of distinct closed sites visited by the path before it traverses $1$ level, 
   and for $j\geq 2$ let $s_j$ be the number of distinct closed sites visited by the path after having traversed $j-1$ levels and before traversing $j$ levels. 
   So $\sum_{j=1}^m s_j$ is the total number of distinct closed sites visited by the path. 
   Note that, the sites counted in each $s_j$ must be of the form $(\cdot, j-1)$ and must form a connected set with respect to the relation $\sim$ over $G$.
   Using Lemma~\ref{lem:animal}, given $s_1,s_2,\ldots,s_m$, the number of possibles ways to pick the set of distinct sites within $(i_0,\tau_0),(i_1,\tau_1),\ldots$ is 
   $$
      \lr{d_\rmax+1}^{m} \prod_{j=1}^m s_je^{c' s_j},
   $$
   where $c'$ is the constant from Lemma~\ref{lem:animal}, and the term $s_j$ in the product counts the number of sites at level $j$ that can be selected
   to be the last vertex visited by the path before going to level $j+1$. 
   Then, $\lr{d_\rmax+1}^{m}$ accounts for the number of ways to choose the first site at level $j$ given the last site at level $j-1$; this amounts to at most $d_\rmax+1$ choices per level. 
   If we fix $\sum_{j=1}^m s_j=S$, the number of ways to select the $s_j$ is $\binom{S+m-1}{S}$. Finally, given all sites in the path with $s_j$ as  defined above, the probability that this path is an 
   open upwards path is at most $\prod_{j=1}^m(1-\varrho)^{s_j}$ since each site counted in the $s_j$ must be closed. Therefore,       
   the expected number of open upwards paths that traverse $m$ levels and visit at least $\alpha m$ closed sites is at most 
   \begin{align*}
      &\sum_{S\geq \alpha m}\binom{S+m-1}{S} \lr{d_\rmax+1}^{m}\prod_{j=1}^m e^{c's_j}s_j (1-\varrho)^{s_j}\\
      &=\lr{d_\rmax+1}^{m}\sum_{S\geq \alpha m}\binom{S+m-1}{S} \lr{c''e^{c''} (1-\varrho)}^{S},
   \end{align*}
   where we used that given $c$ there exists a constant $c''$ such that $ze^{c'z}\leq c''e^{c''z}$ for all $z$. 
   It is enough to use the trivial bound $\binom{S+m-1}{S}\leq 2^{S+m-1}$ in the above expression to obtain the upper bound 
   $$
      \lr{2(d_\rmax+1)}^m \sum_{S\geq \alpha m}\lr{2c''e^{c''} (1-\varrho)}^{S}
      \leq 2 \lr{2(d_\rmax+1)}^m\lr{2c''e^{c''} (1-\varrho)}^{\alpha m},
   $$   
   with the inequality hold whenever $\varrho$ is close enough to $1$ so that $2c''e^{c''} (1-\varrho)\leq \frac{1}{2}$.
   Then the lemma holds by setting $\varrho$ further closer to $1$ so that 
   $4(d_\rmax+1)\lr{2c''e^{c''} (1-\varrho)}^{\alpha}\leq e^{-c}$.
\end{proof}

By using a result by Liggett, Schonmann and Stacey~\cite{LSS}, the above result can be extended to percolation on $\T_d\times \Z_+$ with bounded dependences.
\begin{lemma}\label{lem:oup2}
   Let $C\geq 1$ be a constant. Consider a site percolation process on $\T_d\times \Z_+$ where the probability that a given site is open depends on at most $C$ other sites.
   Then Lemma~\ref{lem:oup} holds with the lower bound on $\varrho$ depending on $C$.
\end{lemma}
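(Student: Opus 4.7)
The plan is to reduce to the independent case (Lemma~\ref{lem:oup}) by invoking the Liggett--Schonmann--Stacey stochastic domination theorem, exactly in the manner already used for Lemma~\ref{lem:badboxlss}. Inspecting the proof of Lemma~\ref{lem:oup}, the only place where the independence of the percolation process is used is in the upper bound
\[
   \PR(\text{all specified $s_j$ sites at level $j-1$ are closed for each $j$})\leq \prod_{j=1}^m (1-\varrho)^{s_j},
\]
and the event in question, namely that a given finite set of sites in $\T_d\times\Z_+$ is entirely closed, is a \emph{decreasing} event in the open configuration.

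Given the dependence parameter $C$ of the lemma, I would first pick $\varrho^\ast\in(0,1)$ close enough to $1$ so that the marginal closed probability $1-\varrho^\ast$ satisfies the LSS hypothesis (analogously to what is done in Lemma~\ref{lem:badboxlss}): concretely, $1-\varrho^\ast<\tfrac{(C+1)^{C+1}}{(C+2)^{C+2}}$ is enough. By \cite{LSS}, any $C$-dependent site percolation process on $\T_d\times\Z_+$ with marginal $\varrho\ge\varrho^\ast$ stochastically dominates an independent Bernoulli site percolation with parameter $\varrho''=\varrho''(\varrho,C)$, where $\varrho''\to 1$ as $\varrho\to 1$. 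Consequently, for every finite set $S$ of sites,
\[
   \PR_{\text{dep}}\bigl(\text{all sites of $S$ are closed}\bigr)\leq (1-\varrho'')^{|S|}.
\]

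With this inequality in hand, I would simply rerun the union-bound argument of Lemma~\ref{lem:oup} verbatim, replacing every occurrence of $(1-\varrho)^{s_j}$ by $(1-\varrho'')^{s_j}$. The combinatorial factors (choice of the distinct sites visited, number of ways to split $S=\sum_j s_j$, and the $(d_{\rmax}+1)^m$ factor for choosing the first vertex on each new level) are purely geometric and do not depend on the percolation law, so they carry over without change. The outcome is the same bound
\[
   2\bigl(2(d_{\rmax}+1)\bigr)^m \bigl(2 c''e^{c''}(1-\varrho'')\bigr)^{\alpha m}
\]
on the expected number of open upwards paths that traverse $m$ levels and visit at least $\alpha m$ closed sites.

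The only thing to verify at the end is that, given $\alpha$ and $C$, we can choose $\varrho'=\varrho'(\alpha,d_\rmax,C)$ close enough to $1$ so that the resulting $\varrho''$ makes the expression $4(d_\rmax+1)\bigl(2c''e^{c''}(1-\varrho'')\bigr)^\alpha$ strictly less than $1$, yielding the desired exponential decay in $m$. Since $\varrho''\to 1$ as $\varrho\to 1$, this is automatic by taking $\varrho$ sufficiently close to $1$. I don't foresee a genuinely hard step; the only point requiring minor care is the monotonicity observation --- that the event $\{\text{a given set of sites is closed}\}$ is decreasing in the open configuration --- which is precisely what lets stochastic domination be applied in the right direction.
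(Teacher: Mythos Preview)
Your proposal is correct and uses essentially the same approach as the paper: invoke Liggett--Schonmann--Stacey to stochastically dominate by an independent process with parameter close to $1$, then appeal to Lemma~\ref{lem:oup}. The only cosmetic difference is that the paper applies Lemma~\ref{lem:oup} as a black box to the dominated independent process (noting, implicitly, that an open upwards path for the dependent process is also an open upwards path for the coupled independent process, with at least as many closed sites), whereas you reopen the proof of Lemma~\ref{lem:oup} and substitute the bound $(1-\varrho'')^{|S|}$ at the one place independence is used; both routes are valid and equivalent.
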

\begin{proof}
   For any $\tilde \varrho$, provided $\varrho$ is large enough we can apply Liggett, Schonmann and Stacey~\cite[Theorem~0.0]{LSS} to obtain that the dependent site percolation process stochastically dominates an independent site percolation process 
   of parameter $\tilde \varrho$. The lemma then follows by applying Lemma~\ref{lem:oup} to this independent site percolation process.
\end{proof}

%====================================================================================================================
\subsection{Proof of Lemma~\ref{lemmaf1}}\label{sec:lemmaf1}
Now we are in a position to establish the occurence of the first phase.
\begin{proof}[Proof of Lemma~\ref{lemmaf1}]
   Let $\tau_B'$ be the first time $t\geq t_1$ such that $X_t$ and $\bar X_t$ are both isolated, meaning that all edges adjacent to them are closed.
   We will show that $\tau_B'$ occurs before time $\Delta_1-t_1+\bar t_1$.
   
   For each process $M_t^\star$ and $\bar M_t^\star$ we create a tessellation of 
   $\mathbb{T}_n^d \times [0,\Delta_1]$ into boxes of scale $1$ using the values for $\ell$ and $t_1$ from 
   Section~\ref{tessellation}.
   The event that a given box is good is defined as in Definition~\ref{def:goodbox}.
   We let $M_t^\star$ and $\bar M_t^\star$ evolve independently of one another until a stopping time $st_1$ where $X_{st_1}$ and $\bar X_{st_1}$ are both in good boxes and $s\geq 1$.
   Note that good $1$-boxes form a dependent site percolation process on $\T^d_n \times \Z_+$ so that we can apply Lemma~\ref{lem:oup2}.
   Let $(i_0,1)$ and $(\bar i_0,1)$ be the boxes visited by $X_t$ and $\bar X_t$ at time $t= t_1$. Now, since random walks must traverse a feasible path, and since feasible paths leave
   good boxes from the time boundary (cf.\ Lemma~\ref{timeboundlem1}), we obtain that from $(i_0,1)$ and $(\bar i_0,1)$ the random walks $X_t$ and $\bar X_t$ must traverse an open upwards path.
   Therefore, the probability that up to level $m=\Delta_1/t_1-1$ we have that $X_t$ and $\bar X_t$ each visited more than $\frac{m}{3}$ bad $1$-boxes is at most 
   $2e^{-cm}$ provided $p$ is small enough (which makes the probability that a $1$-box being good large enough). Under this event, there must exist $\frac{m}{3}$ instances of time $s\leq m$ at which $X_{st_1}$ and 
   $\bar X_{st_1}$ are both in good $1$-boxes. when this happens, at time $st_1+\bar t_1$ both $X_{st_1+\bar t_1}$ and $\bar X_{st_1+\bar t_1}$ are isolated in a vertex (i.e., all edges adjacent to them are closed).
   Therefore,
   $$
      \PR\lr{\tau_B' \geq \Delta_1-t_1+\bar t_1}
      \leq n^{2d} \PR\lr{s > m}
      \leq 2ne^{-cm},
   $$
   where the term $n^{2d}$ accounts for the number of choices for $i_0$ and $\bar i_0$.

   Now let $\mathcal{F}$ be the $\sigma$-algebra generated by $M_t^\star$ and $\bar M_t^\star$ during $t\in[0,\tau_B']$. We want to establish a lower bound on the probability that $\cB_{\tau_B'+\bar t_1}$ given 
   $\cF$. Since $X_t$ and $\bar X_t$ are isolated in $t=\tau_B'$, it is enough to compute the 
   probability that all edges inside a $L_\infty$ ball of radius $2\ell$ around the walkers do a $\star$-update but no non-$\star$ update, 
   and the edges adjacent to the walkers do not open or do a non $\star$-update during $[\tau_B',\tau_B'+\bar t_1]$. This probability is 
   $$
      \exp\lr{-\bar t_1 \mu (1-p_\star)2\lr{4\ell}^d} \lr{1-\exp\lr{-\bar t_1 \mu p_\star}}^{2\lr{4\ell}^d} \exp\lr{-\bar t_1 \mu p_\rmin 4d},
   $$
   where the first term is the probability that no edge in the $L_\infty$ ball of radius $2\ell$ around the walkers does a non $\star$-update, 
   the second term is the probability that those edges do a $\star$-update and the last term 
   is the probability that the edges adjacent to the walkers do not open. Therefore, 
   \begin{align*}
      \PR\lr{F_1^\compl}
      &\leq 2ne^{-cm} + 1- \exp\lr{-\bar t_1 \mu (1-p_\star)2\lr{4\ell}^d-\bar t_1 \mu p_\rmin 4d} \lr{1-\exp\lr{-\bar t_1 \mu p_\star}}^{2\lr{4\ell}^d}\\
      &\leq 2ne^{-cm} + 1- \exp\lr{-\bar t_1 \mu (1-p_\star)2\lr{4\ell}^d-\bar t_1 \mu p_\rmin 4d} - 2\lr{4\ell}^d\exp\lr{-\bar t_1 \mu p_\star}.
   \end{align*}
   Recall that $\Delta_1=\frac{C_{\ref*{cte:p1}} \log^2n}{\mu}$, $\bar t_1 = \frac{\log^2\ell}{\mu}$ and $t_1=\frac{\sqrt{\ell}}{\mu}$, where $\ell$ is just a large enough constant that is set before letting $p$ be small enough.
   Now we show that we can make the above smaller than $\delta$.
   We start with the term $2\lr{4\ell}^d\exp\lr{-\bar t_1 \mu p_\star}$, which can be made, say, smaller than $\frac{\delta}{3}$. We will do this by adjusting $\ell$ only, but this term involves also $p$ though $p_\star$.
   However, note that $p_\star$ goes to $1$ as $p$ goes to $0$. So, since $\bar t_1\mu$ is of order $\log^2\ell$, we can choose $\ell$ large enough so that $2\lr{4\ell}^d\exp\lr{-\bar t_1 \mu p_\star}\leq \frac{\delta}{3}$ for all 
   $p$ so that $p_\star\geq \frac{1}{2}$. After fixing $\ell$, we can take $p$ close enough to $0$, which makes $p_\rmin$ goes to $0$ and $p_\star$ goes to $1$, so that 
   $\exp\lr{-\bar t_1 \mu (1-p_\star)2\lr{4\ell}^d-\bar t_1 \mu p_\rmin 4d}\geq 1-\frac{\delta}{3}$. Finally, after fixing $\ell$ and $p$, we can take $n$ large enough so that    
   $2ne^{-cm}\leq \frac{\delta}{3}$ since 
   $m=\frac{\Delta_1}{t_1}-1$ is of order $\log^2n$ as a function of $n$.
   This concludes the first phase.

\end{proof}
%====================================================================================================================
%====================================================================================================================
%====================================================================================================================

\section{The Second Phase: non Markovian Coupling}
\label{secondphase}
To describe the coupling during the second phase we will use the full multi-scale space-time tessellation described in section \ref{tessellation}.
For simplicity, we translate time so that this phase starts at time $0$ and that $X_0$ is at the origin. 
Hence, $\bar X_0$ can be arbitrary, and $\eta_0^\star$ and $\bar \eta_0^\star$ can be any configuration for which the event 
$\cB_0$ from~\eqref{eq:eventb} holds.

%====================================================================================================================
\subsection{Largest scale}
We begin by creating the multi-scale space-time tessellation of $\mathbb{T}_n^d\times[0,\Delta_2]$ and with largest scale
\begin{equation}
\label{eq:kmax}
k_\rmax\defn\log_2\log{n}.
\end{equation}
We consider a positive constant $C_\newcte{cte:delta2}(p)>0$ to be chosen later so that $t_{k_\rmax}$ divides $C_{\ref*{cte:delta2}}\frac{n^2}{\mu}$, and define
\begin{equation}
   \Delta_3\defn \Delta_2+\frac{n^2}{\mu} 
   \quad\text{and}\quad
   \Delta_2\defn C_{\ref*{cte:delta2}}\frac{n^2}{\mu}.
   \label{eq:delta}
\end{equation}

% In order to simplify the notation we will translate time so that the interval $[\tau_B, \tau_B+\Delta_2]$ will be in this section refereed to as the interval $[0,\Delta_2]$.
The following Lemma shows that with large probability there are no bad boxes of scale $k_\rmax$ or larger. This will allow us to restrict our analysis to boxes of scale at most $k_\rmax$. 
We will consider all the boxes contained into the tessellation $\mathbb{T}_n^d\times[0,\Delta_3]$, 
which in particular are all the boxes intersecting the tessellation of $\mathbb{T}_n^d\times[0,\Delta_2]$.

\begin{lemma}
   \label{lemmakmax}
   For any $\delta>0$, there exists $p_0=p_0(\delta,d)>0$ such that for all $p<p_0$ and $n$ large enough
   \begin{equation*}
       \PR\lr{R_k(i,\tau)\text{ is bad}\text{ for some }R_k(i,\tau)\subset\mathbb{T}_n^d\times[0,\Delta_3],\text{ with }k\geq  k_\rmax}
       \leq \rho_1^{2^{k_\rmax-3}}.
   \end{equation*}
\end{lemma}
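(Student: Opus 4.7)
The plan is a straightforward union bound over scales $k \geq k_\rmax$ and over the $k$-boxes inside $\bT_n^d \times [0,\Delta_3]$, controlled by the doubly-exponential decay of $\rho_k$ from Lemma~\ref{lemrho} which will dominate any polynomial blow-up in the number of boxes. Concretely, for each scale $k$ I will bound the number of $k$-boxes contained in $\bT_n^d\times[0,\Delta_3]$ by
\[
   N_k \;\leq\; \left(\tfrac{n}{\ell_k}\right)^d \cdot \tfrac{\Delta_3}{t_k} \;\leq\; n^d \cdot \tfrac{2n^2/\mu}{\sqrt{\ell}/\mu} \;=\; \tfrac{2 n^{d+2}}{\sqrt{\ell}},
\]
(interpreting the count as zero once $\ell_k>n$ or $t_k>\Delta_3$), which is crucially polynomial in $n$ independently of $k$.

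Applying the union bound together with Lemma~\ref{lemrho},
\[
   \PR\bigl(\exists\text{ bad }R_k(i,\tau)\subset\bT_n^d\times[0,\Delta_3]\text{ with }k\geq k_\rmax\bigr)
   \;\leq\; \sum_{k\geq k_\rmax} N_k\, \rho_k
   \;\leq\; \tfrac{2n^{d+2}}{\sqrt{\ell}} \sum_{k\geq k_\rmax} \rho_1^{2^{k-2}}.
\]
Since $\rho_1<1/2$ for $p$ small (Lemma~\ref{lem:badbox}), the geometric-type series is bounded by a constant multiple of its first term, so the whole expression is at most $C\, n^{d+2}\rho_1^{2^{k_\rmax -2}}$ for an absolute constant $C$.

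Now I use the defining property $k_\rmax = \log_2 \log n$, which gives $2^{k_\rmax-2} = \tfrac{\log n}{4}$ and hence $\rho_1^{2^{k_\rmax-2}} = n^{(\log \rho_1)/4}$. Writing the target exponent as $2^{k_\rmax -3}=2^{k_\rmax-2}-2^{k_\rmax-3}$, it suffices to arrange
\[
   C\, n^{d+2} \;\leq\; \rho_1^{-2^{k_\rmax-3}} \;=\; n^{(\log(1/\rho_1))/8},
\]
which amounts to the single condition $\log(1/\rho_1) \geq 8(d+2)+o(1)$ as $n\to\infty$. Because Lemma~\ref{lem:badbox} yields $\rho_1 \leq C_{\ref*{cte:badbox}}\ell^{d+1/2}p_\rmax \to 0$ as $p\to 0$, we can choose $p_0=p_0(\delta,d)$ small enough that this inequality holds for all $p<p_0$ and all sufficiently large $n$, completing the proof.

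The only subtlety I see is making sure the sum over $k$ really is geometric; this is immediate because $2^{k-2}-2^{k_\rmax-2}\geq 2^{k-3}$ for $k>k_\rmax$, so $\rho_1^{2^{k-2}}/\rho_1^{2^{k_\rmax-2}} \leq \rho_1^{2^{k-3}}$ which is summable in $k$. No step looks like a serious obstacle: the entire argument is a careful accounting of how the super-exponential decay of $\rho_k$ in Lemma~\ref{lemrho} swallows both the polynomial count of boxes and the factor $n^{d+2}$, once $k_\rmax$ reaches the threshold $\log_2\log n$.
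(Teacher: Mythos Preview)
Your proof is correct and follows essentially the same approach as the paper: a union bound over scales $k\geq k_\rmax$ and over the $k$-boxes inside $\bT_n^d\times[0,\Delta_3]$, combined with the doubly-exponential decay $\rho_k\leq\rho_1^{2^{k-2}}$ from Lemma~\ref{lemrho} and the choice $k_\rmax=\log_2\log n$ to absorb the polynomial factor $n^{d+2}$. The only cosmetic slip is that $\Delta_3=(C_{\ref*{cte:delta2}}+1)\tfrac{n^2}{\mu}$ rather than $\tfrac{2n^2}{\mu}$, but this constant is harmlessly absorbed into your $C$; otherwise your write-up is a more explicit version of the paper's argument.
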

\begin{proof}
%    We start by deriving bounds on $\ell_k$ and $t_k$, the size of the boxes of scale $k$. When $k\geq  k_\rmax$, we can choose $n$ large enough so that for any $m,\ell$ fixed
%    \begin{align*}    
%        \ell \lr{k/e}^{2k}\leq \ell_{k}&=m^{k}(k!)^2\ell\leq \ell k^{3k},\\
%        \sqrt{\ell}\lr{k/e}^{2k}\leq \mu t_{k}&=m^{k}(k!)^2\sqrt{\ell}\leq \sqrt{\ell}k^{3k}.
%    \end{align*}
%    \begin{align*}    
%        \ell_k \geq \ell \lr{k/e}^{2k} \quad\text{ and }\quad %\leq \ell_{k}&=m^{k}(k!)^2\ell\leq \ell k^{3k},\\
%        \mu t_k \geq \sqrt{\ell}\lr{k/e}^{2k}. %\leq \mu t_{k}&=m^{k}(k!)^2\sqrt{\ell}\leq \sqrt{\ell}k^{3k}.
%    \end{align*}
   The number $\zeta_{k}$ of boxes of scale $k$ in $\mathbb{T}_n^d\times[0,\Delta_3]$ is trivially bounded as
   \begin{align*}
%        \zeta_{k}\geq &\left(\frac{n}{3\ell_k}\right)^d\frac{\Delta_3}{3t_k}\geq \frac{(C_{\ref*{cte:delta2}}+1)n^{d+2}}{3^{d+1}\ell^{d+\frac{1}{2}} k^{k(3d+3)}},\\
       \zeta_{k}\leq &\left(\frac{n}{\ell_k}\right)^d\frac{\Delta_3}{t_k}
          \leq (C_{\ref*{cte:delta2}}+1)n^{d+2}.
   \end{align*}
%    In the upper bound of $\zeta_k$ we add a $1$ to the fraction to consider the case when $k$ is so large that we cannot find a box all contained in the tessellation.
   Using Lemma \ref{lemrho} the probability that there exists a box of scale $k_\rmax$ or bigger that is bad is bounded above by
   \begin{equation*}
       \sum_{k\geq  k_\rmax}\zeta_k\rho_k
       \leq (C_{\ref*{cte:delta2}}+1)n^{d+2} \sum_{k\geq  k_\rmax}\rho_1^{2^{k-2}}
       \leq 2 (C_{\ref*{cte:delta2}}+1)n^{d+2} \rho_1^{2^{k_\rmax-2}}.
   \end{equation*}
   Using the value of $k_\rmax$ and the fact that $\rho_1$ can be made arbitrarily small by taking $p$ small concludes the proof.
%    moreover using the inequalities above for $\zeta_k$ it is easy to see that, for any $k\geq  k_\rmax$, 
%    \begin{equation*}
%        \zeta_k\rho_1^{2^{k-2}}\leq 2^{-(k-k_\rmax)}\zeta_{k_\rmax}\rho_1^{2^{k_\rmax-2}},
%    \end{equation*}
%    so that
%    \begin{align*}
%        \sum_{k\geq  k_\rmax}\zeta_k\rho_k&\leq 2\zeta_{k_\rmax}\rho_1^{2^{k_\rmax}-2}\\
%        &\leq  2\frac{(C_{\ref*{cte:delta2}}+1)}{\ell^{d+\frac{1}{2}}}n^{d+2}\rho_1^{2^{\log\log n}-2}\\
%        &\leq  \frac{1}{10},
%    \end{align*}
%    where we choose $p_0$ small enough so that Lemma \ref{lemrho} can be applied and $\rho_1<e^{-2d}$.
\end{proof}

%====================================================================================================================
\subsection{The coupling}
Recall the map $\Psi_t$ introduced in Section~\ref{firstphase} which maps $X_t$ into $\bar X_t$.
In order to define the coupling of the two processes, we will use a different map $\Phi_t$. The idea is that our new map will be equal to $\Psi_t$ in good parts of the environment, but when the walker enters the enlargement 
of a bad box, we will stop changing $\Phi_t$ and will keep it ``frozen'' until the walkers exit the enlargements of all bad boxes. The idea is that in the enlargement of bad boxes we want to couple the graphs
in a large region around the walkers so that if the walkers enter a bad box, then they do so with their graphs coupled within the box. We stop updating $\Phi_t$ because when $\Phi_t$ changes many edges uncouple.

More precisely, given a time $t$, denote with 
\begin{equation*}
\bar{s}_t=\sup\{s\leq  t\,:\, (X_s,s) \text{ is inside a $k_\rmax$-great box}\}
\end{equation*}
the last time before $t$ the walker is in a $k_\rmax$ great box. We will consider the new map $\Phi_t$ defined as
\begin{equation*}
    \Phi_t\defn\Psi_{\bar{s}_t}.
\end{equation*}

We will show that this change of map actually will not create any problems; in fact, we will show that $\Phi_t\equiv\Psi_t$ for all $t$ because in the way we construct the coupling, 
when the walkers are in the enlargement of a bad box, 
we will succeed in applying identity coupling, hence the translation map remains constant. So, the introduction of $\Phi_t$ here is a formalism so that the coupling procedure is well defined. This will imply that 
our application of identity coupling later on will be successful, which in turn implies that $\Phi_t\equiv \Psi_t$. 

As soon as the second phase begins we check whether the box $R_1(i,0)$, such that $(X_{0},0)\in R_1^\core(i,0)$, is $k_\rmax$-great (the reason we do this will be clarified later, see Remark \ref{whyenl}). 
If that is the case then we can begin the coupling procedure relative to the second phase.
The coupling is composed of two parts: the coupling of the graphs (that is, the coupling of $\eta_t^\star$ and $\bar{\eta}_t^\star$) and the coupling of the walkers.

%====================================================================================================================
\subsubsection{Coupling of the graphs}\label{sec:coupgraph}
We let the process $\{\eta_t^\star\}_{t\ge 0}$ evolve. 
Denote with $\mathcal{C}_v(t)$ (resp., $\bar{\mathcal{C}}_v(t)$) the cluster that contains vertex $v$ at time $t$ in the process $\eta^\star_t$ (resp., $\bar\eta^\star_t$). 
When an update $(s,U',U)$ occurs at an edge $e$ in $\eta_s^\star$ we update the process $\bar\eta^\star_s$ as follows.
\begin{itemize}
    \item If the update is a $\star$-update we refrain from looking at $U$ and instead simply set $\eta^\star_s(e)=\star$ and $\bar\eta^\star_s(\Phi_s(e))=\star$.
    \item If the update is not a $\star$-update we must check in both configurations $\eta^\star_s$ and $\bar\eta^\star_s$ whether $e$ is a cut-edge or not. We do this by looking at the connected components of the endpoints $v_1,v_2$ of the edge $e$. 
       If an edge $e'$ is such that $\eta^\star_s(e')=\star$ and $e'$ is incident to a vertex in $\mathcal{C}_{v_1}(s)\cup\mathcal{C}_{v_2}(s)$, 
          we sample its current status, open or closed, according to its last update. Note that this last update is itself a tuple $(\bar{s}, \bar{U}', \bar{U})$, so this step boils down to checking the value of $\bar{U}$. 
          If $\bar\eta^\star_s(\Phi_s(e'))=\star$ we set $\bar\eta^\star_s(\Phi_s(e'))=\eta^\star_s(e')$ as well. 
          We continue this procedure until the components of $v_1$ and $v_2$ have been fully explored in $\eta^\star_s$ and proceed analogously for the process $\bar\eta^\star_s$ until the components of $\Phi_s(v_1)$ 
          and $\Phi_s(v_2)$ have been fully explored. 
          A potential disagreement $\eta^\star_s(e)\ne\bar\eta^\star_s(\Phi_s(e))$ can happen only if, by revealing the components of $v_1$, $v_2$, $\Phi_s(v_1)$ and $\Phi_s(v_2)$, 
          we find that $e$ is a cut-edge in $\eta^\star_s$ but $\Phi_s(e)$ is not a cut-edge in $\bar\eta^\star_s$, or vice-versa.  
\end{itemize}
In this way edges whose status is $\star$ can always be coupled equivalently whereas non $\star$-updates cause the reveal of the status of other edges, potentially creating disagreements between the two configurations.

\begin{remark}[Momentaneous change of coupling]\label{rem:newcoupgraph}
   At some times we will carry out a different coupling of the environment. This will be done by simply introducing another map $\tilde \Phi$ of the environments, and the coupling of the graphs 
   will go as described above with $\Phi_t$ replaced with $\tilde \Phi$ until we specify that $\Phi_t$ is again the map to be used. 
\end{remark}

%====================================================================================================================
\subsubsection{Coupling of the walkers}\label{sec:coupwalker}
During this discussion the reader should refer to Figure~\ref{fig:coupling}.
\begin{figure}
    \centering
    \includegraphics[width=.9\linewidth]{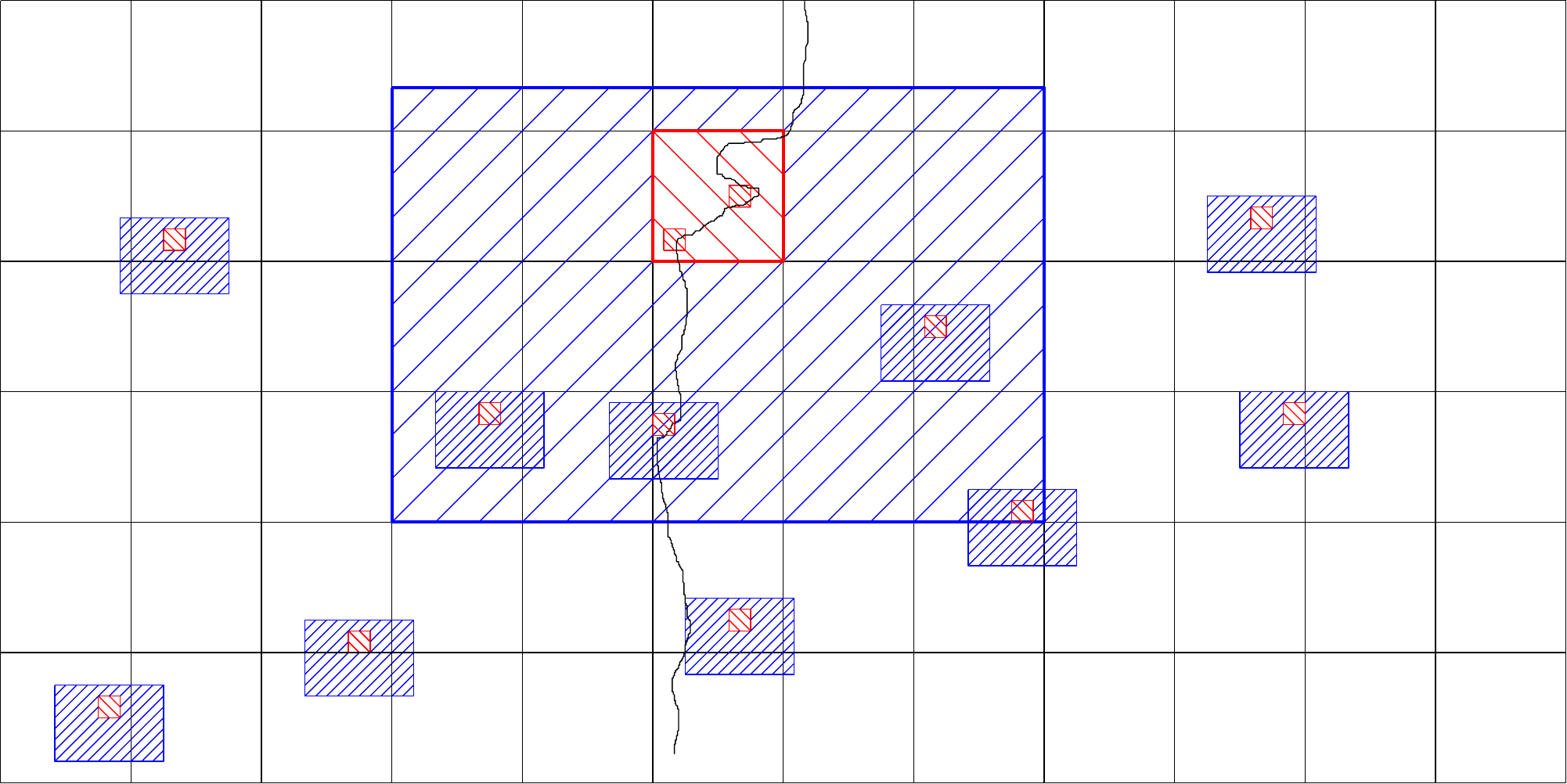}
\caption{In red the bad boxes, in blue their enlargement, in black the tessellation and the walker's trajectory. In bad boxes there is no control over the displacement of the walker, whereas in good boxes, the walker always leaves the box from its time boundary. Whenever the walker enters the enlargement of a bad box we start doing identity coupling, otherwise, in great boxes, if a SRWM occurs we do a simple random walk coupling, if not we keep doing identity coupling.}
\label{fig:coupling}
\end{figure}
Our goal is to define a coupling that can bring the walkers together. For this we will use the multi-scale tessellation. 
The coupling of the walkers will be composed of two different couplings. 
When the walker $X_s$ enters the core of a great box $R_1^\core(i,\tau)$, we will try to take advantage of the nice environment that a great box provides to perform a coupling that we refer to as a \textit{simple random walk moment}. 
This coupling aims to change the distance between the walkers, so that eventually the walkers may find themselves at the same site.

On the other hand, whenever $X_s$ is not in a great box, then we do not have a good enough control on the environment around the walker to do a simple random walk moment.
In such cases, we will just resort to a simple identity coupling that keeps the distance between the walkers unchanged. An identity coupling will only be able to be performed if the environment around the walkers are the same.
For this, we define the following event:
\begin{equation}
    \cB_t'\defn\left\{\forall e\in E(B_{\ell/2}^\infty(X_{t})),\,{\eta}_{t}^\star(e)=\bar{\eta}_{t}^\star(\Phi_{t}(e))\right\}.
    \label{eq:eventbp}
\end{equation}
If $B_s'$ holds for all $s\in(s_1,s_2)$, then in this time interval the walkers can perform the same jumps and not change their relative distance. 
In other words, we identity coupling is successful. 
In fact if the environment around the walkers is the same (as a matter of fact we only need the environments to agree on a ball of radius $1$ around the walkers), 
by doing identity coupling the walkers are able to perform the same jumps.

So the proof is now split into three steps. Since $\Phi_t$ does not change when the walker enters the $2$-enlargement of a bad box, we will show in Section~\ref{sec:phiunchanged} that when
$\Phi_t$ does not change the graph couples. Next, we deal with showing that identity coupling can be successfully implemented as the walker enters the $2$-environment of a bad box (i.e., when the walker is not in a great box).
This is carried out in Section~\ref{sec:identity}. Then in Section~\ref{sec:srwm} we deal with the simple random walk moments.

%====================================================================================================================
\subsection{Coupling of the graphs with $\Phi_t$ unchanged}\label{sec:phiunchanged}
Given $I\subset \Z^d$ and $k\geq 1$, let 
$$
   \cS_k(I)=\bigcup_{i\in I} S_k(i)
   \quad\text{and}\quad
   \cS_k^\diamond(I)=\bigcup_{i\in I} S_k^\diamond(i),
$$
for any $\diamond\in\lrc{\core,\enl,\enlb}$.
Recall the value $m$ in the definition of $\ell_k$ in~\eqref{ellkdeff}. 
Recall also $\bar t_k$ from~\eqref{eq:bt1}.
Then, for $k\geq 2$, we define 
\begin{equation}
   \bar t_k = \frac{6 t_k}{(k-1)^2m}.
   \label{eq:bartk}
\end{equation}

We start this section showing that the graph gets coupled in regions of good boxes if $\Phi_t$ does not change.

\begin{lemma}[Graphs couple in good boxes]\label{lem:graphcoup}
   Let $m$ be large enough, and then let $\ell$ be large enough with respect to $m$.
   Let $R_k(i,\tau)$ be a good box, and let $s_1$ be any time instance so that $[s_1,s_1+2\bar t_k]\subset T_k(\tau)$. 
   If $\Phi_t$ does not change during $[s_1,s_1+2\bar t_k]$, then  
   \begin{equation}
      \text{there exists $t\in [s_1,s_1+2\bar t_k]$ such that }\eta_{t}^\star(e)=\bar\eta_{t}^\star(\Phi_{t}(e)) \text{ for all $e\in S_k(i)$}.
      \label{eq:coupenlb}
   \end{equation}
\end{lemma}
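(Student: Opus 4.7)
The plan is to prove the lemma by induction on the scale $k$, leveraging the stability of the coupling under fixed $\Phi_t$: once an edge is coupled in $\eta_t^\star$ and $\bar\eta_t^\star$ (via $\Phi_t$), any subsequent $\star$-update preserves the agreement (both sides are set to $\star$), and any subsequent non-$\star$-update preserves it provided the surrounding graphs are already coupled, since the cut-edge exploration then yields identical outcomes in both processes.

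For the base case $k = 1$, I would invoke the definition of a good scale-$1$ box. By event $\cG_1 = \cG_1'$, no non-$\star$-update occurs on any edge of $E(S_1(i))$ during $T_1(\tau)$. By the events $\cG_3'$ bundled into $\cG_2$ and $\cG_3$, every edge in $E(S_1^\core(i'))$ for each core $S_1^\core(i') \subset S_1(i)$ receives at least $\tfrac{1}{2} p_\star \log^2\ell$ many $\star$-updates in each $\bar T_1(j) \subset T_1(\tau)$. Since $[s_1, s_1 + 2\bar t_1]$ has length $2\bar t_1$, it contains a full sub-interval $\bar T_1(j)$, so every such edge receives a $\star$-update in the window. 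The graph-coupling rule of Section~\ref{sec:coupgraph} sets $\eta_s^\star(e) = \bar\eta_s^\star(\Phi_s(e)) = \star$ at each $\star$-update, and the absence of non-$\star$-updates preserves this agreement, so at $t = s_1 + 2\bar t_1$ all edges in $E(S_1(i))$ are coupled.

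For the inductive step at $k \geq 2$, I would use $\bar t_k = 6 t_{k-1}$. By the goodness of $R_k(i,\tau)$ and Remark~\ref{enl1shield}, any bad $(k-1)$-subboxes inside $R_k(i,\tau)$ are all contained in a single $1$-enlargement $R_{k-1}^\enl(i',\tau')$, whose time span has length $9 t_{k-1}$, and moreover $R_{k-1}^\benl(i',\tau')$ is a layer of good $(k-1)$-cores shielding the bad cluster. Since $2\bar t_k = 12 t_{k-1} > 9 t_{k-1}$, I can locate inside $[s_1, s_1 + 2\bar t_k]$ a sub-window of length at least $2\bar t_{k-1}$ that lies inside some $T_{k-1}(\tau'')$ disjoint from $T_{k-1}^\enl(\tau')$; during this clean sub-window every scale-$(k-1)$ box meeting $S_k(i)$ is good. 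Applying the inductive hypothesis to each such box, and using persistence of coupling under fixed $\Phi_t$ to combine the individual agreement times into one, I obtain a single time at which all of $S_k(i)$ outside $S_{k-1}^\enl(i')$ is coupled; in particular, the shield $R_{k-1}^\benl(i',\tau')$ is coupled. Finally, for the interior edges inside the bad region, once the current time has passed the bad time window $T_{k-1}^\enl(\tau')$ no bad activity at scale $\leq k-1$ remains there, and a base-case-style argument using the abundance of $\star$-updates, combined with the already-coupled shield keeping cut-edge explorations consistent, couples the interior edges as well.

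The main obstacle is handling the interior of the bad region $R_{k-1}^\enl(i',\tau')$, where the inductive hypothesis does not apply. The argument relies on the calibration $\bar t_k = 6 t_{k-1}$: the window $[s_1, s_1 + 2\bar t_k]$ of length $12 t_{k-1}$ exceeds the $9 t_{k-1}$ time span of $R_{k-1}^\enl(i',\tau')$ with margin to spare, giving room to first build the shield via induction on a clean sub-window and then wait out the bad time window so that the remaining interior coupling reduces to a scale-$1$-type argument, protected from exploration-based disagreements by the already-coupled shield.
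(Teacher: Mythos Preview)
Your base case and the skeleton of your inductive step match the paper's approach. But you introduce a confusion in the inductive step that creates an artificial and unjustified detour. When you pick a sub-window disjoint from $T_{k-1}^{\enl}(\tau')$, \emph{every} $(k-1)$-box $R_{k-1}(j,\tau'')$ with $S_{k-1}(j)\subset S_k(i)$ is good --- including those whose spatial part lies inside $S_{k-1}^{\enl}(i')$ --- since the bad $(k-1)$-boxes are confined to the space-\emph{time} region $R_{k-1}^{\enl}(i',\tau')$, not merely to the spatial slab $S_{k-1}^{\enl}(i')$. Hence the inductive hypothesis, applied to all these boxes, already couples all of $S_k(i)$; there is no leftover ``interior'' to handle.

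Your proposed treatment of that interior via a ``base-case-style argument using the abundance of $\star$-updates'' protected by a coupled ``shield'' is therefore unnecessary, and as stated it is not justified either: the absence of non-$\star$ updates is a scale-$1$ guarantee, whereas inside a scale-$(k-1)$ region with $k\geq 3$ non-$\star$ updates do occur, and you give no bound on the reach of the resulting cut-edge explorations that would keep them inside the shield. The paper's proof avoids all this by simply taking $\tau'$ to be the first index with $\tau't_{k-1}\geq s_1$ such that every $R_{k-1}(\cdot,\tau')\subset R_k(i,\tau)$ is good, applying the inductive hypothesis uniformly over $S_k(i)$, and then checking the arithmetic $\tau't_{k-1}+2\bar t_{k-1}\leq s_1+10t_{k-1}+2\bar t_{k-1}\leq s_1+2\bar t_k$.
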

% \begin{lemma}[Graphs couple in good boxes]
%    Let $m$ be large enough, and then let $\ell$ be large enough with respect to $m$.
%    Let $\bar t_1$ be as in~\eqref{eq:bt1} and, for $k\geq 2$, define $\bar t_k = 3^{d+2}t_k/m$.
%    Let $R_k(i,\tau)$ be a good box, and let $s_1<s_2$ with $s_1,s_2\in T_k(\tau)$. 
%    If $k=1$, let $I=\lrc{i}$. 
%    If $k\geq2$, set 
%    $$
%          I=\lrc{i}\cup\lrc{\iota \colon S_k^\core(\iota)\subset S_k(i) \text{ and } S_k^\core(\iota)\times [s_1,s_2] \text{ intersects a bad $(k-1)$-box}}.
%    $$
%    If $\Phi_t$ does not change during $[s_1,s_2]$, and $R_k(i',\tau)$ is good for all $i'\in I$, then  
%    \begin{equation}
%       \eta_{t}^\star(e)=\bar\eta_{t}^\star(\Phi_{t}(e)) \text{ for all $e\in E(S_k(i))\cup E(\cS_k^\inn(I))$ and all $t\in [s_1+2\bar t_k,s_2]$}.
%       \label{eq:coupenlb}
%    \end{equation}
% \end{lemma}
\begin{proof}
   If $k=1$ then the proof follows since each edge of $S_k(i)$ receives only $\star$-updates and gets updated at least once during $[s_1,s_1+2\bar t_k]$.
   For $k\geq 2$, we assume that the statement of the lemma holds up to scale $k-1$.
   Let $s_2=\max T_k(\tau)$. 
   Let $\tau'$ be the first time index such that $\tau't_{k-1}\in [s_1,s_2]$ and all boxes $R_{k-1}(\cdot,\tau')\subset R_k(i,\tau)$ are good.
   Let $I$ be the set of indices containing all $(k-1)$-boxes that are inside $S_k(i)$; more precisely,
   $$
       I = \lrc{i' \colon S_{k-1}(i')\subset S_k(i)}.
   $$
%    $$
%        I = \lrc{i' \colon \bigcup_{j\colon S_{k-1}^\core(j)\subset S_{k-1}(i')}S_{k-1}(j)\subset S_k(i)}
%    $$
   Then, by induction, by time $\tau't_{k-1}+2\bar t_{k-1}$ we obtain that $\cS_{k-1}(I)=S_k(i)$ has been coupled. 
   
   Now it remains to show that $\tau't_{k-1}+2\bar t_{k-1}\leq s_1+2 \bar t_k$. 
   Note that since $R_k(i,\tau)$ is a good box, there exists $\hat \imath, \hat\tau$ such that all $(k-1)$-bad boxes contained in $R_k(i,\tau)$ are contained in 
   $R_{k-1}^\enl(\hat\imath,\hat\tau)$. Since the amount of time spanned by the enlargement at scale $k-1$ is $9t_{k-1}$, 
   we obtain that $\tau't_{k-1}\leq s_1 + 9t_{k-1} +t_{k-1}$, where the last $t_{k-1}$ is to account for the possibility that $s_1$ is not a multiple of $t_{k-1}$. 
   Hence, using the notation $a_+ = \max\lrc{a, 1}$ for consistency with the case $k=2$, and noting that 
   $\bar t_1 \leq \frac{6t_1}{(k-2)^2_+ m}$ provided $\ell$ is made large enough once $m$ has been fixed, we have
   \begin{align*}
       \tau't_{k-1}+2\bar t_{k-1}
       &\leq s_1 + 10 t_{k-1}+2\bar t_{k-1}\\
       &\leq s_1 + t_{k-1}\lr{10+2\frac{6}{(k-2)^2_+m}}\\
       &= s_1 + \frac{t_{k}}{m(k-1)^2}\lr{10+\frac{12}{(k-2)^2_+m}}\\
       &= s_1 + \frac{\bar t_k}{6}\lr{10+\frac{12}{(k-2)^2_+m}}\\
       &\leq s_1 + 2\bar t_k.
   \end{align*}
\end{proof}

Recall the definition of $S_1^\inn(i)$ from~\eqref{eq:sinn}. For $k\geq2$, define
\begin{equation}
   S_k^\inn(i) = \bigcup_{j \colon S_{k-1}^\enlb(i)\subset S_k(j)}S^\core_{k-1}(j).
   \label{eq:sinnk}
\end{equation}
For a set of indices $I$, we write 
$$
   \cS_k^\inn(I) = \bigcup_{j \in I}S_{k}^\inn(j).
$$
Note that by taking $m$ large enough, then $\cS_k^\core(I)\subset \cS_k^\inn(I)\subset \cS_k(I)$.
We start with a simple result about the connected component of a vertex.
\begin{lemma}\label{lem:compsize}
   Let $m\geq 2$ and let $\ell$ be large enough with respect to $m$.
   Let $I\subset \Z^d$ be a set of indices, $k\geq 1$ a scale and $\tau\geq 1$ a time index such that $R_k(i,\tau)$ is a good box for all $i\in I$. 
   Then, for any $v\in \cS_k^\inn(I)$ and any $t\in T_k(\tau)$, the connected component of $v$ is contained in $B_{5\ell_k/m}^\infty(v)$,
   where we recall that $B^\infty_r(v)$ is the $L_\infty$ ball of radius $r$ around $v$.
\end{lemma}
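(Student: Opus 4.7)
I would proceed by induction on the scale $k$, leveraging the multi-scale structure built into the definition of good boxes.

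For the base case $k=1$, I read off the bound from the events defining a good $1$-box. The event $\cG_4'(v,j)$, activated for every $v\in S_1^\inn(i)$ via $\cG_4(i,\tau)$, directly bounds $|\cC_v(\dbar T_1(j))|<\log^2\ell$ for each $\dbar T_1(j)\subset T_1^\core(\tau)\setminus T_1(\tau+1)$. On the initial flanking subinterval $\bar T_1(j(\tau))$, the events $\cG_1'$, $\cG_2'$, and $\cG_3'$ together force all edges of $S_1(i)$ to be closed by time $\tau t_1$ (the abundant $\star$-updates guaranteed by $\cG_3'$ cannot open edges by $\cG_2'$, and $\cG_1'$ rules out non-$\star$-updates throughout $T_1(\tau)$); symmetrically, on $\bar T_1(j(\tau+1))$ no edge opens and clusters can only shrink. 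Hence $\cC_v(t)$ has diameter at most $\log^2\ell$ throughout $T_1(\tau)$, which is at most $5\ell/m$ once $\ell$ is large enough with respect to $m$.

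For the inductive step at $k\geq 2$, I invoke Remark~\ref{enl1shield}: within each good $R_k(i,\tau)$, all bad $(k-1)$-boxes are confined to a single $R_{k-1}^\enl(\hat\jmath_i,\hat\tau_i)$. Call the union of these confinement regions (over $i\in I$) the \emph{bad region at scale $k-1$}. Fix $v\in\cS_k^\inn(I)$ and pick $i_0\in I$ with $v\in S_k^\inn(i_0)$, so $v\in S_{k-1}^\core(j_0)$ with $S_{k-1}^\enlb(j_0)\subset S_k(i_0)$. I split into two cases. If $S_{k-1}^\enlb(j_0)$ is disjoint from the bad region, then every $(k-1)$-box inside $S_{k-1}^\enlb(j_0)$ is good; applying the inductive hypothesis to this family of $(k-1)$-indices (and noting that $S_{k-1}^\core(j_0)\subset S_{k-1}^\inn(j_0)$ for $m$ large, since the width of a $(k-2)$-box $2$-enlargement is negligible relative to $\ell_{k-1}$) yields $\cC_v(t)\subset B^\infty_{5\ell_{k-1}/m}(v)\subset B^\infty_{5\ell_k/m}(v)$.

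In the opposite case, $j_0$ lies in the bad region, whose spatial extent is $O(\ell_{k-1})$ (the union of $(k-1)$-boxes intersecting a single bad $(k-1)$-box). The cluster of $v$ may extend unrestricted within this bad region, contributing $O(\ell_{k-1})$ to its diameter. Each time the cluster exits into an adjacent good $(k-1)$-box at some vertex $v'$, I reapply the inductive hypothesis at $v'$ with an index set of good $(k-1)$-boxes around it (chosen so that $v'$ sits in the inner part, via suitable $(k-2)$-coordinates), bounding the local extension by $5\ell_{k-1}/m$. Since $5\ell_{k-1}/m\ll \ell_{k-1}$, the cluster cannot cross a good $(k-1)$-box and hence cannot propagate into further $(k-1)$-boxes. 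Summing the bad-region contribution with the good-box extensions gives a diameter of $O(\ell_{k-1})(1+O(1/m))\leq 5(k-1)^2\ell_{k-1}=5\ell_k/m$ once $m$ is large (and $k\geq 2$). The main obstacle is precisely controlling the constants in this bad-region case, which is tightest at $k=2$ where $5\ell_k/m=5\ell_1$ leaves very little room beyond the bad-region extent; pushing it through forces $m$ to be chosen large enough (after $\ell$) so that the $O(1/m)$ slack from good-box extensions is absorbed into the stated bound.
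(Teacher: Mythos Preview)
Your overall strategy (induction on $k$, reading off the $k=1$ case from $\cG_4'$, and using Remark~\ref{enl1shield} at higher scales) matches the paper, and your Case~1 is fine. The gap is in Case~2, and your proposed fix of taking $m$ large does not rescue it.

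In Case~2 you bound the cluster by adding the bad-region contribution, $O(\ell_{k-1})$, to the extensions into neighbouring good $(k-1)$-boxes, each of size $5\ell_{k-1}/m$. At $k=2$ the target is $5\ell_k/m = 5m\ell_1/m = 5\ell_1$, while the bad region $S_{k-1}^\enl(\hat\jmath)$ already has $L^\infty$-radius $5\ell_1$ about a vertex $v$ sitting in its central core. Both numbers are \emph{independent of $m$}, so no choice of $m$ creates room for the additional $5\ell_1/m$ extensions; your additive accounting simply overshoots at $k=2$.

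The paper avoids the addition entirely. If the cluster exits the enlargement $S_{k-1}^\enl(i')$, it must contain some vertex $u$ lying in $S_{k-1}^\core(i'')$ for a good box $R_{k-1}(i'',\tau')$. The inductive hypothesis at $u$ then bounds the \emph{entire} cluster of $u$---which is also the cluster of $v$---inside $B^\infty_{5\ell_{k-1}/m}(u)$. Since $v$ itself belongs to this cluster, $\|u-v\|_\infty\le 5\ell_{k-1}/m$, so the cluster sits in $B^\infty_{10\ell_{k-1}/m}(v)$, and $10\ell_{k-1}/m = 10\ell_k/(m^2(k-1)^2)\le 5\ell_k/m$ once $m\ge 2$. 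The point you are missing is that induction at the exit vertex controls the whole component, not just the portion beyond the bad region; this is what makes the $k=2$ case go through without any slack.

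A secondary remark: your case split (whether $S_{k-1}^{\enlb}(j_0)$ meets the bad region) is coarser than needed and leads to the confusion you flag. The cleaner split is simply whether $R_{k-1}(j_0,\tau')$ is good or bad; in the former case induction at $j_0$ closes immediately, and only in the latter does the enlargement argument enter.
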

\begin{proof}
   For $k=1$, the result follows by the fact that components have size at most $\log^2\ell$ in good $1$-boxes when $\tau\geq 1$, and $\ell$ is large enough so $\log^2\ell \leq 5\ell/m$. 
   For $k\geq 2$, let $(i',\tau')$ be such that $v\in S_{k-1}^\core(i')$ and $t\in T_{k-1}(\tau')\subset T_k(\tau)$; there could be more than one choice for $\tau'$, it is irrelevant which one we pick.
   Note that since $v\in \cS_k^\inn(I)$
   $$
      S_{k-1}(i')\subset S_{k-1}^\enlb(i')\subset S_k(i)\text{ for some $i\in I$}. 
   $$ 
   If $R_{k-1}(i',\tau')$ is good, then the connected component of $v$ is contained in $B_{5\ell_{k-1}/m}(v)\subset B_{5\ell_{k}/m}(v)$ 
   by applying the induction hypothesis at scale $k-1$ and set of indices $\lrc{i'}$. Otherwise, note that by Remark~\ref{enl1shield} we have that $R_{k-1}^\enl(i',\tau')$ contains all bad boxes in $R_k(i,\tau)$. 
   If the connected component of $v$ is contained in $S_{k-1}^\enl(i')$ then it is contained in $B_{5\ell_{k-1}}(v)\supset S_{k-1}^\enl(i')$. 
   Since $5\ell_{k-1}=5 \frac{\ell_k}{m(k-1)^2}\leq \frac{5\ell_k}{m}$ the lemma holds on this case as well.
   In the final case, when the connected component of $v$ is not contained in $S_{k-1}^\enl(i')$, it may sound contradictory but we can get an even smaller bound for the component of $v$.
   The reason is that there must exist 
   $i''$ such that $v$ is at the same component of a vertex $u$ with 
   $u\in S_{k-1}^\core(i'')$ and $S_{k-1}^\core(i'')\cap S_{k-1}^\enl(i')=\emptyset$ but $S_{k-1}(i'')\cap S_{k-1}^\enl(i')\neq\emptyset$. 
   But since $(u,t)$ is in the box $R_{k-1}(i'',\tau')$, and $S_{k-1}(i'')\subset S_{k-1}^\enlb(i')\subset S_k(i)$, we have that $R_{k-1}(i'',\tau'')$ is a good box. Thus, 
   by induction we obtain that the connected component of $v$ is inside $B_{5\ell_{k-1}/m}(u)\subset B_{10\ell_{k-1}/m}(v)$.
   Since $10\frac{\ell_{k-1}}{m} = 10 \frac{\ell_k}{m^2 (k-1)^2} \leq \frac{5\ell_k}{m}$ for all $k$ as long as $m\geq 2$, the proof is completed.
% 
%    The case $k=1$ is trivial since $R_k(i',\tau)$ are good for all $i'\in I'$ and, in good $1$-boxes, connected component cannot be larger than $\log^2\ell$. So the connected component of any 
%    $e\in \bigcup_{i\in I} E(S_k(i))$ must be within distance $\log^2\ell$ from the boundary of $\bigcup_{i\in I} S_k(i)$.
\end{proof}

With the help of the above lemma, we can show that the graph cannot uncouple in regions surrounded by good boxes. 
\begin{lemma}[Graphs remain coupled if $\Phi_t$ does not change]\label{lem:keepcoupled}
   Let $m$ be large, and let $\ell$ be large enough with respect to $m$.
   Let $R_k(i,\tau)$ be a good box, and let $s_1<s_2$ with $s_1,s_2\in T_k(\tau)$ and $s_1\leq (\tau+1) t_k$.
%    If $k=1$, let $I=\lrc{i}$. 
%    If $k\geq2$, set 
%    $$
%       I=\lrc{i}\cup\lrc{\iota \colon S_k^\core(\iota)\subset S_k(i) \text{ and } S_k^\core(\iota)\times [s_1,s_2] \text{ intersects a bad $(k-1)$-box}}.
%    $$
   If $\Phi_t$ does not change during $t\in [s_1,s_2]$ and
   $$
      \eta_{s_1}^\star(e)=\bar \eta_{s_1}^\star(\Phi_{s_1}(e)) \text{ for all $e\in E(S_k(i))$},
   $$
   then $\eta_{t}^\star(e)=\bar \eta_{t}^\star(\Phi_{t}(e))$ for all $e\in E(S_k^\inn(i))$ and all $t\in[s_1,s_2]$.
\end{lemma}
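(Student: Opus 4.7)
I would prove this by induction on the scale $k$, tracking what happens at each individual update event during $[s_1,s_2]$ and exploiting the coupling rule of Section~\ref{sec:coupgraph} together with the cluster-size bound of Lemma~\ref{lem:compsize}.

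For the base case $k=1$, the event $\cG_1(i,\tau)$ forces no edge of $E(S_1(i))$ to receive a non-$\star$-update during $T_1(\tau)\supseteq[s_1,s_2]$. Hence every update is a $\star$-update; the coupling rule sets both $\eta_s^\star(e)$ and $\bar\eta_s^\star(\Phi_s(e))$ to $\star$ simultaneously, and since $\Phi_t\equiv\Phi_{s_1}$ on $[s_1,s_2]$, the matching between edges on the two sides does not drift. The coupling assumed at $s_1$ on $E(S_1(i))\supseteq E(S_1^\inn(i))$ therefore persists throughout.

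For the inductive step $k\geq 2$, I process the updates and walker jumps in $[s_1,s_2]$ chronologically and maintain the invariant that $\eta_t^\star(e)=\bar\eta_t^\star(\Phi_t(e))$ for every $e\in E(S_k^\inn(i))$, together with a slightly stronger coupling on a buffer region extending from $S_k^\inn(i)$ almost out to $\partial S_k(i)$. A $\star$-update preserves the invariant trivially. For a non-$\star$-update at an edge $e$, the coupling rule triggers cluster explorations from the endpoints of $e$ on the $\eta^\star$ side and from the endpoints of $\Phi_s(e)$ on the $\bar\eta^\star$ side; the two updates produce identical outcomes provided the two explorations agree, which holds whenever both clusters lie inside the currently coupled region. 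Here I apply Lemma~\ref{lem:compsize} with $I=\{i\}$ at scale $k$: for any $v\in S_k^\inn(i)$ and $t\in T_k(\tau)$, the cluster $\cC_v(t)$ is contained in $B_{5\ell_k/m}^\infty(v)$, and by the definition~\eqref{eq:sinnk} of $S_k^\inn(i)$ via the 2-enlargement, such a $v$ has a buffer of order $\ell_{k-1}$ (indeed, more than $20\ell_{k-1}$) to $\partial S_k(i)$. Taking $m$ large enough relative to the constants involved makes the explorations remain inside $E(S_k(i))$, where coupling is preserved by the inductive hypothesis applied at the relevant sub-scale.

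The walker must be accounted for separately: since $\Phi_t$ is unchanged on $[s_1,s_2]$, identity coupling is in force, so $X_t$ and $\bar X_t=\Phi_t(X_t)$ move in lockstep, and whenever a $\star$-edge adjacent to the new vertex is sampled on the $\eta^\star$ side, the matched edge on the $\bar\eta^\star$ side is sampled from the same uniform $U$, producing the same open/closed status provided the local environments agree—which is exactly the invariant we are maintaining.

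The main technical obstacle I anticipate is the geometric bookkeeping for the buffer in the inductive step: the bound $5\ell_k/m=5(k-1)^2\ell_{k-1}$ from Lemma~\ref{lem:compsize} is not smaller than the $\sim 20\ell_{k-1}$ buffer of the 2-enlargement for $k\geq 4$, so a naive application of the lemma at scale $k$ does not suffice. The clean fix is to apply Lemma~\ref{lem:compsize} at the sub-scale $k-1$ box actually containing $v$—obtaining the much sharper bound $B_{5\ell_{k-1}/m}^\infty(v)$ whenever that box is good—and to fall back on the coarser $B_{5\ell_{k-1}}^\infty(v)$ bound (still small compared to the $20\ell_{k-1}$ buffer) only inside the single bad $(k-1)$-enlargement permitted by Remark~\ref{enl1shield}. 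Separating these two regimes and verifying the constants give the required containment for all $k\geq 2$ is the delicate portion of the argument.
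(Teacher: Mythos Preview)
Your base case $k=1$ is fine and matches the paper. The gap is in the inductive step, and it is temporal rather than purely geometric. You propose to process updates chronologically and maintain coupling on $S_k^{\inn}(i)$ together with a buffer reaching almost to $\partial S_k(i)$. But each non-$\star$ update at an edge $e$ sitting in the buffer close to $\partial S_k(i)$ triggers an exploration that may leave $S_k(i)$; the outcome at $e$ can then decouple, and the coupled region shrinks. Over a single $(k-1)$-time window the induction hypothesis controls this shrinkage (from $S_{k-1}(j)$ down to $S_{k-1}^{\inn}(j)$), but $[s_1,s_2]$ has length of order $t_k$, i.e.\ of order $m(k-1)^2$ such windows, and you give no mechanism preventing the erosion from accumulating across windows. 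The cluster-size bound of Lemma~\ref{lem:compsize} tells you how far a single exploration reaches, not how far the decoupled region has crept in after many explorations; likewise, invoking the induction hypothesis once on a sub-box only buys you time $\sim t_{k-1}$, after which you hold coupling on $S_{k-1}^{\inn}(j)$ but not on the full $S_{k-1}(j)$ needed to re-apply it.

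The missing ingredient is the re-coupling Lemma~\ref{lem:graphcoup}: in every good $(k-1)$-sub-box $R_{k-1}(j,\tau')$, within time $2\bar t_{k-1}$ the \emph{entire} $S_{k-1}(j)$ becomes coupled again (regardless of boundary erosion), simply because $\Phi_t$ is frozen. The paper's proof alternates the induction hypothesis with this reset to propagate~\eqref{eq:sinncoup} across all of $[s_1,s_2]$, and only then brings in Lemma~\ref{lem:compsize} to handle the single bad $(k-1)$-enlargement. Your geometric fix (applying the cluster bound at scale $k-1$ and separating the good/bad sub-boxes) is in fact what the paper does in that final step, but it cannot stand on its own without the Lemma~\ref{lem:graphcoup} reset that keeps the surrounding good sub-boxes coupled throughout.
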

\begin{proof}
   For $k=1$ the lemma is obvious, since for any $e\in E(S_k(i))$, $e$ only receives $\star$-updates during $T_k(\tau)$. Therefore, $\eta_t^\star(e)=\bar\eta_t^\star(\Phi_t(e))$ for all $t\in[s_1,s_2]$. 
   For $k\geq 2$, assume the lemma holds up to scale $k-1$. 
   Let 
   $$
      \cT=\lrc{\tau' \colon T_{k-1}(\tau')\cap[s_1,s_2]\neq\emptyset \text{ and } T_{k-1}(\tau')\subset T_k(\tau)}.
   $$
   Let $\tau_1=\min \cT$. Note that either 
   \begin{equation}
      s_1\in T_{k-1}^\core(\tau_1) 
      \quad\text{or}\quad
      s_1\in T_{k-1}^\core(\tau_1-1),
      \label{eq:s1prop}
   \end{equation}
   where the latter happens when $s_1$ is near the starting time of $T_k(\tau)$. Because $s_1$ cannot be near the ending time
   of $T_k(\tau)$ due to the condition $s_1\leq (\tau+1) t_k$, we obtain that $\cT$ is not empty.
   We will first show that 
   \begin{align}
      \text{$S_{k-1}^\inn(j)$ is coupled during $[s_1,s_2]$ for all $j$ such that $S_{k-1}(j)\subset S_k(i)$}\nonumber\\
      \text{and for which $R_{k-1}(j,\tau')$ is good for all $\tau'\in \cT$}.
      \label{eq:sinncoup}
   \end{align}
   To see this, let $r_1=\sup T_{k-1}(\tau_1)$ and note that $r_1 \geq s_1 + t_{k-1}$ because of~\eqref{eq:s1prop}.
   Now, induction gives that $S_{k-1}^\inn(j)$ remains coupled up to time $r_1$. 
   We would like to reapply the induction hypothesis on the box $S_{k-1}(j)$ in the next time step, but for this we need $S_{k-1}(j)$ to be coupled, not only $S_{k-1}^\inn(j)$. 
   Thus, we first apply Lemma~\ref{lem:graphcoup} from time $r_1-2\bar t_{k-1}$ to obtain that there exists a time $r_1'\in [r_1-2\bar t_{k-1},r_1]$ 
   for which the whole of $S_{k-1}(j)$ is coupled. Let $\tau_2$ be such that $r_1'\in T_{k-1}^\core(\tau_2)$ and note that $\tau_2\geq \tau_1+1$. Thus, we repeat the induction hypothesis and the application of 
   Lemma~\ref{lem:graphcoup} to obtain a sequence of $\tau_\iota$, $r_\iota$ and $r_\iota'$ until a certain value $r_\iota'\in [s_2-2\bar t_{k-1},s_2]$. At that time, the induction hypothesis gives that 
   $S_{k-1}^\inn(j)$ is coupled at time $s_2$, establishing~\eqref{eq:sinncoup}. 
   
   Now we turn to establish the lemma. 
   If $R_k(i,\tau)$ has no bad $(k-1)$-box intersecting the time interval $[s_1,s_2]$, then~\eqref{eq:sinncoup} and the fact that $(k-1)$-boxes overlap give that 
   $\bigcup_{j\colon S_{k-1}(j)\subset S_k(i)}S_{k-1}^\inn(j)\supset S_k^\inn(i)$ is coupled during $[s_1,s_2]$.
    
   Now assume that $R_k(i,\tau)$ contains bad $(k-1)$-boxes that intersect $[s_1,s_2]$. From Remark~\ref{enl1shield}, there exists $R_{k-1}(i',\tau')$ so that all $(k-1)$-bad boxes contained in $R_k(i,\tau)$ are contained in 
   $R_{k-1}^\enl(i',\tau')$. Let 
   $$
      \cJ = \lrc{j \colon S_{k-1}(j)\subset S_k(i) \text{ and } S_{k-1}(j)\not \subset S_{k-1}^\enl(i')},
   $$
   and note that $R_{k-1}(j,\tau'')$ is good for all $j\in \cJ$ and $\tau''\in \cT$. Therefore,~\eqref{eq:sinncoup} gives that $S_{k-1}^\inn(j)$ is coupled during $[s_1,s_2]$ for all $j\in\cJ$. 
   The remaining of the proof is split into two cases. 
   First assume that $S_{k-1}^\enl(i')$ is separated from infinity by $\cJ$, which means that any path from $S_{k-1}^\enl(i')$ to the outside of $S_k(i)$ must enter $S_{k-1}^\core(j)$ for some $j\in\cJ$. 
   In fact, letting 
   $$
      \cJ' = \lrc{j\in\cJ \colon S_{k-1}^\core(j)\subset S_{k-1}^\enl(i')},
   $$
   we get that the path must enter $S_{k-1}^\core(j)$ for some $j\in \cJ'$.
   Besides, Lemma~\ref{lem:compsize} gives that for all $v\in \cS_{k-1}^\inn(\cJ)$ and all $s\in T_{k-1}^\enl(\tau')$ we have that the connected component of $v$ is contained in $B_{5\ell_{k-1}/m}^\infty(v)$. 
   Therefore, all connected components intersecting $S_{k-1}^\enl(i')$ must be contained in $\bigcup_{v \in S_{k-1}^\enl(i')} B_{5\ell_{k-1}/m}^\infty(v)$, which is a spatial region contained in the interior of 
   $\cS_k^\inn(J')$. 
   Therefore, since $\cS_{k-1}^\inn(J)\supset \cS_{k-1}^\inn(J')$ remains coupled throughout $[s_1,s_2]$ by~\eqref{eq:sinncoup}, non-$\star$ updates inside $S_{k-1}^\enl(i')$ cannot uncouple the graph.
   
   Turning to the second case, we assume that 
   $S_{k-1}^\enl(i')$ is not separated from infinity by $\cJ$. This means that $S_{k-1}^\enl(i')$ is so close to the boundary of $S_k(i)$ that it does not intersect $S_k^\inn(i)$. 
   More formally, for any $v\in S_{k-1}^\enl(i)$ we have that $B_{10\ell_{k-1}}^\infty(v)$ cannot be contained in $S_k(i)$. But this implies that any $i''$ with $S_{k-1}^\core(i'')\subset S_{k-1}^\enl(i')$ we have that 
   $S_{k-1}^\enlb(i'')\not\subset S_k^\inn(i)$. Therefore, applying~\eqref{eq:sinncoup} to the boxes in $\cJ$ already gives that $S_k^\inn(i)$ is coupled during $[s_1,s_2]$.
\end{proof}

%====================================================================================================================
\subsection{Identity coupling}\label{sec:identity}
We prove that, by doing identity coupling, as long as the particle $X_t$ is in a point $(v,t)\in \mathbb{T}_n^d\times\mathbb{R}^+$ in space-time that is part of a $1$-box $R_1(\cdot,\cdot)$ that is good, 
it is always possible to keep the distance between $X_t$ and $\bar X_t$ constant.
Recall the event $\cB'_t$ from~\eqref{eq:eventbp}, the event $\cB_t$ from~\eqref{eq:eventb}, and the definition of the spatial core of a box in~\eqref{eq:score}.
We will need a weaker version of $\cB'_t$ which we define as 
\begin{equation}
   \cB_t'' = \lrc{\forall e\in E\lr{B_{\ell/3}^\infty(X_t)},\;\eta_t^\star(e) = \bar\eta_t^\star(\Phi_t(e))}.
   \label{eq:cbpp}
\end{equation}
Recall that in the second phase we assume that $R_0(0,0)$ is a $k_\rmax$-great box and $\cB_0$ holds; we do not restate these conditions on the lemmas.
\begin{lemma}[Identity coupling succeeds in good boxes]
   \label{lemic1}
   Let $s_1$ be a time so that $\cB_{s_1}''$ holds and $(X_{s_1},s_1)\in R_1^\score(i,\tau)$ with $R_1(i,\tau)$ being a good box. 
   Let $s_2\in T_1(\tau)$, $s_2\geq  s_1$. If we attempt to do identity coupling for the entire time interval $[s_1,s_2]$, then 
   the coupling succeeds and $\Psi_{s_1}\equiv\Psi_{t}$ for all $t\in[s_1,s_2]$. 
\end{lemma}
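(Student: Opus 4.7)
The plan is to show that, in a good box of scale $1$, no events can break the graph coupling in a neighborhood of the walker, and the walker cannot leave that neighborhood, so the identity coupling of the walkers can be maintained indefinitely throughout $[s_1,s_2]\cap T_1(\tau)$. The argument will be a bootstrap in time: assuming the identity coupling has held up to some $t$, I will check that it can be continued at time $t^+$.

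First, since $R_1(i,\tau)$ is good, event $\cG_1(i,\tau)$ holds, so during $T_1(\tau)$ no non-$\star$-update occurs on any edge of $E(S_1(i))$. Hence every edge update within $S_1(i)$ during $[s_1,s_2]$ is a $\star$-update, and by the rule described in Section~\ref{sec:coupgraph} such updates are coupled via the current map (setting both $\eta^\star_\cdot(e)$ and $\bar\eta^\star_\cdot(\Phi_\cdot(e))$ to $\star$, or jointly sampling their values using the same $U$ when they are resolved). Next, applying Lemma~\ref{eventa3} (and its adaptation for a starting time in $T_1(\tau)\setminus T_1^\core(\tau)$, for which the cluster of $X_{s_1}$ is already controlled by $\cB''_{s_1}$ to lie in $B^\infty_{\ell/3}(X_{s_1})$), one obtains
\begin{equation*}
   \sup_{t\in[s_1,s_2]}\|X_t-X_{s_1}\|_1 \;\leq\; \frac{\ell}{10},
\end{equation*}
so that $X_t\in B^\infty_{\ell/3}(X_{s_1})$ throughout $[s_1,s_2]$ for all small enough $p$.

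Now I set up the bootstrap. Let $\cA_t$ be the event that, during $[s_1,t]$, the identity coupling has been successfully applied, so that $\Psi_s\equiv\Psi_{s_1}$ on $[s_1,t]$. Since both walkers have made identical moves up to $t$, the last great-box time of $X$ is unaffected by the attempted identity coupling, hence $\Phi_s$ also remains constant on $[s_1,t]$; together with the fact that at $s_1$ we must have $\Phi_{s_1}=\Psi_{s_1}$ (otherwise the hypothesis $\cB''_{s_1}$ would be vacuous for the purpose of coupling the walkers), I get $\Phi_t=\Psi_t=\Psi_{s_1}$ on $[s_1,t]$. Combined with the first step, this implies that the equality $\eta^\star_t(e)=\bar\eta^\star_t(\Psi_{s_1}(e))$ is preserved for every $e\in E(B^\infty_{\ell/3}(X_{s_1}))\subset E(S_1(i))$: a $\star$-update never destroys it, and when an edge adjacent to the walker is resolved via its last $U$, the same $U$ produces the same answer in both copies. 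When the walker clock rings at some $t^+\leq s_2$ and picks a direction $e_i$, the edge $(X_t,X_t+e_i)$ is in $E(B^\infty_{\ell/3}(X_{s_1}))$ by the displacement bound, so its status in $\eta^\star_{t^+}$ coincides with that of its $\Psi_{s_1}$-image in $\bar\eta^\star_{t^+}$. Hence the identity coupling may be continued at $t^+$, giving $\Psi_{t^+}=\Psi_{s_1}$, and the bootstrap closes. It follows that $\cA_t$ holds for all $t\in[s_1,s_2]$, which is the conclusion of the lemma.

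The main obstacle is bookkeeping around the maps $\Psi$ and $\Phi$: one must verify that the fact that identity coupling holds in $[s_1,s_2]$ is compatible with $\Phi$ being constant on that interval, even though $\Phi$ is in principle allowed to update every time the walker enters a $k_\rmax$-great core. This compatibility follows precisely because, under identity coupling, $\bar s_t$ can only advance via common motion of the two walkers (which leaves $\Psi$ unchanged), so that whenever $\Phi$ does update on $[s_1,s_2]$, it updates to the same value $\Psi_{s_1}$. A subsidiary subtlety is the case when $s_1\in T_1(\tau)\setminus T_1^\core(\tau)$, where Lemma~\ref{eventa3} does not apply verbatim; here $\cG_2'(i,j(\tau))$ ensures that no edge in $S_1(i)$ opens during $T_1(\tau)\setminus T_1^\core(\tau)$, so the cluster of $X_{s_1}$ can only shrink and the cluster-diameter input needed for the displacement bound is supplied directly by $\cB''_{s_1}$.
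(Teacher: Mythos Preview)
Your approach matches the paper's: fix $B=B^\infty_{\ell/3}(X_{s_1})\subset S_1(i)$, use $\cG_1$ to rule out non-$\star$ updates inside $B$ so the graph coupling there persists, and use a displacement bound to keep the walker in $B$; the extra bootstrap and $\Phi$ versus $\Psi$ bookkeeping is harmless elaboration of what the paper leaves implicit. One minor correction: $\cB''_{s_1}$ is purely a graph-coupling statement and says nothing about cluster size, so it cannot supply ``the cluster-diameter input'' you invoke in the parenthetical and in the final paragraph. The paper handles this differently: for $s_1\leq\bar t_1$ it uses the standing phase-2 hypotheses (namely $\cB_0$, which makes the walker isolated, and $R_1(0,0)$ being $k_\rmax$-great), while for $s_1>\bar t_1$ one may cite Lemma~\ref{timeboundlem1} directly, which already accepts $(X_{s_1},s_1)\in R_1^\score(i,\tau)$ and needs no separate cluster bound.
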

\begin{proof}
   Let $B$ be the $L_\infty$ ball of radius $\ell/3$ around $X_{s_1}$; $B$ is a fixed region in space, not changing in time. The edges in $E(B)$ are coupled at time $s_1$ since $\cB''_{s_1}$ holds. 
   By Lemma \ref{timeboundlem} the walker never leaves $S_1(i)\subset B$ during the time interval $[s_1,s_2]$; if $s_1 \leq \bar t_1$, then we know that the component of the walker is at most $\log^2\ell$ since 
   $\cB_0$ holds and the box $R_1(0,0)$ is $k_\rmax$-great by the properties of the second phase. 
   Since there is no non-$\star$ update in $E(B)$ during $[s_1,s_2]$, $B$ remains coupled throughout and identity coupling is successful.
\end{proof}

The lemma below is a composition of the previous lemma when the walker traverses a sequence of good $1$-boxes. We assume that the stronger event $\cB_t'$ holds at the start time to be able to guarantee that $\cB_t''$ holds 
during the entire time interval covered by the lemma.
\begin{lemma}[Identity coupling succeeds in sequences of good boxes]
   \label{lemic1seq}
   Let $s_1$ be a time so that $\cB_{s_1}'$ holds. Let $s_2>s_1$ be such that during $[s_1,s_2]$ the walker only traverses $1$-boxes that are good. 
   Then, if we attempt to do identity coupling for the entire time interval $[s_1,s_2]$, the coupling succeeds, $\cB''_t$ holds and 
   $\Psi_{s_1}\equiv\Psi_{t}$ for all $t\in[s_1,s_2]$. Moreover, $\cB'_t$ holds for all $t\in[s_1+2\bar t_1,s_2]$.
\end{lemma}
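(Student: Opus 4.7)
The strategy is to chain Lemma \ref{lemic1} across the successive good $1$-boxes that the walker visits during $[s_1,s_2]$, and to upgrade the resulting local coupling $\cB''_t$ to the stronger $\cB'_t$ via Lemma \ref{lem:graphcoup}. Since $B_{\ell/3}^\infty(x)\subset B_{\ell/2}^\infty(x)$, the hypothesis $\cB'_{s_1}$ immediately supplies $\cB''_{s_1}$. Let $R_1(i_0,\tau_0),R_1(i_1,\tau_1),\dots$ enumerate the good $1$-boxes whose spatial cores the walker occupies in succession, and let $r_0=s_1<r_1<r_2<\dots$ be the corresponding transition times, where $r_{j+1}=\min\{s_2,\max T_1(\tau_j)\}$ (so that the walker lies in $R_1^\score(i_j,\tau_j)$ on $[r_j,r_{j+1}]$).

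\textbf{Chaining identity couplings.} I argue by induction on $j$ that $\cB''_{r_j}$ holds together with the stronger invariant that $B_{\ell/2}^\infty(X_{r_j})$ is fully coupled (i.e.\ $\cB'_{r_j}$). The base case $j=0$ is exactly $\cB'_{s_1}$. For the inductive step, Lemma \ref{lemic1} applied on $[r_j,r_{j+1}]$ yields identity coupling there and the constancy of $\Psi_t$; this in turn makes $\Phi_t=\Psi_{\bar s_t}$ constant, since $\Psi_{\bar s_t}=\Psi_{s_1}$ regardless of how $\bar s_t$ advances. Because only $\star$-updates hit $E(S_1(i_j))$ during $T_1(\tau_j)$ (good-box property) and $\Phi$ is fixed, the coupling of $B_{\ell/2}^\infty(X_{r_j})$ is preserved up to time $r_{j+1}$. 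Now Lemma \ref{eventa3} gives $\|X_{r_{j+1}}-X_{r_j}\|_1\leq \ell/10$, so $B_{\ell/3}^\infty(X_{r_{j+1}})\subset B_{\ell/2}^\infty(X_{r_j})$, which delivers $\cB''_{r_{j+1}}$. The stronger invariant at $r_{j+1}$ is restored by the next paragraph.

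\textbf{Upgrading to $\cB'$ and main obstacle.} To re-establish the stronger invariant at each transition and to prove $\cB'_t$ on $[s_1+2\bar t_1,s_2]$, I invoke Lemma \ref{lem:graphcoup} inside each box $R_1(i_j,\tau_j)$: since $\Phi$ is constant, it furnishes a time $t^\ast_j\leq \min T_1(\tau_j)+2\bar t_1$ at which every edge of $E(S_1(i_j))$ is coupled, and this coupling persists throughout $T_1(\tau_j)$ (no non-$\star$ updates there, $\Phi$ fixed). Because the walker stays in $S_1^\core(i_j)$ during $T_1(\tau_j)$ by Lemma \ref{timeboundlem1}, the ball $B_{\ell/2}^\infty(X_t)$ sits inside $S_1(i_j)$ for each $t\in T_1(\tau_j)$, yielding $\cB'_t$ as soon as $t\geq t^\ast_j$. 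For the first box this gives $\cB'_t$ from time $s_1+2\bar t_1$ onwards; for later boxes, the spatial overlap $S_1(i_j)\cap S_1(i_{j+1})$ (forced by the displacement bound) combined with the temporal overlap $T_1(\tau_j)\cap T_1(\tau_{j+1})\supset[\tau_{j+1}t_1-\bar t_1,\tau_{j+1}t_1]$ ensures that the coupled region of $S_1(i_j)$ already covers the walker's $\ell/2$-neighborhood inside $S_1(i_{j+1})$, so $\cB'$ transfers without a fresh $2\bar t_1$ wait. The main obstacle is precisely this transfer: one must verify that $B_{\ell/2}^\infty(X_{r_{j+1}})$ is contained in the portion of $S_1(i_{j+1})$ that was already coupled as part of $S_1(i_j)$, using the slack $\ell/2+\ell/10\ll 3\ell$ between the walker's ball and the full box.
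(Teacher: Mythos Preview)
Your proposal is correct and follows essentially the same route as the paper: iterate Lemma~\ref{lemic1} across consecutive good $1$-boxes, use Lemma~\ref{lem:graphcoup} to couple each $S_1(i_j)$, and rely on the absence of non-$\star$ updates in good boxes to preserve that coupling (the paper phrases this last step as invoking Lemma~\ref{lem:keepcoupled}, which for $k=1$ is exactly your direct argument). One small slip: you write that ``the walker stays in $S_1^\core(i_j)$ during $T_1(\tau_j)$ by Lemma~\ref{timeboundlem1}'', but that lemma only keeps the walker inside $S_1(i_j)$; what you actually use, and what suffices, is the displacement bound $\sup\|X_t-X_{r_j}\|_1\le \ell/10$ (Lemma~\ref{eventa3} or~\ref{timeboundlem1}), which together with $X_{r_j}\in S_1^\core(i_j)$ gives $B_{\ell/2}^\infty(X_t)\subset S_1(i_j)$ as you need.
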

\begin{proof}
   Let $R_1(i,\tau)$ be the box the walker is in its core at time $s_1$. Since $R_1(i,\tau)$ is a good box, Lemma~\ref{lemic1} gives that identity coupling works up to the end of $T_1(\tau)$ and 
   Lemma~\ref{lem:graphcoup} gives that $S_1(i)$ couples at some time during $[s_1,s_1+2\bar t_1]$. 
   Moreover, for any $t\in[s_1,s_1+2\bar t_1]$, $\cB_t''$ holds since $\cB_{s_1}'$ holds. 
   For $t\in T_1(\tau)\cap [s_1+2\bar t_1,\infty)$ we have that $\cB'_t\subset \cB''_t$ holds by Lemma~\ref{lem:keepcoupled}.  
   Hence, if $R_1(i',\tau')$ is the box whose core the walker is in when exitting $R_1(i,\tau)$, 
   we can apply Lemma~\ref{lemic1} again to show that identity coupling succeeds. Repeating this argument over and over again establishes the lemma.
\end{proof}

Now we analyze what happens in the neighborhood around a bad $1$-box, supposing that the walker 
enters the 2-enlargement of that box. 
Two things can happen, either the walker enters the 2-enlargement of the box from the space boundary $\partial_\rs R_1^\enlb(\cdot,\cdot)$ or 
it enters from the time boundary $\partial_\rt^-R_1^\enlb(\cdot,\cdot)$.
If it is from the space boundary, then the walker does not get too close to the bad box and $\mathcal{B}_t'$ would still be verified for all $t$. 
Moreover, as long as the walker is in the 2-enlargement, identity coupling can be applied successfully. 
In the other case, if the walker enters from the time boundary, then it could eventually reach the bad box but the environment in the 2-enlargement of the bad box will be coupled before that. 
In particular, the environments will be coupled at all times in $T_1^\enl(\cdot,\cdot)$ thanks to the abundance of $\star$-updates in 
$T_1^\enlb(\cdot,\cdot)\setminus T_1^\enl(\cdot,\cdot)$. 
% Moreover, if the map $\Phi_t$ does not change, 
% the environments remain coupled also for all the bad boxes the walkers are not crossing: 
% this is because the abundance of $\star$-updates in the 2-enlargement couples the graphs before any non-$\star$ updates occur in the bad box 
% (the non-$ \star$-updates do not create disagreement if the environments are coupled), furthermore, as long as the map does not change, revealed edges remain coupled in the two processes. 
This reasoning gives that the relative distance between the walkers does not change and the graphs remain coupled in $E(S_1^\enlb(i))$ when the walker cross a bad box of scale $1$.

In the lemma below we will require $m$ to be large enough so that the following holds: 
\begin{align}
   \text{For any $(k,i,\tau)$ and any $(i',\tau')$ so that $R_{k+1}^\core(i',\tau')$ intersects $R_{k}(i,\tau)$ we obtain that}\nonumber\\  
   \text{$R_{k+1}(i',\tau')$ contains all $k$-boxes that intersects $R_{k}^\enlb(i,\tau)$, and $S_{k+1}^\inn(i')$ contains $S_{k}^\enlb(i)$.}
   \label{eq:largem}
\end{align}
\begin{lemma}[Identity coupling in enlargement of bad boxes]
   \label{distnotchange}%\label{lemcoupscala1}
   Let $m$ be large enough so that~\eqref{eq:largem} holds.
%    Let $R_k(i,\tau)$ be a bad box of scale $k$ such that $R_{k+1}(i',\tau')$ is good for all $R_{k+1}(i',\tau')\supset R_k(i,\tau)$.
   Let $R_k(i,\tau)$ be a bad box of scale $k$ such that $R_{k+1}(i',\tau')$ is good for some $(k+1)$-box for which $R_{k+1}^\core(i',\tau')\cap R_k(i,\tau)\neq \emptyset$.
   Denote with $\tau_k^+=\max T_k^\enlb(\tau)$, and with $\tau_k^-=\min T_k^\enlb(\tau)$. 
   Let $s_c$ be a time at which the walker enters $R_k^\enlb(i,\tau)$ so $X_{s_c}\in S_k^\enlb(i)$ but $X_{s_c-}\not\in S_k^\enlb(i)$ or $s_c=\tau_k^-$.
   Let $s_e\defn\inf\{t \in (s_c,\tau_k^+] \colon X_t\notin S_k^\enlb(i)\}$ 
   the first time the walker exits $R_k^\enlb(i,\tau)$ after $s_c$; we take the convention that %$s_c=+\infty$ if the walker does not enter $S_1^\enlb(i)$ during the time interval $T_1^\enlb(\tau)$, and we take the convention that 
   $s_e=\tau_k^+$ if $X_t\in S_k^\enlb(i)$ for all $t\in [s_c,\tau_k^+]$. Thus
   \begin{equation}
   \label{eq:psinotchange}
   \text{if $\mathcal{B}_{s_c}'$ holds, then for all $t\in[s_c,s_e]$ $\Psi_t$ remains unchanged and $\cB''_t$ holds};
   \end{equation}
   consequently, identity coupling succeeds during $[s_c,s_e]$.
   Moreover, 
   \begin{equation}
   \label{eq:walkerenters}
   \text{if $s_c>\tau_k^-$, the walker does not enter $R_k^\enl(i,\tau)$}.
   \end{equation}
   Ultimately, letting $\cJ=\lrc{j\colon S_k(j)\cap S_k^\enlb(i)\neq \emptyset}$, 
   \begin{align}
      \text{if $s_c=\tau_k^-$, then }\eta_t^\star(e)=\bar\eta_t^\star(\Phi_t(e)) \text{ for all $e\in E(\cS_k^\inn(J))$}\nonumber\\
      \text{and all $t\in [s_c,s_e]$ with $t\geq \min T_k^\enl(\tau)$}.
   \label{eq:giveaname}
   \end{align}
\end{lemma}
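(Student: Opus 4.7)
The plan is to proceed by induction on the scale $k$, with the base case $k=1$ handled directly via Lemma \ref{lemic1seq}, and the inductive step at scale $k \geq 2$ using the hypothesis at scale $k-1$ together with the graph-coupling lemmas \ref{lem:graphcoup} and \ref{lem:keepcoupled} applied at scale $k+1$. I start by establishing \eqref{eq:walkerenters}. By hypothesis $R_{k+1}(i',\tau')$ is good, so Remark \ref{enl1shield} confines all bad $k$-boxes inside it to a single intersecting cluster which, because $R_k(i,\tau)$ itself is bad, must sit inside $R_k^\enl(i,\tau)$. Combined with \eqref{eq:largem}, every $k$-box contained in $R_k^\enlb(i,\tau)\setminus R_k^\enl(i,\tau)$ is then good. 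If $s_c>\tau_k^-$, the walker enters through $\partial_\rs^\enlb R_k^\enlb(i,\tau)$, so Lemma \ref{spaceenl} yields that the walker stays at $L_1$-distance at least $12\ell_k$ from $S_k^\enl(i)$ for all $t\in[s_c,s_e]$, which is \eqref{eq:walkerenters}.

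Next I would handle \eqref{eq:psinotchange}, which reduces to ensuring that $\cB''_t$ holds at every $t\in[s_c,s_e]$ so that identity coupling can be performed without change of $\Psi_t$. In Case~1 ($s_c>\tau_k^-$) the walker avoids $R_k^\enl(i,\tau)$ and only traverses good $k$-boxes; for $k=1$ this is immediate from Lemma \ref{lemic1seq}, and for $k\geq 2$ one invokes the inductive hypothesis each time the walker enters the 2-enlargement of a bad $(k-1)$-box inside one of these good $k$-boxes, while recursing through good boxes all the way down to scale $1$ in between. In Case~2 ($s_c=\tau_k^-$) the initial interval $[s_c,\min T_k^\enl(\tau)]$ has length of order $18 t_k$ and lies entirely in the good temporal shell of $R_k^\enlb(i,\tau)$, so the Case~1 argument applies throughout it. Simultaneously, since $\Phi_t$ has not changed, Lemma \ref{lem:graphcoup} applied at scale $k+1$ to $R_{k+1}(i',\tau')$ produces a time $s^{\ast}\in[s_c,s_c+2\bar t_{k+1}]$ at which the graph is fully coupled on $S_{k+1}(i')$; for $m$ sufficiently large $2\bar t_{k+1}$ is smaller than the length of the temporal shell, hence $s^{\ast}\leq \min T_k^\enl(\tau)$. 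From $s^{\ast}$ onward, Lemma \ref{lem:keepcoupled} at scale $k+1$ preserves the coupling on $S_{k+1}^\inn(i')$, which by \eqref{eq:largem} contains $S_k^\enlb(i)\supset B_{\ell/3}^\infty(X_t)$ for every $t$ the walker can visit; thus $\cB''_t$ holds throughout $[s_c,s_e]$, identity coupling succeeds and $\Psi_t$ stays frozen even after the walker possibly enters $R_k^\enl(i,\tau)$. Claim \eqref{eq:giveaname} follows as a by-product: once the graph is coupled on $S_{k+1}^\inn(i')$ at time $s^{\ast}\leq \min T_k^\enl(\tau)$ and $\Phi_t$ is frozen, Lemma \ref{lem:keepcoupled} maintains coupling on $S_{k+1}^\inn(i')\supset \cS_k^\inn(\cJ)$ for all later times in $[s_c,s_e]$, provided $m$ is large enough in \eqref{eq:largem}.

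The main obstacle is Case~2 of \eqref{eq:psinotchange}: one must simultaneously run the inductive identity-coupling argument for the walker's trajectory in the good temporal shell, ensure that the graph-coupling procedure at scale $k+1$ completes before the walker could reach $R_k^\enl(i,\tau)$, and then use Lemma \ref{lem:keepcoupled} to guarantee that the coupling persists on a spatial region large enough to accommodate the walker's possible wanderings inside the bad box. Verifying the required inequalities between $\bar t_{k+1}$, the length of $T_k^\enlb(\tau)\setminus T_k^\enl(\tau)$, and the spatial inclusions $S_{k+1}^\inn(i')\supseteq S_k^\enlb(i)\supseteq \cS_k^\inn(\cJ)$ — all for $m$ large — is the delicate bookkeeping that glues the whole argument together.
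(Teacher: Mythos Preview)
Your proposal is correct and follows essentially the same inductive approach as the paper: the same case split on spatial versus temporal entry, the same use of Lemma~\ref{spaceenl} for \eqref{eq:walkerenters}, and the same application of Lemmas~\ref{lem:graphcoup} and~\ref{lem:keepcoupled} at scale $k+1$ together with the key inequality $2\bar t_{k+1}=12t_k<15t_k$ for \eqref{eq:giveaname}. One point the paper makes explicit that you should watch for in Case~1 at scale $k\geq 2$: at time $s_c$ the walker may already sit inside the 2-enlargement of a bad box at some scale $k''<k$, and the paper anchors the induction there by invoking the standing assumption that $R_1(0,0)$ is $k_\rmax$-great (so the walker must have entered that 2-enlargement at an earlier time with $\cB'$ holding).
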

The proof uses induction on $k$, so we treat the case $k=1$ separately.
\begin{proof}[Proof of Lemma~\ref{distnotchange} for $k=1$]
   We start with the case $s_c>\tau_1^-$, meaning that the walker entered the $2$-enlargement of the bad box from $\partial_\rs R_1^\enlb(i,\tau)$. 
   We need to establish~\eqref{eq:psinotchange} and~\eqref{eq:walkerenters} in this case.
   We establish~\eqref{eq:walkerenters} by showing that the walker never gets closer than $12\ell$ from $S_1^\enl(i)$. 
   To see this, from~\eqref{eq:largem} we have that $R_2(i',\tau')$ contains the $2$-enlargement of $R_1(i,\tau)$, and $R_2(i',\tau')$ is a good box.
   Moreover, Remark~\ref{enl1shield} gives that the $1$-enlargement of $R_1(i,\tau)$ contains all bad $1$-boxes inside $R_2(i',\tau')$, and Lemma~\ref{spaceenl} gives that 
   the distance between the walker and $S_1^\enl(i)$ is at least $12\ell$, establishing~\eqref{eq:walkerenters}. 
   To establish~\eqref{eq:psinotchange}, note that the walker only traverses good boxes during $[s_c,s_e]$, so~\eqref{eq:psinotchange} follows from~\ref{lemic1seq}.
   
   Now we consider the case $s_c=\tau_1^-$, and need to establish~\eqref{eq:psinotchange} and~\eqref{eq:giveaname}. 
   The idea in this case is to use the time interval between $s_c$ and $\min T_1^\enl(\tau)$, which is large enough for the graphs to couple. 
   In fact, applying Lemma~\ref{lem:graphcoup} to the box $R_2(i',\tau')$ from time $s_c$, we obtain a time $s\in[s_c,s_c+2\bar t_2]$ so that $S_2(i')$ is coupled. From this time onwards Lemma~\ref{lem:keepcoupled} 
   gives that $S_2^\inn(i')\supset S_1^\enlb(i)$ remains coupled up to time $s_e$. 
   From Lemma~\ref{timeboundlem} we know that the walker does not leave $S_2^\inn(i')$ during $[s_c,s_e]$. So if identity coupling succeeds up to time 
   $s_c+2\bar t_2$, then it succeeds up to time $s_e$. Moreover, note that $2\bar t_2=12 t_2/m=12 t_1$ is smaller than the distance between $s_c$ and $\min T_1^\enl(\tau)$, which is $15t_1$. 
   So $S_2^\inn(i')$ couples before the walker can enter $R_1^\enl(i,\tau)$ and~\eqref{eq:giveaname} is established.
   
   It remains to show that the coupling succeeds and $\cB_t''$ holds for all $t\in[s_c,s_c+2\bar t_2]$, completing the proof of~\eqref{eq:psinotchange}. 
   For this, we only need to note that during this time interval the walker only traverses good $1$-boxes, so~\eqref{eq:psinotchange} follows from Lemma~\ref{lemic1seq}.
\end{proof}

\begin{proof}[Proof of Lemma~\ref{distnotchange} for $k\geq2$]
% \begin{proof}
   We have already established the case $k=1$. Now we proceed via induction. 
   Assume all claims of the lemma are proved up to scale $k-1$. Let $R_{k}(i,\tau)$ be a bad box and $R_{k+1}(i',\tau')$ as in the statement of the lemma be a good box. 
   All bad boxes in $R_{k+1}(i',\tau')$ are contained in $R_k^\enl(i,\tau)$. 
   
   We first prove the case $s_c>\tau_k^-$, which requires establishing~\eqref{eq:psinotchange} and~\eqref{eq:walkerenters}. 
   In this case we use the same argument as in the case $k=1$; that is,~\eqref{eq:walkerenters} follows from Lemma~\ref{spaceenl}.
   To show that identity coupling can be performed and $\cB_t''$ holds, notice that if at time $s_c$ the walker is inside a bad box $R_{k''}(i'',\tau'')$ for some $k''<k$, then since $R_1(0,0)$ is $k_\rmax$-great, 
   we have that in a previous time the walker was in the boundary of 
   $R_{k''}^\enlb(i'',\tau'')$. 
   If there are more than one tuple $(k'',i'',\tau'')$ satisfying the property above, we take the one with the largest $k''$ (breaking ties arbitrarily if there still are more than one such tuples).
   Since the walker must have entered the $2$-enlargement $R_{k''}^{\enlb}(i'',\tau'')$ at some time $s_c''$, 
   we obtain by induction that while traversing the bad box $R_{k''}(i'',\tau'')$ identity coupling is successful and $\cB'_t$ holds up to the end of $T_{k''}(\tau'')$, since~\eqref{eq:giveaname} implies $\cB'_t$. Therefore, 
   when the walker leaves $R_{k''}(i'',\tau'')$, we can apply the induction hypothesis again if the walker is inside another bad box.  
   It remains to check that identity coupling can be performed while the walker passes through space-time locations that belong to good boxes at all scale, in particular, while the walker passes through good $1$-boxes.
   But since $\cB'_t$ holds at that time, identity coupling succeeds by Lemma~\ref{lemic1seq}, concluding the proof of~\eqref{eq:psinotchange}.
   
   We now prove the case $s_c=\tau_k^-$, which requires establishing~\eqref{eq:psinotchange} and~\eqref{eq:giveaname}. Assume that $s_e\geq \min T_k^\enl(\tau)$, otherwise~\eqref{eq:psinotchange} follows from the same
   argument above and~\eqref{eq:giveaname} is irrelevant.
   We can do the same argument as for $k=1$; i.e., we show that the time interval between $s_c$ and $\min T_k^\enl(\tau)$ is large enough for the graphs to couple. 
   By Lemma~\ref{lem:graphcoup} we obtain a time $s\in[s_c,s_c+2\bar t_{k+1}]$ so that $S_{k+1}(i')$ is coupled and, by Lemma~\ref{lem:keepcoupled}, 
   $S_{k+1}^\inn(i')\supset S_k^\enlb(i)$ remains coupled until $s_e$. 
   Since Lemma~\ref{timeboundlem} gives that the walker does not leave $S_{k+1}^\inn(i')$ during $[s_c,s_e]$, if identity coupling succeeds up to time 
   $s_c+2\bar t_{k+1}$, then it succeeds up to time $s_e$. Besides, $2\bar t_{k+1}=12 \frac{t_{k+1}}{k^2 m}= 12t_k$ is smaller than the distance between $s_c$ and $\min T_k^\enl(\tau)$, which is $15t_k$. 
   So $S_{k+1}^\inn(i')$ couples before the walker can enter $R_k^\enl(i,\tau)$ and~\eqref{eq:giveaname} is established.
   To establish~\eqref{eq:psinotchange}, we need to show that $\cB_t''$ holds for all $t\in[s_c,s_c+2\bar t_{k+1}]$, but during this time the walker only traverses good $1$-boxes, 
   so~\eqref{eq:psinotchange} follows from Lemma~\ref{lemic1seq}.
\end{proof}

\begin{remark}
\label{whyenl}
The 2-enlargement of a bad box is chosen so that whenever the walker crosses it, by doing identity coupling the two processes have time to couple the environment before the walker crosses the bad box. 
For this exact reason we want the first box whose core the walker is at, at the beginning of the second phase, to be $k_\rmax$-great, so we know that the walker does not start inside the enlargement of a bad box, 
meaning that if the walker encountersa bad box during the second phase, it must first traverse its enlargement.
\end{remark}

% % % % % % % % 
% \begin{remark}
% If $B_s'$ holds for all $s\in(s_1,s_2)$ then in this time interval the walkers can perform the same jumps and not change their relative distance. In fact if the environment around them is the same (as a matter of fact we only need the environments to agree on a ball of radius $1$ around the walkers at all times), by doing identity coupling they are always able to perform the same jumps.
% \end{remark}
% 
%\begin{remark}
%The lemma also holds when the walkers are in a great box, as long as they are not performing a SRWM. During a SRWM the map $\Phi_s$ can change, causing $B_s'$ not to hold for edges adjacent to the walkers.
%\end{remark}
%====================================================================================================================
\subsection{Simple random walk moment}\label{sec:srwm}

Now we handle the case when the walker traverses great boxes, during which we do not perform identity coupling but try a different coupling. 
This coupling will be based on what we call \emph{a simple random walk moment} (SRWM), which is a given condition of the evolution of the environment that makes the walker performs a simple random walk step.

\begin{definition}[Simple random walk moment]
\label{srwm}
Let $R_1(i,\tau)$ be a great box such that $(X_{\tau t_1},\tau t_1)\in R_1^\core(i,\tau)$. We consider three consecutive intervals $I_1,I_2,I_3$ of lengths
$$
   |I_1| = \frac{t_1}{2}
   \quad\text{and}\quad
   |I_2| = |I_3|= \frac{1}{\mu},
$$
such that $I_1$ begins at time $\tau t_1=\min T_1^\core(\tau)$; note that $\tau t_1+\sum_{j=1}^3|I_j|<(\tau+2)t_1=\max T_1^\core(\tau)$. 
Let $v\in S_1(i)$ be the position of the walker $X_{\tau t_1}$; note that since $R_1(i,\tau)$ is a good box then all edges adjacent to $v$ at time $\tau t_1$ are closed. 
All the events below consider only $\star$-updates during $I_1\cup I_2\cup I_3$, ignoring all non-$\star$ updates. 
Then, a \emph{simple random walk moment} (SRWM) is said to occur in $R_1(i,\tau)$ if the following events happen consecutively:
\begin{enumerate}[start=1,label={(\bfseries $E_\arabic*$)}]
    \item During $I_1$, one of the edges adjacent to $v$, say $e=(v,u)$, receives an update to become open, and the edges adjacent to $u$ with status $\star$ are sampled closed. 
       Moreover, the other edges adjacent to $v$ or $u$ do not open during $I_1$, 
        and after $e$ opens, $e$ does not close for at least time $\frac{C_\newcte{cte:i1}}{\mu}$. %\textcolor{red}{whereas the other adjacent edges remain closed};
    \item During $I_2$, edge $e$ closes and does not open, while the edges adjacent to $e$, that were closed, do not open; note that at the end of $I_2$, the walker is in either $u$ or $v$.
    \item During $I_3$, the edges adjacent to $u$ or $v$ do a $\star$-update, and the edges adjacent to the walker do not open.
\end{enumerate}
\end{definition}
See Figure~\ref{fig:srwm} for an illustrative realization of a simple random walk moment. 
Define 
\begin{equation}
   \SRWM{i}{\tau} \text{ be the indicator for the event that SRWM occurs in $R_1(i,\tau)$.}
   \label{eq:srwm}
\end{equation}

\begin{remark}
\label{rem:srwm}
   Given $v$, the position of the walker at time $\tau t_1$, the event SRWM depends only on the updates in $E(S_1(i))$ during the time interval $I_1\cup I_2 \cup I_3$.
   In particular, it does not depend on the jumps of the walkers during $I_1\cup I_2\cup I_3$, and does not depend on non-$\star$ updates that could occur during $I_1\cup I_2\cup I_3$.
\end{remark}

\begin{figure}
    \centering
    \includegraphics[width=.9\linewidth]{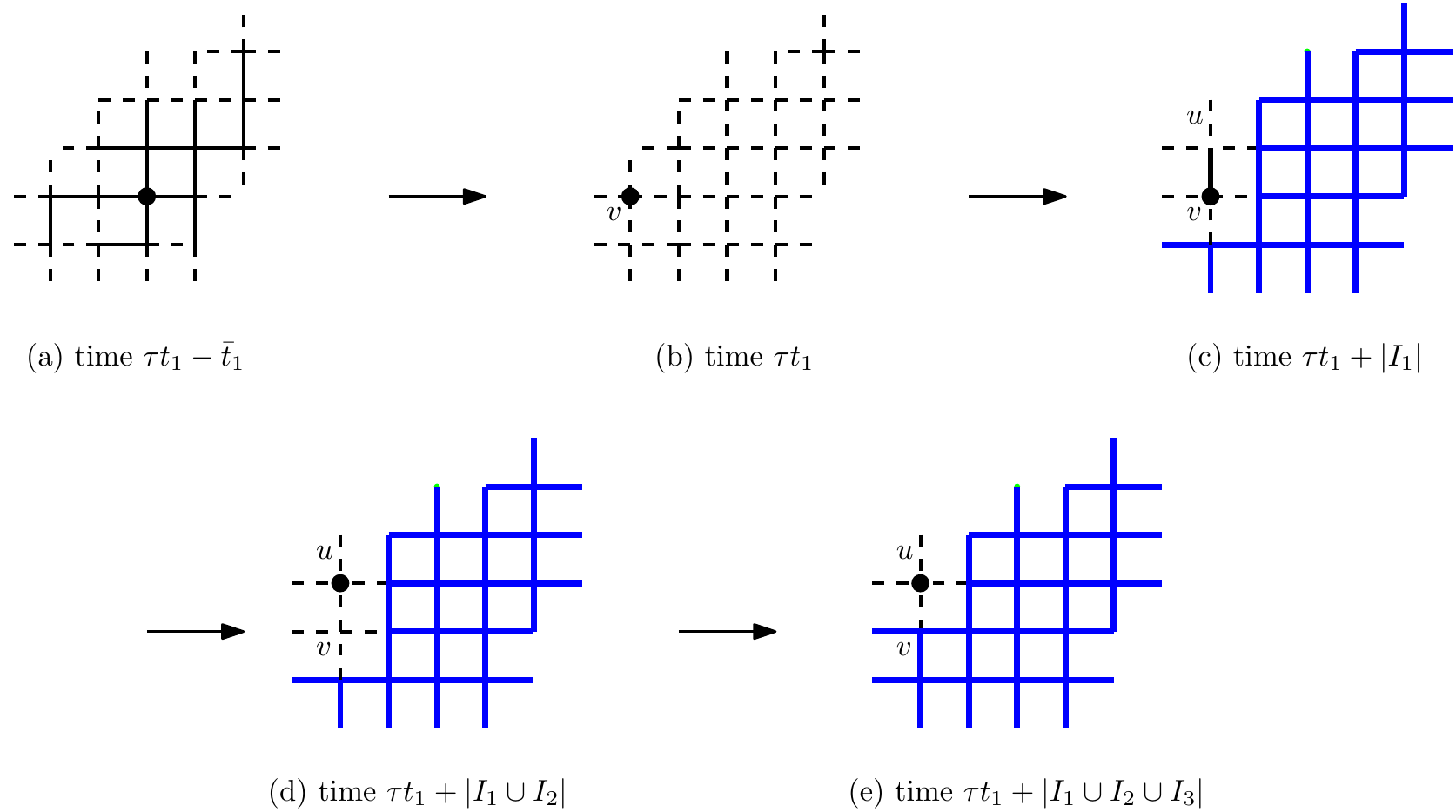}
   \caption{A possible realization of SRWM in a $k_{\rmax}$-great box. 
      (a) configuration at time $\tau t_1-\bar t_1$, with dashed lines representing closed edges, solid lines representing open edges, and the black ball representing the walker. 
      (b) During $[\tau t_1 -\bar t_1,\tau t_1]$ all edges close, trapping the walker in a vertex $v$. 
      (c) During $I_1$, edge $(u,v)$ adjacent to the walker opens, edges adjacent to $u$ are closed, and the other edges receive a $\star$-update. Blue lines represent edges that are updated $\star$. 
      (d) Edge $(u,v)$ closes at some time during $I_2$, trapping the walker in one of its endpoints, in this case endpoint $u$. 
      (e) During $I_3$ all edges adjacent to $v$ receive a $\star$-update, while the edges adjacent to the walker do not open.}
   \label{fig:srwm}
\end{figure}

Note that from $\tau t_1$ to time $\tau t_1 + |I_1\cup I_2 \cup I_3|$ the walker essentially performed a simple random walk step since the edge $e$ adjacent to $v$ that is chosen to open during $I_1$ is a uniformly random edge.
Now assume that the walker enters $R_1^\core(i,\tau)$ with $R_1(i,\tau)$ being a $k_\rmax$-great box; i.e., $(X_{\tau t_1},\tau t_1)\in R_1^\core(i,\tau)$. 
We define the coupling we employ in this situation.

\begin{definition}[Coupling on great boxes]
   At time $\tau t_1$ both walkers are trapped at some vertices $v=X_{\tau t_1}\in S_1^\core(i)$ and $\bar v=\Phi_{\tau t_1}(v)$. 
   Then we perform the following steps.
   \begin{enumerate}
      \item Sample whether a simple random walk moment occurs in $R_1(i,\tau)$. If not, sample the updates of the edges in $S_1(i)$ during $I_1\cup I_2 \cup I_3$ from the distribution conditioned on $\SRWM{i}{\tau}=0$, apply the 
         coupling of the graphs from Section~\ref{sec:coupgraph} and apply identity coupling for the walkers. Identity coupling succeeds since the graphs are coupled inside $S_1(i)$ and we obtain that 
         $\Phi_t$ does not change during $I_1\cup I_2 \cup I_3$. This concludes the coupling when $\SRWM{i}{\tau}=0$.
      \item If $\SRWM{i}{\tau}=1$, choose a coordinate $j\in\lrc{1,2,\ldots,d}$ and a sign $s\in\lrc{-1,+1}$ uniformly at random. 
         If $v$ and $\bar v$ agree in that coordinate, let $e=(v,v+se_j)$ and $\bar e=(\bar v, \bar v+se_j)$ 
         be the edges chosen to open during $I_1$ in the configurations $\eta^\star$ and $\bar \eta^\star$, respectively, where $e_1,e_2,\ldots,e_d$ stands for the standard basis of $\Z^d$. 
         In this case, during $I_2$, we let the walkers perform the same jumps across $e$ and $\bar e$ (i.e., we perform identity coupling), 
            and note that $\Phi_t$ maps $e$ into $\bar e$ during this time. Then we couple the graphs using $\Phi_t$, as described 
            in Section~\ref{sec:coupgraph}, until the end of $I_3$. In this case, the map $\Phi_t$ does not change during $I_1\cup I_2 \cup I_3$.           
      \item If $\SRWM{i}{\tau}=1$, and $v$ and $\bar v$ do not agree in the $j$th coordinate, we set $e=(v,v+se_j)$ and $\bar e=(\bar v, \bar v-se_j)$. This is the most delicate case as we will need to change the coupling of the
       graphs from the time $e$ opens to the end of $I_1\cup I_2 \cup I_3$. For this, we will use the map $\tilde \Phi$ which maps $v$ to $\bar v$ and is a translation map in all coordinates but the $j$th one, 
       where it is a reflection  map around $e$. In particular, $\tilde \Phi$ maps $e$ onto $\bar e$. Then the graphs will be coupled as in Remark~\ref{rem:newcoupgraph}; 
       that is, the graphs are coupled as in Section~\ref{sec:coupgraph} but using the 
       map $\tilde \Phi$ instead of $\Phi_t$. Note that any update to $e$ translates to an update of $\bar e$, so they open at the same time and close at the same time. 
       Let $\zeta$ be the time that $e$ and $\bar e$ open for the first time during $I_1$. Then, they remain open during $[\zeta,\zeta+C_{\ref*{cte:i1}}/\mu]$ since $\SRWM{i}{\tau}=1$. 
       We couple the position of the walkers at time $\zeta+C_{\ref*{cte:i1}}/\mu$ as follows. Let $\delta$ be the probability that $X_{\zeta+C_{\ref*{cte:i1}}/\mu}=v$ and $1-\delta$ be the probability that 
       $X_{\zeta+C_{\ref*{cte:i1}}/\mu}=u$. Then we make $X_{\zeta+C_{\ref*{cte:i1}}/\mu}=\bar X_{\zeta+C_{\ref*{cte:i1}}/\mu}$ with probability $\min\lrc{\delta,1-\delta}$; otherwise, we sample them accordingly.  
       Then, we let the graph and the walkers evolve up to the end of the interval $I_1\cup I_2\cup I_3$, coupling the jumps of the walkers so that they jump at the same times after time 
       $\zeta+C_{\ref*{cte:i1}}/\mu$; note that the walkers do not move after $e$ and $\bar e$ close for the first time after $\zeta+C_{\ref*{cte:i1}}/\mu$.       
   \end{enumerate}
\end{definition}

Now, let $s=\tau t_1 + |I_1\cup I_2\cup I_3|$ be the end time of the simple random walk moment. Note that if SRWM occurs then 
$\|X_{s}-\bar X_{s}\|_1$ may differ from $\|X_{\tau t_1}^1-X_{\tau t_1}^2\|_1$, and as a result the translation map $\Phi_s$ may be different from $\Phi_{\tau t_1}$ as well. So 
it could be the case that an edge that was coupled before the simple random walk moment (in the sense that ${\eta}_{\tau t_1}(e')=\bar{\eta}_{\tau t_1}(\Phi_{\tau t_1}(e'))$) may get uncoupled because the map $\Phi$ changes. 
On the other hand, after $I_1$ all the edges in the box receive a $\star$ update. So at the end of the SRWM, all edges in $S_1(i)$ are $\star$ with the only exception being the edges adjacent to the walker which are closed.  
So the configurations are coupled locally, in particular, $\cB_s'$ holds. 
Moreover, as $R_1(i,\tau)$ is great (so it is also good) the particles will stay in $S_1(i)$ for the whole time interval $T_1(\tau)$. 
In other words we obtain that the edges in $S_1(i)$, where the random walk moment is occurring, are coupled after the simple random walk moment ends.

More formally, we will implement this by assigning a ``hidden'' random variable to each $1$-box, which tells whether the box will undergo a SRWM should the walker pass there. We will not try the above coupling at each 
great box the walker enters, since we do need a bit of time separation between two simple random walk moments because of the overlapping of the boxes. But whenever we decide to attempt a simple random walk moment inside a 
great box the walker is in, the hidden random variable will tell whether SRWM will occurs. 
The main point is that we can obtain a lower bound on $\PR\lr{\SRWM{i}{\tau}=1}$ that is uniform on the location of the walker at time $\tau t_1$. 
Because of this uniform bound, we can couple the outcome of the hidden variable with the evolution of the processes ${M}_t^\star$ and $\bar M_t^\star$ 
so that the simple random walk moment takes place, regardless of the location of the walker within the box. 
The content of the hidden variable is just a Bernoulli random variable of parameter $C_{\ref*{cte:srwm}} p^{\frac{6d-1}{6d}}$, which is the bound we derive in Lemma \ref{srwmprob} below, so  
the event of successfully performing a SRWM stochastically dominates the hidden variable. 
Whenever we decide to look at the hidden variable of a box, we perform the coupling described above. Otherwise, we just do identity coupling.

Before establishing a bound on $\PR\lr{\SRWM{i}{\tau}=1}$ we need to show that the environments recouple locally after a SRWM.
\begin{lemma}[Recoupling the graphs after SRWM]\label{lem:recoupgraph}
   Let $R_1(i,\tau)$ be a $k_\rmax$-great box such that $X_{\tau t_1}\in S_1^\core(i)$ and $\cB''_{\tau t_1}$ holds. 
   Suppose the walkers perform successfully a simple random walk moment. 
   Then 
   $$
      \eta_{t}^\star(e)=\bar\eta_{t}^\star(\Phi_{t}(e)) \text{ for all $e\in E(S_1^\enlb(i))$ and all $t\in [s,(\tau+1)t_1],$}
   $$
   where $s=\tau t_1+|I_1\cup I_2\cup I_3|+\bar t_1$.
\end{lemma}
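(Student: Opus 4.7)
Set $s_0 := \tau t_1 + |I_1 \cup I_2 \cup I_3|$, the time at which the SRWM ends. Because $R_1(i,\tau)$ is $k_\rmax$-great, every $1$-box whose $2$-enlargement intersects $R_1(i,\tau)$ is good; hence every edge in $E(S_1^\enlb(i))$ receives only $\star$-updates throughout $T_1(\tau)$, and Lemma~\ref{timeboundlem1} implies the walker does not leave $S_1(i)$ during $[s_0, (\tau+1)t_1]$.

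I would first show that identity coupling applies on $[s_0, (\tau+1)t_1]$, so that $\Phi_t \equiv \Phi_{s_0}$. The SRWM events $E_1$--$E_3$ close all edges adjacent to the walkers at time $s_0$ and apply $\star$-updates to a $1$-neighborhood of each walker, so together with the pre-SRWM assumption $\cB''_{\tau t_1}$ (and the fact that $\Phi$ is changed by the SRWM through a bounded, single-coordinate translation or reflection) the event $\cB''_{s_0}$ holds with respect to $\Phi_{s_0}$. Lemma~\ref{lemic1} applied at time $s_0$ inside the good $1$-box containing the walker then yields identity coupling for the remainder of $T_1(\tau)$, which in particular freezes $\Phi_t$ at $\Phi_{s_0}$.

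With $\Phi_t$ fixed, every $\star$-update on an edge $e \in E(S_1^\enlb(i))$ sets both $\eta_t^\star(e)$ and $\bar\eta_t^\star(\Phi_t(e))$ to $\star$, coupling the edge. The good event $\cG_3'$, which is part of $k_\rmax$-greatness for all $1$-boxes covering $S_1^\enlb(i)$, provides at least $\tfrac{1}{2} p_\star \log^2 \ell \geq 1$ $\star$-updates per edge per $\bar T_1$ interval contained in $T_1^\core(\tau)$. Applied over the window $[s_0, s]$ of length $\bar t_1$, this forces every edge in $E(S_1^\enlb(i))$ to receive at least one $\star$-update between $s_0$ and $s$, and therefore to be coupled at time $s$. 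Further updates for $t \in [s, (\tau+1)t_1]$ are again $\star$-updates, which preserve the coupling.

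The main obstacle is the second step: one must verify that $\cB''_{s_0}$ holds with respect to the map $\Phi_{s_0}$ produced by the SRWM, in particular in Case~3 of the SRWM coupling where $\Phi$ is reflected about the jumping edge. This requires tracking which edges in $B^\infty_{\ell/3}(X_{s_0})$ can have been uncoupled by the change of map and checking that the explicit $\star$-updates mandated by $E_3$, combined with the original coupling on $B^\infty_{\ell/3}(X_{\tau t_1})$ granted by $\cB''_{\tau t_1}$, restore agreement under the new $\Phi_{s_0}$. A secondary technicality is that $[s_0, s]$ has length exactly $\bar t_1$ but need not be aligned with the $\bar T_1$ partition; I would resolve this by applying $\cG_3'$ to the at most two $\bar T_1$ intervals overlapping $[s_0, s]$ and exploiting that each contributes $\approx p_\star \log^2 \ell \gg 1$ updates, so at least one lands in $[s_0, s]$ for every edge.
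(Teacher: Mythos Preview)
Your overall architecture matches the paper's proof: establish that the environments are coupled inside $S_1(i)$ at time $s_0=\tau t_1+|I_1\cup I_2\cup I_3|$, deduce that identity coupling succeeds (so $\Phi_t$ freezes), and then let the $\star$-updates guaranteed by the good boxes covering $S_1^{\enlb}(i)$ couple the whole $2$-enlargement.

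However, your resolution of the ``main obstacle'' does not work. The events $E_1$--$E_3$ only prescribe $\star$-updates for the $O(d)$ edges adjacent to $u$ and $v$; they say nothing about the remaining edges in $B^\infty_{\ell/3}(X_{s_0})$. And the hypothesis $\cB''_{\tau t_1}$ asserts agreement under the \emph{old} map $\Phi_{\tau t_1}$; once the SRWM in Case~3 produces a different translation $\Phi_{s_0}$ (which it does whenever $\bar w - w\neq \bar v - v$), equality under $\Phi_{\tau t_1}$ tells you nothing about equality under $\Phi_{s_0}$. So the combination ``$E_3$ plus $\cB''_{\tau t_1}$'' cannot yield $\cB''_{s_0}$.

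The mechanism the paper uses (stated in the discussion just before the lemma) is different and stronger: because $|I_1|=t_1/2\gg \bar t_1$ and $R_1(i,\tau)$ is good, the event $\cG_3$ forces \emph{every} edge of $S_1(i)$ to receive a $\star$-update during $I_1$. Hence at time $s_0$ every edge of $S_1(i)$ is literally in state~$\star$, except those adjacent to the walker which are closed; the same holds in $\bar\eta^\star$ in a box of side $3\ell$ around $\bar w$. Since $\star$ matches $\star$ under any bijection and $\Phi_{s_0}$ sends edges adjacent to $w$ to edges adjacent to $\bar w$, agreement under $\Phi_{s_0}$ is automatic --- no reference to the old map or to $\cB''_{\tau t_1}$ is needed. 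Your alignment concern about $\bar T_1$ intervals is legitimate but minor (and is shared by the paper); it disappears if one uses $2\bar t_1$ instead of $\bar t_1$, which does not affect the rest of the argument.
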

\begin{proof}
   $R_1(i,\tau)$ is $k_\rmax$-great, and in particular $1$-great. 
   Thus, every $1$-box $R(j,\tau)$ such that $S_1(j)\cap S_1^\enlb(i)\neq\emptyset$ is good. 
   After $s-\bar t_1=\tau t_1 + |I_1\cup I_2\cup I_3|$, we have that the edges in $S_1(i)$ are coupled and we start performing identity coupling of the walkers. 
   The coupling is succeessful so $\Phi_t$ does not change from that moment onwards and all edges in $S_1(j)$ with $S_1(j)\cap S_1^\enlb(i)\neq\emptyset$ receives a $\star$-update and couples.
\end{proof}

Now we bound the probability of a SRWM. 
Recall from Definition~\ref{def:goodbox} that the event that a box $R_1(i,\tau)$ is good is based on events $\cG_{12}(i,\tau)$ and $\hat\cG_{34}(i,\tau)$. 
Define $\cJ$ to be the set of all tuples $(i,\tau)$ such that $R_1(i,\tau)$ is a box of the tessellation of the second phase.
Let $\Sigma\defn\{0,1\}^{2\cJ}$ be the set of all possible assignments of occurrence or non occurrence to the events $\cG_{12}(i,\tau)$ and $\hat\cG_{34}(i,\tau)$. 
Then for each $\sigma\in\Sigma$ and each $(i,\tau)$, the values $\sigma_{12}(i,\tau)$ and $\sigma_{34}(i,\tau)$ will be used to specify whether the events $\cG_{12}(i,\tau)$ and $\hat\cG_{34}(i,\tau)$ occur, respectively.  
In this way, given $\sigma\in\Sigma$, we abuse notation and denote by $\sigma$ the event that the realizations of $\cG_{12}(i,\tau)$ and $\hat\cG_{34}(i,\tau)$ match the values of $\sigma_{12}(i,\tau)$ and
$\sigma_{34}(i,\tau)$ for each $(i,\tau)\in\cJ$, 
and write $\PR(\cdot \mid \sigma)$ for the corresponding conditional probability. More precisely, 
$$
   \PR\lr{\cdot \mid \sigma} = \PR\lr{\cdot \Mid \bigcap_{(i,\tau)\in\cJ} \lrc{\sigma_{12}(i,\tau)=\ind{\cG_{12}(i,\tau)}}\cap\lrc{\sigma_{34}(i,\tau)=\ind{\hat\cG_{34}(i,\tau)}}}.
$$

Note that once we condition on some $\sigma\in \Sigma$, then which boxes of all scales are good or bad is a deterministic function of $\sigma$. 
Let $\cF_{t}$ be the $\sigma$-algebra generated by the trajectory of the walker $X_s$ and the value of the map $\Psi_s$, $s\in[0,t]$, and all the updates of the graph up to time $t$.
Let $\Sigma_{i,\tau}\subset \Sigma$ be the set of assignments $\sigma$ for which $R_1(i,\tau)$ is a $k_\rmax$-great box.
\begin{lemma}\label{srwmprob}
   Let $(i,\tau)$ be such that $R(i,\tau)$ is a $k_\rmax$-great box. 
   There exists $p_0>0$ and $C_\newcte{cte:srwm}>0$ such that for all $p<p_0$, for all $\sigma \in\Sigma_{i,\tau}$, and all $F\in \cF_{\tau t_1}$ for which $\PR\lr{\sigma\cap F}>0$, 
   then the probability of performing a simple random walk moment in $R_1(i,\,\tau)$ is
   \begin{equation}
       \PR\lr{\SRWM{i}{\tau}=1 \mid F\cap \sigma}\geq  C_{\ref*{cte:srwm}} p^{\frac{6d-1}{6d}}.
   \end{equation}
\end{lemma}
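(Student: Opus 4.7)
The plan is to exploit the local nature of the SRWM event and show that most of the conditioning in $\sigma\cap F$ is irrelevant to it. By Remark~\ref{rem:srwm}, the event $\{\SRWM{i}{\tau}=1\}$ depends only on $v=X_{\tau t_1}$ and on the $\star$-updates on edges of $E(S_1(i))$ during $I_1\cup I_2\cup I_3$; let $\mathcal{U}$ denote the latter. Since $F\in\cF_{\tau t_1}$ is measurable with respect to events up to time $\tau t_1$, and by disjointness of Poisson clocks in time and space, $\mathcal{U}$ is independent of $F$ once $v$ is fixed. Moreover, by Lemma~\ref{lem:badboxI} and the fact that $\{\cG_{12}(\cdot,\cdot)\}$ only involves non-$\star$ updates and $\star$-updates in shield intervals of the form $\bar{T}_1(j(\cdot))$, which are disjoint from $I_1\cup I_2\cup I_3$, the conditioning on $\{\cG_{12}(\cdot,\cdot)\}$ also leaves the distribution of $\mathcal{U}$ unchanged.

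Next I would handle the conditioning on $\{\hat\cG_{34}(i',\tau')\}$, which is a family of i.i.d.\ Bernoulli variables coupled with $\{\cG_{34}(i',\tau')\}$ via~\eqref{eq:hatcoup}. By choosing the LSS coupling carefully and exploiting that $\cG_{34}$ has bounded range of dependence, one arranges that only $\hat\cG_{34}(i,\tau)=1$ substantially affects $\mathcal{U}$. Since $\hat\cG_{34}(i,\tau)=1$ implies $\cG_{34}(i,\tau)$ holds, the problem reduces to lower bounding $\PR(\{\SRWM{i}{\tau}=1\}\cap \cG_{34}(i,\tau))$ by a constant times $p^{(6d-1)/(6d)}\PR(\cG_{34}(i,\tau))$.

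Unconditionally, the probability of SRWM factors (approximately) over $E_1,E_2,E_3$. For $E_1$, during the first half of $I_1$ of length $t_1/4=\sqrt{\ell}/(4\mu)$, the probability that exactly one of the $2d$ edges adjacent to $v$ receives a $\star$-update-to-open is of order $p_\rmin \sqrt{\ell}\asymp p\cdot p^{-1/(6d)}=p^{(6d-1)/(6d)}$; conditionally on this, the sampling of $\star$-status edges adjacent to $u$ being all closed (each independently with probability $1-p_\rmin/p_\star$, uniformly positive for $p$ small), the non-opening of the other $O(d)$ nearby edges during $I_1$, and the requirement that $e$ remains open for time at least $C_{\ref*{cte:i1}}/\mu$ are each of constant probability. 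Both $E_2$ and $E_3$ prescribe outcomes of a bounded number of events on $O(d)$ edges during intervals of length $1/\mu$ at update rate $\Theta(\mu)$, so each has constant probability. Multiplying these estimates gives $\PR(\SRWM{i}{\tau}=1\mid v)\gtrsim p^{(6d-1)/(6d)}$ uniformly in $v$.

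Finally, one checks compatibility of SRWM with $\cG_{34}(i,\tau)$: the updates SRWM prescribes concern only $O(d)$ edges near $v,u$; $\cG_3'$ asks for at least $\tfrac{p_\star}{2}\log^2\ell$ $\star$-updates per edge per subinterval $\bar{T}_1(j)$, which SRWM does not forbid since it only fixes the outcomes of a handful of updates rather than their counts, while $\cG_4'$ bounds cluster sizes of $\dbar{T}_1(j)$-open edges started from all-closed, which SRWM trivially satisfies as only the pair $(v,u)$ is briefly connected. A decomposition of $\mathcal{U}$ into updates on the SRWM-relevant edges versus the rest, together with independence across edges in the Poisson-clock description, then yields $\PR(\SRWM\cap \cG_{34}(i,\tau))\gtrsim p^{(6d-1)/(6d)}\PR(\cG_{34}(i,\tau))$. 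The main obstacle, in my view, is precisely this last compatibility step: carefully separating the $O(d)$ SRWM-relevant edges from the remaining $\Theta(\ell^d)$ edges of $E(S_1(i))$ and showing that the prescribed SRWM pattern does not significantly depress the probability of $\cG_{34}(i,\tau)$ being satisfied on the rest of the box.
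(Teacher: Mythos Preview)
Your decomposition of the conditioning and the computation of the unconditional probability $\PR(\SRWM{i}{\tau}=1\mid X_{\tau t_1}=v)\gtrsim p^{(6d-1)/(6d)}$ via $E_1,E_2,E_3$ are correct and match the paper. The difference, and the place where your proposal has a genuine gap, is in how you handle the conditioning on the $\hat\cG_{34}$ variables.

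Your plan relies on two steps that are not justified. First, ``choosing the LSS coupling carefully'' so that only $\hat\cG_{34}(i,\tau)$ interacts with the SRWM-relevant updates $\mathcal{U}$: the LSS domination in Lemma~\ref{lem:badboxlss} is a black-box existence statement, and nothing in the paper (or in~\cite{LSS}) gives you control over which $\hat\cG_{34}(i',\tau')$ is measurable with respect to which part of the underlying process. Second, the reduction ``since $\{\hat\cG_{34}(i,\tau)=1\}\subset\{\cG_{34}(i,\tau)\}$, it suffices to bound $\PR(\SRWM\cap\cG_{34})$'' is not valid: an inclusion $C\subset C'$ does not imply $\PR(A\mid C)\geq\PR(A\mid C')$, so even a perfect compatibility argument between SRWM and $\cG_{34}$ would not finish the job. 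Your ``main obstacle'' is therefore not just delicate but, as stated, does not close the argument.

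The paper sidesteps all of this with a one-line trick. Writing $\cS_{34}$ for the conjunction of the $\hat\cG_{34}$ constraints on the finitely many boxes $(i',\tau)$ with $S_1(i')\cap S_1(i)\neq\emptyset$, and $\cS$ for the rest of $\sigma$, it uses
\[
\PR(\SRWM\mid F\cap\cS\cap\cS_{34})\;\geq\;\PR(\SRWM\mid F\cap\cS)\;-\;\PR(\cS_{34}^\compl\mid F\cap\cS).
\]
The first term is handled exactly as you do (independence of $\mathcal{U}$ from $F$, from $\cG_{12}$, and from the far-away $\hat\cG_{34}$'s, then the $E_1E_2E_3$ computation). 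For the second term, since the $\hat\cG_{34}$'s are i.i.d.\ and independent of $F\cap\cS$, one has $\PR(\cS_{34}^\compl\mid F\cap\cS)=\PR(\cS_{34}^\compl)\leq 5^d\exp(-C_{\ref*{cte:bblss}}\log^2\ell)$, which is super-polynomially small in $\ell$ and hence negligible compared to $p^{(6d-1)/(6d)}$. No compatibility analysis between SRWM and $\cG_{34}$ is needed at all.
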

\begin{proof}
   Start with the following simplification of $\sigma$. Recall the definition of $j(\tau)$ from~\eqref{eq:jtau}. 
   So $j(\tau)$ and $j(\tau+1)$ are the first and last interval of the type $\bar{T}_1(\cdot)$ inside $T_1(\tau)$. 
   Recall that $\sigma_{34}(\cdot,\cdot)$ correspond to the events $\hat\cG_{34}(\cdot,\cdot)$, which are i.i.d.\ events coupled with the events $\cG_{34}(\cdot,\cdot)$. 
   Since $\cG_{34}(\cdot,\cdot)$ are independent of $\cG_{12}(\cdot,\cdot)$ by Lemma~\ref{lem:badboxI}, we have that also $\hat\cG_{34}(\cdot,\cdot)$ are independent of $\cG_{12}(\cdot,\cdot)$. 
   Moreover, for any $x$, we have that $\cG_{34}(i',\tau)$ is independent of $\cF_{\tau t_1}$ since $\cG_{34}(i',\tau)$ only considers updates on the edges during the interval $T_1^\core(\tau)\setminus T_1(\tau+1)$. 
   Since for any fixed $x$ we have that $\cG_{34}(i',\tau')$ are independent for different $s$, we have that $\hat\cG_{34}(i',\tau)$ is independent of $\cF_{\tau t_1}$. 
   So now we collect in $\cJ_{34}$ all tuples from $\cJ$ for which $\SRWM{i}{\tau}$ depend on $\sigma_{34}(\cdot,\cdot)$:
   $$
      \cJ_{34} = \lrc{(i',\tau) \colon S_1(i')\cap S_1(i)\neq \emptyset}
      \quad\text{and}\quad
      \cJ' = \cJ \setminus \cJ_{34}.
   $$
   We will not need to split $\sigma_{12}(\cdot,\cdot)$ into two groups since those events are already independent of $\SRWM{i}{\tau}$. 
   
   For any $\sigma\in\Sigma$ denote
   \begin{align*}
      \cS &= \cS(\sigma) = \bigcap_{(i',\tau')\in \cJ} \sigma_{12}(i',\tau')\bigcap_{(i',\tau')\in \cJ'} \sigma_{34}(i',\tau')\\
      \cS_{34} &= \cS_{34}(\sigma) = \bigcap_{(i',\tau')\in \cJ_{34}} \sigma_{34}(i',\tau').
   \end{align*}
   Then, 
   \begin{align*}
       \PR\lr{\SRWM{i}{\tau}=1 \mid F\cap \sigma}
       &= \PR\lr{\SRWM{i}{\tau}=1 \Mid F\cap \cS \cap \cS_{34}} \\
       &\geq \frac{\PR\lr{\SRWM{i}{\tau}=1 \Mid F\cap \cS}-\PR\lr{\cS_{34}^\compl \Mid F\cap \cS}}
                  {\PR\lr{\cS_{34} \Mid F\cap \cS}}\\
       &\geq \PR\lr{\SRWM{i}{\tau}=1 \Mid F\cap \cS}-\PR\lr{\cS_{34}^\compl \Mid F\cap \cS}.
   \end{align*}
   Note that $T_1^\core(\tau)\setminus T_1(\tau+1)\supset I_1\cup I_2 \cup I_3$, so $\SRWM{i}{\tau}$ does not depend on $F\cap \cS$ given the position of the walker at time $\tau t_1$. 
   Letting 
   $S_1'(i)=\bigcup_{u\in S_1^\core(i)} B_{\log^2\ell}^\infty(u)$, which are the places where the walker can be at time $\tau t_1$, we write 
   \begin{align*}
      \PR\lr{\SRWM{i}{\tau}=1 \Mid F\cap \cS}
      &= \sum_{v\in S_1'(i)}\PR\lr{\SRWM{i}{\tau}=1 \Mid F\cap \cS\cap\lrc{X_{\tau t_1}=v}}\PR\lr{X_{\tau t_1}=v \Mid F\cap \cS}\\
      &= \sum_{v\in S_1'(i)}\PR\lr{\SRWM{i}{\tau}=1 \Mid X_{\tau t_1}=v}\PR\lr{X_{\tau t_1}=v \Mid F\cap \cS}
   \end{align*}
   We are left with the following lower bound on $\PR\lr{\SRWM{i}{\tau}=1 \mid F\cap \sigma}$:
   \begin{align}
       &\sum_{v\in S_1'(i)}\PR\lr{\SRWM{i}{\tau}=1 \Mid X_{\tau t_1}=v}\PR\lr{X_{\tau t_1}=v \Mid F\cap \cS}-\PR\lr{\cS_{34}^\compl \Mid F\cap \cS}\nonumber\\
       &\geq \inf_{v\in S_1'(i)}\PR\lr{\SRWM{i}{\tau}=1 \Mid X_{\tau t_1}=v}-\PR\lr{\cS_{34}^\compl \Mid F\cap \cS}.
       \label{eq:mainsrwm}
   \end{align}
   
   We start with the first term in~\eqref{eq:mainsrwm}; that is, we derive a lower bound on $\PR\lr{\SRWM{i}{\tau}=1 \Mid X_{\tau t_1}=v}$ that is uniform in $v$. 
   Since $\SRWM{i}{\tau}$ is composed of the events $E_1$, $E_2$ and $E_3$, which are independent of one another since they involve disjoint time intervals, we 
   will derive a lower bound for each of them. For the event $E_1$, we will require that an edge adjacent to $v$ (call it $e$) opens during the first half of $I_1$, 
   so that $e$ has time to remains open for time $C_{\ref*{cte:i1}}/\mu$ during $I_1$. Recall that $I_1$ has length $t_1/2$, so its first half has length $t_1/4$, and the rate at which an edge opens due to a $\star$-update 
   is $\mu p_\star \frac{p_\rmin}{p_\star}=\mu p_\rmin$, and the rate at which an edge close due to a $\star$-update is $1-p_\rmax$. We obtain 
   \begin{align*}
      \PR\lr{E_1 \Mid X_{\tau t_1}=v}
      = \lr{1-e^{-2d\mu p_\rmin \frac{t_1}{4}}}\lr{\frac{1-p_\rmax}{p_\star}}^{2d-1}e^{-(4d-2)\mu p_\rmin \frac{t_1}{2}}e^{-\mu(1-p_\rmax)\frac{C_{\ref*{cte:i1}}}{\mu}}.
   \end{align*}
   In the product above, the first term corresponds to an edge adjacent to the walker (call it $e$) opening during the first half of $I_1$, the second term is the probability that all $2d-1$ edges adjacent to $e$ are 
   closed at that time, the third term is the probability that none of the $4d-2$ edges adjacent to $e$ open until the end of $I_1$, and the fourth term is the probability that $e$ remains open for at least time $C_{\ref*{cte:i1}}/\mu$.
   Recalling that $t_1=\frac{\sqrt{\ell}}{\mu}$ and that $\ell=p^{-\frac{1}{3d}}$ we obtain
   \begin{align*}
      \PR\lr{E_1 \Mid X_{\tau t_1}=v}
      = \lr{1-e^{-d p_\rmin \frac{\sqrt{\ell}}{2}}}\lr{\frac{1-p_\rmax}{p_\star}}^{2d-1}e^{-(2d-1)p_\rmin \sqrt{\ell}}e^{-C_{\ref*{cte:i1}}(1-p_\rmax)}.
   \end{align*}
   Using that $p_\star\leq 1$ in the second term, $p_\rmin\in \lrb{\frac{p}{1+q},p}$ in the first and third terms, and
   $p_\rmax\geq 0$ in the fourth term, and then making $p$ small enough so that $p_\rmin\leq p_\rmax\leq \frac{1}{2}$ and 
   $e^{-d \frac{p}{1+q}\frac{\sqrt{\ell}}{2}}\leq 1- \frac{d p\sqrt{\ell}}{4(1+q)}$ we obtain
   \begin{align*}
      \PR\lr{E_1 \Mid X_{\tau t_1}=v}
      &\geq \lr{1-e^{-d \frac{p}{1+q}\frac{\sqrt{\ell}}{2}}}\lr{1-p_\rmax}^{2d-1}e^{-(2d-1)p \sqrt{\ell}}e^{-C_{\ref*{cte:i1}}}\\
      &\geq \frac{d p \sqrt{\ell}}{4(1+q)}2^{-2d+1}e^{-(2d-1)p \sqrt{\ell}}e^{-C_{\ref*{cte:i1}}}\\
      &=\frac{e^{-C_{\ref*{cte:i1}}}d}{2^{2d+1}(1+q)}p \sqrt{\ell}e^{-(2d-1)p \sqrt{\ell}}.
   \end{align*}
   Now note that $p\sqrt{\ell}=p^{1-\frac{1}{6d}}$ goes to $0$ as $p\to0$. Thus, we can take $p$ small enough so that $p \sqrt{\ell}e^{-(2d-1)p \sqrt{\ell}} \geq \frac{p\sqrt{\ell}}{2}$ to otain
   \begin{align}
      \PR\lr{E_1 \Mid X_{\tau t_1}=v}
      \geq \frac{e^{-C_{\ref*{cte:i1}}}d}{2^{2d+2}(1+q)}p\sqrt{\ell}
      = \frac{e^{-C_{\ref*{cte:i1}}}d}{2^{2d+2}(1+q)}p^{1-\frac{1}{6d}}.
      \label{eq:srwme1}
   \end{align}
   Event $E_1$ is the main one governing the probability that SRWM occurs, since it involves the opening of an edge, which has small probability. For $E_2$ and $E_3$ we will just derive simple bounds that will not go to $0$
   as $p\to0$. Recall that $I_2$ and $I_3$ are time intervals of length $1/\mu$, so
   $$
      \PR\lr{E_2 \Mid X_{\tau t_1}=v}
      =\lr{1-e^{-\mu (1-p_\rmax)\frac{1}{\mu}}}e^{-\mu p_\rmin\frac{1}{\mu}}e^{-(4d-2)\mu p_\rmin\frac{1}{\mu}},
   $$  
   where the first term is the probability that $e$ has a $\star$-update to close, the second term is the probability that $e$ does not get a $\star$-update to open, and the final term is the probability that 
   all $4d-2$ edges adjacent to $e$ do not receive a $\star$-update to open. 
   Recall that $p_\rmin$ and $p_\rmax$ both go to $0$ as $p\to 0$, so we obtain that 
   \begin{align}
      \PR\lr{E_2 \Mid X_{\tau t_1}=v}
      \geq 1-\frac{1}{2e}.
      \label{eq:srwme2}
   \end{align}
   Regarding $E_3$, we obtain
   \begin{align}
      \PR\lr{E_3 \Mid X_{\tau t_1}=v}
      =\lr{1-e^{-(2d-1)\mu p_\star\frac{1}{\mu}}}e^{-2d\mu p_\rmin\frac{1}{\mu}}
      \geq 1-\frac{1}{2e^{2d-1}},
      \label{eq:srwme3}
   \end{align}
   where the inequality follows for all small enough $p$ since $p_\star\to 1$ and $p_\rmin\to0$ as $p\to0$.
   Putting~\eqref{eq:srwme1},~\eqref{eq:srwme2} and~\eqref{eq:srwme3} together we have a constant $c=c(d,q)$ so that for all small enough $p$ we obtain
   $$
      \PR\lr{\SRWM{i}{\tau}=1 \Mid X_{\tau t_1}=v}
      \geq c p^{1-\frac{1}{6d}}.
   $$
   Plugging the bound above into~\eqref{eq:mainsrwm}, we obtain
   \begin{equation}
       \PR\lr{\SRWM{i}{\tau}=1 \mid F\cap \sigma} 
       \geq c p^{1-\frac{1}{6d}}-\PR\lr{\cS_{34}^\compl \Mid F\cap \cS}.
       \label{eq:main2srwm}
   \end{equation}
   Now as we explained in the beginning of the proof, $\cS_{34}$ is independent of $\cF_{\tau t_1}$ and of $\cS$. Moreover, $\cS_{34}$ is composed of an intersection of independent events $\hat\cG_{34}(\cdot,\tau)$ since 
   $\sigma\in \Sigma_{i,\tau}$ so that $R_1(i,\tau)$ is $k_\rmax$-great. Therefore, 
   \begin{align*}
       \PR\lr{\SRWM{i}{\tau}=1 \mid F\cap \sigma} 
       &\geq c p^{1-\frac{1}{6d}}-\PR\lr{\bigcup_{(i',\tau)\in \cJ_{34}}\hat\cG^\compl_{34}(i',\tau)}\\
       &\geq c p^{1-\frac{1}{6d}}-\sum_{(i',\tau)\in \cJ_{34}} \PR\lr{\hat\cG^\compl_{34}(i',\tau)}\\
       &\geq c p^{1-\frac{1}{6d}}-5^d \exp\lr{-C_{\ref*{cte:bblss}} \log^2\ell},
       \label{eq:main2srwm}
   \end{align*}
   where the last inequality follows from Lemma~\ref{lem:badboxlss}. Since as $p\to 0$ the second term is much smaller than the first one, the lemma follows.
\end{proof}

%%%%%%%%%%%%%%%%%%%%%%%%%%%%%%%%%%%%%%%%%%%%%%%%%%%%%%%%%%%%%%%%%%%%%%%%%%%%%%
\subsection{Concluding the second phase}

Recall that for simplicity we are assuming that $(X_0,0)=(0,0)$, and recall the value of $\Delta_2$ from~\eqref{eq:delta}. Denote with $I_d:V\to V$ the identity map, then we define
\begin{equation}
    F_2\defn\{(0,0) \text{ is $k_\rmax$-great}\}\cap\{\Phi_{\Delta_2}=I_d\}\cap\mathcal{B}_{\Delta_2}'.
    \label{eq:f2}
\end{equation}
If $F_2$ is verified, the second phase is successful and the third phase can start, 
otherwise we let the two processes evolve independently until the end of \emph{phase 3}, and only then restart the coupling from phase 1.

\begin{lemma}
   \label{lemmaf2}
   Assume $F_1$ is verified at time $0$. For any $\delta>0$ and for all $p$ small enough, there exists $C_{\ref*{cte:delta2}}=C_{\ref*{cte:delta2}}(d,p,\delta)>0$ in the definition of $\Delta_2$ 
   and $n_0<\infty$ such that for all $n>n_0$
   \begin{equation*}
       \mathbb{P}(F_2)\geq  1-\delta.
   \end{equation*}
\end{lemma}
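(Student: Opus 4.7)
The plan is to bound $\PR(F_2^\compl)$ by decomposing $F_2$ into three events and handling each with probability cost at most $\delta/3$: (i) $R_1(0,0)$ is $k_\rmax$-great, (ii) $\Phi_{\Delta_2}=I_d$, and (iii) $\cB'_{\Delta_2}$ holds.

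For (i), I would first apply Lemma~\ref{lemmakmax} to ensure that no box of scale $\geq k_\rmax$ in $\bT_n^d\times[0,\Delta_3]$ is bad. Conditional on this, $R_1(0,0)$ is $k_\rmax$-great iff all boxes of scales $k<k_\rmax$ whose $2$-enlargement intersects $R_1(0,0)$ are good, and only $O(1)$ such boxes exist at each scale. Using Lemmas~\ref{lem:badbox} and~\ref{lemrho} together with a union bound over scales, taking $p$ small enough makes this event occur with probability at least $1-\delta/3$.

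For (ii), on the event of (i) the walker follows a feasible path through a sequence of good $k_\rmax$-boxes, and Lemma~\ref{gbcross} applied iteratively yields at least $C_{\ref*{cte:gbc}}\Delta_2/t_1$ distinct $k_\rmax$-great $1$-boxes visited during $[0,\Delta_2]$. For each such box, the hidden Bernoulli variable introduced after Lemma~\ref{srwmprob} gives independently a SRWM with probability at least $C_{\ref*{cte:srwm}}p^{(6d-1)/(6d)}$. Using $\sqrt{\ell}=p^{-1/(6d)}$ and $t_1=\sqrt{\ell}/\mu$, the expected number of successful SRWMs is $\Theta(C_{\ref*{cte:delta2}}\,p\,n^2)$, which concentrates by Chernoff for sums of independent Bernoulli variables. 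Picking $C_{\ref*{cte:delta2}}=C_{d,\delta}/p$ gives order $n^2$ SRWMs with high probability. Cases~2 and~3 of the SRWM coupling realize a mirror-type coupling of two simple random walks on $\bT_n^d$: a uniform coordinate $j$ and sign $s$ are chosen, and the coordinate-$j$ difference of $X_t$ and $\bar X_t$ is either unchanged (case~2, differences agree on $j$) or updated by $\pm2$ together with an additional coalescence step (case~3, differences disagree on $j$). Projected onto each coordinate, the difference is a lazy simple random walk on $\bT_n$ with extra coalescence, hitting $0$ with high probability in $O(n^2)$ steps; taking $C_{d,\delta}$ large enough and union-bounding over the $d$ coordinates yields total failure probability at most $\delta/3$.

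For (iii), between successive SRWMs the coupling is identity coupling, which by Lemmas~\ref{lemic1seq} and~\ref{distnotchange} succeeds and leaves $\Phi_t$ unchanged while maintaining $\cB''_t$, even across enlargements of bad sub-$k_\rmax$ boxes; Lemma~\ref{lem:recoupgraph} reinstates coupling of $E(S_1^\enlb(i))$ within a $1$-box time after each SRWM, so once the walkers coalesce at some $s\leq \Delta_2$ they remain coupled for the rest of $[s,\Delta_2]$, ensuring $\cB'_{\Delta_2}$. The main obstacle lies in the SRWM-counting step of (ii): the events $\lrc{\SRWM{i}{\tau}=1}$ for the successively visited $1$-boxes are not genuinely independent because they are tied together by the multi-scale conditioning $\sigma$ and by the walker's own trajectory, but the uniform lower bound of Lemma~\ref{srwmprob} together with the hidden-variable trick dominates them from below by i.i.d.\ Bernoullis, reducing the coupling analysis to standard mirror-coupling on $\bT_n^d$. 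A union bound over the three contributions completes the proof.
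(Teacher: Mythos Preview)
Your proposal is correct and follows essentially the same route as the paper's proof. The paper organizes the union bound into four pieces rather than three---it separates ``not enough great boxes visited'' ($E_2^c$, handled via Lemma~\ref{lemmakmax}) from ``not enough successful SRWMs among the visited great boxes'' ($E_1^c$, handled via Chernoff) and folds $\cB'_{\Delta_2}$ together with $\Phi_{\Delta_2}=I_d$ into a single event $E_\coup$---but the ingredients are identical: Lemma~\ref{lemmakmax} plus the scale-wise union bound for greatness of the origin box, Lemma~\ref{gbcross} iterated over the $k_\rmax$-boxes to count visited great $1$-boxes, the hidden-Bernoulli domination from Lemma~\ref{srwmprob} plus Chernoff, and the standard SRW coupling bound for coalescence. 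Your treatment of $\cB'_{\Delta_2}$ via Lemmas~\ref{lem:recoupgraph}, \ref{lemic1seq}, and~\ref{distnotchange} is in fact slightly more explicit than the paper's, which defers this point to the paragraph following the proof.
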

\begin{proof}
   Let $\mathcal{P}$ be any feasible path and consider
   \begin{align*}
      \Upsilon_1^\mathcal{P}&\defn\inf\lrc{\tau>0 \colon (\mathcal{P}(\tau t_1),\tau t_1)\in R_1^\core(i,\tau) \text{ where $R_1(i,\tau)$ is a $k_\rmax$-great box}},\\
      \Upsilon_j^\mathcal{P}&\defn\inf\lrc{\tau > \Upsilon_{j-1}^\mathcal{P} \colon (\mathcal{P}(\tau t_1),\tau t_1)\in R_1^\core(i,\tau) \text{ where $R_1(i,\tau)$ is a $k_\rmax$-great box}},
   \end{align*}
   for $j\geq 2$.
   For any feasible path $\mathcal{P}$ we let $\kappa_\mathcal{P}$ be the largest value such that $\Upsilon_{\kappa_\mathcal{P}}^\mathcal{P}\leq  \frac{\Delta_2}{t_1}-2$.
   Recall that $\Sigma$ represents the set of all possible realizations of occurrences and non occurrences for the events $\cG_{12}(\cdot,\cdot)$ and $\hat\cG_{34}(\cdot,\cdot)$, so the good and bad boxes at all scales
   are deterministic functions of $\sigma$. 
   Let $\fF(\sigma)$ be the set of all feasible paths for a given $\sigma$. 
   Given the uniform bound from Lemma~\ref{srwmprob}, we let $Y_1,Y_2,\ldots$ be a sequence of i.i.d.\ Bernoulli random variables of parameter $C_{\ref*{cte:srwm}} p^{\frac{6d-1}{6d}}$ where $Y_j$ gives whether the $j$th SRWM will
   succeed when we try to perform it during the coupling.
   Let
   \begin{equation}
      \zeta\defn \frac{C_{\ref*{cte:gbc}}C_{\ref*{cte:delta2}}}{4\sqrt{\ell}}n^2
      = \frac{C_{\ref*{cte:gbc}}}{4t_1}\Delta_2,
      \label{eq:zeta}
   \end{equation}
   where $C_{\ref*{cte:gbc}}$ is from Lemma~\ref{gbcross} and $C_{\ref*{cte:delta2}}$ from the definition of $\Delta_2$ in~\eqref{eq:delta}.
   Define the following events
   \begin{align*}
       &E_1 \defn \lrc{\sum\nolimits_{j=1}^\zeta Y_j \geq c_0n^2}, \text{ with $c_0$ to be chosen later, and}\\
       &E_2 \defn \lrc{\kappa_\mathcal{P}\geq \zeta \text{ for all feasible paths $\mathcal{P}\in\fF(\sigma)$}}.
   %    \item If $\mathcal{P}$ is the trajectory of $\{X_t^1\}_{t\in[0,\Delta_2]}$ (this is a feasible path), the number of simple random walk moments successfully performed is at least $\frac{C_7(p)}{2}\kappa_\mathcal{P}$, with $C_7(p)$ from Lemma \ref{srwmprob};
   \end{align*}
   In this stage we want to couple the position of the walkers. 
   From Lemma \ref{distnotchange}, by doing identity coupling whenever the walkers are not in a great box, their relative distance does not change. 
   Their relative distance changes only when they are in a great box and a simple random walk moment is successfully performed. 
   Let $E_\coup=\{\Phi_{\Delta_2}=I_d\}\cap \mathcal{B}_{\Delta_2}'$.
   Hence
   \begin{align*}
      \mathbb{P}(F_2^c)
      &\leq \PR\lr{(0,0) \text{ is not $k_\rmax$-great}} + \PR\lr{E_1\cap E_2\cap E_\mathrm{coup}^c}+\PR\lr{E_1^c}+\PR\lr{E_2^c}.
   \end{align*}
   
   We start by bounding the first term. 
   Notice that $\lrc{(0,0) \text{ is not $k_\rmax$-great}}$ does not depend on the configuration at time 0. 
   Moreover, at time 0, the walkers are stuck in a vertex, so $X_s$ has to leave $R_1(0,0)$ from the time boundary if $R_1(0,0)$ is a good box. 
   Using Lemmas~\ref{lem:badbox} and~\ref{lemrho} to bound $\rho_j$, we obtain
   \begin{align*}
       \PR\lr{(0,0) \text{ is not $k_\rmax$-great}}
       \leq  c_d\sum_{j=1}^{k_\rmax}\rho_j
       \leq  c_d\lr{\rho_1+\sum_{j=2}^{k_\rmax}\rho_1^{2^{k-2}}}
       \leq  3c_d\rho_1
       \leq  \frac{\delta}{4},
   \end{align*}
   where $c_d$ is a constant that counts the number of boxes whose 2-enlargement intersects $R_1(0,0)$, and the last inequality follows for all $p$ small enough.
   Next we bound 
   $$
      \PR\lr{E_1\cap E_2\cap E_\mathrm{coup}^c}\leq \frac{\delta}{4}.
   $$ 
   Under $E_1\cap E_2$, we know we performed at least $c_0 n^2$ simple random walk moments. So, 
   $\PR\lr{E_1\cap E_\mathrm{coup}^c}$ can be bounded by the probability that two random walkers performing SRW on $\mathbb{T}_n^d$ are not coupled after 
   $c_0 n^2$ steps. Taking $c_0=c_0(d,\delta)$ large enough we obtain that they have coupled with probability at least $1-\frac{\delta}{4}$.
   
   Next, we bound $\PR\lr{E_2^c}$. 
   From Lemma \ref{lemmakmax}, with probability at least $1-\rho_1^{2^{k_\rmax-3}}$, all $k_\rmax$-boxes in the tessellation are good.
   Thus, Lemma~\ref{gbcross} gives that while traversing the first good $k_\rmax$-great box any feasible paths will traverse at least 
   $$
      C_{\ref*{cte:gbc}}\frac{t_{k_\rmax}}{t_1}
   $$ 
   $k_\rmax$-great $1$-boxes. After the feasible path exits the first $k_\rmax$-great box, it enters into another one and we obtain again another set of $k_\rmax$-great $1$-boxes.
   The total number of steps we can iterate this procedure up to reaching time $\Delta_2$ is $\frac{\Delta_2}{2t_{k_\rmax}}-1\geq \frac{\Delta_2}{4t_{k_\rmax}}$. Therefore, any feasible path must traverse at least 
   $$
      C_{\ref*{cte:gbc}}\frac{t_{k_\rmax}}{t_1} \frac{\Delta_2}{4t_{k_\rmax}} = C_{\ref*{cte:gbc}}\frac{\Delta_2}{4t_1}=\zeta
   $$ 
   $k_\rmax$-great boxes.
   Hence,
   \begin{equation*}
       \PR\lr{E_2^c}\leq  \rho_1^{2^{k_\rmax-3}} \leq \frac{\delta}{4},
   \end{equation*}
   by simply having $n$ large enough.
   
   Finally we bound $\PR\lr{E_1^\compl}$. 
   This is a simple Chernoff bound for the sum of independent Bernoulli random variables, where $\PR(Y_j) = C_{\ref*{cte:srwm}} p^{\frac{6d-1}{6d}}$. Since  
   $$
      \E\lr{\sum_{j=1}^\zeta Y_j}=C_{\ref*{cte:srwm}} p^{\frac{6d-1}{6d}}\zeta 
      = \frac{C_{\ref*{cte:gbc}}C_{\ref*{cte:delta2}}C_{\ref*{cte:srwm}}}{4} \frac{p^{\frac{6d-1}{6d}}}{\sqrt{\ell}}n^2
      =\frac{C_{\ref*{cte:gbc}}C_{\ref*{cte:delta2}}C_{\ref*{cte:srwm}}}{4} p^{\frac{1}{6d}+\frac{6d-1}{6d}}n^2
      = \frac{C_{\ref*{cte:gbc}}C_{\ref*{cte:delta2}}C_{\ref*{cte:srwm}}}{4} pn^2.
   $$
   Now we take $C_{\ref*{cte:delta2}}$ large enough so that the above is larger than $2c_0n^2$, which gives a constant $c$ so that 
   $$
      \PR\lr{E_1^\compl} \leq \exp\lr{-c \E\lr{\sum_{j=1}^\zeta Y_j}} \leq \exp\lr{-c 2c_0n^2} \leq \frac{\delta}{4},
   $$
   where the last inequality follows by taking $n$ large.
\end{proof}

To conclude the second phase, once the walkers are coupled after one SRWM, we just perform identity coupling up to time $\Delta_2$. 
If $t$ is a time where a SRWM ended, notice that Lemma~\ref{lem:recoupgraph} gives that $\cB_t'$ holds. So we succeed performing identity coupling up to $\Delta_2$ by
Lemmas~\ref{lemic1},~\ref{lemic1seq} and~\ref{distnotchange}.

%====================================================================================================================
%====================================================================================================================
%====================================================================================================================
\section{Third Phase}\label{thirdphase}
The third phase starts at time $\Delta_2$, at which time the walkers are coupled and $\cB_{\Delta_2}'$ holds. 
During the third phase we let $\bar{M}_t^\star$ mimic the evolution of ${M}_t^\star$ by doing identity coupling on both the motion of the walkers and the updates of the edges. 
We now check whether the processes are fully coupled by time $\Delta_3=\Delta_2+\frac{n^2}{\mu}$.
%\begin{definition} 
%For any box $R_k(i,\tau)$ such that $\Delta_3\in T_k(\tau)$, we replace $R_k(i,\tau)$ in the tessellation with $\hat{R}_k(i,\tau)\defn S_k(i)\times (T_k(\tau)\cap(0,\Delta_3])$. 
%\end{definition}
%The definitions of bad box and enlargement are modified accordingly. We describe this modification of the tessellation in this section because by the choice we made for $k_\rmax$, there are no boxes $R_k(\cdot,\cdot)$ which intersect both $\Delta_2$ and $\Delta_3$. We claim that this modification does not create unexpected disagreements in the coupling.

%\begin{lemma}
%If $\hat{R}_k(i,\tau)$ is good and $\tilde{\eta}_{(\tau-2)t_k}^1(e)=\tilde{\eta}_{(\tau-2)t_k}^2(e)$ for all $e\in E(S_k(i))$ then 
%\begin{equation*}
%\tilde{\eta}_{T_3}^1(e)=\tilde{\eta}_{T_3}^2(e)
%\end{equation*}
%for all $e\in E(S_k(i))$.
%\end{lemma}
%\begin{proof}
%If $\hat{R}_k(i,\tau)$ is good there are two possibilities: either $R_k(i,\tau)$ is good or $R_k(i,\tau)$ is bad but there are not two disjoint $\hat{R}_{k-1}(\cdot,\cdot)\subset\hat{R}_k(i,\tau)$. In the first case there is nothing to do, in the second case it means there exists a bad box $R_{k-1}(\cdot,\cdot)$ such that $T_{k-1}(\tau)\cap (0,T_3]=\emptyset$ but in this case we only need to observe that in the third phase we only do the identity coupling, so that the map $\Phi_t$ does not change; moreover $\eta_{T_3}(e)$ is independent of any update after $T_3$ for any $e\in E$.
%\end{proof}
Define
\begin{equation}
    F_3\defn \lrc{X_{\Delta_3}=\bar X_{\Delta_3} \text{ and } {\eta}_{\Delta_3}^\star(e)=\bar{\eta}_{\Delta_3}^\star(e)\,\forall e\in E(\bT)_n^d}.
    \label{eq:f3}
\end{equation}
If $F_3$ is not verified, we restart the coupling at time $\Delta_3$ from phase 1. 
\begin{lemma}
\label{lemmaf3}
   For any $\delta>0$, if $p$ is small enough and $n$ large enough, we obtain 
   \begin{equation}
       \PR\lr{F_3}\geq  1-\delta.
   \end{equation}
\end{lemma}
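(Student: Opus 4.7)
The plan is to introduce the event $E$ that every $k$-box with $k \geq k_\rmax$ in the tessellation of $\T_n^d \times [0, \Delta_3]$ is good; by Lemma~\ref{lemmakmax}, for $p$ small and $n$ large we have $\PR(E^c) \leq \delta$. I will show that on $E$ (and under the starting conditions $X_{\Delta_2} = \bar X_{\Delta_2}$ and $\cB'_{\Delta_2}$ inherited from the successful completion of phase 2), the event $F_3$ holds deterministically, which immediately gives $\PR(F_3) \geq 1 - \delta$.

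\emph{Step 1: walkers remain coupled.} Under $E$, every bad box in the tessellation has scale strictly less than $k_\rmax$, so the multi-scale identity-coupling machinery from phase 2 applies without ever running into a bad box too large to be absorbed by a good box one scale up. Concretely, starting from $\cB'_{\Delta_2}$, I iterate Lemmas~\ref{lemic1}, \ref{lemic1seq}, and \ref{distnotchange} exactly as in the second phase but without ever attempting a simple random walk moment: whenever the walker is in the core of a good $1$-box the first two lemmas suffice, and whenever the walker crosses the $2$-enlargement of a bad $k$-box, Lemma~\ref{distnotchange} (applied at the largest scale at which the walker lies in such a $2$-enlargement, noting that this scale is at most $k_\rmax - 1$) guarantees identity coupling works and $\cB'$ is restored on exit. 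Hence identity coupling succeeds at every jump during $[\Delta_2, \Delta_3]$, so $X_t = \bar X_t$ and $\Phi_t = I_d$ for all $t \in [\Delta_2, \Delta_3]$.

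\emph{Step 2: graphs couple by time $\Delta_3$.} Let $\tau^*$ be the unique index with $\Delta_3 \in T_{k_\rmax}^\core(\tau^*)$. Since $t_{k_\rmax} = t_1 \, m^{k_\rmax-1}\bigl((k_\rmax-1)!\bigr)^2$ is only of polylogarithmic order in $n$ divided by $\mu$, for $n$ large one has $t_{k_\rmax} \ll n^2/\mu$, and consequently $\tau^* t_{k_\rmax} \geq \Delta_2$ and $\Delta_3 - 2\bar t_{k_\rmax} \geq \Delta_2$. For each spatial index $i$, the box $R_{k_\rmax}(i, \tau^*)$ is good by $E$, and $\Phi_t$ is constant throughout $[\Delta_3 - 2\bar t_{k_\rmax}, \Delta_3]$ by Step 1, so Lemma~\ref{lem:graphcoup} applied with $s_1 = \Delta_3 - 2\bar t_{k_\rmax}$ yields a time $t^*_i \in [s_1, \Delta_3]$ at which $S_{k_\rmax}(i)$ is fully coupled. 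Lemma~\ref{lem:keepcoupled} (whose condition $s_1 \leq (\tau^*+1)t_{k_\rmax}$ is met because $t^*_i \leq \Delta_3 \leq (\tau^*+1)t_{k_\rmax}$) then maintains coupling of $S_{k_\rmax}^\inn(i)$ up to $\Delta_3$. Because for $m$ large the inner parts $\bigcup_i S_{k_\rmax}^\inn(i)$ cover $\T_n^d$, every edge of $\T_n^d$ is coupled at $\Delta_3$, which combined with $X_{\Delta_3} = \bar X_{\Delta_3}$ from Step 1 gives $F_3$.

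The main obstacle is Step 1: the walkers must stay coupled through a time window of length $n^2/\mu$ despite potentially encountering bad boxes at any scale up to $k_\rmax - 1$. The scale-wise induction already developed for phase 2 (Lemma~\ref{distnotchange}) is what makes this work, and its application here is clean only because Lemma~\ref{lemmakmax} rules out bad boxes of scale $\geq k_\rmax$ throughout $[0, \Delta_3]$, allowing every bad box encountered to be dominated by a good box at an outer scale.
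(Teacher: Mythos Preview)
Your two-step skeleton is right, and Step~2 (explicitly coupling the whole graph via Lemmas~\ref{lem:graphcoup} and~\ref{lem:keepcoupled}) is actually more complete than the paper's own treatment. But there is a genuine gap in your choice of the controlling event.

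Your event $E$, taken from Lemma~\ref{lemmakmax}, only rules out bad boxes of scale $\geq k_\rmax$ that are \emph{contained in} $\T_n^d\times[0,\Delta_3]$. Both of your steps, however, need goodness of $k_\rmax$-boxes that \emph{straddle} $\Delta_3$. In Step~2 this is explicit: you pick $\tau^*$ with $\Delta_3\in T_{k_\rmax}^\core(\tau^*)$, so $T_{k_\rmax}(\tau^*)$ runs up to $(\tau^*+2)t_{k_\rmax}>\Delta_3$, and $R_{k_\rmax}(i,\tau^*)$ is not covered by $E$; your assertion ``$R_{k_\rmax}(i,\tau^*)$ is good by $E$'' is therefore unjustified. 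In Step~1 the same problem appears when the walker is within $O(t_{k_\rmax})$ of time $\Delta_3$: the outer good box required by Lemma~\ref{distnotchange} may straddle $\Delta_3$ and hence lie outside the scope of $E$.

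This is exactly the new difficulty that phase~3 is meant to address, and it is what the paper's proof focuses on. The boxes sampled during phase~2 are precisely those contained in $[0,\Delta_3]$; the only threat to identity coupling in phase~3 comes from bad boxes \emph{not} contained in $[0,\Delta_3]$ whose $2$-enlargement nonetheless reaches back into the time interval the walker traverses. The paper observes that any such box must have time-span comparable to $\Delta_3-\Delta_2=n^2/\mu$, which forces its scale above $k_\rmax$ since $t_{k_\rmax}$ is only polylogarithmic in $n$ over $\mu$; it then bounds $\sum_{k> k_\rmax}\zeta_k\rho_k$ where $\zeta_k$ counts $k$-boxes whose $2$-enlargement contains $\Delta_3$. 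Your argument becomes correct once you enlarge $E$ to also exclude bad boxes of scale $\geq k_\rmax$ with $\Delta_3\in T_k^\enlb(\tau)$; the probability bound is the same computation, and with that strengthening both of your steps go through.
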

\begin{proof}
   Recall that boxes contained in $[0,\Delta_3]$ have been sampled as good or bad during the second phase. 
   By Lemmas~\ref{lemic1},~\ref{lemic1seq} and~\ref{distnotchange}, identity coupling is successful provided we cannot enter a bad box without first entering its 2-enlargement. 
   Therefore, for the walkers to get uncoupled during $[\Delta_2,\Delta_3]$, it must so happen that the walkers entered a bad box of some scale $k$ 
   whose $2$-enlargement intersects $[0,\Delta_2]$ and which was not observed during the second phase because it is not contained in $[0,\Delta_3]$. 
   We now count the number of such boxes. 
   
   We start by deriving bounds on $\ell_k$ and $t_k$, the size of the boxes of scale $k$, for which the above can happen. 
   When $k\geq  k_\rmax$, we can choose $n$ large enough so that for any $m,\ell$ fixed
   \begin{align*}    
       2\ell k^{2k}\leq \ell_{k}&=m^{k}(k!)^2\ell\leq \ell k^{3k},\\
       2\sqrt{\ell}k^{2k}\leq \mu t_{k}&=m^{k}(k!)^2\sqrt{\ell}\leq \sqrt{\ell}k^{3k}.
   \end{align*}
   Recall that $k_\rmax=\log_2\log n$ from~\eqref{eq:kmax}. Then, $\mu t_{k_\rmax}\leq \sqrt{\ell} k_\rmax^{3\log_2\log n}$ is much smaller than a polynomial in $n$. 
   Therefore, any box whose enlargement intersects $[0,\Delta_2]$ and is not contained in $[0,\Delta_3]$ must be of scale larger than $k_\rmax$. So
   \begin{align*}
    \PR\lr{F_3}
       &\geq  1-\PR\lr{\exists\,k>k_\rmax \colon R_k(i,\tau) \text{ is bad and } \Delta_3\in T_k^\enlb(\tau)}.
   \end{align*}
   Next, using the bounds we derived above for $\ell_k$ and $t_k$, the number $\zeta_{k}$ of boxes of scale $k$ that intersect $\mathbb{T}_n^d\times \Delta_3$ is bounded above and below by
   \begin{align*}
       \zeta_{k}\geq &\left(\frac{n}{3\ell_k}\right)^d 24 \geq \frac{24n^{d}}{3^{d}\ell^{d} k^{3dk}},\\
       \zeta_{k}\leq &1+24\left(\frac{n}{\ell_k}\right)^d\leq 1+\frac{24n^{d}}{\ell^{d} k^{2dk}}.
   \end{align*}
   In the upper bound of $\zeta_k$ we add a $1$ to the fraction to consider the case when $k$ is so large that we cannot find a box all contained in the tessellation.
   Using Lemma \ref{lemrho} the probability that there exists a box of scale $k_\rmax$ or bigger that is bad is bounded above by
   \begin{equation*}
       \sum_{k\geq  k_\rmax}\zeta_k\rho_k\leq \sum_{k\geq  k_\rmax}\zeta_k\rho_1^{2^{k-2}},
   \end{equation*}
   moreover using the inequalities above for $\zeta_k$ it is easy to see that, for any $k\geq  k_\rmax$, 
   \begin{equation*}
       \sum_{k\geq  k_\rmax}\zeta_k\rho_k\leq 2 \zeta_{k_{\rmax}}\rho_1^{2^{k_{\rmax}-2}}.
   \end{equation*}
   Since $2^{k_\rmax}=\log n$, by taking $p$ small enough we make $\rho_1$ small enough, which gives that 
   \begin{align*}
       \PR(F_3)\geq 1-\delta.
   \end{align*}
\end{proof}

%====================================================================================================================
%====================================================================================================================
%====================================================================================================================

\section{Completing the proof of Theorem~\ref{thm:main}}
\label{proofteo}

\begin{proof}[Proof of Theorem~\ref{thm:main}]
   Let $\{{M}_t^\star\}_{t\geq 0}$ and $\{\bar{M}_t^\star\}_{t\geq 0}$ denote two copies of the process, 
   each starting from an arbitrary configuration in $\mathbb{T}_n^d\times\{0, 1\}^{E(\mathbb{T}_n^d)}$. 
   Recall the events $F_1$, $F_2$ and $F_3$ from~\eqref{eq:f1},~\eqref{eq:f2} and~\eqref{eq:f3}. 
   If the three events hold then $X_{\Delta_3}= \bar X_{\Delta_3}$ and $\eta_{\Delta_3}^\star\equiv \bar \eta_{\Delta_3}$. So from $\Delta_3$ onwards we can keep the processes coupled.
   We can now set $\delta=\frac{1}{12}$ so that $F_1\cap F_2\cap F_3$ all hold with probability at least $\frac{3}{4}$.
   If any of the above fails, we just let the processes evolve independently up to time $\Delta_3$ and restart from scratch. Since $\Delta_3$ is of order $n^2/\mu$ from~\eqref{eq:delta}, we obtain that the mixing time 
   is of order $n^2$ concluding the proof.
\end{proof}

%====================================================================================================================
%====================================================================================================================
%====================================================================================================================

\section{Proof of the lower bound (Theorems~\ref{thm:lb} and~\ref{thm:lb2})}
\label{prooflb}

The proof of the lower bounds are identical to the ones in~\cite{peres2015random}. We add them here for completion.

\begin{proof}[Proof of Theorem~\ref{thm:lb}]
   First we introduce a discrete time Markov chain $\tilde{M}_k= (\tilde X_k,\tilde \eta_k)$ which is defined by sampling the continuous time chain $M_t=(X_t,\eta_t)$ on intervals of length $\delta$; that is,
   $$
      \tilde X_k = X_{k\delta}
      \quad\text{and}\quad
      \tilde \eta_k = \eta_{k\delta},
   $$
   where $\delta$ is given from~\eqref{eq:assump2}.
   Let $\tilde \gamma=\tilde\gamma(\tilde M)$ and $\gamma=\gamma(M)$ be the spectral gaps of the discrete time and continuous time chain, respectively. We obtain
   $$
      1-\tilde \gamma = \exp\lr{- \delta\gamma}.
   $$
   For all $\tilde\gamma\leq 1/2$ we simply use the bound $\gamma \leq \frac{2\tilde\gamma}{\delta}$.
   The lower bound on the relaxation time follows by taking the function $f(x,\xi)=d(x,0)$, so $f(X_t,\eta_t)$ is the distance between the walker and the origin of $\T_n^d$. 
   Since the stationary distribution of the walker is uniform by~\eqref{eq:assump1}, it follows that $\Var(f)\geq c n^2$ for some constant $c>0$. Moreover, from~\eqref{eq:assump2}, we have
   \begin{align*}
      \cE(f,f) 
      &= \frac{1}{2}\sum_{x,\xi}\pi(x)\nu(\xi) \sum_{x',\xi'} P((x,\xi),(x',\xi')) \lr{d(x,0)-d(x',0)}^2\\
      &\leq \frac{1}{2}\E_{x\sim \pi}\lr{D_{x,\delta}^2}
      \leq \frac{1}{2}C_\cref{cte:assump2},
   \end{align*}
   where $\E_{x\sim \pi}$ denotes the expectation where $x$ is a random variable sampled according to $\pi$, the uniform measure on $\T_n^d$.
   From the above we obtain
   $$
      \tilde\gamma\leq \frac{C_\cref{cte:assump2}}{2c n^2}.
   $$
   If the above is at most $1/2$ we obtain
   $$
      \gamma 
      \leq \frac{C_\cref{cte:assump2}}{c n^2\delta}.
   $$
   Otherwise, if $\tilde\gamma\geq 1/2$ we obtain that $\gamma$ is of order $1/\delta$.
   The above establishes the relaxation time of the chain. 
\end{proof}
\begin{proof}[Proof of Theorem~\ref{thm:lb2}]
   We use the following nice result from~\cite{Naor2006Jul}, which appeared implicitly already in
   \cite{Ball1992Jun}.
   \begin{lemma}
      Let $\lrc{Y_k}_{k\in\Z}$ be a discrete-time, stationary, reversible Markov chain with finite state space $\cS$, and let $h\colon S\to \R^m$ for some $m\in \Z_+$. Then, for each $k\geq0$
      $$
         \E\lr{\|h(Y_k)-h(Y_0)\|_{L_2}^2}\leq k \E\lr{\|h(Y_1)-h(Y_0)\|_{L_2}^2},
      $$
      where $\|\cdot\|_{L_2}$ denotes the Euclidean norm on $\R^m$.
   \end{lemma}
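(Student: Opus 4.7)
The plan is to diagonalize the one-step transition operator $P$ on $L^2(\pi)$ and reduce the lemma to a scalar inequality. Writing $h=(h_1,\ldots,h_m)$ with each $h_i\in L^2(\pi)$ and using that $Y_k\stackrel{d}{=}Y_0$ by stationarity,
$$\E\|h(Y_k)-h(Y_0)\|_{L_2}^2 \;=\; 2\E\|h(Y_0)\|_{L_2}^2 \;-\; 2\sum_{i=1}^m \langle h_i,\,P^k h_i\rangle_\pi.$$
Reversibility makes $P$ self-adjoint in $L^2(\pi)$, so on the finite-dimensional space $\R^{\cS}$ the spectral theorem supplies an orthonormal basis of eigenfunctions $\{\phi_j\}$ with real eigenvalues $\lambda_j\in[-1,1]$. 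Expanding $h_i=\sum_j c_{ij}\phi_j$ and comparing the identity above for $k$ with the same identity for $k=1$, the lemma becomes equivalent to
$$\sum_{i,j}(1-\lambda_j^k)\,c_{ij}^2 \;\le\; k\sum_{i,j}(1-\lambda_j)\,c_{ij}^2,$$
which, since the coefficients $c_{ij}^2$ are nonnegative, follows term by term from the pointwise inequality $1-\lambda^k\le k(1-\lambda)$ on $[-1,1]$.

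It remains to verify this scalar inequality. For $\lambda\in[0,1]$ the factorization $1-\lambda^k=(1-\lambda)\sum_{j=0}^{k-1}\lambda^j$ gives directly at most $k(1-\lambda)$. For $\lambda\in[-1,0)$ I would split on the parity of $k$: if $k$ is even then $1-\lambda^k\le 1\le k\le k(1-\lambda)$ since $1-\lambda\ge 1$; if $k$ is odd then $1-\lambda^k=1+|\lambda|^k\le 1+|\lambda|=1-\lambda\le k(1-\lambda)$. In both cases the bound holds for every integer $k\ge 0$.

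There is no real obstacle; the argument is entirely spectral. The one point worth highlighting is why the naive telescoping approach $h(Y_k)-h(Y_0)=\sum_{j=0}^{k-1}[h(Y_{j+1})-h(Y_j)]$ followed by Cauchy--Schwarz does not suffice: it yields only the weaker bound $k^2\,\E\|h(Y_1)-h(Y_0)\|_{L_2}^2$. Reversibility is precisely what upgrades $k^2$ to $k$, via the scalar inequality above applied to the possibly negative eigenvalues of $P$.
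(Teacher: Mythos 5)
Your proof is correct. The paper does not actually prove this lemma --- it cites it from Naor (2006) and Ball (1992) --- and your spectral argument is precisely the standard proof of the cited result: reduce to $\E\|h(Y_k)-h(Y_0)\|_{L_2}^2=2\sum_i\langle h_i,(I-P^k)h_i\rangle_\pi$ by stationarity, then use self-adjointness of $P$ to diagonalize and conclude from the scalar bound $1-\lambda^k\le k(1-\lambda)$ on $[-1,1]$, which you verify correctly, including the negative-eigenvalue case where reversibility is genuinely needed. The only point left implicit (harmless, not a gap) is that if $\pi$ is not fully supported one should restrict the chain to the support of $\pi$ so that $\langle\cdot,\cdot\rangle_\pi$ is a genuine inner product; your closing remark that naive telescoping plus Cauchy--Schwarz only yields the factor $k^2$ is also accurate.
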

   Letting $g_n \colon \T_n^d \to \R^{2d}$ the function 
   $$
      g_n(x_1,x_2,\ldots,x_d)=\lr{n\cos\lr{2\pi x_1/n},n\sin\lr{2\pi x_1/n},\ldots,n\cos\lr{2\pi x_d/n},n\sin\lr{2\pi x_d/n}}.
   $$
   For $x\in \T_n^d$ and $\xi\in\lrc{0,1}^{E\lr{\T_n^d}}$ we let $h(x,\xi)=g_n(x)$. Then, noting that $g_n$ is bi-Lipschitz with some constant $c$ we have 
   \begin{align*}
      \E_{\pi\times\nu}\lr{\|\tilde X_k-\tilde X_0\|_1^2}
      &\leq c^2 \E_{\pi\times\nu}\lr{\lr{g_n(\tilde X_k)-g_n(\tilde X_0)}^2}\\
      &\leq c^2 k \E_{\pi\times\nu}\lr{\lr{g_n(\tilde X_1)-g_n(\tilde X_0)}^2}\\
      &\leq c^4 k\E_{\pi\times\nu}\lr{\|\tilde X_1-\tilde X_0)\|_2^2}
      \leq c^4 k\E_{\pi\times\nu}\lr{D_{\tilde X_0,\delta}^2}
      \leq c^4 C_\cref{cte:assump2} k.
   \end{align*}
   Hence for any $t\geq \delta$ we have
   $$
      \E_{\pi\times\nu}\lr{\|X_{t} - X_0\|_1^2}\leq c^4 C_\cref{cte:assump2} \left\lceil\tfrac{t}{\delta}\right\rceil
      \leq 2c^4 C_\cref{cte:assump2} \frac{t}{\delta}.
   $$
   Now for the total variation starting from a stationary environment, we simply make
   \begin{align*}
      \|\upsilon_t - \pi\times \nu\|_\TV
      &\geq \PR\lr{\|X_t-X_0\|_1 \leq \epsilon^{1/d} n}\lr{1-\frac{2\epsilon}{3}}\\
      &= \lr{1-\PR\lr{\|X_t-X_0\|_1 \leq \epsilon^{1/d} n}}\lr{1-\frac{2\epsilon}{3}}\\
      &\geq \lr{1-\frac{\E\lr{\|X_t-X_0\|_1^2}}{\epsilon^{2/d} n^2}}\lr{1-\frac{2\epsilon}{3}}\\
      &\leq \lr{1-\frac{2c^4 C_\cref{cte:assump2}t}{\delta\epsilon^{2/d} n^2}}\lr{1-\frac{2\epsilon}{3}}.
   \end{align*}
   Therefore, if $t\leq \frac{\epsilon^{\frac{2+d}{d}}\delta n^2}{6 c^4 C_\cref{cte:assump2}}$ we have that $\|\upsilon_t - \pi\times \nu\|_\TV\geq 1-\epsilon$.
\end{proof}

%====================================================================================================================
%====================================================================================================================
%====================================================================================================================
\section{Proof of Corollary~\ref{cor:lb}}
\label{sec:cor}

In order to apply the above to the random walk on dynamical random cluster model, we first need a certain sprinkling lemma for the 
random cluster model. 
Given $q\geq 1$ and $p>0$, let $\nu_{p,q}$ be the measure of a random cluster model with parameters $p,q$. 
Let $\eta$ be a configuration sampled from $\nu_{p,q}$. We construct a sprinkling by 
associating to each edge $e$ an independent Bernoulli random variable $Z(e)$ of parameter $\epsilon$. 
Define the configurations
$$
   \lr{\eta+Z}(e) = \ind{\eta(e)+Z(e) \geq 1} \text{ for $e\in E(\T_n^d)$}
$$
and 
$$
   \lr{\eta-Z}(e) = \ind{\eta(e)(1-Z(e)) = 1} \text{ for $e\in E(\T_n^d)$}.
$$
So $\eta+Z$ (resp., $\eta-Z$) is the configuration obtained from $\eta$ by opening (resp., closing) all edges $e$ with $Z(e)=1$.
% Denote by $\nu_{p,q,\epsilon}$ and $\nu_{p,q,-\epsilon} $the probability measures of $\eta+Z$ and $\eta-Z$, respectively.
Given two elements $\xi,\xi'$ of $\lrc{0,1}^{E(\T_n^d)}$ we say that $\xi \leq \xi'$ if $\xi(e)\leq \xi'(e)$ for all $e\in E(\T_n^d)$. 
% Finally, we say that an event $A\subset \lrc{0,1}^{E(\T_n^d)}$ is \emph{increasing} if for any $\xi,\xi'\in \lrc{0,1}^{E(\T_n^d)}$ with $\xi \leq \xi'$ then 
% $\xi\in A$ implies that $\xi'\in A$.
\begin{lemma}[Sprinkling lemma]\label{lem:sprinkle}
   Let $q\geq 1$, $0<p<p'<1$ and $\epsilon>0$ be fixed. Let $\lrc{Z(e)\colon e\in E(\T_n^d)}$ be a collection of i.i.d.\ Bernoulli random variables of parameter $\epsilon$. 
   Let $\eta$ and $\eta'$ be random configurations with distributions $\nu_{p,q}$ and $\nu_{p',q}$, respectively. 
   If 
   \begin{equation}
      \epsilon+(1-\epsilon)p\leq p'
      \quad\text{and}\quad
      \epsilon+(1-\epsilon)\frac{p}{p+(1-p)q}\leq \frac{p'}{p'+(1-p')q}
      \label{eq:sprinkle}
   \end{equation}
   then there exists a coupling between $\nu,\nu',Z$ such that $\lr{\eta+Z}\leq \eta'$.
   Similarly, if 
   \begin{equation}
      (1-\epsilon)p'\geq p
      \quad\text{and}\quad
      (1-\epsilon)\frac{p'}{p'+(1-p')q}\geq \frac{p}{p+(1-p)q}
      \label{eq:sprinkle2}
   \end{equation}
   then there exists a coupling between $\nu,\nu',Z$ such that $\lr{\eta'-Z}\geq \eta$.
\end{lemma}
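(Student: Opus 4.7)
The plan is to construct both couplings via a joint heat-bath Glauber dynamics on the triple $(\eta,\eta',Z)$, designed so that a suitable monotone invariant is preserved by every single-edge update. For the first claim the invariant is $\eta\vee Z\leq \eta'$, and for the second it is $\eta(e)=1\Rightarrow \eta'(e)=1$ and $Z(e)=0$. Starting the chain from any triple satisfying the invariant (for instance $\eta\equiv 0$, $\eta'\equiv 1$, $Z$ arbitrary), the invariant is maintained, and by ergodicity on the finite state space the chain converges to a unique stationary distribution, from which the required coupling is read off.

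For the first claim, at each step we select an edge $e$ uniformly and perform three sub-updates: first, resample $\eta(e)$ from the $\nu_{p,q}$ heat-bath distribution $\mathrm{Bern}(a)$, where $a\in\{p,\frac{p}{p+(1-p)q}\}$ according to whether $e$ is a cut-edge in $\eta|_{\setminus e}$; second, resample $Z(e)\sim\mathrm{Bern}(\epsilon)$ fully independently; third, set $W=\eta(e)\vee Z(e)$ and resample $\eta'(e)$ conditionally on $W$ and $\eta'|_{\setminus e}$ so that $\eta'(e)\geq W$ always and, averaged over $W$, the marginal law is $\mathrm{Bern}(b)$, where $b\in\{p',\frac{p'}{p'+(1-p')q}\}$ is the corresponding $\nu_{p',q}$ heat-bath probability. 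Step three is feasible iff $\PR[W=1]=\epsilon+(1-\epsilon)a\leq b$.

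The invariant $\eta\leq\eta'$ makes every cut-edge of $\eta'$ automatically a cut-edge of $\eta$, so only three $(a,b)$ configurations arise: $(p,p')$, $\bigl(\tfrac{p}{p+(1-p)q},\tfrac{p'}{p'+(1-p')q}\bigr)$, and $\bigl(\tfrac{p}{p+(1-p)q},p'\bigr)$. The first two cases match the two hypotheses in \eqref{eq:sprinkle} directly; the third follows from the first hypothesis combined with $\tfrac{p}{p+(1-p)q}\leq p$, valid because $q\geq 1$. Checking the marginals is immediate: the $\eta$ update uses only $\eta|_{\setminus e}$ and the $Z$ update is fully independent, so $\eta$ is Glauber for $\nu_{p,q}$, $Z$ is i.i.d.\ $\mathrm{Bern}(\epsilon)$, and the two are stationarily independent; the $\eta'$ update, after averaging $W$ out conditionally on $\eta'|_{\setminus e}$, is $\mathrm{Bern}(b(\eta'|_{\setminus e}))$, so $\eta'$ is Glauber for $\nu_{p',q}$.

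The second claim is symmetric. At each step we resample $\eta'(e)\sim\mathrm{Bern}(b)$ and $Z(e)\sim\mathrm{Bern}(\epsilon)$ independently, set $V=\eta'(e)(1-Z(e))$, and resample $\eta(e)$ conditionally on $V$ with $\eta(e)\leq V$ and marginal $\mathrm{Bern}(a)$. Feasibility now becomes $\PR[V=1]=(1-\epsilon)b\geq a$, which is exactly \eqref{eq:sprinkle2}, and the mixed cut-edge case once again reduces to the main inequality via $\tfrac{p}{p+(1-p)q}\leq p$. The main technical point in both claims is this case analysis of cut-edges under the invariant, and the role of $q\geq 1$ is precisely to supply the inequality $\tfrac{p}{p+(1-p)q}\leq p$ that lets the intermediate case be absorbed into the stated hypotheses.
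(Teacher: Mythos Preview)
Your proof is correct and takes essentially the same approach as the paper: a monotone coupling of heat-bath Glauber dynamics for the triple $(\eta,Z,\eta')$, with the invariant $\eta\vee Z\le\eta'$ (resp.\ $\eta\le\eta'(1-Z)$) preserved by each single-edge update via a case analysis on the cut-edge status, using $\eta\le\eta'$ to rule out the fourth case and $q\ge1$ to absorb the mixed case into the first hypothesis. The only cosmetic differences are that you spell out the sequential sub-update construction and the value of $r$ explicitly, and you are a bit more careful in verifying that the $\eta'$-marginal transition, after integrating out the fresh $W$, is exactly $\mathrm{Bern}(b(\eta'|_{\setminus e}))$ so that $\eta'$ is genuinely a Glauber chain for $\nu_{p',q}$; the paper leaves this implicit.
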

\begin{proof}
   Let $\lrc{\eta_t}_t$ and $\lrc{\eta'_t}_t$ be the single-site Glauber dynamics Markov chains on the random cluster model with parameters $(p,q)$ and $(p',q)$, respectively. 
   Let $\lrc{Z_t}_t$ be a Glauber dynamics Markov chain on the state space $\lrc{0,1}^{E(\T_n^d)}$ with stationary distribution given by a product of Bernoulli measures with parameter $\epsilon$.  
   Start with arbitrary configurations such that $\eta_0\equiv \eta'_0$ and $Z_0(e)=0$ for all $e\in E(\T_n^d)$. 
   Assume that $\eta_t+Z_t \leq \eta'_t$ at some time $t$. We will show that we can couple the next transition of the chains so that $\eta_{t+1}+Z_{t+1} \leq \eta'_{t+1}$. This establishes the lemma.
   For any edge $e$ and configuration $\eta$, let 
   $$
      \alpha(e,\eta) = \ind{\text{$e$ is a cut-edge in $\eta$}}.
   $$
   In the coupling we will choose the same edge to be updated in all chains. Let $e$ be such an edge.
   Then, note that 
   \begin{align*}
      \PR\lr{\lr{\eta_{t+1}+Z_{t+1}}(e)=1 \mid \eta_t,\eta'_t,Z_t}
      &= \epsilon + (1-\epsilon)\lr{\alpha(e,\eta_t)\frac{p}{p+(1-p)q}+\lr{1-\alpha(e,\eta_t)}p}.
   \end{align*}
   Since $\eta_t \leq \eta'_t$ we have that $\alpha(e,\eta_t)\geq \alpha(e,\eta_t')$. So if $\alpha(e,\eta_t)=0$ we have that $\alpha(e,\eta'_t)=0$, which gives 
   \begin{align*}
      \PR\lr{\lr{\eta_{t+1}+Z_{t+1}}(e)=1 \mid \eta_t,\eta'_t,Z_t}
      = \epsilon + (1-\epsilon)p
      \leq p'
      = \PR\lr{\eta_{t+1}'(e)=1 \mid \eta_t,\eta'_t,Z_t},
   \end{align*}
   where in the first inequality we used~\eqref{eq:sprinkle}.
   If $\alpha(e,\eta_t)=1$, then we use the second part of~\eqref{eq:sprinkle} and  that $\frac{p'}{p'+(1-p')q}\leq p'$ to write 
   \begin{align*}
      \PR\lr{\lr{\eta_{t+1}+Z_{t+1}}(e)=1 \mid \eta_t,\eta'_t,Z_t}
      &= \epsilon + (1-\epsilon)\frac{p}{p+(1-p)q}\\
      &\leq \frac{p'}{p'+(1-p')q}\\
      &\leq \alpha(e,\eta_t')\frac{p'}{p'+(1-p')q}+\lr{1-\alpha(e,\eta_t')}p'\\
      &= \PR\lr{\eta_{t+1}'(e)=1 \mid \eta_t,\eta'_t,Z_t}.
   \end{align*}
   Therefore, it follows that we can couple the next transition of the Markov chains so that $\eta_{t+1}\leq \lr{\eta_{t+1}+Z_{t+1}} \leq \eta_{t+1}'$. Consequently, we can couple the stationary measures of such chains to obtain
   that $\lr{\eta+Z}\leq \eta'$.
   
   For the second part of the lemma, we use the same strategy and analyze the transition probabilities for $\eta_t'-Z_t$. We have
   \begin{align*}
      \PR\lr{\lr{\eta'_{t+1}-Z_{t+1}}(e)=1 \mid \eta_t,\eta'_t,Z_t}
      &= (1-\epsilon)\lr{\alpha(e,\eta_t')\frac{p'}{p'+(1-p')q}+\lr{1-\alpha(e,\eta_t')}p'}.
   \end{align*}
   If $\alpha(e,\eta_t')=1$ then $\alpha(e,\eta_t)=1$, yielding
   \begin{align*}
      \PR\lr{\lr{\eta'_{t+1}-Z_{t+1}}(e)=1 \mid \eta_t,\eta'_t,Z_t}
      = (1-\epsilon)\frac{p'}{p'+(1-p')q}
      &\geq \frac{p}{p+(1-p)q}\\
      &=\PR\lr{\eta_{t+1}=1 \mid \eta_t,\eta'_t,Z_t}.
   \end{align*}
   If $\alpha(e,\eta_t')=0$ then 
   \begin{align*}
      \PR\lr{\lr{\eta'_{t+1}-Z_{t+1}}(e)=1 \mid \eta_t,\eta'_t,Z_t}
      = (1-\epsilon)p'
      &\geq p\\
      &\geq \alpha(e,\eta_t)\frac{p}{p+(1-p)q}+\lr{1-\alpha(e,\eta_t)}p\\
      &=\PR\lr{\eta_{t+1}=1 \mid \eta_t,\eta'_t,Z_t}.
   \end{align*}
   Therefore, there exists a coupling such that $\eta_{t+1}\leq \lr{\eta_{t+1}'-Z_{t+1}}$ and we obtain $\lr{\eta'-Z}\geq \eta$.
\end{proof}

\begin{proof}[Proof of Corollary~\ref{cor:lb}]
   We only need to check that assumptions~\eqref{eq:assump1} and~\eqref{eq:assump2} hold for the random walk on dynamical random cluster model.
   For any $q,p$ we have that~\eqref{eq:assump1} holds. For $q\geq 1$,~\eqref{eq:assump2} holds for all $p<p_\crit^q$ using the following
   argument. Take $p'=\frac{p+p_\crit^q}{2}\in(p,p_\crit^q)$. Take $\epsilon>0$ small enough so that~\eqref{eq:sprinkle} is satisfied. 
   We choose $\delta=\epsilon/\mu$ and take $\eta$ to be a random cluster configuration of parameters $p,q$. Note that the probability that a given edge gets refreshed during $[0,\delta]$ is
   $$
      1-e^{-\mu\delta} = 1-e^{-\epsilon} \leq \epsilon.      
   $$
   Therefore, if $Z(e)$ is a Bernoulli random variable of parameter of parameter $\epsilon$, we can couple $Z(e)$ with the refresh clocks of the dynamical random cluster so that if $e$ gets refreshed during $[0,\delta]$ then
   $Z(e)=1$. Therefore, this coupling gives that $\cC_x([0,\delta])$ is contained in the cluster of $x$ inside the configuration $\eta+Z$, which by Lemma~\ref{lem:sprinkle} is contained inside $\eta'$, a random cluster 
   configuration with parameters $p',q$. Then it follows by the sharpness of the phase transition~\cite{Duminil-Copin2019} that the cluster of $x$ in $\eta'$ has an exponential decay, establishing~\eqref{eq:assump2} and 
   allowing us to obtain the conclusions of Theorems~\ref{thm:lb} and~\ref{thm:lb2} for the random cluster model with $q\geq 1$. 
   
   Regarding the case $q<1$, one can deduce the exponential decay of the cluster $\eta+Z$ only when $p$ is small enough. This becames rather trivial as regardless of the state of the other edges, we obtain that an edge $e$ is open
   during $[0,\delta]$ with probability at most 
   $$
      \max\lrc{\frac{p}{p+(1-p)q},p}+\epsilon=\frac{p}{p+(1-p)q}+\epsilon
   $$
   which for small enough $p$ can be made smaller than $p_\crit$, the critical probability for independent percolation.
\end{proof}

%\addcontentsline{toc}{chapter}{Bibliography}
\bibliographystyle{plain} 
\bibliography{Biblio}
\end{document}